\def\quotient#1#2{%
    \lower 0.2ex\hbox{$#1$}\big\backslash \raise0.2ex\hbox{$#2$}%
    }
\newcommand{\hl}{\sl}
\newcommand{\hd}{\sl}
\newcommand{\ba}{\begin{array}}
\newcommand{\ea}{\end{array}}
\newcommand{\LL}{\mbox{\rm L}}
\newcommand{\Cnt}{\mbox{\rm C}}
\newcommand{\id}{\mbox{\rm id}}
\renewcommand{\d}{\mathrm{d}}
\newcommand{\e}{\mathrm{e}}
\newcommand{\SX}{{\mathcal G}}
\newcommand{\reg}{{\mathcal R}}
\newcommand{\zero}{{\mathcal Z}}
\newcommand{\jac}{{\mathcal J}}
\newcommand{\bs}{{\mathcal B}}
\newcommand{\Gr}{{Gr}}
\newcommand{\NN}{\mathbb{N}} \newcommand{\ZZ}{\mathbb{Z}}
 \newcommand{\RR}{\mathbb{R}}
\newcommand{\XX}{X}
\newcommand{\ganz}{\overline{\XX}}
\newcommand{\rand}{\partial\XX}
\newcommand{\chr}{{\mathbbm 1}}
\newcommand{\Lim}{L_\Gamma}
\newcommand{\radlim}{L_\Gamma^{\small{\mathrm{rad}}}}
\newcommand{\at}{\!\cdot\!}
\newcommand{\width}{\mathrm{width}}
\newcommand{\ndpt}{\partial}
\newcommand{\Hopf}{H}
\newcommand{\xo}{{o}}
\newcommand{\gamo}{{\gamma\xo}}
\newcommand{\be}{\begin{eqnarray*}}
\newcommand{\ee}{\end{eqnarray*}}
\newcommand{\ein}{\,\rule[-5pt]{0.4pt}{12pt}\,{}}
\newcommand{\is}{\mbox{Is}}
\newcommand{\supp}{\mbox{supp}}
\newcommand{\st}{\mbox{such}\ \mbox{that}\ }
\newcommand{\wrt}{\mbox{with}\ \mbox{respect}\ \mbox{to}\ }
\newtheorem{theorem}{Theorem}[section]
\newtheorem{corollary}{Corollary}
\newtheorem*{mainA}{Theorem A}
\newtheorem*{mainB}{Theorem B}
\newtheorem*{mainC}{Theorem C}
\newtheorem{lemma}[theorem]{Lemma}
\newtheorem{proposition}{Proposition}
\theoremstyle{definition}
\newtheorem{definition}[theorem]{Definition}
\title[Hopf-Tsuji-Sullivan dichotomy for quotients of Hadamard spaces] 
      {Hopf-Tsuji-Sullivan dichotomy for quotients of \\ Hadamard spaces with a rank one isometry}
\author[Gabriele Link]{}
\subjclass{Primary: 37D40, 20F67; Secondary: 37D25.}
 \keywords{rank one space, Hopf-Tsuji-Sullivan dichotomy, geodesic currents, Patterson-Sullivan measures.}
 \email{gabriele.link@kit.edu}
\begin{document}
\bibliographystyle{AIMS.bst}
\maketitle

\centerline{\scshape Gabriele Link$^*$}
\medskip
{\footnotesize
 \centerline{Institut f\"ur Algebra und Geometrie}
   \centerline{Karlsruhe Institute of Technology (KIT)}
   \centerline{Englerstr.~2, 76 131 Karlsruhe, Germany}
} 

\bigskip


\begin{abstract}
Let $\XX$ be a proper Hadamard space  and $\ \Gamma<\is(\XX)$ a non-elementary discrete group of isometries with a rank one isometry. We discuss and prove Hopf-Tsuji-Sullivan dichotomy for the geodesic flow on the set of parametrized geodesics of the quotient $\quotient{\Gamma}{\XX}$ and with respect to Ricks' measure introduced in \cite{Ricks}. This generalizes previous work of the author and J.~C.~Picaud on Hopf-Tsuji-Sullivan dichotomy in the analogous manifold setting and with respect to Knieper's measure. 
\end{abstract}

\section{Introduction}
Let $(\XX,d)$ be a  proper  Hadamard space and $\Gamma<\is(\XX)$ a discrete group.  Let $\SX$ denote the  set of parametrized geodesic lines in $\XX$ endowed with the compact-open topology (which can be identified with the unit tangent  bundle $S\XX$ if $\XX$ is a Riemannian manifold) and consider the action of $\RR$ on $\SX$ by reparametrization. This action induces a flow $g_\Gamma$ on the quotient space $\quotient{\Gamma}{\SX}$.  Let $m_\Gamma$ be an appropriate Radon  measure on $\quotient{\Gamma}{\SX}$ which is invariant by the  flow $g_\Gamma$. Hopf-Tsuji-Sullivan dichotomy then states that -- under certain conditions on the space $\XX$ and the group $\Gamma$ -- there are precisely two mutually exclusive possibilities for the dynamical system $( \quotient{\Gamma}{\SX}, g_\Gamma, m_\Gamma)$: Either it is  conservative (that is almost every orbit is recurrent)  and ergodic (which means that the only invariant sets have zero or full measure) or it is dissipative (that is almost every orbit is  divergent) and non-ergodic. For a precise definition of the previous notions the reader is referred to Section~\ref{dyndef}.

The story of Hopf-Tsuji-Sullivan dichotomy probably began with Poincar\'e's
 recurrence theorem applied to Riemann surfaces and with Hopf's seminal
 work  later in the 1930's (see \cite{Hopf} and \cite{MR0284564}). 
For quotients of the hyperbolic plane by Fuchsian groups it was  
observed  that  with respect to Liouville measure  the geodesic flow  is either conservative and ergodic   or dissipative and non-ergodic. 
Later, with the invention of the remarkable Patterson-Sullivan measures on the boundary of $\XX$ (see \cite{MR0450547} and  \cite{MR556586} for the original constructions,
then  \cite{MR1348871}, \cite{MR1293874}, \cite{MR1207579}  for extensions, and \cite{MR2057305} for a clear account and deep applications of this theory) and then the  construction of Bowen-Margulis measure  on $\quotient{\Gamma}{ S \XX}$ using these,  generalizations to a wider class of spaces and groups have been obtained by several authors. Among them I only want to mention here the work of M.~Coornaert and A.~Papadopoulos (\cite{MR1207579}) which deals with locally compact metric trees and the work of V.~Kaimanovich (\cite{MR1293874}) in the setting of Gromov hyperbolic spaces with some additional properties; these were probably the first ones considering non-Riemannian   spaces.  T.~Roblin (\cite[Th{\'e}or{\`e}me 1.7]{MR2057305}) then gave a unified version for all proper CAT$(-1)$-spaces. 

Recently, in \cite{LinkPicaud}, Hopf-Tsuji-Sullivan dichotomy was proved for quotients of Hadamard {\hl manifolds} by discrete isometry groups containing an element which translates a geodesic without parallel perpendicular Jacobi field 
and with respect to Knieper's  measure (\cite{MR1652924}) on the unit tangent bundle.  The main goal of the present paper is to prove Hopf-Tsuji-Sullivan dichotomy in the setting of proper Hadamard {\hl spaces} with a {\hl rank one} isometry (that is an isometry translating a geodesic which does not bound a flat half-plane) and hence to generalize the Main Theorem of \cite{LinkPicaud} to non-Riemannian spaces; compared to \cite{LinkPicaud} we also impose an a priori weaker condition on the discrete group $\Gamma$ of the Hadamard {\hl manifold} $\XX$: 
In fact,  we 
only need a discrete group with infinite limit set which contains the fixed point of a rank one isometry of $\XX$. So in particular $\XX$ need not a priori possess a geodesic without parallel perpendicular Jacobi field, but only one without a flat half-plane. However, this can only happen when  $\XX$ does not admit a quotient of finite volume according to  the rank rigidity  theorem of
Ballmann \cite{MR819559} and Burns-Spatzier \cite{MR908215}, which  asserts that otherwise $\XX$ has a geodesic without parallel perpendicular Jacobi field. 

Even though some of the results from the above mentioned paper \cite{LinkPicaud} remain true in this more general setting, 
there are several obstructions occurring when singular spaces are involved. The probably most important one is the fact that Knieper's measure 
cannot be constructed without a volume form on the closed and convex subsets corresponding to the parallel sets of geodesic lines.  
We will therefore follow here the construction proposed by R.~Ricks in \cite{Ricks} and  first define weak Bowen-Margulis measure on the quotient $\quotient{\Gamma}{[\SX]}$ of parallel classes of {\hl parametrized} geodesic lines by $\Gamma$. With respect to this measure we have the following 
 \begin{mainA} Let $X$ be a proper  Hadamard space and $\ \Gamma<\is(X)$ a  discrete group with the fixed point of a rank one isometry of $X$ in its infinite limit set.
Then with respect to Ricks' weak Bowen-Margulis measure either the geodesic flow on $\quotient{\Gamma}{[\SX]}$ is conservative, or it is dissipative and non-ergodic unless the measure is supported on a single orbit by the geodesic flow on $\quotient{\Gamma}{[\SX]}$. 
\end{mainA}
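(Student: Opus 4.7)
The plan is to follow the classical Hopf-Sullivan-Roblin proof strategy, adapted to the rank one CAT$(0)$ setting and to Ricks' weak Bowen-Margulis measure on the space of parallel classes of geodesics. The first step is to set up a Patterson-Sullivan density $(\mu_x)_{x\in X}$ of dimension $\delta$ (the critical exponent of $\Gamma$) on the geometric boundary $\partial X$, supported on the limit set $\Lim$, and to express Ricks' measure locally via the Hopf parametrization of $[\SX]$ as a product of the form $e^{-\delta\beta_o(\xi,\eta)}\,d\mu_o(\xi)\,d\mu_o(\eta)\wedge dt$ on parallel classes, where $\beta_o$ is the Gromov cocycle. Alongside this, one records a shadow lemma for rank one limit points, giving an estimate of the form $\mu_o(\sh_R(o,\gamma o))\asymp e^{-\delta d(o,\gamma o)}$ for $\gamma$ ranging over a rank one subset of $\Gamma$ and $R$ sufficiently large. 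These are the two core tools that feed into every subsequent step.

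Step two is to establish the equivalence between conservativity of the flow $g_\Gamma$ on $\quotient{\Gamma}{[\SX]}$ and divergence of the Poincar\'e series $\sum_{\gamma\in\Gamma}e^{-\delta d(o,\gamma o)}$ at $\delta$. The argument is a two-sided Borel-Cantelli style computation: shadows of orbit points cover the radial limit set, the Patterson-Sullivan masses of these shadows are controlled by the shadow lemma, and the $\Gamma$-invariance of the Hopf-parametrized measure translates these boundary statements into dynamical statements about $m_\Gamma$-a.e.\ trajectory revisiting a fixed compact set. Divergence of the series then yields that $\radlim$ has full $\mu_o$-measure, from which conservativity follows; convergence forces the dissipative alternative.

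The third step is ergodicity in the conservative case, via Hopf's averaging argument. For bounded continuous $f$ on $\quotient{\Gamma}{[\SX]}$, the forward and backward Birkhoff averages $f^\pm$ are shown to descend to $\mu_o$-measurable functions on $\partial X$ by establishing their invariance along strong stable and strong unstable horospheres, using recurrence of $\Gamma$-orbits and the cocycle identity for $\beta_o$. A double ergodic theorem argument on $\partial X\times\partial X$ with respect to $\mu_o\otimes\mu_o$ then forces $f^+$ to be $m_\Gamma$-a.e.\ constant. Finally, in the dissipative case the wandering set has full measure, which gives non-ergodicity whenever this set decomposes into more than one $g_\Gamma$-orbit; the remaining possibility is precisely the exceptional case of $m_\Gamma$ being supported on a single flow orbit.

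The main obstacle in this program is the adaptation of the Hopf argument to the non-Riemannian rank one setting. The absence of a smooth structure, the presence of flats in $X$, and the preliminary quotient by parallel classes together mean that the strong stable and unstable horospherical foliations must be manipulated at the level of measurable product structures rather than via smooth local charts. Some care is also required at the shadow-lemma stage, where the restriction to rank one orbit points is essential to avoid the degeneracies caused by higher rank flat behaviour and to obtain the correct two-sided bound on shadow masses; keeping the bookkeeping of rank one versus non-rank one limit points clean throughout the dichotomy argument is where most of the technical work will sit.
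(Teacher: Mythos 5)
Your Steps 1 and 2, together with the dissipative half of Step 3, follow essentially the same route as the paper's proof of Theorem~A (which is Theorem~\ref{HTSweak}): a $\delta$-dimensional conformal density and the shadow lemma (Proposition~\ref{shadowlemma}); the characterization of conservativity and dissipativity through the radial limit set (Lemma~\ref{consdiss}); the implication ``convergence of the Poincar\'e series $\Rightarrow\mu_\xo(\radlim)=0$'' (Lemma~\ref{convseries}); the harder converse via a two-sided Borel--Cantelli (Renyi-inequality) estimate performed with the weak Ricks' measure (Proposition~\ref{divseries}); the zero-one law for the radial limit set (Lemma~\ref{ergodicity}); and the general fact that a dissipative system is non-ergodic unless the measure is carried by a single flow orbit. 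Two points your sketch glosses over but the paper needs: the geodesic current must be restricted to rank one pairs $\ndpt\reg$, since otherwise the $\Gamma$-action on parallel classes need not be proper and the quotient measure on $\quotient{\Gamma}{[\SX]}$ is not even defined; and the Borel--Cantelli set $\overline K$ must be built from Ballmann neighborhoods of a rank one axis (via Proposition~\ref{lrcinproduct}) precisely because $\Gamma$ is only assumed to contain rank one elements of \emph{arbitrary width}. Also, Theorem~A concerns weak Bowen--Margulis measures built from an arbitrary $\delta$-dimensional density with $\delta\ge\delta_\Gamma$, not only the Patterson--Sullivan density at the critical exponent; for $\delta>\delta_\Gamma$ the series converges and one lands automatically in the dissipative case.

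The genuine gap is your Step 3 claim of ergodicity in the conservative case. First, Theorem~A deliberately does not assert it: the dichotomy there is ``conservative'' versus ``dissipative and non-ergodic unless supported on one orbit,'' and the paper states explicitly that neither the Hopf argument nor Kaimanovich's method can be applied under these hypotheses. Second, the argument you propose would fail: invariance of the Birkhoff averages $f^{\pm}$ along ``strong stable horospheres'' requires that two geodesics with the same forward endpoint and matching Busemann parameter become asymptotic under the flow, and in a rank one Hadamard space this is false in general --- such geodesics can bound a flat strip of positive width and remain at constant positive distance for all time. The paper recovers this contraction only for $\Gamma$-recurrent \emph{zero width} geodesics (Lemma~\ref{KniepersProp}), and to know that the measure is concentrated on zero width geodesics one needs Theorem~\ref{zerofull}, whose hypothesis $\zero_\Gamma\ne\emptyset$ is exactly the additional assumption of Theorem~B, not of Theorem~A. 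A further obstruction specific to $\quotient{\Gamma}{[\SX]}$ is that recurrence of a parallel class does not produce a recurrent representative geodesic (see the remark after Definition~\ref{weakstrongrecurrencedef}), so the Hopf argument cannot even be set up on the space of parallel classes. If you delete the conservative-case ergodicity claim, what remains of your outline is a proof sketch of Theorem~A along the paper's own lines; with it, the proposal asserts something the stated hypotheses do not support.
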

Notice that since Ricks' construction of weak Bowen-Margulis measure depends on the choice of a conformal density, a priori there may exist many distinct weak Bowen-Margulis measures. In the conservative case however, it is well-known that up to scaling there exists only one  conformal density; hence there is precisely one Ricks'  weak Bowen-Margulis measure  in this setting.

We remark that we do not manage to deduce ergodicity from conservativity in this weakest setting (only requiring the fixed point of an {\hl arbitrary} rank one isometry in the limit set of $\Gamma$) as neither the Hopf argument nor Kaimanovich's method for the proof of Theorem~2.5 in \cite{MR1293874} can be applied in this case. However, if $\XX$ is {\hl geodesically complete} then thanks to Proposition~\ref{largewidthgiveszerowidth} this weak assumption implies the existence of
 a {\hl zero width} rank one geodesic (that is one which does not even bound  a flat {\hl strip}) with extremities  in the limit set of $\Gamma$.  
Under this additional assumption  any weak Bowen-Margulis measure induces a so-called Ricks' Bowen-Margulis measure $m_\Gamma$ on the quotient  $\quotient{\Gamma}{\SX}$. Notice that by the remark following Theorem~A there is only one Ricks' Bowen-Margulis measure in the conservative case. We finally get Theorem~\ref{HTS}, the full Hopf-Tsuji-Sullivan dichotomy including  ergodicity in the conservative case; a short version reads as follows:
\begin{mainB} Let $X$ be a proper  Hadamard space and $\ \Gamma<\is(X)$ a  discrete group with the fixed point of a rank one isometry of $\XX$ {\hl and} the extremities of a {\hl zero width} rank one geodesic  in its infinite limit set. Then with respect to any Ricks' Bowen-Margulis measure either the geodesic flow on $\quotient{\Gamma}{\SX}$ is conservative and ergodic, or it is dissipative and non-ergodic unless the measure is supported on a single orbit by the geodesic flow in $\quotient{\Gamma}{\SX}$. \end{mainB}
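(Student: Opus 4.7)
The plan is to build on Theorem~A, which already delivers the conservative versus dissipative-plus-non-ergodic dichotomy on $\quotient{\Gamma}{[\SX]}$ for any weak Bowen-Margulis measure $\bar m_\Gamma$. First I would show how the zero width hypothesis promotes $\bar m_\Gamma$ to a genuine Ricks' Bowen-Margulis measure $m_\Gamma$ on $\quotient{\Gamma}{\SX}$: the canonical projection $\SX\to[\SX]$ is $\Gamma$- and flow-equivariant, and on the set of parallel classes consisting of a single parametrized geodesic---which is non-empty and $\Gamma$-invariant by the zero width hypothesis---it is a measurable isomorphism. Using this projection one transfers the conservative/dissipative alternative from $[\SX]$ to $\SX$; conservativity, dissipativity and the exceptional ``single orbit'' case are all preserved. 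Thus the only statement left to prove is ergodicity of $g_\Gamma$ on $\quotient{\Gamma}{\SX}$ in the conservative case.

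For this I would run the classical Hopf argument, adapted to the rank one setting as in Roblin's treatment of the CAT$(-1)$ case. For $f\in C_c(\quotient{\Gamma}{\SX})$, conservativity yields $m_\Gamma$-almost everywhere existence and equality of the forward and backward Birkhoff averages
\[
f^+(v)=\lim_{T\to\infty}\tfrac{1}{T}\int_0^T f(g_\Gamma^t v)\,\d t,\qquad f^-(v)=\lim_{T\to\infty}\tfrac{1}{T}\int_0^T f(g_\Gamma^{-t} v)\,\d t.
\]
Both limits are $g_\Gamma$-invariant; the core point is that $f^+$ is constant along strong stable leaves and $f^-$ along strong unstable leaves, and these leaves are genuinely well-defined precisely on the zero width locus. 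Exploiting the product structure of $m_\Gamma$ in Hopf coordinates on the set of biinfinite rank one geodesics with endpoints in $\Lim$---a structure coming from the Patterson-Sullivan measure $\mu_\xo$ and the Busemann cocycle built into Ricks' construction---the identity $f^+=f^-$ forces $f^+$ to descend to a $\Gamma$-invariant measurable function on $\rand$. This function is then constant $\mu_\xo$-almost everywhere by ergodicity of the $\Gamma$-action on $(\rand,\mu_\xo)$, which in the conservative case follows from the shadow lemma together with a Borel-Cantelli argument on the radial limit set $\radlim$.

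The main obstacle will be justifying that the Hopf product structure is available $m_\Gamma$-almost everywhere, equivalently that the zero width locus carries full $m_\Gamma$-measure in the conservative case. The hypothesis that the endpoints of a zero width rank one geodesic lie in $\Lim$ gives this set positive measure; to promote ``positive'' to ``full'' I would invoke the $g_\Gamma$-invariance of $m_\Gamma$ together with the Poincar\'e recurrence theorem furnished by conservativity, plus the density of $\Gamma$-translates of zero width endpoint pairs in $\Lim\times\Lim$ coming from the presence of a rank one fixed point. Once this measure-theoretic regularity is in place the classical Hopf-Kaimanovich-Roblin mechanism adapts, and combining the resulting ergodicity with Theorem~A gives the full Hopf-Tsuji-Sullivan dichotomy asserted in Theorem~B.
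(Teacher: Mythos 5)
Your overall architecture matches the paper's: reduce Theorem~B to ergodicity in the conservative case, run a Hopf argument on the zero width locus, and identify as the crux the fact that this locus carries full measure. But the mechanism you propose for that crux does not work. The hypothesis only supplies \emph{one} pair $(v^-,v^+)\in\ndpt\zero\cap(\Lim\times\Lim)$; since the relevant boundary measures are non-atomic (radial limit points are never atoms, Proposition~\ref{atomicpart}), this single pair has $(\mu_-\otimes\mu_+)$-measure zero, so the hypothesis does \emph{not} give the zero width locus positive measure. Density of the $\Gamma$-translates of zero width endpoint pairs in $\Lim\times\Lim$ does not help either, because $\zero$ is \emph{not open} in $\SX$: the paper stresses that arbitrarily close to a zero width geodesic there may be rank one geodesics of small positive width, and a dense set can of course be null. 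Finally, promoting ``positive measure'' to ``full measure'' of a flow- and $\Gamma$-invariant set via invariance plus Poincar\'e recurrence is itself an ergodicity-type assertion, hence circular at this stage. The paper's actual proof of Theorem~\ref{zerofull} is a rigidity argument: almost every geodesic is weakly $\Gamma$-recurrent (Corollary~\ref{weakregisfull}); recurrent rank one geodesics sharing an endpoint have mutually isometrically embeddable, hence isometric, compact transversals (Lemma~\ref{weakgivesisometricembeddings} together with the Burago--Burago--Ivanov theorem); a Fubini-type lemma (Lemma~\ref{mapisconstantae}) then makes the isometry type of $C_{(\xi\eta)}$ constant almost everywhere (Lemma~\ref{isometrytypeconstant}); and semicontinuity of transversals in the Hausdorff metric near the given zero width geodesic (Lemma~\ref{Hausdorffonboundary}) forces that constant type to be a point.

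There is a second gap in the ergodicity step. You use plain Birkhoff averages $\frac1T\int_0^T f(g^t_\Gamma v)\,\d t$, but the conservative Ricks' measure is in general infinite, and for infinite conservative systems such normalized time averages of $\LL^1$ functions tend to zero almost everywhere, so $f^+=f^-=0$ and nothing about invariant sets follows. One must instead use Hopf's ratio ergodic theorem (Theorem~\ref{Hopfindividual}) with a weight $\rho\in\LL^1(m_\Gamma)$ that is almost everywhere positive; constructing such a $\rho$ and controlling its variation along stable leaves is Lemma~\ref{definerho}, which in turn needs the growth bound~(\ref{boundongrowth}) (finite, equal to $2\delta$, for currents coming from a conformal density). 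Relatedly, constancy of $f^+$ along stable leaves is not merely a matter of those leaves being ``well-defined'' on the zero width locus: in a rank one but non-CAT$(-1)$ space, asymptotic geodesics with vanishing Busemann difference need not converge to each other at all; the required contraction $d_1(g^tv,g^tu)\to 0$ is exactly Lemma~\ref{KniepersProp}, whose proof uses $\Gamma$-recurrence of the zero width geodesic in an essential way. With these two repairs --- the rigidity proof of full measure for $\ndpt\zero$ and the ratio-average version of the Hopf argument --- your outline coincides with the paper's proof.
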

We finally want to mention here that if $\XX$  is a Hadamard {\hl manifold}, then in the conservative case Ricks' Bowen-Margulis measure $m_\Gamma$ is equal to  Knieper's  measure which was used  in \cite{LinkPicaud}. If moreover $\Gamma$ is  cocompact, then Knieper's work \cite{MR1652924}  implies that the Rick's Bowen-Margulis measure is the unique measure of maximal entropy on the unit tangent bundle $\quotient{\Gamma}{\SX}$.

 We summarize now what is known (from the Main Theorem of \cite{LinkPicaud} and Theorem~B above) in the special case of Hadamard {\hl manifolds}: 
\begin{mainC} Let $X$ be a Hadamard manifold and $\ \Gamma<\is(X)$ a  discrete group with the fixed point of an arbitrary rank one isometry of $\XX$ in its infinite limit set.  Then  either Knieper's measure and Ricks' Bowen-Margulis measure on $\quotient{\Gamma}{\SX}$ coincide, and the geodesic flow is conservative and ergodic with respect to this measure, or
the geodesic flow is dissipative with respect to any Knieper's measure and with respect to any Ricks' Bowen-Margulis measure on $\quotient{\Gamma}{\SX}$. Moreover, in the second case it is  non-ergodic unless the considered measure is supported on a single orbit by the geodesic flow. 
\end{mainC}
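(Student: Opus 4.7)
The theorem is essentially a compendium of the Main Theorem of \cite{LinkPicaud} together with Theorem~B above, so the plan is to organise these ingredients and compare the two measures on the overlap.

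First I would upgrade the hypothesis to the stronger one needed for Theorem~B. Since a Hadamard manifold is automatically geodesically complete, Proposition~\ref{largewidthgiveszerowidth} applies: as soon as the fixed point of some rank one isometry lies in the infinite limit set of $\Gamma$, there already exists a zero width rank one geodesic whose two extremities lie in $\Lim$. Theorem~B then yields, for every Ricks' Bowen-Margulis measure on $\quotient{\Gamma}{\SX}$, the dichotomy ``conservative and ergodic'' versus ``dissipative, and non-ergodic unless the measure is supported on a single flow orbit''. In parallel, the Main Theorem of \cite{LinkPicaud} provides the analogous dichotomy for every Knieper measure on the same space.

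Next I would synchronise the two dichotomies and identify the measures in the conservative case. Both Knieper's measure and Ricks' Bowen-Margulis measure are constructed from a Patterson-Sullivan conformal density of critical exponent $\delta_\Gamma$, and by the remark following Theorem~A such a density is unique up to a positive scalar whenever the flow is conservative. It then suffices to check that on a Hadamard manifold the two constructions agree on a full-measure set: a zero width rank one geodesic has a trivial parallel set (the geodesic line itself), so on the set of zero width rank one geodesics Knieper's construction (integration against a Lebesgue volume form on parallel sets) collapses to Ricks' construction (counting parallel classes). In the conservative regime essentially all mass is carried by recurrent orbits, and such orbits live on zero width rank one geodesics with both endpoints in $\Lim$; hence the two measures coincide up to scaling, and with the standard normalisation used in both papers they are equal. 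The conservative and dissipative alternatives of the two dichotomies therefore match automatically.

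In the dissipative case no further comparison is required: Theorem~B delivers the claim for every Ricks' Bowen-Margulis measure and the Main Theorem of \cite{LinkPicaud} delivers it for every Knieper measure, and the ``unless supported on a single orbit'' clauses are inherited from those statements. The main obstacle in this plan is the identification of the two measures in the conservative case, i.e.\ verifying that the full-measure set for both constructions is the set of zero width rank one geodesics, and that on this set Knieper's volume integration reduces to Ricks' parallel-class counting. Once that identification is in place, Theorem~C follows by juxtaposing the two dichotomies.
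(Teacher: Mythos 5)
Your first step is exactly the paper's route: a Hadamard manifold is geodesically complete, so Lemma~\ref{inflimset} and Proposition~\ref{largewidthgiveszerowidth} give $\zero_\Gamma\ne\emptyset$, and Theorem~B then yields the dichotomy for every Ricks' Bowen-Margulis measure. The gap is in your treatment of Knieper's measure: you invoke the Main Theorem of \cite{LinkPicaud} ``in parallel'', but its hypothesis is strictly stronger than that of Theorem~C. That theorem requires $\Gamma$ to contain an element translating a geodesic \emph{without parallel perpendicular Jacobi field} (a strong rank one geodesic, $v\in\jac$), whereas Theorem~C only assumes the fixed point of an arbitrary rank one isometry (no flat half-plane) in the infinite limit set. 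Proposition~\ref{largewidthgiveszerowidth} yields $\zero_\Gamma\ne\emptyset$, but $\jac\subseteq\zero$ can be a proper inclusion (the paper's example: a surface of negative curvature except along one closed geodesic where the curvature vanishes), so $\jac_\Gamma$ may well be empty and the Main Theorem of \cite{LinkPicaud} simply does not apply. Indeed the whole point of Theorem~C, as Section~\ref{conclusion} states, is that the Jacobi-field hypothesis is \emph{not} necessary; the Knieper half of the dichotomy is precisely what must be proved, not quoted.

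The paper fills this in as follows, and this is what your proposal is missing. In the conservative (divergent) case, Theorem~\ref{zerofull} shows $\overline\mu\bigl(\ndpt(\SX\setminus\zero)\bigr)=0$, so both Knieper's and Ricks' measures are supported on $\quotient{\Gamma}{\zero}$, where the two constructions visibly agree; hence they coincide (Corollary~\ref{weakisstrongisKnieper}) and the Knieper dichotomy is inherited from Theorem~B. Note that your heuristic for this step -- ``recurrent orbits live on zero width rank one geodesics'' -- is false pointwise (the axis of a positive-width rank one isometry is recurrent); the statement is only measure-theoretic, and its proof (constancy $\overline\mu$-almost everywhere of the isometry type of the cross-sections $C_{(\xi\eta)}$, via Lemma~\ref{isometrytypeconstant}, combined with Hausdorff convergence to a point near a zero width geodesic) is a substantial part of the paper, not a routine verification. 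In the dissipative (convergent) case the statement for Knieper's measure follows not from \cite{LinkPicaud} but from Lemma~\ref{consdiss} together with Lemma~\ref{convseries} and Proposition~\ref{confgivesdissipative}, which are proved for Knieper's measures built from arbitrary geodesic currents and hence apply under the weak hypothesis. With these replacements your outline becomes the paper's proof; as written, it is circular in the sense that it assumes the generalization it is supposed to establish.
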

Again, in the dissipative case there may be several choices for Knieper's measure and for Ricks'  Bowen-Margulis measure on $\quotient{\Gamma}{\SX}$ as both measures are constructed from a conformal density. And even if the same conformal density is used in the construction, Knieper's measure and Ricks' Bowen-Margulis measure might be different. 

Actually, in this article we will consider slightly more general classes of measures on $\quotient{\Gamma}{[\SX]}$ respectively $\quotient{\Gamma}{\SX}$: Instead of using the geodesic current associated to a conformal density for the construction we  allow for an arbitrary quasi-product geodesic current (see Section~\ref{geodcurrentmeasures} for a precise definition).

The paper is organized as follows: In Section~\ref{prelim} we  fix some notation and recall basic facts concerning Hadamard spaces; in Section~\ref{rank1prelim} the notion of rank one isometry is recalled and basic properties are listed. 

Section~\ref{rankonegroups} discusses conditions under which a subgroup $\Gamma$ of the isometry group of a proper Hadamard space $\XX$ is {\hl rank one} (that is contains a pair of independent rank one elements), and under which hypotheses the presence of a rank one geodesic of zero width in $\XX$ with extremities in the limit set of $\Gamma$ can be guaranteed. This section is of independent interest.

 In Section~\ref{dyndef} basic notions and useful facts from ergodic theory and dynamical systems are recalled, and the important notion of quasi-product geodesic current is introduced. We also recall from \cite{Ricks} Ricks' construction of a geodesic flow invariant measure associated to such a geodesic current first on the quotient $\quotient{\Gamma}{[\SX]}$ of parallel classes of parametrized geodesic lines and finally on the quotient $\quotient{\Gamma}{\SX}$ of parametrized geodesic lines. Section~\ref{propradlimset} deals with the relation between the radial limit set of the group $\Gamma$ and recurrence in $\quotient{\Gamma}{[\SX]}$ respectively $\quotient{\Gamma}{\SX}$. We deduce the crucial Theorem~\ref{zerofull}, which in particular implies that for a rank one group $\Gamma$ with the extremity of a zero width rank one geodesic in its limit set any conservative  quasi-product geodesic current $\overline\mu$ is supported on the set of end point pairs of zero width rank one geodesics. 
In Section~\ref{HopfArgument} we use the Hopf argument to show that under the presence of a {\hl zero width} rank one geodesic with extremities in the limit set  conservativity of a quasi-product geodesic current $\overline\mu$ satisfying a mild growth condition implies ergodicity of  the geodesic  flow with respect to the associated geodesic flow invarant Ricks' measure. Compared to the classical case a few technical issues need to be addressed there. 

In Section~\ref{currentsfromconfdens} we then specialize to geodesic currents coming from a conformal density. We recall a few properties of conformal densities and prove Proposition~\ref{confgivesdissipative}, which states that for convergent groups every Ricks' measure on    $\quotient{\Gamma}{[\SX]}$ is dissipative. Section~\ref{divergentmeansconservative} is devoted to the proof of Proposition~\ref{divseries}, namely that divergent groups always induce conservative Ricks' measure. The minimal requirement that $\Gamma$ contains only a rank one element of {\hl arbitrary width} makes the proof a bit more technical than it would be with the presence of a  zero width geodesic with extremities in the limit set; however,  it is needed in this form to obtain Theorem~\ref{HTSweak} which is Theorem~A above.  In the final section~\ref{conclusion} we summarize our results to deduce Theorems~A, B and C.  
Following an idea of F.~Dal'bo, M.~Peign{\'e} and J.P. Otal  (\cite{MR1776078},  \cite{MR3220550}) we also show how to construct plenty of convergent discrete rank one isometry groups of any Hadamard space admitting a rank one isometry.

\section{Preliminaries on Hadamard spaces}\label{prelim}

The purpose of this section is to introduce terminology and notation and to summarize basic results about Hadamard spaces.  
Most of the material can be found in  \cite{MR1744486} and  \cite{MR1377265} (see also \cite{MR823981}  in  the special case of Hadamard manifolds and \cite{Ricks} for more recent results). 

Let $(\XX,d)$ be a metric space. For $y\in \XX$ and $r>0$ we will denote
$B_y(r)\subseteq\XX$ the open ball of radius $r$ centered at $y\in\XX$.
A {\hd geodesic} is a map $\sigma$ from a closed interval $I\subseteq\RR$ or $I=\RR$ to $\XX$ \st $d(\sigma(t), \sigma(t'))=|t-t'|$ for all $t,t'\in I$. For more precision we use the term {\hd geodesic ray} if $I=[0,\infty)$ and  {\hd geodesic line} if $I=\RR$.

We will deal here with  {\hl Hadamard spaces} $(\XX,d)$, that is complete metric spaces in which for any two points $x,y\in\XX$ there exists a geodesic $\sigma:[0,d(x,y)]\to \XX$ with $\sigma(0)=x$ and $\sigma(d(x,y))=y$ and in which all geodesic triangles satisfy the CAT$(0)$-inequality. This implies in particular that $\XX$ is simply connected and that the geodesic joining an arbitrary pair of points in $\XX$ is unique.   Notice  however that in the non-Riemannian setting completeness of $\XX$ does not imply that every geodesic  can be extended to a geodesic line, so $\XX$ need not be geodesically complete. The geometric boundary $\rand$ of
$\XX$ is the set of equivalence classes of asymptotic geodesic
rays endowed with the cone topology (see for example Chapter~II in \cite{MR1377265}).  We remark that for all $x\in\XX$ and all $
\xi\in\rand$  there exists a unique geodesic ray $\sigma_{x,\xi}$ with origin $x=\sigma_{x,\xi}(0)$ representing $\xi$.

From here on we will require that $\XX$ is proper; in this case the geometric boundary $\rand$ is compact and the space $\XX$ is a dense and open subset of the compact space $\ganz:=\XX\cup\rand$.
Moreover, the action of the isometry group $\is(\XX)$ on $\XX$ naturally extends to an action by homeomorphisms on the geometric boundary. 

If $x, y\in \XX$, $\xi\in\rand$ and $\sigma$ is a geodesic ray in the
class of $\xi$, we set 
\begin{equation}\label{buseman}
 \bs_{\xi}(x, y)\,:= \lim_{s\to\infty}\big(
d(x,\sigma(s))-d(y,\sigma(s))\big).
\end{equation}
This number exists, is independent of the chosen ray $\sigma$, and the
function
\[ \bs_{\xi}(\cdot , y):
 \XX \to  \RR,\quad
x \mapsto \bs_{\xi}(x, y)\]
is called the {\hl Busemann function} centered at $\xi$ based at $y$ (see also Chapter~II in~\cite{MR1377265}). Obviously we have
\[ \bs_{g\cdot\xi}(g\at x,g\at y) = \bs_{\xi}(x, y)\quad\text{for all }\ x,y\in\XX\quad\text{and }\  g\in\is(\XX),\]
and the cocycle identity  
\begin{equation}\label{cocycle}
\bs_{\xi}(x, z)=\bs_{\xi}(x, y)+\bs_{\xi}(y,z)
\end{equation}
holds for all $x,y,z\in\XX$.

Since $\XX$ is non-Riemannian in general, we consider (as a substitute of the unit tangent bundle $S\XX$) the set  of parametrized geodesic lines in $\XX$ which we will denote $\SX$. We endow this set  with the metric $d_1$ given by
\begin{equation}\label{metriconSX} d_1(u,v):=\sup \{ \e^{-|t|} d\bigl(v(t), u(t)\bigr) \colon t\in\RR\}\ \mbox{ for} \ u,v\in \SX ;\end{equation}
this metric induces the compact-open topology, and every isometry of $\XX$ naturally extends to an isometry of the metric space $(\SX,d_1)$.

Moreover, there is a natural map $p:\SX\to\XX$ defined as follows: To a geodesic line  $v:\RR\to \XX$ in $\SX$ we assign its origin 
$pv:=v(0)\in\XX$.   
Notice that $p$ is proper, $1$-Lipschitz and $\is(\XX)$-equivariant; if $\XX$ is geodesically complete, then $p$ is surjective. 

For a geodesic line $v\in \SX$ we denote its extremities $v^-:=v(-\infty)\in\rand$ and  $v^+:=v(+\infty)\in\rand$ the negative and positive end point of $v$; in particular, we can define the end point map
\[ \ndpt:\SX\to \rand\times\rand,\quad v\mapsto (v^-,v^+).\]

We say that a point $\xi\in\rand$ can be joined to  $\eta\in\rand$  by a
geodesic $v\in \SX$ if   
$v^-=\xi$ and $v^+=\eta$. Obviously the  set of pairs $(\xi,\eta)\in\rand\times\rand$   \st $\xi$ and $\eta$ can be joined by a geodesic coincides with  $ \ndpt\SX $, the image of $\SX$ under the end point map $\ndpt$. It is well-known that if $\XX$ is CAT$(-1)$, then any pair of distinct boundary points $(\xi,\eta)$ belongs to $\ndpt\SX $ and  the geodesic joining $\xi$ to $\eta$ is unique up to reparametrization. In general however, the set $\ndpt\SX $ is much smaller compared to $\rand\times\rand$ minus the diagonal due to the possible existence of flat subspaces in $\XX$. 
For $(\xi,\eta)\in\ndpt\SX $ we denote by
\begin{equation}\label{joiningflat}
(\xi\eta):=p\bigl(\{ v\in \SX \colon v^-=\xi,\ v^+=\eta\}\bigr)=p\circ \ndpt^{-1}(\xi,\eta)\end{equation}
the subset of points in $\XX$ which lie on a geodesic line  joining $\xi$ to  $\eta$. It is well-known that $(\xi\eta)=(\eta\xi)\subseteq \XX$ is a closed and convex  subset of $\XX$ which is   isometric to a  product $C_{(\xi\eta)}\times\RR$, where  $C_{(\xi\eta)}=C_{(\eta\xi)}$ is again a closed and convex set. 

In order to describe the sets $(\xi\eta)$ and $C_{(\xi\eta)}$ more precisely and for later use 
 we introduce as in \cite[Definition 5.4]{Ricks} for $x\in\XX$ the so-called {\hd Hopf parametrization} map 
 \begin{equation}\label{HopfPar}
\Hopf_x: \SX\to \ndpt\SX \times \RR,\quad v\mapsto \bigl(v^-,v^+,\bs_{v^-}(x, v(0))\bigr)
\end{equation}
of $\SX$ with respect to $x$. It is immediate that for a CAT$(-1)$-space $\XX$ this map is a homeomorphism; in general  it is only continuous and surjective. 
Moreover, it depends on the point $x\in\XX$ as follows: If $y\in \XX$,
$v\in \SX$ and  $\Hopf_x(v)=(\xi,\eta,s)$, then 
\[ \Hopf_y(v)=\bigl(\xi,\eta,s+\bs_{\xi}(y,x)\bigr)\]
by the cocycle identity~(\ref{cocycle}) for the Busemann function 
(compare also \cite[Section~3]{MR1207579}). 

The Hopf parametrization map allows to define an equivalence relation $\sim$ on $\SX$ as follows: If $u,v\in \SX$, then $u\sim v$ if and only if $\Hopf_\xo(u)=\Hopf_\xo(v)$. Notice that this definition does not depend on the choice of $\xo\in\XX$ and that every point $(\xi,\eta,s)\in\ndpt\SX\times \RR$ uniquely determines an equivalence class $[v]$ with $v\in\SX$. Moreover, the closed and convex set $C_{(\xi\eta)}$ from above can be identified with the set 
\begin{equation}\label{transversal}
C_v:=p\bigl(\{u\in\SX\colon u\sim v\}\bigr)\subseteq\XX,
\end{equation}  
which we will call the  {\hl transversal}  of $v$. We remark that for all $w\in \ndpt^{-1}(\xi,\eta)$ the transversal $C_w$ is isometric to $C_v$. Moreover, if
 $\XX$ is CAT$(-1)$ then for all $v\in\SX$ the transversal $C_v$ is simply a point; in general, the transversals can be unbounded. 

As stated in \cite[Proposition 5.10]{Ricks} the $\is(\XX)$-action on $\SX$ descends to an action on $\ndpt\SX \times \RR=\Hopf_\xo(\SX)$ by homeomorphisms via 
\[ \gamma (\xi,\eta, s):=\bigl(\gamma \xi,\gamma \eta, s+\bs_{\gamma\xi}(\xo,\gamma \xo)\bigr).\]

Moreover, the action of $\is(\XX)$ is well-defined on  the set of equivalence classes  $[\SX]$ of elements in $\SX$, and the (well-defined) map
\begin{equation}\label{equivHopf} [\SX]\to \ndpt\SX\times \RR,\quad [v]\mapsto \Hopf_\xo(v)\end{equation}
is an $\is(\XX)$-equivariant homeomorphism. 
 For convenience we will frequently identify  $
\ndpt\SX\times \RR$ with  $[\SX]$. 
We also remark that the end point map $\ndpt:\SX\to \rand\times \rand$ induces a well-defined map $[\SX]\to\rand\times\rand$ which we will also denote $\ndpt$. 

As in Definition~5.4 of \cite{Ricks} we will say that a  sequence $(v_n)\subseteq\SX$ {\hd  converges weakly} to $v\in \SX$ if and only if 
  \begin{equation}\label{defweakconvergence}
  v_n^-\to v^-,\quad v_n^+\to v^+\quad\text{and }\  \bs_{v_n^-}\bigl(\xo,v_n(0)\bigr)\to \bs_{v^-}\bigl(\xo,v(0)\bigr).\end{equation}
 Obviously, weak convergence $v_n\to v$ is equivalent to the convergence $[v_n]\to [v]$ in $[\SX]$, and  $v_n\to v$ in $\SX$ always implies $[v_n]\to [v]$ in $[\SX]$.

 The topological space $\SX$ can be endowed with the {\hd geodesic flow} $(g^t)_{t\in\RR}$  which  is naturally defined by reparametrization of $v\in \SX$. In particular we have
\[ (g^t v)(0)=v(t) \quad\text{for all } \ v\in \SX\quad\text{and all }\ t\in\RR.\]
The geodesic flow induces a flow on the set of equivalence classes $[\SX]$ which we will also denote $(g^t)_{t\in\RR}$; via the $\is(\XX)$-equivariant homeomorphism $[\SX]\to\ndpt\SX\times \RR$ the action of the geodesic flow $(g^t)_{t\in\RR}$ on $[\SX]$ is equivalent to the translation action on the last factor of $\ndpt\SX \times \RR$ given by
\[ g^t (\xi,\eta,s):=(\xi,\eta, s+t).\]

\section{Facts about rank one isometries}\label{rank1prelim}

The purpose of this section is to introduce the notion of  rank one geodesic and rank one  isometry. Many useful well-known facts about Hadamard spaces with a rank one isometry are recalled. Most of the material can be found in 
\cite{MR1377265} and \cite{MR1383216} (see also \cite{MR656659} for the special case of Hadamard manifolds  and \cite{Ricks} for more recent results). 

As in the previous section we assume that $(X,d)$ is a proper Hadamard space.
A geodesic line $v\in \SX$  
 is called  
{\hl rank one} if its transversal $C_v$ is bounded. In this case the number
\[ \width(v):= \sup\{ d(x,y)\colon x,y\in C_v\}\]
is called the {\hl width} of $v$; if   $C_v$ reduces to a point, then $v$ is said to have zero width. 
In the sequel we will use as in \cite{Ricks} the notation 
\begin{align*}
\mathcal{R}&:=\{v\in \SX\colon v\ \text{is rank one}\}\quad\text{respectively}\\
\mathcal{Z}&:=\{v\in \SX\colon v\ \text{is rank one of zero width}\}.
\end{align*}

We remark that the existence of a rank one geodesic  imposes severe restrictions on the Hadamard space $\XX$. For example,  $\XX$ can neither be a symmetric space or  Euclidean building of higher rank  nor a product of Hadamard spaces.

Notice that if $\XX$ is a Hadamard {\hl manifold}, then there is a more restrictive notion of rank one: If $v\in\SX$ the number $  J$-rank$(v)$ is defined as the dimension of the vector space  of parallel Jacobi fields along $v$ (compare Section~IV.4 in \cite{MR1377265}); clearly, for all $w$ in a sufficiently small neighborhood of $v$ we have    $J$-rank$(w)\le J$-rank$(v)$.  As in \cite{LinkPicaud} we will call $v\in\SX$  {\hl strong rank one}  if $  J$-rank$(v)=1$, that is if $v$ does not admit a parallel perpendicular Jacobi field; we further define
\[\jac:=\{v\in\SX\colon v \ \text{is strong rank one}\}\]
which is obviously a subset of $\zero$. Notice that in general $\jac\ne \zero$: Take for example a surface with negative Gau\ss ian curvature except along a simple closed geodesic where the curvature vanishes; then the lift of the closed geodesic has zero width, but possesses a parallel perpendicular Jacobi field.

The following important lemma states that even though we cannot join any two distinct points in the geometric boundary $\rand$ of the Hadamard space $\XX$, given a rank one geodesic we can at least join all points in a neighborhood of its end points. More precisely,  we have the following result which is a reformulation of  Lemma~III.3.1 in \cite{MR1377265}:
\begin{lemma}[Ballmann]\label{joinrankone} \
Let $v\in\mathcal{R}$ be a rank one geodesic and $c>\width(v)$.
Then there exist open disjoint neighborhoods $U^-$ of $\,v^-$ and $U^+$ of $\,v^+$ in $\ganz$ with the following properties:
If $\xi\in U^-$ and $\eta \in U^+$ then there exists a rank one geodesic joining $\xi$ and $\eta$. For any such geodesic $w\in\mathcal{R}$ we have   $d(w(t), v(0))< c$ for some $t\in\RR$ and $\width(w)\le 2c$.
\end{lemma}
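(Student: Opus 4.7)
The plan is to reduce the statement to a single compactness--contradiction argument relying on the rank one hypothesis. Fix $c>\width(v)$. I would first establish the existence of open neighborhoods $U^\pm$ of $v^\pm$ in $\ganz$ with the joining property, arguing by contradiction: if no such neighborhoods exist, one extracts sequences $\xi_n\to v^-$, $\eta_n\to v^+$ together with points $x_n,y_n\in\XX$ taken along the geodesic rays from $v(0)$ to $\xi_n,\eta_n$, with $d(v(0),x_n),d(v(0),y_n)\to\infty$, such that either the CAT$(0)$ segments $[x_n,y_n]$ do not subsequentially converge to a joining geodesic, or every such limit stays at distance $\ge c$ from $v(0)$. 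Let $z_n\in[x_n,y_n]$ be the closest point to $v(0)$. If $d(v(0),z_n)$ is bounded, I would reparametrize so that $z_n$ sits at parameter $0$ and invoke Arzel\`a--Ascoli together with properness of $\XX$ to extract a subsequential limit $w^\ast\in\SX$ with $(w^\ast)^\pm=v^\pm$ and $d(w^\ast(\RR),v(0))\ge c$; but then $w^\ast$ lies in the parallel class of $v$, so the transversal distance $d(w^\ast(\RR),v(\RR))$ is at most $\width(v)<c$, a contradiction. The case $d(v(0),z_n)\to\infty$ I would rule out by the standard asymptotic argument (rescaling / Gromov--Hausdorff limit), which extracts from such a runaway configuration a flat half--plane bounding $v$, forbidden by rank one. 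Hence $U^\pm$ exist, and rerunning the same compactness with $x_n,y_n$ taken along any candidate joining geodesic shows that every geodesic joining $\xi\in U^-$ to $\eta\in U^+$ enters $\overline{B_{v(0)}(c)}$, giving the required $d(w(t),v(0))<c$ for some $t$.

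For the width bound I would exploit the product decomposition $(\xi\eta)\cong C_{(\xi\eta)}\times \RR$ of the parallel set, together with the $1$--Lipschitz closest--point projection $\pi\colon\XX\to(\xi\eta)$. Writing $\pi(v(0))=(q_0,s_0)$ in product coordinates, the distance bound of the previous paragraph, applied to every $w'\in\ndpt^{-1}(\xi,\eta)$, forces the corresponding slice $\{q_{w'}\}\times \RR$ to contain a point at distance less than $c$ from $(q_0,s_0)$, whence $d(q_{w'},q_0)<c$. Consequently $\mathrm{diam}\,C_{(\xi\eta)}\le 2c$ and $\width(w)\le 2c$; in particular every joining geodesic is itself rank one. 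Disjointness of $U^\pm$ is arranged by shrinking them inside disjoint cone--neighborhoods of $v^\pm$, which exist because $\ganz$ is Hausdorff and $v^-\ne v^+$.

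The main obstacle will be the contradiction step that excludes $d(v(0),z_n)\to\infty$: making it precise that a runaway closest--point sequence produces, in the limit, a flat region attached to $v$. This is exactly where the rank one hypothesis enters, and it is the technical heart of Ballmann's original argument in \cite{MR1377265}; everything else in the proof reduces to compactness packaging and CAT$(0)$ product--set bookkeeping.
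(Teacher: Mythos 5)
Your first paragraph's architecture (dichotomy on the closest points $z_n$, Arzel\`a--Ascoli plus the flat strip theorem in the bounded case) and your entire second paragraph (deriving $\width(w)\le 2c$ from the penetration bound via the splitting $(\xi\eta)\cong C_{(\xi\eta)}\times\RR$ and the $1$-Lipschitz projection onto the parallel set) are correct. The genuine gap is exactly the step you yourself flag as the ``main obstacle'': excluding $d(v(0),z_n)\to\infty$. The tool you invoke there --- a rescaling / Gromov--Hausdorff limit producing ``a flat half-plane bounding $v$'' --- cannot work as stated. A rescaled pointed limit (equivalently, an asymptotic cone) of $\XX$ is a different metric space: flatness there does not descend to a flat half-plane, nor even to a flat strip of definite width, inside $\XX$, and the geodesic $v$ leaves no trace in such a limit beyond a single line through the basepoint. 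Since the rank one hypothesis ($C_v$ bounded; equivalently, no flat half-plane in $\XX$ with boundary $v$) is a property of $\XX$ itself, no contradiction is reached. Worse, because the lemma is quantitative --- it must hold for every $c>\width(v)$ and deliver the bound $2c$ --- what the contradiction really has to produce is a geodesic of $\XX$ parallel to $v$ at distance at least $c$ from $v(0)$, i.e.\ a flat strip of width $\ge c$ in $\XX$; a flat half-plane in an auxiliary limit space is neither necessary nor sufficient for this.

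For comparison: the paper gives no proof at all, quoting the statement as a reformulation of Lemma~III.3.1 of Ballmann \cite{MR1377265}, and Ballmann's argument is organized precisely so that your runaway case never arises. Rather than joining $x_n$ to $y_n$ outright, one joins points chosen along the geodesics from $v(0)$ toward $\xi_n$ and $\eta_n$, increasing the two parameters only up to the first moment at which the connecting geodesic reaches distance exactly $c$ from $v(0)$; such a critical pair exists by continuity and the intermediate value theorem, since otherwise all connecting geodesics would stay within distance $c$ and a sub-limit would join $\xi_n$ to $\eta_n$ through $\overline{B_{v(0)}(c)}$, contrary to the choice of $(\xi_n,\eta_n)$. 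The critical connecting geodesics all meet the compact set $\overline{B_{v(0)}(c)}$, so they subconverge with no possibility of escape; one then checks that both critical parameters tend to infinity (otherwise the limit would be a ray issuing from a point of $v$ asymptotic to $v^-$ or $v^+$, hence equal to a subray of $v$ by uniqueness of rays from a point, contradicting that it keeps distance $c>0$ from $v(0)$), so the limit is a complete geodesic with endpoints $v^-,v^+$ at distance exactly $c$ from $v(0)$. The flat strip theorem then yields a flat strip of width $c$ bounded by $v$, contradicting $c>\width(v)$. If you replace your rescaling step by this selection argument, the rest of your proof (the bounded case and the width bookkeeping) goes through, up to routine epsilon-management to turn non-strict inequalities into the strict ones in the statement.
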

This lemma implies that the set $\reg$ is open in $\SX$; we emphasize that $\zero$ in general need not be an open subset of $\SX$: In every open neighborhood of a {\hl zero width} rank one geodesic there may exist a rank one geodesic of arbitrarily small but strictly positive width. However, if $\XX$ is a Hadamard {\hl manifold}, then $\jac\subseteq\zero$  
is open in $\SX$ (as the $J$-rank cannot be bigger in a suffiently small open neighborhood). 
So Lemma~\ref{joinrankone} has the following
\begin{corollary}\label{manifoldJopen}
Let $v\in\jac$. Then there exist disjoint neighborhoods $U^-$ of $\,v^-$ and $U^+$ of $\,v^+$ in $\ganz$ such that any pair of points $(\xi,\eta)\in U^-\times U^+$ can be joined by a geodesic $u\in\jac$.  \end{corollary}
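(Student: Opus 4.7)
\emph{Plan.} The paper has just observed that in a Hadamard manifold the subset $\jac\subseteq\zero$ is open in $\SX$; choose accordingly a $d_1$-neighborhood $V\subseteq\jac$ of $v$. Since $\jac\subseteq\zero$ one also has $\width(v)=0$. The goal then reduces to producing neighborhoods $U^\pm$ of $v^\pm$ such that every pair $(\xi,\eta)\in U^-\times U^+$ is joined by a geodesic lying in $V$.

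The strategy is to feed Ballmann's Lemma~\ref{joinrankone} with smaller and smaller $c>0$ and to argue that the joining geodesics it produces, after reparametrization, are forced to converge to $v$ in $\SX$. For each $c>0$ the lemma yields disjoint neighborhoods $U^-_c,U^+_c$ of $v^\pm$ such that any $(\xi,\eta)\in U^-_c\times U^+_c$ is joined by a rank one geodesic $w$ with $d(w(t_w),v(0))<c$ for some $t_w\in\RR$. After replacing $w$ by $g^{t_w}w$ we may assume $w(0)\in B_{v(0)}(c)$. I then claim that for all sufficiently small $c$, and on shrinking $U^\pm_c$ if necessary, every such $w$ automatically lies in $V$.

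Suppose this claim fails. Then there exist $c_n\downarrow 0$, neighborhoods $U^\pm_n$ of $v^\pm$ in $\ganz$ shrinking to $\{v^\pm\}$, and reparametrized joining geodesics $w_n\notin V$ with $w_n^\pm\in U^\pm_n$ and $w_n(0)\to v(0)$. Each $w_n$ is $1$-Lipschitz on $\RR$, so properness of $\XX$ together with Arzel\`a--Ascoli extracts a subsequence converging in $(\SX,d_1)$ to some geodesic line $u$ with $u(0)=v(0)$, and continuity of the endpoint map on $\SX$ forces $u^\pm=v^\pm$. Because $\width(v)=0$, the transversal $C_v$ reduces to a point, so $v$ is the \emph{unique} element of $\SX$ with endpoints $v^\pm$ passing through $v(0)$; therefore $u=v$, contradicting $w_n\notin V$ for large $n$. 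Setting $U^\pm:=U^\pm_c$ for any such small $c$ finishes the proof. The principal obstacle is exactly this compactness-and-uniqueness step; it is crucial that $\width(v)=0$, for otherwise the Arzel\`a--Ascoli limit $u$ could be any geodesic in $\ndpt^{-1}(v^-,v^+)$ through $v(0)$ and one could not conclude $u=v$.
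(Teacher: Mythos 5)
Your proposal is correct and takes essentially the same route as the paper: there the corollary is stated as an immediate consequence of the openness of $\jac$ in $\SX$ (valid for manifolds, since the $J$-rank cannot increase in a small neighborhood) combined with Lemma~\ref{joinrankone}. Your Arzel\`a--Ascoli argument, with $\width(v)=0$ used to identify the limit geodesic with $v$, simply fills in the compactness step -- passing from Ballmann's conclusion $d(w(t_w),v(0))<c$ to $d_1$-closeness to $v$ -- which the paper leaves implicit.
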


We will also need the following 
result due to R.~Ricks;  
recall that $(v_n)\to v$ weakly as defined in (\ref{defweakconvergence}) means that $[v_n]\to [v]$ in $[\SX]$.
\begin{lemma}[\cite{Ricks}, Lemma~5.9]\label{weakimpliesstrong}
If a sequence $(v_n)\subseteq\SX$ converges weakly to \underline{$v\in\reg$}, then some subsequence of $(v_n)$ converges to some $u\sim v$. 
\end{lemma}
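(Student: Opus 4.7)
The plan is to upgrade the weak convergence $[v_n]\to [v]$ to compact--open convergence of a subsequence in $\SX$, with the limit lying in the equivalence class of $v$. The strategy has three steps: (i) bound the base points $v_n(0)$ using that $v\in\reg$; (ii) extract a compact--open convergent subsequence by Arzel\`a--Ascoli; (iii) identify the Hopf parameters of the limit.

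For step (i), I would apply Lemma~\ref{joinrankone} to $v$ with any constant $c>\width(v)$, obtaining disjoint neighborhoods $U^-$ of $v^-$ and $U^+$ of $v^+$ in $\ganz$. Since $v_n^-\to v^-$ and $v_n^+\to v^+$, for all sufficiently large $n$ we have $v_n^\pm\in U^\pm$, and the lemma supplies parameters $t_n\in\RR$ with $d(v_n(t_n),v(0))<c$. To bound $(t_n)$, observe that for any geodesic line $v_n$ we have $\bs_{v_n^-}(v_n(0),v_n(t_n))=-t_n$, so the cocycle identity~(\ref{cocycle}) gives
\[ \bs_{v_n^-}(\xo,v_n(0))\;=\;\bs_{v_n^-}\bigl(\xo,v_n(t_n)\bigr)+t_n. \]
Because Busemann functions are $1$-Lipschitz in the base point, $|\bs_{v_n^-}(\xo,v_n(t_n))-\bs_{v_n^-}(\xo,v(0))|<c$; because they are continuous in the center variable, $\bs_{v_n^-}(\xo,v(0))\to\bs_{v^-}(\xo,v(0))$; and by hypothesis $\bs_{v_n^-}(\xo,v_n(0))\to\bs_{v^-}(\xo,v(0))$. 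Combining these three shows $(t_n)$ is bounded, and hence $d(v_n(0),v(0))\le|t_n|+c$ is bounded as well.

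Since $\XX$ is proper and each $v_n\colon\RR\to\XX$ is $1$-Lipschitz with base points in a fixed compact set, a diagonal Arzel\`a--Ascoli argument produces a subsequence $(v_{n_k})$ converging uniformly on compact subsets of $\RR$ to some $u\in\SX$; by~(\ref{metriconSX}) this is precisely $d_1$-convergence. Standard continuity of the endpoint map on compact--open convergent sequences with bounded base points yields $u^\pm=\lim v_{n_k}^\pm=v^\pm$, and joint continuity of $(x,\xi)\mapsto\bs_\xi(\xo,x)$ combined with $u(0)=\lim v_{n_k}(0)$ gives
\[ \bs_{u^-}\bigl(\xo,u(0)\bigr)\;=\;\lim_{k\to\infty}\bs_{v_{n_k}^-}\bigl(\xo,v_{n_k}(0)\bigr)\;=\;\bs_{v^-}\bigl(\xo,v(0)\bigr). \]
Thus $\Hopf_\xo(u)=\Hopf_\xo(v)$, i.e.\ $u\sim v$, as required.

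The main obstacle is step (i): without the rank one hypothesis there is nothing to prevent $v_n(0)$ from drifting off to infinity in a flat direction while $[v_n]$ still converges to $[v]$ in $[\SX]$, so that no subsequence of $(v_n)$ converges in $\SX$ at all. Ballmann's Lemma~\ref{joinrankone}, applicable precisely because $v$ is rank one, is exactly what forces every $v_n$ with endpoints near $v^\pm$ to come uniformly close to the fixed reference point $v(0)$; the Busemann coordinate from weak convergence then pins the origin $v_n(0)$ itself into a bounded set.
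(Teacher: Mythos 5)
Your proof is correct and follows essentially the standard route: Ballmann's Lemma~\ref{joinrankone} pins the geodesics $v_n$ near $v(0)$ (with the Busemann coordinate from weak convergence bounding the time shifts $t_n$), properness plus Arzel\`a--Ascoli yields a $d_1$-convergent subsequence, and continuity of the endpoint map and of Busemann functions identifies the Hopf coordinates of the limit with those of $v$. The paper itself gives no proof --- the lemma is quoted from \cite{Ricks} --- and your argument is the same in substance as Ricks' proof of his Lemma~5.9.
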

Notice that this lemma implies that the  restriction of the Hopf parametrization map~(\ref{HopfPar})   to the subset $\mathcal{R}$ is closed, hence a topological quotient map.

In combination with Lemma~8.4 in \cite{Ricks} we get
the following statement concerning transversals of a weakly convergent sequence in $\SX$:
\begin{lemma}\label{Hausdorffconv}
If a sequence $(v_n)\subseteq\SX$ converges weakly to $v\in\reg$, then some subsequence of $(C_{v_n})$ converges,  in the Hausdorff metric, to a closed subset $A\subseteq C_v$.
\end{lemma}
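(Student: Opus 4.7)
The plan is to reduce weak convergence to strong convergence via Lemma \ref{weakimpliesstrong} and then invoke Ricks' Lemma~8.4, which (we assume) handles the strongly convergent case.

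First, since $v\in\reg$, Lemma~\ref{weakimpliesstrong} applies: after passing to a subsequence (still denoted $(v_n)$), there is some $u\in\SX$ with $u\sim v$ such that $v_n\to u$ in the topology of $\SX$, i.e., in the compact-open topology induced by the metric $d_1$. Crucially, $u\sim v$ forces $C_u=C_v$: this is the very definition of the transversal in~(\ref{transversal}), since the equivalence class $[u]=[v]$ determines the closed convex set $C_{(v^-v^+)}$ uniquely.

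Next I would apply Lemma~8.4 of \cite{Ricks} to the strongly convergent sequence $v_n\to u$. Its conclusion gives, after a further subsequence, Hausdorff convergence of $(C_{v_n})$ to a closed subset $A\subseteq C_u$. Combined with $C_u=C_v$ we obtain $A\subseteq C_v$, as required.

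The only subtlety one should mention is that passing to Hausdorff limits requires the sets $C_{v_n}$ to eventually lie in a common compact region of $\XX$. This is automatic here: by Lemma~\ref{joinrankone}, once $v_n^\pm$ are close enough to $v^\pm$, each $v_n$ is rank one with width bounded by a uniform constant $2c$, and each $C_{v_n}$ meets a fixed ball $B_{v(0)}(c)$. Hence all $C_{v_n}$, for $n$ large, lie in a fixed bounded (and thus relatively compact, since $\XX$ is proper) region of $\XX$, so by Blaschke-type compactness of closed subsets of a compact set in the Hausdorff metric, a Hausdorff-convergent subsequence exists; the content of Ricks' Lemma~8.4 is precisely that the limit is contained in $C_u$.

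The main (minor) obstacle is conceptual rather than technical: one must recognize that weak convergence alone does not guarantee convergence of transversals (the transversals live in $\XX$, and weak convergence only sees the Hopf parameters on the boundary times $\RR$), so the rank-one hypothesis is used exactly to upgrade weak to strong convergence through Lemma~\ref{weakimpliesstrong} before Hausdorff compactness and Ricks' inclusion result can be brought to bear.
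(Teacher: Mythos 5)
Your proposal is correct and follows exactly the route the paper intends: the paper derives this lemma by combining Lemma~\ref{weakimpliesstrong} (to upgrade weak convergence to strong convergence toward some $u\sim v$, whence $C_u=C_v$) with Lemma~8.4 of \cite{Ricks} applied to the strongly convergent subsequence. Your additional remarks on uniform boundedness of the transversals and the identification $C_u=C_v$ are accurate fillings-in of details the paper leaves implicit.
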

From this we immediately get the following complement to Lemma~\ref{joinrankone}:
\begin{lemma}\label{Hausdorffonboundary}
Let $v\in\zero$ and $\bigl((\xi_n,\eta_n)\bigr)\subseteq\rand\times\rand$ be a sequence converging to $(v^-,v^+)$. Then for $n$ sufficiently large $(\xi_n,\eta_n)\in\ndpt\reg$ and some subsequence of $\bigl(C_{(\xi_n\eta_n)}\bigr)$ converges,  in the Hausdorff metric, to a point. 
\end{lemma}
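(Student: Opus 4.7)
The plan is to apply Lemma~\ref{joinrankone} to the zero width rank one geodesic $v$, extract from suitably reparametrized joining geodesics $w_n$ of $(\xi_n,\eta_n)$ a weakly convergent subsequence, and then invoke Lemma~\ref{Hausdorffconv}. Concretely, I would fix any $c>0$; since $\width(v)=0<c$, Lemma~\ref{joinrankone} produces disjoint open neighborhoods $U^-$ of $v^-$ and $U^+$ of $v^+$ in $\ganz$ such that every pair $(\xi,\eta)\in U^-\times U^+$ is joined by a rank one geodesic, and any such joining geodesic admits a parametrization $w$ with $d(w(0),v(0))<c$ and $\width(w)\le 2c$. Since $(\xi_n,\eta_n)\to(v^-,v^+)$, we have $(\xi_n,\eta_n)\in U^-\times U^+$ for all $n$ sufficiently large, which immediately gives the first assertion $(\xi_n,\eta_n)\in\ndpt\reg$.

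For each such $n$ I would then choose a joining geodesic $w_n\in\reg$ parametrized so that $d(w_n(0),v(0))<c$. Properness of $\XX$ combined with Arzel\`a-Ascoli allows me, after passing to a subsequence, to assume that $w_n\to w$ in the compact-open topology for some geodesic line $w\in\SX$ with $d(w(0),v(0))\le c$. A standard argument in the cone topology yields $w_n^\pm\to w^\pm$, and combined with the hypothesis $(\xi_n,\eta_n)\to (v^-,v^+)$ this forces $w^-=v^-$ and $w^+=v^+$; continuity of the Busemann cocycle then upgrades this to weak convergence $w_n\to w$ in the sense of~(\ref{defweakconvergence}).

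Since $w^\pm=v^\pm$ and $v\in\zero$, the slab $(v^-v^+)=p\circ\ndpt^{-1}(v^-,v^+)$ is isometric to a single copy of $\RR$, so $C_w$ reduces to the single point $\{w(0)\}$; in particular $w\in\reg$. Lemma~\ref{Hausdorffconv} then provides a further subsequence along which the transversals $C_{w_n}=C_{(\xi_n\eta_n)}$ converge in the Hausdorff metric to a closed subset $A\subseteq C_w$. Since $w_n(0)\in C_{w_n}$ and $w_n(0)\to w(0)$, the limit $A$ must contain $w(0)$, so $A=\{w(0)\}$ is a single point as claimed. I expect the main technical obstacle to be the clean justification of the weak convergence step, in particular the verification that the compact-open limit of the $w_n$ really carries the boundary endpoints $v^\pm$; once that is in hand, Lemma~\ref{Hausdorffconv} closes the argument.
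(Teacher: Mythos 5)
Your proposal is correct and follows exactly the route the paper intends: the paper states this lemma without proof as an immediate consequence of Lemma~\ref{joinrankone} combined with Lemma~\ref{Hausdorffconv}, which is precisely your argument. Your write-up simply fills in the details the paper leaves implicit (Arzel\`a-Ascoli extraction, endpoint convergence in the cone topology, and the passage to weak convergence, which in fact also follows from the paper's remark that convergence in $\SX$ always implies convergence in $[\SX]$), and the final identification $A=\{w(0)\}$ via $w_n(0)\in C_{w_n}$ is sound.
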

\begin{definition}\label{hypaxiso}\
An isometry $\gamma\neq\id$ of $\XX$ is called {\hd axial}  if there exists a constant
$\ell=\ell(\gamma)>0$ and a geodesic $v\in \SX$ \st $\gamma v=g^{\ell} v$. 
We call
$\ell(\gamma)$ the {\hd translation length} of $\gamma$, and $v$
an {\hd invariant geodesic} of $\gamma$. The boundary point
$\gamma^+:=v^+$ (which is independent of the chosen invariant geodesic $v$) is called the {\hd attractive fixed
point}, and $\gamma^-:=v^-$ the {\hd repulsive fixed
point} of $\gamma$. 

An axial isometry $h$ is called {\hd rank one} if one (and hence any) invariant geodesic of $h$ belongs to  $\reg$; the  {\hd width} of $h$ is then defined as the width of an arbitrary invariant geodesic of $h$. $h$ is said to have {\hd zero width}  if up to reparametrization $h$ has only one invariant geodesic.
\end{definition}
Notice that if $\gamma\in\is(\XX)$ is axial, then $\partial^{-1}(\gamma^-,\gamma^+)\subseteq\SX$ is the set of parametrized invariant geodesics of $\gamma$, and every axial isometry $\widetilde\gamma$ commuting with $\gamma$ satisfies $p \partial^{-1}(\widetilde\gamma^-,\widetilde\gamma^+)=p \partial^{-1}(\gamma^-,\gamma^+)$.  If $h$ is rank one, then  the fixed point set of $h$ equals $\{h^-, h^+\}$, and every axial isometry  commuting with $h$ belongs to the subgroup $\langle h\rangle<\is(\XX)$ generated by $h$. 

The following important lemma describes the north-south dynamics of rank one isometries:
\begin{lemma}\label{dynrankone}(\cite{MR1377265}, Lemma III.3.3)\
\ Let $h$ be a rank one isometry. Then
\begin{enumerate}
\item[(a)]  every point $\xi\in\rand\setminus\{h^+\}$ can be joined
to $h^+$ by a geodesic, and all these geodesics are  rank one,
\item[(b)] given neighborhoods $U^-$ of $h^-$ and $U^+$ of $h^+$ in $\ganz$
there exists $N\in\NN$ \st
 $\  h^{-n}(\ganz\setminus U^+)\subseteq U^-$ and
$h^{n}(\ganz\setminus U^-)\subseteq U^+$ for all $n\ge N$.
\end{enumerate}
\end{lemma}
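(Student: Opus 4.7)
The plan is to establish (b) first and then derive (a) from it together with Lemma~\ref{joinrankone}. Fix once and for all an invariant rank one geodesic $v\in\reg$ of $h$, so $v^\pm=h^\pm$ and $h\cdot v(t)=v(t+\ell)$ where $\ell=\ell(h)>0$. Apply Lemma~\ref{joinrankone} to $v$ with some $c>\width(v)$ to produce disjoint open neighborhoods $V^-$ of $h^-$ and $V^+$ of $h^+$ in $\ganz$ such that every pair $(\xi,\eta)\in V^-\times V^+$ is joined by a rank one geodesic meeting the ball $B_{v(0)}(c)$ and having width at most $2c$.

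For part (b) it suffices to treat the inclusion $h^n(\ganz\setminus U^-)\subseteq U^+$; the other follows by replacing $h$ with $h^{-1}$, whose axis is $v$ reversed so that attracting and repelling fixed points swap. The starting observation is that for $x\in X$ the isometry property gives $d(h^nx,v(n\ell))=d(x,v(0))$, so $h^nx$ stays at bounded distance from the ray $t\mapsto v(t)$ and therefore converges to $h^+$ in the cone topology, locally uniformly on $X$. To promote this to $\ganz$ I would argue by contradiction: assume there exist $n_k\to\infty$ and $\xi_k\to\xi_\infty\notin U^-$ with $h^{n_k}\xi_k\to\zeta\notin U^+$, and by shrinking $U^\pm$ arrange $\xi_\infty\neq h^-$ and $\zeta\neq h^+$. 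Approximating the pair $(\xi_\infty,\zeta)$ by endpoints of joining geodesics via Lemma~\ref{joinrankone} applied to a suitable translate $h^{-m}v$ that pushes the endpoints into its joining neighborhoods, one obtains geodesic segments that pass within a fixed distance of $v(0)$; translating by $h^{n_k}$ places a passage point within $c$ of $v(n_k\ell)$, forcing $h^{n_k}\xi_k\to h^+$ and contradicting $\zeta\neq h^+$.

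For part (a), given $\xi\in\ganz\setminus\{h^+\}$, apply (b) to $h^{-1}$: for $n$ sufficiently large, $h^{-n}\xi\in V^-$. Since $h^+\in V^+$, Lemma~\ref{joinrankone} supplies a rank one geodesic $w$ joining $h^{-n}\xi$ to $h^+$. Then $h^n w$ is a geodesic from $\xi$ to $h\cdot h^+=h^+$, and remains rank one since isometries preserve the transversal structure. For the uniqueness assertion: if $\sigma$ is any geodesic from $\xi$ to $h^+$, then $h^{-n}\sigma$ joins $h^{-n}\xi\in V^-$ to $h^+\in V^+$, so Lemma~\ref{joinrankone} guarantees $h^{-n}\sigma$ is rank one, whence so is $\sigma$.

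The main obstacle is the contradiction argument in (b): converting the elementary bounded-orbit estimate on $X$ into uniform convergence on all of $\ganz$. It is precisely the rank one hypothesis on $v$, through Lemma~\ref{joinrankone}, that prevents geodesic segments joining arbitrary boundary pairs near $(h^-,h^+)$ from drifting off the axis; without this control one cannot transport the bounded-orbit estimate to the compactification, and mere axiality of $h$ would not suffice (as higher-rank flats in symmetric spaces demonstrate).
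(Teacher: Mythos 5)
The paper never proves this lemma; it imports it verbatim from Ballmann \cite{MR1377265} (Lemma~III.3.3), so your argument has to stand on its own. Your reduction of (a) to (b) plus Lemma~\ref{joinrankone} is sound (modulo reading Lemma~\ref{joinrankone}, as in Ballmann's original, as saying that \emph{every} geodesic joining a pair in $U^-\times U^+$ is rank one of width $\le 2c$; the paper's phrasing only quantifies over the rank one ones, and you need the stronger reading for the clause ``all these geodesics are rank one''). The interior-point estimate $d(h^nx,v(n\ell))=d(x,v(0))$ is also fine. The genuine gap is in (b), exactly at the step you yourself flag as the main obstacle. The mechanism you propose --- ``Lemma~\ref{joinrankone} applied to a suitable translate $h^{-m}v$ that pushes the endpoints into its joining neighborhoods'' --- cannot work: $h^{-m}v$ is the \emph{same} geodesic line as $v$ up to reparametrization, with the same endpoints $h^{\pm}$, so the neighborhoods Lemma~\ref{joinrankone} attaches to it are again neighborhoods of $(h^-,h^+)$. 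Your points $\xi_\infty$ and $\zeta$ were chosen precisely outside $U^-$ and $U^+$, i.e.\ away from $h^-$ and $h^+$, and the assertion that some power of $h$ moves them into neighborhoods of the fixed points is exactly the north--south dynamics you are trying to prove: the argument is circular. Moreover, even granting a geodesic from $\xi_\infty$ to $\zeta$ through $B_{v(0)}(c)$, translating it by $h^{n_k}$ yields information about $h^{n_k}\xi_\infty$ and $h^{n_k}\zeta$, not about $h^{n_k}\xi_k$; and a geodesic merely passing within $c$ of $v(n_k\ell)$ does not force its endpoints toward $h^+$ (in the flat plane, vertical lines through $(n,0)$ pass near points tending to the horizontal direction at infinity while their endpoints stay fixed).

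What is missing is control of geodesics from points converging to $h^-$ (such as $v(-n_k\ell)$) to points $\xi_k\to\xi_\infty$, where $\xi_\infty\in\rand\setminus\{h^-\}$ is \emph{arbitrary} --- pairs that the joining neighborhoods of the axis never see, since Lemma~\ref{joinrankone} requires \emph{both} endpoints to lie near the respective fixed points. The workable route reverses your order: first establish the joinability statement, namely that $\xi_\infty$ can be joined to $h^-$ by a rank one geodesic $w$ (this is (a) for $h^{-1}$; in the paper's toolkit it follows from Lemma~\ref{jointoweakrecurrent}, since the axis of $h$ is $\langle h\rangle$-recurrent, or one runs Ballmann's own limiting argument). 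Then apply Lemma~\ref{joinrankone} to $w$ rather than to $v$: for large $k$ both $v(-n_k\ell)$ and $\xi_k$ lie in the joining neighborhoods of $w^{\pm}$, so the ray $f_k$ from $v(-n_k\ell)$ to $\xi_k$ passes within a fixed distance $R$ of $w(0)$, hence within $R+d\bigl(w(0),v(0)\bigr)$ of $v(0)$. Since $h^{n_k}f_k$ is the ray from $h^{n_k}v(-n_k\ell)=v(0)$ to $h^{n_k}\xi_k$ and passes within that fixed distance of $v(n_k\ell)$, convexity of the distance between geodesics issuing from the common origin $v(0)$ forces $h^{n_k}\xi_k\to h^+$, the desired contradiction. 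As written, with (b) resting only on joining neighborhoods of the axis, the boundary case of (b) --- and with it the whole lemma --- is not established.
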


The following lemma shows that under the presence of a rank one geodesic in $\XX$ with {\hl $\is(\XX)$-dual} end points (the interested reader is referred to Section~III.1 in \cite{MR1377265} for a definition) the rank one isometries are numerous:  
\begin{lemma}\label{elementsarerankone} (\cite{MR1377265}, Lemma III.3.2)\
Let $v\in \reg$ be a rank one geodesic, and $(g_n)\subseteq\is(\XX)$ a sequence of isometries \st $g_n x\to v^+$ and $g_n^{-1}x\to v^-$ for one (and hence any) $x\in\XX$. Then, for $n$ sufficiently large, $g_n$ is rank one with an invariant geodesic  $v_n$ \st $v_n^+\to v^+$ and $v_n^-\to v^-$.
\end{lemma}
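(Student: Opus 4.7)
The plan is to produce boundary fixed points $\xi_n^\pm$ of $g_n$ close to $v^\pm$ and then appeal to Lemma~\ref{joinrankone} to get a rank one invariant geodesic joining them. First, fix $c>\width(v)$ and apply Lemma~\ref{joinrankone} to obtain disjoint open neighborhoods $U^-$ of $v^-$ and $U^+$ of $v^+$ in $\ganz$ such that every pair $(\xi,\eta)\in U^-\times U^+$ is joined by a rank one geodesic of width at most $2c$. For later shrinking, select nested open neighborhoods $V^\pm$ of $v^\pm$ with $\overline{V^\pm}\subset U^\pm$ and $\overline{V^-}\cap\overline{V^+}=\emptyset$.

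For $n$ large, consider the geodesic segments $w_n\colon[0,\ell_n]\to\XX$ from $g_n^{-1}x$ to $g_n x$, where $\ell_n=d(g_n^{-1}x,g_n x)\to\infty$. By the hypothesis and Lemmas~\ref{weakimpliesstrong} and \ref{Hausdorffconv}, (a subsequence of) the $w_n$ converges weakly to $v$, with transversals Hausdorff-converging into $C_v$. I expect this translation structure to force north-south dynamics: for $n$ large, $g_n(\ganz\setminus V^-)\subset V^+$ and $g_n^{-1}(\ganz\setminus V^+)\subset V^-$. Given this, $\overline{V^+}\subset\ganz\setminus V^-$ is mapped by $g_n$ into $V^+\subset\overline{V^+}$, and a standard compactness argument on the nested sequence $g_n^k(\overline{V^+})$ combined with the contracting behaviour on the rank one joining set yields a fixed point $\xi_n^+\in\overline{V^+}$; symmetrically we find $\xi_n^-\in\overline{V^-}$. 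Lemma~\ref{joinrankone} then furnishes a rank one geodesic joining $\xi_n^-$ to $\xi_n^+$; the associated convex set $(\xi_n^-\xi_n^+)\cong C_{(\xi_n^-\xi_n^+)}\times\RR$ is $g_n$-invariant with bounded transversal of diameter at most $2c$, and $g_n$ splits as an isometry of the bounded factor times a translation of $\RR$. Because $d(x,g_n x)\to\infty$, the $\RR$-translation is non-trivial, so $g_n$ is rank one axial with an invariant geodesic $v_n\in\ndpt^{-1}(\xi_n^-,\xi_n^+)$. Shrinking the $V^\pm$ and passing to a diagonal subsequence yields $v_n^\pm\to v^\pm$.

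The main obstacle I anticipate is the north-south step: the mere convergence $g_n x\to v^+$ and $g_n^{-1}x\to v^-$ does not, in a CAT$(0)$ space with flat subspaces, automatically push arbitrary boundary points towards $v^+$ under $g_n$. The rank one hypothesis on $v$ is essential here and must be used via the uniform width control of Lemma~\ref{joinrankone} together with the weak convergence $w_n\to v$ and the Hausdorff control of transversals from Lemma~\ref{Hausdorffconv}; this is the only place where rank one enters quantitatively, and making it precise requires careful Busemann cocycle or angular estimates in the spirit of Ballmann's Lemma~III.3.2.
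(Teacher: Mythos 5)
There is no proof in the paper to compare against: Lemma~\ref{elementsarerankone} is quoted verbatim from Ballmann (\cite{MR1377265}, Lemma~III.3.2) without proof, so your proposal has to stand on its own, measured against Ballmann's argument. Your outline has the right landmarks, but the two steps that carry all the weight are either missing or incorrect. First, the north--south step: you assert $g_n(\ganz\setminus V^-)\subset V^+$, concede you cannot prove it, and defer to ``estimates in the spirit of Ballmann's Lemma~III.3.2'' --- which is circular, since that is precisely the lemma being proven. Moreover the assertion is stronger than what is provable at this stage (full north--south dynamics on $\ganz\setminus V^-$ is Lemma~\ref{dynrankone}(b), available only \emph{after} one knows the isometry is rank one). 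What one actually proves, and all that is needed, is the contraction $g_n(\overline{V^+})\subset V^+$ for $n$ large: given $\xi\in\overline{V^+}$, Lemma~\ref{joinrankone} provides a geodesic from $g_n^{-1}x\in U^-$ to $\xi$ meeting the compact set $K:=\overline{B_{v(0)}(c)}$; applying $g_n$ yields a geodesic from $x$ to $g_n\xi$ meeting $g_nK$, and since $d(g_nv(0),g_nx)=d(v(0),x)$ is bounded while $g_nx\to v^+$, the sets $g_nK$ converge to $v^+$, so $g_n\xi$ lies in any prescribed cone neighborhood of $v^+$ for $n$ large, \emph{uniformly} in $\xi\in\overline{V^+}$. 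This argument appears nowhere in your proposal, and your route via weak convergence of the segments $w_n$ cannot replace it: Lemmas~\ref{weakimpliesstrong} and~\ref{Hausdorffconv} concern sequences of geodesic \emph{lines} in $\SX$, not segments, and in any case they control the joining geodesics, not the action of $g_n$ on boundary points.

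Second, even granting the contraction, your fixed-point step fails as stated: the nested intersection $\bigcap_{k\ge 0}g_n^k(\overline{V^+})$ is a nonempty compact $g_n$-invariant set, but a compact invariant set need not contain a fixed point (a rotation has none), and ``the contracting behaviour on the rank one joining set'' is not an argument. The standard proof does not produce boundary fixed points first; it produces the axis directly: the segments from $g_n^{-k}x$ to $g_n^{k}x$ all cross $K$, so a subsequential limit is a geodesic line $c_n$ crossing $K$, each of whose iterates $g_n^kc_n$ again joins $\overline{U^-}$ to $\overline{U^+}$ and hence crosses $K$; convexity of the displacement function $y\mapsto d(y,g_ny)$ then makes it bounded along $c_n$, so $g_nc_n$ is parallel to $c_n$; since $c_n$ is rank one of width at most $2c$, $g_n$ preserves the bounded parallel set $C\times\RR$ and fixes the circumcenter of $C$ --- this circumcenter argument, not the bare product splitting you invoke, is what converts the splitting into an actual invariant geodesic --- and the translation component is nonzero for $n$ large, since otherwise $g_n$ would fix a point at bounded distance from $v(0)$, contradicting $d(x,g_nx)\to\infty$. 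The convergence $v_n^\pm\to v^\pm$ then follows, as you say, by rerunning the argument in shrinking neighborhoods, using that the axis endpoints of a rank one isometry are well defined. So your proposal correctly anticipates the skeleton of Ballmann's proof, but both the contraction estimate and the passage from contraction to an axis are left unproven; as written it is an outline with the core of the lemma missing.
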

We next prepare for an extension of Lemma~\ref{dynrankone} (a) which replaces the fixed point $h^+$ of the rank one isometry $h$ by the end point of a certain geodesic:
\begin{definition}[compare Section~5 in \cite{Ricks}]\label{weakstrongrecurrencedef} Let $G<\is(\XX)$ be any subgroup. An element   $v\in\SX$ is said to {\hd (weakly) $G$-accumulate} on $u\in\SX$ if  
there exist sequences $(g_n)\subseteq G$ and $(t_n)\nearrow \infty$  \st $g_n g^{t_n} v$ converges (weakly) to $u$ as $n\to\infty$; $v$ is said to be {\hd (weakly) $G$-recurrent} if $v$ (weakly) $G$-accumulates on $v$. 
 \end{definition} 
Notice that if $v$ is an invariant geodesic of an axial isometry $\gamma\in\is(\XX)$, then $v$ is $\langle \gamma\rangle$-recurrent and hence in particular $\is(\XX)$-recurrent. Moreover, if $v\in\SX$ weakly $G$-accumulates on  \underline{$u\in\reg$}, then by Lemma~\ref{weakimpliesstrong} $v$ $G$-accumulates on some element $w\sim u$. However, in general $v\in\SX$ weakly $G$-recurrent  does \underline{not} imply that some representative of the equivalence class $[v]$ is $G$-recurrent. Even in the case $v\in\reg$ it is possible that every representative $u$ of the class $[v]$  $G$-accumulates on $w\sim u$ with $w\ne u$. 

The following statements show the relevance of the previous notions. 
\begin{lemma}[\cite{Ricks}, Lemma~6.10]\label{Gammaconv}
If $w\in\SX$ $\is(\XX)$-accumulates on $v\in\SX$, then  there exists an isometric embedding $C_{w}\hookrightarrow C_v$ which maps $w(0)$ to $v(0)$. 
\end{lemma}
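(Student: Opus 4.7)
The plan is to exploit the product structure of the parallel set of $w$. Recall that $(w^-w^+)=p(\ndpt^{-1}(w^-,w^+))$ is isometric to $C_w\times\RR$, and under this identification the slice $\{u\in\SX:u\sim w\}$ is parametrized by $C_w$: for each $x\in C_w$ there is a unique $u_x\in\SX$ with $u_x\sim w$ and $pu_x=x$, and $u_x$ is parallel to $w$ in the sense that $d(u_x(s),w(s))=d(x,w(0))$ for every $s\in\RR$. This constancy of distance along parallel geodesics is the only rigid-geometric ingredient I will need.

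Fix sequences $(g_n)\subseteq\is(\XX)$ and $t_n\nearrow\infty$ realizing the accumulation, so that $g_n g^{t_n}w\to v$ in $(\SX,d_1)$ and in particular $g_nw(t_n)\to v(0)$ in $\XX$. For each $x\in C_w$, applying the isometry $g_n$ to the parallelism identity gives
\[ d(g_n u_x(t_n),\,g_n w(t_n)) \;=\; d(u_x(t_n),w(t_n)) \;=\; d(x,w(0)), \]
so $(g_n u_x(t_n))_n$ is bounded. By properness of $\XX$ together with a diagonal extraction along a countable dense subset $D\subseteq C_w$, I pass to a subsequence (still indexed by $n$) along which $g_n u_x(t_n)\to\phi(x)\in\XX$ for every $x\in D$. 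The analogous identity
\[ d(g_n u_{x_1}(t_n),\,g_n u_{x_2}(t_n)) \;=\; d(u_{x_1}(t_n),u_{x_2}(t_n)) \;=\; d(x_1,x_2) \]
passes to the limit, so $\phi|_D$ is distance-preserving and extends uniquely by uniform continuity to an isometric embedding $\phi:C_w\hookrightarrow\XX$.

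It remains to verify that $\phi$ takes values in $C_v$ and maps $w(0)$ to $v(0)$. The second is immediate on specializing to $x=w(0)$, for then $u_x=w$ and the hypothesis yields $\phi(w(0))=v(0)$. For the first, fix $x\in D$ and apply Arzel\`a--Ascoli to the $1$-Lipschitz geodesic maps $g_n g^{t_n}u_x:\RR\to\XX$, which stay in a bounded tube around the convergent sequence $g_n g^{t_n}w$; a further subsequence converges uniformly on compacta to a geodesic $\tilde u_x\in\SX$ with $\tilde u_x(0)=\phi(x)$. A short check from the definition of $\Hopf_\xo$, together with the $\is(\XX)$-equivariance of the Busemann function and the cocycle identity~(\ref{cocycle}), shows that the relation $\sim$ is preserved by both the $\is(\XX)$-action and the geodesic flow, so $u_x\sim w$ forces $g_n g^{t_n}u_x\sim g_n g^{t_n}w$ for every $n$; since strong convergence in $\SX$ implies convergence of Hopf coordinates, the equality of Hopf coordinates passes to the limit and gives $\tilde u_x\sim v$. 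Hence $\phi(x)=\tilde u_x(0)\in C_v$ for $x\in D$, and closedness of $C_v$ extends this to all of $C_w$.

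The main obstacle I anticipate is the Hopf-coordinate bookkeeping in the last step: checking that the equivalence $\sim$ is preserved by the combined isometry/flow action and survives passage to the limit in $\SX$. Once this is in place, the properness-based diagonal extraction and the isometry property of $\phi$ are direct consequences of the flat-strip constancy of distance along parallel geodesics and of each $g_n$ being an isometry.
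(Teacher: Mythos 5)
Your proof is correct. Note that the paper itself gives no proof of this lemma — it imports it verbatim from Ricks (Lemma~6.10) — and your argument (the product structure $C_w\times\RR$ of the parallel set, constancy of distance between Busemann-synchronized parallels, properness plus a diagonal extraction over a countable dense subset, and passage to the limit of Hopf coordinates, using the fact recorded in the paper that convergence in $\SX$ implies convergence of the classes in $[\SX]$) is essentially the argument given there, so there is nothing to flag beyond the routine observation that the convergence $g_nu_x(t_n)\to\phi(x)$ extends from the dense set $D$ to all of $C_w$ by the uniform-in-$n$ bound $d\bigl(g_nu_x(t_n),g_nu_{x'}(t_n)\bigr)=d(x,x')$, which you may as well make explicit when identifying $\phi(w(0))$ with $v(0)$.
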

Notice that if \underline{$v\in\reg$} is weakly $G$-recurrent for some subgroup $G<\is(\XX)$, then every $w\in\SX$ with $w^+=v^+$ $G$-accumulates on an element $u\sim v$  according to Lemma~6.9 in \cite{Ricks}.  Hence we have
\begin{lemma}[Corollary~6.11 in \cite{Ricks}]\label{weakgivesisometricembeddings}
If $v\in\reg$ is weakly $\is(\XX)$-recurrent, then for every $w\in\SX$ with $w^+=v^+$ there exists an isometric embedding $C_w\hookrightarrow C_v$.
\end{lemma}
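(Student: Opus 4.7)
The plan is to reduce the statement to Lemma~\ref{Gammaconv} via the auxiliary result \cite[Lemma~6.9]{Ricks} recalled immediately before the lemma. First I would apply the quoted Ricks' lemma with $G=\is(\XX)$: since $v\in\reg$ is weakly $\is(\XX)$-recurrent, every $w\in\SX$ with $w^+=v^+$ genuinely $\is(\XX)$-accumulates on some $u\in\SX$ with $u\sim v$. Concretely, this supplies sequences $(g_n)\subseteq\is(\XX)$ and $(t_n)\nearrow\infty$ with $g_ng^{t_n}w\to u$ in $(\SX,d_1)$ (strong, not merely weak, convergence).

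Next I would feed this into Lemma~\ref{Gammaconv}, applied with $u$ in place of ``$v$'' and with the present $w$ unchanged: the $\is(\XX)$-accumulation of $w$ on $u$ produces an isometric embedding $C_w\hookrightarrow C_u$ sending $w(0)$ to $u(0)$.

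Finally I would use that $u\sim v$ forces $(u^-,u^+)=(v^-,v^+)$, and hence, by the remark recorded in Section~\ref{prelim} that any two geodesics in $\ndpt^{-1}(v^-,v^+)$ have mutually isometric transversals, there is an isometry $C_u\to C_v$. Composing this isometry with the previous embedding $C_w\hookrightarrow C_u$ yields the desired isometric embedding $C_w\hookrightarrow C_v$.

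The only nontrivial ingredient is the first step: it upgrades the weak $\is(\XX)$-recurrence of $v$ to strong $\is(\XX)$-accumulation of an \emph{arbitrary} $w\in\SX$ sharing the forward endpoint $v^+$ on some representative of the equivalence class $[v]$. This upgrade fails without the rank one hypothesis $v\in\reg$, and relies internally on Lemma~\ref{weakimpliesstrong} (which is what lets one pass from convergence in $[\SX]$ to convergence in $\SX$, at the price of moving within $[v]$). Since this step has been imported directly from \cite{Ricks}, the remaining composition with Lemma~\ref{Gammaconv} and with the isometry of transversals is purely formal.
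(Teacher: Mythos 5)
Your proposal is correct and follows essentially the same route as the paper: Ricks' Lemma~6.9 to get genuine $\is(\XX)$-accumulation of $w$ on some $u\sim v$, then Lemma~\ref{Gammaconv} for the embedding $C_w\hookrightarrow C_u$, then identification of $C_u$ with $C_v$. The only cosmetic difference is in the last step, where you invoke the isometry of transversals of geodesics sharing endpoints, whereas $u\sim v$ already gives $C_u=C_v$ on the nose, since the transversal depends only on the equivalence class.
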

Moreover, the 
proof of Lemma~6.12 in \cite{Ricks} shows that every point $\xi\in\rand\setminus\{v^+\}$ can be joined to $v^+$ by a geodesic $w\in\SX$. So we finally get
\begin{lemma}\label{jointoweakrecurrent}
If $v\in\reg$ is weakly $\is(\XX)$-recurrent then for every $\xi\in\rand\setminus\{v^+\}$ there exists $w\in\reg$ with $\width(w)\le \width(v)$ such that $w^-=\xi$ and $w^+=v^+$. \end{lemma}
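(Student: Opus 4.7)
The plan is to assemble the statement directly from the two ingredients that the paragraph just before it isolates: the remark (attributed to the proof of Lemma 6.12 in \cite{Ricks}) that every $\xi\in\rand\setminus\{v^+\}$ can be joined to $v^+$ by some $w\in\SX$, and Lemma~\ref{weakgivesisometricembeddings}, which, under the weak $\is(\XX)$-recurrence hypothesis on $v\in\reg$, produces an isometric embedding $C_w\hookrightarrow C_v$ for any such $w$.

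First I would invoke the cited consequence of the proof of Ricks' Lemma 6.12 to pick $w\in\SX$ with $w^-=\xi$ and $w^+=v^+$. This $w$ a priori only lies in $\SX$, not necessarily in $\reg$; the width bound, and rank one-ness, will both come from controlling its transversal. Next I would apply Lemma~\ref{weakgivesisometricembeddings} to this $w$: since $v\in\reg$ is weakly $\is(\XX)$-recurrent and $w^+=v^+$, there is an isometric embedding $\iota: C_w\hookrightarrow C_v$.

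Because $v\in\reg$, the transversal $C_v$ is bounded with $\width(v)=\sup\{d(x,y):x,y\in C_v\}<\infty$. An isometric embedding is distance-preserving, so
\[ \sup\{d(x,y):x,y\in C_w\}=\sup\{d(\iota(x),\iota(y)):x,y\in C_w\}\le \width(v).\]
Thus $C_w$ is bounded, which is exactly the definition of $w$ being rank one, i.e.\ $w\in\reg$, and the displayed inequality reads $\width(w)\le \width(v)$. This completes the proof.

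I do not expect any genuine obstacle: the two cited lemmas have been arranged in the text precisely to make this a two-line corollary. The only minor point worth stating explicitly is the observation that an isometric embedding automatically transports the defining sup of the width, so that boundedness and the width estimate follow in a single step from Lemma~\ref{weakgivesisometricembeddings}.
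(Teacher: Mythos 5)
Your proposal is correct and follows exactly the route the paper intends: the paper presents this lemma as an immediate consequence of the joining statement extracted from the proof of Lemma~6.12 in \cite{Ricks} together with Lemma~\ref{weakgivesisometricembeddings}, and your observation that an isometric embedding $C_w\hookrightarrow C_v$ forces $\width(w)\le\width(v)<\infty$ (hence $w\in\reg$) is precisely the missing glue the paper leaves implicit.
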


\section{Rank one groups}\label{rankonegroups}

Let $\XX$ be a proper
 Hadamard space and $\Gamma<\is(\XX)$ an arbitrary subgroup. 
 The {\hl geometric limit set} $\Lim$ of $\Gamma$  is defined by
$\Lim:=\overline{\Gamma\cdot x}\cap\rand,$
where $x\in\XX$ is an arbitrary point.  

If $\XX$ is a CAT$(-1)$-space, then a  group $\Gamma<\is(\XX)$ is called {\hl non-elementary} if its limit set is infinite and if $\Gamma$ does not globally fix a point in $\Lim$. It is well-known that this implies that $\Gamma$ contains two axial isometries  with disjoint fixed point sets (which are actually rank one of zero width as $\SX=\zero$ for any CAT$(-1)$-space). In the general setting this motivates the following 
\begin{definition}
We say that  two rank one isometries $g,h\in\is(\XX)$ are {\hd independent} if and only if $\{g^+,g^-\}\cap \{h^+,h^-\}\ne\emptyset$ (see for example Section~2 of \cite{MR2629900}).  

Moreover, 
 a group $\Gamma< \is(\XX)$ is called  {\hl rank one} if $\Gamma$ contains a pair of independent rank one elements.
\end{definition}
Obviously, if $\XX$ is CAT$(-1)$ then every non-elementary isometry group is rank one. In general however, the notion of rank one group seems very restrictive at first sight.  The goal of this section -- which may be of independent interest --  is to  discuss  conditions which ensure that $\Gamma$ is a rank one group.

\begin{lemma} Let $\Gamma < \is(\XX)$ be an arbitrary subgroup. If 
$\Lim$ contains the positive end point $v^+$  of a weakly $\is(\XX)$-recurrent element $v\in\reg$, and if $v^+$ 
is not globally fixed by $\Gamma$, then $\Gamma$ contains a  rank one isometry. 
\end{lemma}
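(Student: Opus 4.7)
The plan is to produce a sequence $(\gamma_n)\subseteq\Gamma$ with $\gamma_n\xo\to v^+$ and $\gamma_n^{-1}\xo\to\eta\in\rand$ for some $\eta\ne v^+$, and then invoke Lemmas~\ref{jointoweakrecurrent} and~\ref{elementsarerankone} in succession to conclude that $\gamma_n$ is rank one for $n$ large.

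First, since $v^+\in\Lim$, I pick any sequence $(\gamma_n)\subseteq\Gamma$ with $\gamma_n\xo\to v^+$ in $\ganz$; in particular $d(\xo,\gamma_n\xo)\to\infty$, hence also $d(\xo,\gamma_n^{-1}\xo)\to\infty$, and by compactness of $\ganz$ I may extract a subsequence so that $\gamma_n^{-1}\xo\to\eta\in\rand$.

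Next, I ensure $\eta\ne v^+$. If $\eta=v^+$, choose $h\in\Gamma$ with $hv^+\ne v^+$, which exists by hypothesis, and replace the sequence by $(\gamma_n h)_n$. Since $d(\gamma_n h\xo,\gamma_n\xo)=d(h\xo,\xo)$ is bounded, the forward orbit still satisfies $\gamma_n h\xo\to v^+$, while $(\gamma_n h)^{-1}\xo=h^{-1}\gamma_n^{-1}\xo\to h^{-1}v^+$ by continuity of $h^{-1}$ on $\ganz$, and $h^{-1}v^+\ne v^+$ because $hv^+\ne v^+$. Note that multiplying on the left by $h$ instead would have spoiled the convergence $\gamma_n\xo\to v^+$, which is why the right-multiplication is the correct move.

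Finally, since $v\in\reg$ is weakly $\is(\XX)$-recurrent and $\eta\ne v^+$, Lemma~\ref{jointoweakrecurrent} furnishes a rank one geodesic $w\in\reg$ with $w^-=\eta$ and $w^+=v^+$. With the (possibly modified) sequence now satisfying $\gamma_n\xo\to w^+$ and $\gamma_n^{-1}\xo\to w^-$, Lemma~\ref{elementsarerankone} applied to $w$ yields that for $n$ sufficiently large $\gamma_n\in\Gamma$ is rank one, producing the desired element.

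The only delicate point is the displacement step: one must move $\eta$ off $v^+$ without destroying the forward convergence $\gamma_n\xo\to v^+$. This is precisely where the non-fixity hypothesis on $v^+$ enters, and its combination with right-multiplication is the key technical observation; everything else is a direct concatenation of the cited lemmas.
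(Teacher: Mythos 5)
Your proof is correct and follows essentially the same route as the paper's: choose a sequence $(\gamma_n)\subseteq\Gamma$ with $\gamma_n\xo\to v^+$, extract a subsequence so the backward orbit $\gamma_n^{-1}\xo$ converges, use the non-fixity hypothesis and right-multiplication by a suitable group element to arrange that the backward limit differs from $v^+$, then apply Lemma~\ref{jointoweakrecurrent} to produce a rank one geodesic $w$ joining the two limit points and conclude via Lemma~\ref{elementsarerankone}. The only cosmetic difference is that the paper, after applying Lemma~\ref{elementsarerankone}, additionally invokes Lemma~\ref{joinrankone} to verify that the invariant geodesics of the $\gamma_n$ are indeed rank one, whereas you rely on the conclusion of Lemma~\ref{elementsarerankone} exactly as it is stated in the paper, which already asserts this.
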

\begin{proof}
Let $v\in\reg$ be weakly $\is(\XX)$-recurrent and $x\in\XX$. As  $v^+\in \Lim$ there exists a sequence $(\gamma_n)\subseteq\Gamma$ such that $\gamma_n x\to v^+$ as $n\to\infty$. Passing to a subsequence if necessary we may assume that $\gamma_n^{-1}x$ converges, say to a point $\xi\in\ganz$ which obviously belongs to $\Lim\subseteq\rand$. If $\xi=v^+$, there exists $\gamma\in\Gamma$ \st $\gamma\xi\ne v^+$ since $\Gamma$ does not globally fix $v^+$. Replacing the sequence $(\gamma_n)$ by $(\gamma_n\gamma^{-1})$ in this case we may assume that $\xi\ne v^+$. According to Lemma~\ref{jointoweakrecurrent} there exists $w\in\reg$ \st $w^-=\xi$ and $w^+=v^+$. 
Lemma~\ref{elementsarerankone} then states that for $n$ sufficiently large $\gamma_n$ is rank one with an invariant geodesic  $v_n$ \st  $v_n^+\to w^+=v^+$ and $v_n^-\to w^-=\xi$ as $n\to \infty$. Since the geodesic $w$ is rank one, the geodesics $v_n$ are rank one  for $n$ sufficiently large by Lemma~\ref{joinrankone}. This implies that for some fixed $n$  large enough the element $\gamma_n\in \Gamma$ is rank one.
\end{proof}
Notice that the conclusion is obviously true when $v^+$ is a fixed point of a rank one isometry of $\XX$. The following statements show that a group is rank one under very weak conditions.
\begin{lemma}\label{get2independent} If $\Gamma< \is(\XX)$ neither globally fixes a point in $\rand$  nor stabilizes a geodesic line in $\XX$, and if 
$\Lim$ contains the positive end point $v^+$  of a weakly $\is(\XX)$-recurrent element $v\in\reg$, then $\Gamma$  contains a pair of independent rank one elements. 
\end{lemma}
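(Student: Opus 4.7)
The plan is to first obtain a single rank one element $h \in \Gamma$ from the previous lemma, and then produce a second rank one element $g \in \Gamma$ with $\{g^+,g^-\} \cap \{h^+,h^-\} = \emptyset$. Two ingredients drive the argument: a circumcenter reduction that exploits the hypothesis that $\Gamma$ stabilizes no geodesic line, and the dynamical criterion of Lemma~\ref{elementsarerankone} applied to a suitable sequence of products of $\gamma$ with powers of $h$.

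Since $\Gamma$ globally fixes no point of $\rand$, the previous lemma yields a rank one $h \in \Gamma$ with fixed points $h^-, h^+$. I next claim that some $\gamma \in \Gamma$ must satisfy $\gamma\{h^+,h^-\} \ne \{h^+,h^-\}$. Indeed, if not, $\Gamma$ stabilizes the set $(h^+h^-) = p\,\ndpt^{-1}(h^+,h^-) \subseteq \XX$, which is isometric to $C_{(h^+h^-)} \times \RR$; since $h$ is rank one, $C_{(h^+h^-)}$ is bounded (and in particular splits off no further $\RR$-factor). Every isometry of this product stabilizing the unordered pair of infinity points therefore splits canonically as a product of an isometry of $C_{(h^+h^-)}$ with an isometry of $\RR$, so $\Gamma$ acts by isometries on $C_{(h^+h^-)}$ and fixes its unique CAT(0) circumcenter $c_0$. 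The geodesic line $\{c_0\} \times \RR$ would then be $\Gamma$-invariant, contradicting the hypothesis.

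Now fix $\gamma$ as above; after possibly swapping $h$ with $h^{-1}$ I may assume $\gamma h^+ \notin \{h^+, h^-\}$. Consider the sequence $g_n := h^n \gamma \in \Gamma$. For any $x \in \XX$ one has $g_n x \to h^+$ and $g_n^{-1} x \to \gamma^{-1} h^-$, and these two limits are distinct because $\gamma h^+ \ne h^-$. Lemma~\ref{dynrankone}(a) yields a rank one geodesic joining them, so Lemma~\ref{elementsarerankone} makes $g_n$ a rank one element for all large $n$, with axis endpoints $g_n^+ \to h^+$ and $g_n^- \to \gamma^{-1} h^-$. Since $g_n h^+ = h^n \gamma h^+ \ne h^+$, the point $h^+$ is not a fixed point of $g_n$, and being close to $h^+$ the point $g_n^+$ avoids both $h^+$ and $h^-$ for $n$ large. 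In the generic subcase $\gamma h^- \ne h^-$ the analogous reasoning places $g_n^-$ outside $\{h^+,h^-\}$ as well, giving the desired independent pair $(h, g_n)$.

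The main obstacle I anticipate is the asymmetric subcase where $\gamma$ happens to fix $h^-$, which forces $g_n$ to fix $h^-$ and so destroys independence. To handle it I use the hypothesis that $\Gamma$ does not fix $h^-$ either: pick $\gamma' \in \Gamma$ with $\gamma' h^- \ne h^-$, and re-run the previous paragraph with $\gamma$ replaced by $\gamma'$. In the single remaining branch where $\gamma'$ swaps $h^+$ and $h^-$, I instead apply the construction to $\gamma\gamma'$, which by direct computation sends $h^-$ to $\gamma h^+ \notin \{h^+,h^-\}$ while still moving $h^+$ off its own stabilizer. A short case check then confirms that in each branch both endpoints of the resulting rank one element lie outside $\{h^+, h^-\}$, completing the proof.
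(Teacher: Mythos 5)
Your proposal is correct in substance, but it takes a genuinely different route from the paper. The paper's own proof is two lines long: having produced a single rank one element via the preceding lemma (exactly as you do), it invokes Proposition~3.4 of Caprace--Fujiwara \cite{MR2585575}, a dichotomy whose first alternative (fixing a boundary point or stabilizing a geodesic line) is excluded by hypothesis, leaving a pair of independent rank one elements. You instead reprove the needed special case of that dichotomy using only tools internal to the paper: the circumcenter argument on $(h^+h^-)\cong C_{(h^+h^-)}\times\RR$ (since $C_{(h^+h^-)}$ is bounded, every geodesic line in this set is vertical, so a $\Gamma$-invariant pair $\{h^+,h^-\}$ would force a $\Gamma$-invariant line through the circumcenter) extracts $\gamma\in\Gamma$ with $\gamma\{h^+,h^-\}\ne\{h^+,h^-\}$, and the sequence $g_n=h^n\gamma$ combined with Lemma~\ref{dynrankone}(a), Lemma~\ref{elementsarerankone}, and the fact that the fixed point set of a rank one isometry is exactly its pair of axis endpoints produces the second element. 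What the paper's route buys is brevity and an appeal to a result stated in the right generality; what yours buys is a self-contained argument that never leaves the paper's own lemma stock.

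One inaccuracy in your terminal case analysis should be repaired, although your own remedy already covers it. After re-running the construction with $\gamma'$ (chosen with $\gamma'h^-\ne h^-$), the swap $\gamma'h^+=h^-$, $\gamma'h^-=h^+$ is \emph{not} the single remaining branch: the branch $\gamma'h^+=h^+$, $\gamma'h^-\notin\{h^+,h^-\}$ also survives, because there both $h^n\gamma'$ and $h^{-n}\gamma'$ fix $h^+$, so neither orientation of $h$ makes the re-run conclude. Fortunately $\gamma\gamma'$ handles this branch as well: it sends $h^+$ to $\gamma h^+\notin\{h^+,h^-\}$ and moves $h^-$ (since $\gamma'h^-\ne h^-$ while $\gamma$ fixes $h^-$ and is injective), so $h^n\gamma\gamma'$ works directly. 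In the swap branch, by contrast, note that $\gamma\gamma'h^+=\gamma h^-=h^-$, so $h^n\gamma\gamma'$ fails (its forward and backward orbit limits both equal $h^+$, and Lemma~\ref{elementsarerankone} does not apply); you must run the construction with $h^{-1}$ in place of $h$, i.e.\ with $h^{-n}\gamma\gamma'$, which your ``possibly swapping $h$ with $h^{-1}$'' convention permits. With these two branches checked explicitly, your proof is complete.
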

\begin{proof}
Since $\XX$ is proper and $\Gamma <\is(\XX)$ contains a  rank one element by the previous lemma, Proposition~3.4 of \cite{MR2585575} applies: Its first possibility is excluded by the assumption that $\Gamma$ neither globally fixes a point in $\rand$  nor stabilizes a geodesic line in $\XX$, hence $\Gamma$ contains a pair of independent rank one elements. 
\end{proof}
\begin{lemma}\label{inflimset}
A {\em discrete}  
subgroup $\Gamma<\is(\XX)$ is rank one  if and only if 
its limit set $\Lim$ is infinite and contains the positive end point $v^+$  of a weakly $\is(\XX)$-recurrent element $v\in\reg$. 
\end{lemma}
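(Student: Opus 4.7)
The plan is to leverage Lemma~\ref{get2independent} for the substantive direction while the other direction unpacks the definitions.

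For the forward direction, suppose $\Gamma$ contains independent rank one elements $g,h$. Any invariant geodesic $v$ of $g$ lies in $\reg$ and is $\langle g\rangle$-recurrent, hence weakly $\is(\XX)$-recurrent, with positive endpoint $v^+=g^+\in\Lim$. Independence of $g,h$ gives $h^+\in\Lim\setminus\{g^+,g^-\}$, and the north-south dynamics of Lemma~\ref{dynrankone}(b) show that the $\Gamma$-invariant set $\Lim$ contains the pairwise distinct sequence $(g^n h^+)_{n\in\NN}$ (distinctness since $g^k h^+=h^+$ with $k\ne 0$ would force $h^+\in\{g^+,g^-\}$). Thus $\Lim$ is infinite.

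For the reverse direction I would apply Lemma~\ref{get2independent}; it suffices to verify that $\Gamma$ neither stabilizes any geodesic line nor globally fixes any point in $\rand$. The first is immediate, as stabilizing a line forces $|\Lim|\le 2$. Assume for contradiction that $\Gamma$ fixes some $p\in\rand$. Combining the hypothesis $v^+\in\Lim$ with Lemmas~\ref{jointoweakrecurrent} and~\ref{elementsarerankone}, and with a suitable modification of a sequence $(\gamma_n)\subseteq\Gamma$ converging to $v^+$ (using a second accumulation point $\eta\in\Lim\setminus\{v^+\}$, available since $|\Lim|\ge 2$, to escape the degenerate case where also $\gamma_n^{-1}x_o\to v^+$), the first unnamed lemma of this section yields---at least when $p\ne v^+$ so that $v^+$ is not globally fixed---a rank one element $g\in\Gamma$; in the residual case $p=v^+$ the same construction with $\eta$ in place of $v^+$ furnishes a rank one $g\in\Gamma$ as well. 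Since $g$ fixes $p$ and rank one isometries have exactly the two boundary fixed points $g^\pm$, it follows that $p\in\{g^+,g^-\}$.

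The remaining step, and the main obstacle, is to conclude that such a $\Gamma$ must be virtually cyclic, whence $\Lim\subseteq\{g^+,g^-\}$ would contradict $|\Lim|=\infty$. This is a Margulis-type claim for rank one Hadamard spaces: the contracting north-south dynamics of $g$ from Lemma~\ref{dynrankone}(b) should force the conjugation $h\mapsto g^n h g^{-n}$ to accumulate in $\is(\XX)$ unless $h$ stabilizes the parallel set of the axis of $g$. Combining weak recurrence of $v$ with Lemma~\ref{weakgivesisometricembeddings} shows that the cross-section $C_{(g^-g^+)}$ of this parallel set is bounded, so $\is(C_{(g^-g^+)})$ is compact and every discrete subgroup of $\is(C_{(g^-g^+)})\times\RR$ is virtually $\ZZ$. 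Making the contraction-plus-discreteness step precise is the technical heart of the argument and likely invokes the structure theory of rank one groups from \cite{MR2585575} that already underlies the proof of Lemma~\ref{get2independent}.
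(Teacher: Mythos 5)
Your forward direction is correct and complete (the paper dismisses it as ``obvious''), and so is your observation that stabilizing a geodesic line forces $|\Lim|\le 2$. Note, however, that the paper's own proof of the reverse direction never argues by contradiction: it simply asserts that a \emph{discrete} group with \emph{infinite} limit set can neither globally fix a point of $\rand$ nor stabilize a geodesic line, and then applies Lemma~\ref{get2independent}. Your instinct that the fixed-point exclusion deserves a genuine argument is defensible, but the argument you sketch does not close.

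There are two concrete gaps. First, your case split is keyed to whether $p=v^+$, but what the first (unlabelled) lemma of Section~\ref{rankonegroups} needs is that \emph{$v^+$ itself is not globally fixed}; since a group may fix several boundary points, $p\ne v^+$ does not yield this, so even your ``main'' case is unjustified as written, and the genuinely hard case is ``$\Gamma$ fixes $v^+$''. In that case your repair---running ``the same construction with $\eta$ in place of $v^+$''---fails: the construction rests on Lemma~\ref{jointoweakrecurrent}, which joins arbitrary boundary points to $v^+$ by rank one geodesics precisely because $v^+$ is the endpoint of a weakly $\is(\XX)$-recurrent element of $\reg$; an arbitrary limit point $\eta$ carries no such structure, so there is no rank one geodesic ending at $\eta$ to feed into Lemma~\ref{elementsarerankone}. (This case can be salvaged, but only with real work: when $\Gamma$ fixes $v^+$, the map $\gamma\mapsto\bs_{v^+}(\xo,\gamma\xo)$ is a homomorphism to $\RR$, and Lemma~\ref{joinrankone} applied to the geodesic from Lemma~\ref{jointoweakrecurrent} shows $\gamma_n\xo\to\xi\ne v^+$ forces $\bs_{v^+}(\xo,\gamma_n\xo)\to-\infty$, hence $\gamma_n^{-1}\xo\to v^+$, after which Lemma~\ref{elementsarerankone} applies to the reversed geodesic.) Second, and most seriously, the step you yourself call the ``technical heart''---that a discrete group containing a rank one element $g$ and fixing a point of $\{g^+,g^-\}$ has $\Lim\subseteq\{g^+,g^-\}$---is left as an unproved Margulis-type claim with a pointer to \cite{MR2585575}. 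A proof does exist (if some $h\in\Gamma$ moved $g^-$, then $g$ and $hgh^{-1}$ would be rank one elements sharing exactly one fixed point, and the conjugates $g^{-n}hgh^{-1}g^{n}$, of constant translation length and with axes meeting a fixed compact set by Lemma~\ref{joinrankone}, would violate discreteness; hence $\Gamma$ fixes both of $g^{\pm}$, stabilizes the parallel set $C_{(g^-g^+)}\times\RR$ with bounded cross-section, and so $\Lim\subseteq\{g^+,g^-\}$), but nothing of the sort appears in your proposal, so the reverse direction is not yet a proof.
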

\begin{proof} We first assume that  $\Lim$ is infinite and contains the positive end point $v^+$  of a weakly $\is(\XX)$-recurrent element $v\in\reg$. As $\Gamma$ is discrete and $\Lim$ is infinite, $\Gamma$ cannot globally fix a point in $\rand$ nor stabilize a geodesic line in $\XX$, so  Lemma~\ref{get2independent} above implies that $\Gamma$ is rank one. 
The other direction is obvious.
\end{proof}

The proof of the following criterion relies heavily on the work of  
R.~Ricks:
 \begin{proposition}\label{largewidthgiveszerowidth}
 If $\XX$ is geodesically complete and $\Gamma<\is(\XX)$ is a discrete rank one group,  
 then 
\[ {\mathcal Z}_\Gamma:=\{v\in\zero\colon v^-,v^+\in\Lim\}\ne \emptyset.\] \end{proposition}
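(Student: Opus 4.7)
My plan is a minimization argument combined with an extension-by-geodesic-completeness step. The rank one hypothesis on $\Gamma$ provides the initial rank one geodesic with endpoints in $\Lim$, while geodesic completeness lets us escape the parallel set of any positive-width candidate to produce a narrower competitor.

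First, since $\Gamma$ contains a rank one isometry $\gamma$, its invariant geodesic $v_0\in\reg$ has $v_0^\pm=\gamma^\pm\in\Lim$, so the set $\reg_\Gamma:=\{v\in\reg:v^\pm\in\Lim\}$ is nonempty, and we may define $W:=\inf\{\width(v):v\in\reg_\Gamma\}\ge 0$. The goal is to show $W=0$ and is realized, yielding an element of $\zero_\Gamma$. To realize the infimum, take a minimizing sequence $(v_n)\subseteq\reg_\Gamma$ with $\width(v_n)\to W$, normalize the basepoints $v_n(0)$ inside a bounded region by applying the geodesic flow, and extract a weakly convergent subsequence $v_n\to v^\ast$ whose endpoints converge in $\Lim\times\Lim$ (closed). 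By Lemma~\ref{joinrankone} applied to some $v_{n_0}$, eventually $(v_n^-,v_n^+)$ lies in a neighborhood of endpoints for which the pair is joined by a rank one geodesic, so $v^\ast\in\reg$ and $v^{\ast\pm}\in\Lim$. Lemma~\ref{Hausdorffconv} then gives $\width(v^\ast)\ge\lim\width(v_n)=W$, and the infimum definition gives $\width(v^\ast)\ge W$, so that $v^\ast\in\reg_\Gamma$.

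Assume for contradiction $W>0$. Then $C_{v^\ast}$ is a bounded closed convex set of diameter $W>0$; pick $x,y\in C_{v^\ast}$ realizing this diameter. By geodesic completeness, the segment $[x,y]$ extends to a full geodesic line $u\in\SX$. Now observe that the boundary at infinity of the parallel set $P_{v^\ast}\cong C_{v^\ast}\times\RR$ consists only of $\{v^{\ast-},v^{\ast+}\}$ (since $C_{v^\ast}$ is bounded), while $u$ cannot remain in the bounded transversal $C_{v^\ast}$ past the segment $[x,y]$; hence $u$ must exit $P_{v^\ast}$ and its endpoints satisfy $u^\pm\notin\{v^{\ast\pm}\}$. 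Using the second independent rank one element guaranteed by the hypothesis and invoking Lemma~\ref{elementsarerankone} together with the north-south dynamics of Lemma~\ref{dynrankone}, construct a sequence $(\gamma_n)\subseteq\Gamma$ of rank one elements whose invariant geodesics $u_n$ converge weakly to $u$ in the Hopf parametrization; then $u_n\in\reg_\Gamma$, and the transversal-Hausdorff bound of Lemma~\ref{Hausdorffconv} combined with the fact that $u$ genuinely crosses $P_{v^\ast}$ in a transverse direction (rather than along the $\RR$-factor of $v^\ast$) forces $\width(u_n)<W$ for $n$ large, contradicting minimality.

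\textbf{Main obstacle.} The most delicate part is the very last step: ensuring that the $\Gamma$-translates really can approximate $u$ and that the resulting invariant geodesics have width strictly below $W$. The endpoints $u^\pm$ need not lie in $\Lim$ a priori, and the direction of inequality in Lemma~\ref{Hausdorffconv} (widths can only grow under weak limits with a rank one target) works against a naive minimization; thus one has to use the transverse geometry of $u$ versus the $\RR$-direction of $P_{v^\ast}$ to guarantee that the Hausdorff limit of transversals sits in a genuinely smaller set than $C_u$, and one has to combine this with the Ricks-style weak recurrence results (Lemma~\ref{jointoweakrecurrent} and Lemma~\ref{weakgivesisometricembeddings}) to transfer width information to geodesics whose endpoints lie in $\Lim$.
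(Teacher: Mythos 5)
Your proposal has a fatal gap in its final step, and a secondary one in the minimization step. The decisive problem is the contradiction mechanism: the extended geodesic $u$ obtained from geodesic completeness has no reason to have its endpoints in $\Lim$, and Lemma~\ref{elementsarerankone} can only produce rank one elements of $\Gamma$ from a sequence $(g_n)\subseteq\Gamma$ with $g_n x\to u^+$ and $g_n^{-1}x\to u^-$ --- such a sequence exists only if $u^\pm\in\Lim$. So in general there is no sequence of invariant geodesics of elements of $\Gamma$ converging (even weakly) to $u$, and the set $\reg_\Gamma$ simply cannot ``see'' $u$. Even granting such approximants $u_n$, nothing forces $\width(u_n)<W$: Lemma~\ref{Hausdorffconv} requires the weak limit to be rank one, whereas $u$ need not be rank one (it may bound flat half-planes), and when the lemma does apply it bounds the widths of the approximants \emph{above} by the width of the limit --- here unknown and possibly infinite --- which yields no bound below $W$. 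The appeal to ``transverse geometry'' never converts into a width inequality. The minimization step is also shaky: by Lemma~\ref{Hausdorffconv} one only gets $\width(v^\ast)\ge\lim\width(v_n)=W$, so $v^\ast$ need not realize the infimum (its transversal may have diameter strictly larger than $W$, so your points $x,y$ do not realize ``diameter $W$''), and the claim that the endpoint pairs $(v_n^-,v_n^+)$ of a minimizing sequence eventually lie in the Lemma~\ref{joinrankone} neighborhoods of some fixed $v_{n_0}$ is unjustified, since a minimizing sequence has no reason to converge to $v_{n_0}$ at all.

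For comparison, the paper's proof of Proposition~\ref{largewidthgiveszerowidth} avoids any width minimization. Geodesic completeness enters through Ballmann's Theorem~III.2.3 in \cite{MR1377265}: the geodesic flow on $\SX_\Gamma=\{v\in\SX\colon v^\pm\in\Lim\}$ is topologically transitive mod $\Gamma$, so there is a single $v\in\SX_\Gamma$ which $\Gamma$-accumulates on \emph{every} element of $\SX_\Gamma$. Accumulation on an axis of a rank one element of $\Gamma$ shows $v\in\reg$ via Lemma~\ref{joinrankone}. If $v$ had positive width, then $v$ would $\Gamma$-accumulate both on the central geodesic $v_C$ through the circumcenter of $C_v$ and on any parallel $\overline v\ne v$; Lemma~\ref{Gammaconv} gives isometric embeddings $C_v\hookrightarrow C_{v_C}$ and $C_v\hookrightarrow C_{\overline v}$ sending $v(0)$ to $v_C(0)$ and to $\overline v(0)$, and since $C_{v_C}=C_{\overline v}=C_v$ these embeddings are surjective (an isometric embedding of a compact metric space into itself is onto, Theorem~1.6.15 in \cite{MR1835418}), hence isometries. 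Invariance of the circumcenter then forces $v(0)=v_C(0)=\overline v(0)$, contradicting $\overline v\ne v$. This compactness-rigidity argument is exactly the width-killing mechanism your proposal is missing; if you want to salvage your outline, it is the tool to replace the last step with.
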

 \begin{proof} 
We first notice that the proof of Theorem~III.2.3 in \cite{MR1377265} shows that the geodesic flow restricted to 
\[ \SX_\Gamma:=\{v\in \SX\colon v^-,v^+\in\Lim\}\]
 is topologically transitive mod $\Gamma$; this means that there exists $v\in \SX_\Gamma$ \st for any $u\in \SX_\Gamma$ $v$ $\Gamma$-accumulates on $u$. 
  
We first claim that the element $v\in \SX_\Gamma$ as above belongs to $\reg$:
We choose a rank one element $h\in\Gamma$ and an invariant geodesic $u\in \SX_\Gamma$  of $h$  and neighborhoods $U^-, U^+\subseteq \ganz$  of $h^-,h^+$ as in Lemma~\ref{joinrankone}. In particular, every $w\in\SX$ with $(w^-,w^+)\in U^-\times U^+$ satisfies $w\in\reg$.  As $v$ $\Gamma$-accumulates on $u$ there exist sequences  $(\gamma_n)\subseteq \Gamma$, $(t_n)\nearrow\infty$ \st $\gamma_n g^{t_n} v\to u$ and hence in particular 
$\gamma_n ( v^-,v^+)\to (u^-,u^+)=(h^-,h^+)$ as $n\to \infty$. This implies $\gamma_n (v^-,v^+)\in U^-\times U^+\subseteq\ndpt\reg$ for some $n$ sufficiently large and therefore $v\in \reg$.

Assume for a contradiction that $v\notin\zero$; then there exists $\overline{v}\sim v$ with $\overline{v}\neq v$. We will further denote $v_C\in p^{-1} C_v=\{w\in\reg\colon w\sim v\}$  the {\hl central geodesic}  defined by the condition that its origin $v_C(0)$ is the unique circumcenter of the bounded closed and convex set  $C_v\subseteq \XX$ (compare also Section~5 in \cite{Ricks}). 
As $v_C$, $\overline{v}\in\SX_\Gamma$, $v$\break $\Gamma$-accumulates both on $v_C$ and on $\overline{v}$; 
so  according to Lemma~\ref{Gammaconv} there exist isometric embeddings
  \[  \iota: C_v \hookrightarrow C_{v_C},\qquad \overline{\iota}: C_v \hookrightarrow C_{\overline{v}} \]
  with $\iota\bigl(v(0)\bigr)= v_C(0)$ and $\overline{\iota}\bigl(v(0)\bigr)=\overline{v}(0)$. Since $C_{v_C} =C_{\overline{v}}=C_v$, the maps $\iota$ and $\overline{\iota}$ 
  are surjective by Theorem~1.6.15 in \cite{MR1835418} and hence isometries. As the circumcenter of $C_v$ is invariant by isometries of $C_{v}$ we first get 
  \[ v(0)=\iota^{-1}\bigl(v_C(0)\bigr)=v_C(0),\] 
which implies
\[   \overline{v}(0)= \overline{\iota}\bigl(v(0)\bigr)= \overline{\iota}\bigl(v_C(0)\bigr)=v_C(0)=v(0).\]
This is a contradiction to the choice of $\overline{v}\ne v$, so we conclude that  $v\in\zero$. 
\end{proof}
Notice that a discrete rank one group $\Gamma$ with $\zero_\Gamma\ne\emptyset$ need not possess a {\hl zero width} rank one {\hl isometry} since $\zero$ is not open in $\SX$. However, as for a Hadamard {\hl manifold} the set $\jac$ of vectors not admitting a parallel perpendicular Jacobi field is open in $\SX$, we have the following
\begin{lemma}
If $\XX$ is a {\hl manifold} and $\Gamma<\is(\XX)$ a discrete rank one group \st 
\[ \jac_\Gamma:=\{v\in\jac\colon v^-,v^+\in\Lim\}\ne \emptyset,\]
then $\Gamma$ contains a pair of independent rank one elements with {\hl strong} rank one invariant geodesics (which necessarily have zero width).
\end{lemma}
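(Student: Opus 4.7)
The strategy is to adapt the argument of Proposition~\ref{largewidthgiveszerowidth}, exploiting the new ingredient (valid in the manifold case) that $\jac$ is open in $\SX$. First I would produce one rank one $\gamma\in\Gamma$ whose invariant geodesic lies in $\jac$, and then obtain an independent such element by conjugation within $\Gamma$.

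For the first step, fix $v\in\jac_\Gamma$. By Corollary~\ref{manifoldJopen} I choose disjoint open neighborhoods $U^-, U^+\subseteq\ganz$ of $v^-, v^+$ such that every $(\xi,\eta)\in U^-\times U^+$ is joined by some element of $\jac$. Using $v^\pm\in\Lim$ together with the (standard for rank one groups) density of fixed points of rank one elements in the limit set, I would produce a sequence $(\gamma_n)\subseteq\Gamma$ with $\gamma_n\xo\to v^+$ and $\gamma_n^{-1}\xo\to\xi$ for some $\xi\in U^-\cap\Lim$. Concretely, start with any sequence realizing $g_n\xo\to v^+$; pass to a subsequence so that $g_n^{-1}\xo\to\xi_0\in\Lim$; if $\xi_0\notin U^-$, replace $g_n$ by $g_nh^N$ for a rank one $h\in\Gamma$ with $h^-\in U^-$ (obtained from the density statement) and $N$ large enough that $h^{-N}\xi_0\in U^-$ by the north--south dynamics of Lemma~\ref{dynrankone}(b).

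Let $u\in\jac$ be a geodesic joining $\xi$ to $v^+$. By Lemma~\ref{elementsarerankone}, for $n$ large $\gamma_n$ is rank one with an invariant geodesic $v_n\in\reg$ satisfying $v_n^-\to\xi$ and $v_n^+\to v^+$. I would choose $v_n$ so that $v_n(0)$ is the circumcenter of the bounded convex set $C_{v_n}$, following the proof of Proposition~\ref{largewidthgiveszerowidth}. Applying Lemma~\ref{Hausdorffonboundary} to $u\in\zero$, the transversals $C_{v_n}$ Hausdorff-converge, along a subsequence, to the singleton $C_u=\{u(0)\}$, which forces $v_n(0)\to u(0)$. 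Combined with endpoint convergence this gives $v_n\to u$ weakly, and Lemma~\ref{weakimpliesstrong} together with $C_u$ being a point upgrades this to convergence in $\SX$ along a further subsequence. Since $\jac$ is open in $\SX$ in the manifold setting, $v_n\in\jac$ for $n$ large, so $\gamma:=\gamma_n$ is a rank one element of $\Gamma$ whose invariant geodesic $v_\gamma:=v_n$ is strong rank one (and hence of zero width).

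Finally, as $\Gamma$ already contains a pair of independent rank one elements by hypothesis, it does not stabilize the two-point set $\{\gamma^-,\gamma^+\}$; pick $\delta\in\Gamma$ with $\{\delta\gamma^-,\delta\gamma^+\}\cap\{\gamma^-,\gamma^+\}=\emptyset$. Then $\delta\gamma\delta^{-1}\in\Gamma$ is rank one with invariant geodesic $\delta\cdot v_\gamma\in\jac$ (since $\jac$ is $\is(\XX)$-invariant) and fixed points $\delta\gamma^\pm$ disjoint from $\{\gamma^\pm\}$, so $(\gamma,\delta\gamma\delta^{-1})$ is the desired pair. I expect the main obstacle to be the construction of $(\gamma_n)$ with both asymptotic limits landing in the prescribed neighborhoods $U^\pm$: the geodesic $v\in\jac_\Gamma$ is given only abstractly (not as an axis of an element of $\Gamma$), so one really needs the density of rank one fixed points in $\Lim$ combined with the contraction dynamics of Lemma~\ref{dynrankone} to steer both limits simultaneously.
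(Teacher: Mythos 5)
Your overall architecture is viable and genuinely different from the paper's: the paper first invokes topological transitivity of the geodesic flow mod $\Gamma$ on $\SX_\Gamma$ (available because a Hadamard manifold is geodesically complete) to replace the given element of $\jac_\Gamma$ by a transitive one lying in $\jac$, and then cites the duality argument of Caprace--Fujiwara to get $(\gamma_n)$ with $\gamma_n \xo\to v^+$ and $\gamma_n^{-1}\xo\to v^-$; you instead work directly with the given $v\in\jac_\Gamma$ and steer the backward limits into $U^-$ by hand via density of rank one fixed points and the north--south dynamics of Lemma~\ref{dynrankone}, using that Corollary~\ref{manifoldJopen} lets you take \emph{any} $\xi\in U^-\cap\Lim$ rather than $v^-$ itself. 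Your final conjugation step is essentially the paper's. However, two steps need repair.

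First, the steering has an unaddressed degenerate case: if the subsequential limit $\xi_0$ of $g_n^{-1}\xo$ equals $h^+$, then $h^{-N}\xi_0=h^+$ for every $N$, and the replacement $g_n\mapsto g_nh^N$ never moves the backward limit into $U^-$. You must choose the auxiliary rank one element $h$ with $h^-\in U^-$ \emph{and} $h^+\ne\xi_0$; this is possible because fixed point pairs of rank one elements of $\Gamma$ are dense in $\Lim\times\Lim$ (Hamenst\"adt's theorem combined with Lemma~\ref{elementsarerankone}), but it has to be said.

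Second, and more seriously, the step ``choose $v_n(0)$ to be the circumcenter of $C_{v_n}$ \ldots which forces $v_n(0)\to u(0)$'' does not follow as written. The condition that $v_n(0)$ be the circumcenter of its own transversal is invariant under the geodesic flow: the parallel set splits as $C_{v_n}\times\RR$, the circumcenters of the cross-sections form exactly the central geodesic, so if the condition holds for $v_n$ it holds for every $g^tv_n$. Hence your normalization pins down the transverse position of $v_n(0)$ but not its position along the geodesic; the sets $C_{v_n}$ could escape to infinity in the $\RR$-direction, and Lemma~\ref{Hausdorffonboundary} (whose conclusion implicitly concerns cross-sections through suitably normalized base points, as its derivation from Lemma~\ref{Hausdorffconv} via weak convergence shows) then only gives $\mathrm{diam}\, C_{v_n}\to 0$, not convergence of location. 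The repair is easy and makes the circumcenter detour unnecessary: reparametrize $v_n$ so that $\bs_{v_n^-}\bigl(\xo,v_n(0)\bigr)=\bs_{u^-}\bigl(\xo,u(0)\bigr)$, i.e.\ force weak convergence $v_n\to u$ in the sense of (\ref{defweakconvergence}); then Lemma~\ref{weakimpliesstrong} yields a subsequence converging in $\SX$ to some $w\sim u$, and $w=u$ because $u$ has zero width, after which openness of $\jac$ finishes as you say. Simpler still, you already have $U^\pm$ from Corollary~\ref{manifoldJopen}: for large $n$ the pair $(v_n^-,v_n^+)$ lies in $U^-\times U^+$, hence is joined by some $u_n\in\jac\subseteq\zero$; zero width forces $v_n$ to coincide with $u_n$ up to a time shift, so $v_n\in\jac$ with no limiting argument at all --- this is exactly the paper's shortcut.
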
 
\begin{proof} Since $\XX$ is geodesically complete, the geodesic flow restricted to 
\[ \SX_\Gamma:=\{v\in \SX\colon v^-,v^+\in\Lim\}\]
 is topologically transitive mod $\Gamma$; this means that there exists $v\in \SX_\Gamma$ \st for any $u\in \SX_\Gamma$ $v$ $\Gamma$-accumulates on $u$. 
 Assume for a contradiction that $v\notin\jac$; then $\gamma g^t v\notin \jac$ for all $\gamma\in\Gamma$ and for all $t\in\RR$. But since $v$ $\Gamma$-accumulates on $u\in \jac_\Gamma$ 
 this implies $J$-rank$(u)\ge 2$ which is a contradiction. So we conclude that $v\in\jac_\Gamma$.
 
Since  $v^-,v^+\in\Lim$, 
 there exists a sequence $(\gamma_n)\subseteq\Gamma$ \st $\gamma_n x\to v^+$ and $\gamma_n^{-1} x\to v^-$ for some $x\in\XX$ (see for example the proof of Proposition~3.5 in \cite{MR2585575}). By Lemma~\ref{elementsarerankone}, for $n$ sufficiently large  $\gamma_n$ is rank one with invariant geodesic $v_n$ \st $(v_n^-, v_n^+)\to (v^-,v^+)$ as $n\to \infty$. So according to Corollary~\ref{manifoldJopen} we have  $v_n\in \jac$ for $n$ sufficiently large, hence there exists a rank one element $\gamma_n$ with a strong rank one invariant geodesic. As $\Gamma$ is rank one  there exists one element (actually an infinite number) in $\Gamma$ not commuting with $\gamma_n$, and  conjugating $\gamma_n$ by such an element provides another rank one isometry  in $\Gamma$ independent from $\gamma_n$ which also has a strong rank one invariant geodesic. \end{proof}
 
 This implies that the hypothesis of the Main Theorem in \cite{LinkPicaud} is satisfied for Hadamard manifolds $\XX$ with a rank one group $\Gamma<\is(\XX)$ such that $\jac_\Gamma\ne\emptyset$; we will see later that the conclusion of  the Main Theorem in \cite{LinkPicaud} remains true under the weaker condition that    $\Gamma<\is(\XX)$ is an arbitrary rank one group. 

\section{Basic notions in ergodic theory and geodesic currents }\label{dyndef}\label{geodcurrentmeasures}

In this section we want to  recall a few general notions from topological dynamics and ergodic theory which will be needed later; our main references here are \cite{Hopf} and \cite{MR1293874}.  

Let $\Omega$ be a locally compact and $\sigma$-compact  
Hausdorff topological space and $\varphi$ a {\hd flow} on $\Omega$, that is a continuous map $\varphi :\RR\times \Omega\to \Omega$  \st $\varphi(0,\omega)=\omega$ and $\varphi\bigl(s,\varphi(t,\omega)\bigr)=\varphi(s+t,\omega)$ for all $s,t\in\RR$ and all $\omega\in \Omega$.

A point $\omega\in\Omega$ is said to be {\hd positively recurrent} respectively {\hd negatively recurrent} if there exists a sequence $(t_n)\nearrow\infty$ of real numbers \st 
\[ \varphi^{t_n}\omega=\varphi(t_n,\omega)\to \omega\quad\text{respectively }\  \varphi^{-t_n}\omega=\varphi(-t_n,\omega)\to \omega ;\] 
$\omega\in\Omega$ is said to be {\hd positively divergent} respectively {\hd negatively divergent} if 
for every compact set $K\subseteq \Omega$ there exists a constant $T>0$ \st for all $  t\ge T$  
\[ \varphi^t\omega=\varphi(t,\omega)\notin K\quad\text{respectively }\  \varphi^{-t}\omega=\varphi(-t,\omega)\notin K. \] 

Assume that $M$ is a Borel measure on $\Omega$ invariant by the flow $\varphi$. Then the Hopf decomposition theorem (see for instance  \cite[Theorem~3.2]{MR797411},\cite[Satz~13.1]{Hopf} ) asserts that the space $\Omega$  decomposes  into a disjoint union of $\varphi$-invariant Borel sets $\Omega_C$ and $\Omega_D$ which satisfy the following properties:
\begin{itemize}
\item[(C)] There does not exist a Borel subset $E\subseteq \Omega_C$ with $M(E)>0$ and such that the sets $\bigl( \varphi^k(E)\bigr)_{k\in\ZZ}$ are pairwise disjoint.
\item[(D)] There exists a Borel set $W\subseteq \Omega_D$ \st $\Omega_D$ is the disjoint union of sets $(W_k)_{k\in\ZZ}$, where each $W_k$ is a translate of $W$ under the flow $\varphi$.
\end{itemize}
According to Poincar{\'e}'s recurrence theorem (see for example \cite[Satz~13.2]{Hopf})\break 
\mbox{$M$-almost} every point of $\Omega_C$ is positively  recurrent. On the other hand, by Hopf's divergence theorem (see again \cite[Satz~13.2]{Hopf}), $M$-almost every point of $\Omega_D$ is positively  divergent.  This implies in particular that the sets $\Omega_C$ and $\Omega_D$ are unique up to sets of measure zero.  

The dynamical system $(\Omega,\varphi,M)$ is said to be {\hd  conservative} if $M(\Omega_D)=0$, and {\hd dissipative} if $M(\Omega_C)=0$. Notice that if the measure $M$ is finite, then due to (D) above $(\Omega,\varphi,M)$ is   conservative. Moreover, since the decomposition is the same for $\varphi^1$ and for $\varphi^{-1}$, Poincar{\'e}'s recurrence theorem and Hopf's divergence theorem imply that
$M$-almost every point of $\Omega_C$ is positively and negatively recurrent, and $M$-almost every point of $\Omega_D$ is positively and negatively divergent. Moreover, if $\rho\in\LL^1(M)$ is $M$-almost everywhere strictly positive, then -- up to a set of measure zero -- the conservative part $\Omega_C$ can be written as
\[\Omega_C=\{ \omega\in\Omega\colon \int_{0}^\infty \rho(\varphi^t \omega)\d t=\infty\}.\]

Finally, the dynamical system $(\Omega,\varphi,M)$ is called {\hd  ergodic} if every $\varphi$-invariant Borel set $E\subseteq\Omega$ either satisfies $M(E)=0$ or $M(\Omega\setminus E)=0$. Hence if a dynamical system $(\Omega,\varphi,M)$ is ergodic, then it is either conservative or dissipative; the second possibility can only occur for an infinite 
measure $M$ which is supported on a single orbit
\[ \{ \varphi^t \omega \colon t\in\RR\}\quad\text{with }\ \omega\in \Omega.\]

In Section~\ref{HopfArgument} we will need the following generalization of the Birkhoff ergodic theorem which is stated and proved on p.~53 in \cite{Hopf}:
\begin{theorem}[Hopf's individual ergodic theorem]  \label{Hopfindividual}
Assume that $(\Omega,\varphi,M)$ is conservative, and let $\rho\in \LL^1(M)$ be a function which is strictly positive   $M$-almost everywhere. 

Then for any function $f\in \LL^1(M)$  the limits
$$  f^\pm(\omega)=\lim_{T\to +\infty} \frac{\int_0^T  f(\varphi^{\pm t}(\omega))\d t}{\int_0^T  \rho(\varphi^{\pm t}(\omega))\d t}$$
exist and are equal for $M$-almost every $\omega\in\Omega$. Moreover, the functions $f^+, f^-$ are measurable and flow invariant, $\rho\cdot f^+, \rho\cdot  f^-\in\LL^1(M)$, and for every bounded measurable flow-invariant function $h$ we have \[ \int_\Omega \rho(\omega) f^\pm (\omega) h(\omega)\d M(\omega)=  \int_\Omega  f (\omega) h(\omega)\d M(\omega).\]
Finally, $(\Omega,\varphi,M)$ is ergodic if and only if for 
every function $f\in \LL^1(M)$ the associated limit function $f^+$ is constant $M$-almost everywhere.
\end{theorem}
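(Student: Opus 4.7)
The plan is to reduce the continuous-time statement to Hopf's classical discrete ratio ergodic theorem for the time-one map $T:=\varphi^1$, and then extract flow-invariance, the integral identity, and the ergodicity equivalence by standard measure-theoretic manipulations. Set
$$\widetilde f(\omega):=\int_0^1 f(\varphi^t\omega)\,\d t,\qquad \widetilde\rho(\omega):=\int_0^1 \rho(\varphi^t\omega)\,\d t;$$
Fubini together with the $\varphi$-invariance of $M$ gives $\widetilde f,\widetilde\rho\in\LL^1(M)$, and $\widetilde\rho>0$ $M$-a.e.\ because $\rho>0$ a.e. A telescoping computation yields $\int_0^n f(\varphi^t\omega)\,\d t=\sum_{k=0}^{n-1}\widetilde f(T^k\omega)$ and the analogous identity for $\rho$. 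Conservativity combined with Poincar\'e recurrence forces the denominator $\sum_{k=0}^{n-1}\widetilde\rho(T^k\omega)\to\infty$ $M$-a.e., so the fractional-endpoint contributions of the continuous-time integrals are asymptotically negligible in the ratio, and convergence of the discrete ratio transfers directly to convergence of the continuous-time one.

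The discrete ratio theorem is the analytic core and rests on the Hopf maximal ergodic lemma: for $g\in\LL^1(M)$ and $M_ng:=\max_{1\le k\le n}\sum_{j=0}^{k-1}g\circ T^j$, one has $\int_{\{M_ng>0\}}g\,\d M\ge 0$. Writing $S_ng:=\sum_{j=0}^{n-1}g\circ T^j$ for the Birkhoff sums and applying this lemma to $g=\widetilde f-\lambda\widetilde\rho$ for positive rational $\lambda$ yields a Chebyshev-type bound on the measure of $\{\sup_n S_n\widetilde f/S_n\widetilde\rho>\lambda\}$; a standard argument over pairs of rationals $\alpha<\beta$ then shows that the set on which $\liminf_n S_n\widetilde f/S_n\widetilde\rho<\alpha<\beta<\limsup_n S_n\widetilde f/S_n\widetilde\rho$ is $M$-null, giving $M$-a.e.\ convergence to a limit $f^+$. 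The reversed flow $t\mapsto\varphi^{-t}$ is treated identically and produces $f^-$.

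Flow-invariance of $f^\pm$ follows at once from the divergence $\int_0^T\rho\circ\varphi^{\pm t}\,\d t\to\infty$, since shifting the window of integration by a bounded amount perturbs numerator and denominator only by bounded quantities, which vanish in the ratio. For the integral identity, fix a bounded flow-invariant $h$; pulling $h$ into the Birkhoff sum using its invariance, dividing by $\int_0^T\rho\circ\varphi^{\pm t}\,\d t$, and applying Fatou together with the $\varphi$-invariance of $M$ yields $\int_\Omega \rho f^\pm h\,\d M=\int_\Omega f h\,\d M$. To deduce $f^+=f^-$, note that both satisfy the same integral identity against every bounded flow-invariant test function; choosing $h$ to be a bounded flow-invariant approximation of $\mathrm{sign}(f^+-f^-)$ forces $\int\rho|f^+-f^-|\,\d M=0$, hence $f^+=f^-$ $M$-a.e.\ because $\rho>0$. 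For the final equivalence, ergodicity makes every flow-invariant function constant $M$-a.e., in particular $f^+$; conversely, if $f^+$ is constant for every $f\in\LL^1(M)$, apply this to $f=\chr_E\rho$ for a flow-invariant Borel set $E$: then $f^+=\chr_E$, whose $M$-a.e.\ constancy forces $M(E)=0$ or $M(\Omega\setminus E)=0$.

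The genuine difficulty is the Hopf maximal ergodic lemma itself, a combinatorial rearrangement argument that goes back to Hopf and is the combinatorial heart of the whole theorem; once it is in hand, the reduction to discrete time, the flow-invariance argument, the derivation of the integral identity, the identification $f^+=f^-$, and the ergodicity equivalence are all routine bookkeeping. In the present paper the entire theorem is simply cited as a black box from Hopf's monograph \cite{Hopf}.
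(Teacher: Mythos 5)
Your proof is necessarily a different route from the paper's, because the paper gives no proof at all: it invokes the theorem as ``stated and proved on p.~53'' of Hopf's monograph \cite{Hopf}. Your architecture is the standard textbook one, and most of it is sound: the reduction to the time-one map $T=\varphi^1$ via $\widetilde f(\omega)=\int_0^1 f(\varphi^t\omega)\,\d t$, the divergence $\sum_{k=0}^{n-1}\widetilde\rho(T^k\omega)\to\infty$ a.e.\ from conservativity, the two-rationals argument (applying the maximal lemma to $\widetilde f-\beta\widetilde\rho$ and $\alpha\widetilde\rho-\widetilde f$ on the invariant set where $\liminf<\alpha<\beta<\limsup$) to get a.e.\ existence of $f^\pm$, the flow-invariance of $f^\pm$, and both directions of the ergodicity criterion --- the choice $f=\chr_E\,\rho$ for an invariant set $E$, giving $f^+=\chr_E$, is exactly right.

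There is, however, a genuine gap at the integral identity $\int_\Omega\rho f^\pm h\,\d M=\int_\Omega fh\,\d M$, and it propagates. Fatou cannot produce this equality: for $f,h\ge 0$ it only gives
\[ \int_\Omega \rho f^+ h\,\d M\;\le\;\liminf_{n\to\infty}\int_\Omega \frac{S_n\widetilde f}{S_n\widetilde\rho}\,\rho h\,\d M,\]
and the integrals on the right bear no relation to $\int_\Omega fh\,\d M$ for finite $n$: the $\omega$-dependent denominator $S_n\widetilde\rho(\omega)$ cannot be separated off by invariance of $M$. This is precisely the difficulty that distinguishes the infinite-measure ratio theorem from the finite-measure Birkhoff theorem, where one divides by the constant $n$ and each term $\int f\circ T^k\,\d M=\int f\,\d M$ contributes equally; identifying the limit is the second half of Hopf's theorem, not routine bookkeeping. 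The standard repair uses the maximal lemma a second time: for an invariant set $E$ and $\lambda>0$, the set $\{f^+>\lambda\}\cap E$ is invariant and (mod $M$) every point of it lies in the maximal set of $\widetilde f-\lambda\widetilde\rho$ for the system restricted to this invariant set, so the maximal lemma gives $\int_{\{f^+>\lambda\}\cap E}\widetilde f\,\d M\ge\lambda\int_{\{f^+>\lambda\}\cap E}\widetilde\rho\,\d M$; a layer-cake summation over levels $\lambda$, together with the symmetric inequality on the sets $\{f^+<\lambda\}$, yields $\int_E f\,\d M=\int_E\rho f^+\,\d M$ for every invariant $E$, hence the identity for bounded invariant $h$ (alternatively one can induce on a sweep-out set of finite measure and invoke Birkhoff's theorem there). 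The gap is load-bearing in your own write-up: your proof that $f^+=f^-$ almost everywhere --- one of the assertions of the theorem, and the one the paper actually exploits in its Hopf argument in Section~6 --- is deduced from this identity by testing against $h=\mathrm{sign}(f^+-f^-)$, so as written that assertion is unsupported.
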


We now want to recall the concept of geodesic current introduced for example in   \cite{MR1293874}.  From here on we let  $\XX$ be a proper Hadamard space and $\Gamma<\is(\XX)$ a discrete group. We will also use the notation introduced in Section~\ref{prelim} and Section~\ref{rank1prelim}. The geodesic flow on the quotient $\quotient{\Gamma}{\SX}$ will be denoted $g_\Gamma=(g_\Gamma^t)_{t\in\RR}$. 

Recall that a Borel measure 
 on a locally compact Hausdorff space is called {\hl Radon} if it is finite for all compact subsets.
 \begin{definition}[compare Definitions~2.3 and 2.5 in \cite{MR1293874}]\label{geodcurrent}\ \\
 A {\hd geodesic current} on $\quotient{\Gamma}{\XX}$ is a $\Gamma$-invariant Radon 
 measure on  
 $\ndpt \SX\subseteq \rand\times \rand $.
 A geodesic current $\overline\mu\,$ is said to be a {\hd quasi-product geodesic current}, if there exist probability measures $\mu_-$, $\mu_+$ on $\rand$ \st  $\overline\mu\,$ is absolutely continuous with respect to the product measure $\mu_-\otimes \mu_+$.  \end{definition} 
 A geodesic current $\overline \mu\,$ hence yields a dynamical system $(\ndpt\SX, \Gamma, \overline\mu)$ which is closely related to the dynamical system $(\rand\times\rand, \Gamma, \mu_-\otimes\mu_+)$ with the diagonal action of $\Gamma$ on $\rand\times\rand$. 
 As in \cite[p.17]{MR2057305} a Borel set $W\subseteq\ndpt\SX$ is called {\hd wandering} if for $\overline\mu$-almost every $(\xi,\eta)\in W$ the number 
 \[ \#\{\gamma\in\Gamma\colon \gamma (\xi,\eta)\in W\}\quad\text{is finite}.\]
  The $\Gamma$-action on $\ndpt\SX$ is called {\hd dissipative} if up to sets of measure zero the set $\ndpt\SX$ is a countable union of wandering sets; it is called {\hd conservative} if every wandering subset $W\subseteq\ndpt\SX$ satisfies $\overline\mu(W)=0$.
  
Let $\overline \mu\,$ be a geodesic current 
such that for $\overline\mu$-almost every $(\xi,\eta)\in\ndpt\SX$ a geodesic flow invariant Radon measure $\lambda_{(\xi\eta)}$ on the closed and convex subset $(\xi\eta)\subseteq\XX$ exists. Then we get a $\Gamma$-invariant and geodesic flow invariant Borel measure $m$ on $\SX$ by integrating $\overline\mu\,$ with respect to the measure $\lambda_{(\xi\eta)}$ along the sets $(\xi\eta)\subseteq\XX$, that is via the assignment
\[ m(E):= \int_{\ndpt\SX} \lambda_{(\xi\eta)}\bigl(p(E)\cap(\xi\eta)\bigr)\mathrm{d}\overline\mu(\xi,\eta)\quad\text{for any Borel set }\  E\subseteq \SX.\]
Notice that 
by continuity of the maps $p:\SX\to\XX$ and  $\ndpt:\SX\to\ndpt\SX$ the Borel measure $m$ is Radon as well. If $(\xi,\eta)\in\ndpt\zero$, then we use the convention that the  Radon measure $\lambda_{(\xi\eta)}$ on $(\xi\eta)\cong\RR$ 
is Lebesgue measure on $\RR$ (which in addition is inner and outer regular). 

The Radon measure $m$ then induces a geodesic flow invariant measure $m_\Gamma$ on the quotient $\quotient{\Gamma}{\SX}$ which we will call a {\hd Knieper's measure} on $\quotient{\Gamma}{\SX}$ for the following reason: In \cite{MR1652924}, G.~Knieper constructed for a Hadamard {\hl manifold} $\XX$ a measure on $\quotient{\Gamma}{\SX}$ precisely in this way with $\lambda_{(\xi\eta)}$ the induced Riemannian volume element on the submanifolds $(\xi\eta)\subseteq\XX$ and $\overline\mu\,$ the quasi-product geodesic current induced by a conformal density for $\Gamma$ (see Section~\ref{currentsfromconfdens} for the precise definition).

Unfortunately, if $\XX$ is not a manifold then in general there is no natural geodesic flow invariant measure on the closed and convex subsets $(\xi\eta)$ for $(\xi,\eta)\in\ndpt(\SX\setminus\zero)$.
Hence we will follow Ricks' approach to obtain from a geodesic current a geodesic flow and $\Gamma$-invariant measure on the set of parallel classes of parametrized geodesic lines $[\SX]$: Given a geodesic current $\overline \mu\,$  on $\ndpt\SX=\ndpt[\SX]$ we want to define a Radon 
measure $\overline m$ on $[\SX]\cong \ndpt\SX\times\RR$ by $\overline\mu\otimes\lambda$, where $\lambda$ denotes Lebesgue measure on $\RR$. 

However, the $\Gamma$-action on $[\SX]$ need not be proper: If $\Gamma$ contains  an axial isometry $\gamma$ with invariant geodesic $w\in \SX\setminus\reg$ whose image $w(\RR)$ belongs to an isometric copy $E\subset(\gamma^-\gamma^+)$ of a Euclidean plane, then for any geodesic $u\in\SX$ orthogonal to $w$ and with image $u(\RR)\subseteq E$  
we have $\gamma^k u\sim u$  and hence $\gamma^k [u]=[u]$ for all $k\in\ZZ$.  So in particular we do not necessarily obtain from $\overline m$ a geodesic flow invariant measure on the quotient $\quotient{\Gamma}{[\SX]}$. For that reason we will consider only geodesic currents $\overline\mu\,$ which are defined on $\ndpt\reg$ instead of $\ndpt\SX$.  

According to Lemma~\ref{joinrankone}, $\Gamma$ acts  properly on $[\reg]\cong\ndpt\reg\times \RR$ which  admits a proper metric. Since the action is by homeomorphisms and  preserves the Borel  measure $\overline m=\overline\mu\otimes\lambda$, there is (see for instance, \cite[Appendix A]{RicksThesis}) a unique Borel quotient measure $\overline m_\Gamma$ on $\quotient{\Gamma}{[\reg]}$ 􏱂 satisfying the characterizing property
\[ \int_{\bar A} \widetilde h\d \overline m=\int_{\quotient{\Gamma}{[\reg]}} \bigl( h\cdot f_{\bar  A}\bigr) \d \overline m_\Gamma\]
for all Borel sets  $ \bar A\subseteq [\reg]$ and $\Gamma$-invariant Borel maps 
$ \widetilde h:[\reg]\to [0,\infty]$ and\break $\widetilde f_{\bar A}:[\reg]\to [0,\infty]$ defined by $\widetilde f_{\bar A}([v]):= \#\{\gamma\in\Gamma\colon \gamma [v]\in \bar A\}$ for $[v]\in\reg$,  and with   $h$ and $f_{\bar A}$ the maps on $\quotient{\Gamma}{[\reg]}$ induced from $\widetilde h$ and $\widetilde f_{\bar A}$.   

According to the characterizing property above, a Borel set $\bar A\subset[\SX]$ satisfies $\overline m(\bar A)=0$ if and only if its projection $\bar A_\Gamma$ to $\quotient{\Gamma}{[\SX]}$ satisfies $m_\Gamma(\bar A_\Gamma)=0$. So in fact we can consider $\overline m_\Gamma$ as a Borel measure on $\quotient{\Gamma}{[\SX]}$; we will call  $\overline m_\Gamma$ the {\hd weak Ricks' measure} associated to the geodesic current $\overline\mu\,$  on $\ndpt\reg$.

Our final goal is to construct from a weak Ricks' measure $\overline m_\Gamma$ a geodesic flow invariant measure on $\quotient{\Gamma}{\SX}$. 
So let us first remark that $\zero\subseteq\reg$ is a Borel subset  by  semicontinuity (see Lemma~\ref{Hausdorffconv}) of the width function; as $\Hopf_\xo\ein_\reg:\reg\to\ndpt\reg\times\RR \cong[\reg]$ is a topological quotient map by Lemma~\ref{weakimpliesstrong}, $[\zero] \subseteq [\reg]$ is also a Borel subset. 
Notice also that $\Hopf_\xo\ein_{\zero}:\zero\to\ndpt\zero\times\RR\cong [\zero]$ is a homeomorphism.  So if $\quotient{\Gamma}{[\zero]}$ 
has positive mass with respect to the weak Ricks' measure $\overline m_\Gamma$   we may define (as in \cite[Definition~8.12]{Ricks})   a geodesic flow and $\Gamma$-invariant measure $m^0$ on $\SX$ by setting 
\begin{equation}\label{defstrongRicks}
 m^0(E):= \overline m \bigl(\Hopf_\xo(E\cap \zero)\bigr)\quad\text{for any Borel set }\ E\subseteq\SX;
 \end{equation}
this measure $m^0$ then induces the {\hd Ricks' measure} $m^0_\Gamma$ on $\quotient{\Gamma}{ \SX}$. 

Notice that in general $\overline m_\Gamma (\quotient{\Gamma}{[\zero]})= 0\ $ is possible; obviously this is always the case when $\zero=\emptyset$. 
However, we will see later that under certain conditions the Ricks' measure is actually equal to the weak Ricks' measure used for its construction.

\section{The radial limit set and recurrence} \label{propradlimset}

As before  $\XX$ will always be a proper Hadamard space and $\Gamma<\is(\XX)$ a discrete rank one group. We further fix a  
base point $\xo\in\XX$. We will begin this section with a few definitions.

A point $\xi\in\rand$ is called a {\hd radial limit point} if there exists $c>0$ and  sequences $(\gamma_n)\subseteq\Gamma$ and $(t_n)\nearrow\infty$ such that
\begin{equation}\label{radlimpoint} d\bigl(\gamma_n \xo, \sigma_{\xo,\xi}(t_n)\bigr)\le c\quad\text{for all }\ n\in\NN.\end{equation}
Notice that by the triangle inequality this condition is independent of the choice of $\xo\in\XX$.
The {\hd radial limit set} $\radlim\subseteq\Lim$ of $\Gamma$ is defined as the set of radial limit points. 

Recall the notion of (weakly) $\Gamma$-recurrent  elements from Definition~\ref{weakstrongrecurrencedef}. 
 Moreover, an element 
 $v\in \SX$ is called  {\hd  $\Gamma$-divergent} if for every compact set $K\subseteq\SX$ there exists $T>0$ such that for all $t\ge T$ 
\[ g^tv\notin \bigcup_{\gamma\in\Gamma}\gamma K;\] 
it is called {\hd  weakly $\Gamma$-divergent} if for every compact set $\overline K\subset[\SX]$ there exists $T>0$ such that for all $t\ge T$ 
\[ g^t [v]\notin \bigcup_{\gamma\in\Gamma}\gamma \overline K.\] 

For the convenience of the reader we state the following easy fact. 
\begin{lemma}\label{critradlim}
Let $ u\in \SX$.  
Then 
\[  u\ \ \Gamma\text{-recurrent}\quad\Longrightarrow\quad u^+\in\radlim
 \quad\Longrightarrow\quad  u  \  \text{ \underline{not} }\ \Gamma\text{-divergent}.\]
\end{lemma}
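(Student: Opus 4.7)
The plan is to prove the two implications separately using only elementary CAT$(0)$ geometry. The single geometric input I will need is the standard fact that two asymptotic geodesic rays $\sigma_1,\sigma_2$ in a CAT$(0)$ space, each parametrized by arclength from its origin, satisfy $d(\sigma_1(t),\sigma_2(t))\le d(\sigma_1(0),\sigma_2(0))$ for all $t\ge 0$: the function $t\mapsto d(\sigma_1(t),\sigma_2(t))$ is convex and bounded on $[0,\infty)$, hence non-increasing. I will also use that, by properness of $\XX$ together with the Arzel\`a--Ascoli theorem, the set $K_r:=\{v\in\SX\colon v(0)\in \overline{B}_\xo(r)\}$ is compact in $(\SX,d_1)$ for every $r>0$.

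For the first implication I would start from the definition: $\Gamma$-recurrence of $u$ gives sequences $(\gamma_n)\subseteq\Gamma$ and $t_n\nearrow\infty$ with $\gamma_n g^{t_n}u\to u$ in $(\SX,d_1)$. Evaluating at the origin yields $\gamma_n u(t_n)\to u(0)$, so with $R:=d(\xo,u(0))$ the triangle inequality gives $d(\gamma_n^{-1}\xo,u(t_n))=d(\xo,\gamma_n u(t_n))\le R+1$ for $n$ large. Since the rays $u|_{[0,\infty)}$ and $\sigma_{\xo,u^+}$ are asymptotic with origins $u(0)$ and $\xo$ respectively, the CAT$(0)$ estimate above gives $d(u(t_n),\sigma_{\xo,u^+}(t_n))\le R$. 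Combining the two inequalities yields $d(\gamma_n^{-1}\xo,\sigma_{\xo,u^+}(t_n))\le 2R+1$ for $n$ large, which by~(\ref{radlimpoint}) exhibits $u^+$ as a radial limit point through $(\gamma_n^{-1})$ and $t_n\nearrow\infty$.

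For the second implication I would work contrapositively. Given $c>0$, $(\gamma_n)\subseteq\Gamma$ and $t_n\nearrow\infty$ with $d(\gamma_n\xo,\sigma_{\xo,u^+}(t_n))\le c$, the same asymptotic-ray estimate gives $d(\sigma_{\xo,u^+}(t_n),u(t_n))\le R$, so $d(\gamma_n\xo,u(t_n))\le c+R$ and therefore
\[(\gamma_n^{-1}g^{t_n}u)(0)=\gamma_n^{-1}u(t_n)\in \overline{B}_\xo(c+R).\]
Thus $\gamma_n^{-1}g^{t_n}u\in K_{c+R}$, and hence $g^{t_n}u\in\gamma_n K_{c+R}\subseteq\bigcup_{\gamma\in\Gamma}\gamma K_{c+R}$ for an unbounded sequence of times, which is precisely the failure of $\Gamma$-divergence.

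I do not anticipate a real obstacle: the lemma is essentially the definitions translated through one soft CAT$(0)$ bound. The only point requiring a little care is the choice of parametrizations -- parametrizing both asymptotic rays by arclength from their own origins makes the quantitative bound come out without any Busemann cocycle bookkeeping. Note also that the argument uses neither the rank one assumption on $\Gamma$ nor any hypothesis on $u$ beyond membership in $\SX$, so the chain of implications is valid for every discrete subgroup $\Gamma<\is(\XX)$ and every $u\in\SX$.
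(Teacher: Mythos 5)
Your proof is correct. The paper states this lemma with no proof at all (it is introduced as an ``easy fact''), and your argument supplies exactly the routine verification that is implicitly intended: the convexity-plus-boundedness argument showing that the distance between the asymptotic rays $u|_{[0,\infty)}$ and $\sigma_{\xo,u^+}$ is non-increasing lets you pass between $u(t_n)$ and $\sigma_{\xo,u^+}(t_n)$ at cost $R=d(\xo,u(0))$ in both implications, and properness plus Arzel\`a--Ascoli gives compactness of $K_{c+R}=\{v\in\SX\colon v(0)\in\overline{B}_\xo(c+R)\}$, which is what turns the radial-limit data into the failure of $\Gamma$-divergence. Your closing observation is also accurate: neither implication uses the rank one hypothesis or anything about $u$ beyond $u\in\SX$, so the chain of implications holds for every discrete $\Gamma<\is(\XX)$, consistent with the fact that the paper invokes this lemma as a purely definitional bridge between recurrence, the radial limit set, and divergence.
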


We want to emphasize here that 
 in general  $u$  weakly $\Gamma$-recurrent does \underline{not} imply $u^+\in\radlim$, while $u$ not  $\Gamma$-divergent always implies $u$ not weakly $\Gamma$-divergent. However, if \underline{$u\in\reg$} is weakly $\Gamma$-recurrent, then according to Lemma~\ref{weakimpliesstrong}  $u$ $\Gamma$-accumulates to some $w\sim u$. This again implies that $w^+=u^+\in\radlim$ and we get the following 
\begin{lemma}\label{critregradlim}
If $ u\in \reg$ 
then 
\[ u\ \text{ weakly }\ \Gamma\text{-recurrent}\quad\Longrightarrow\quad u^+\in\radlim
 \quad\Longrightarrow\quad u  \  \text{ \underline{not} weakly }\ \Gamma\text{-divergent}.\]
 \end{lemma}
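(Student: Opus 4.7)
The plan is to reduce both implications to Lemma~\ref{critradlim} by promoting the weak notions to their strong counterparts via the hypothesis $u\in\reg$.

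For the first implication, suppose $u\in\reg$ is weakly $\Gamma$-recurrent. By Definition~\ref{weakstrongrecurrencedef} there are sequences $(\gamma_n)\subseteq\Gamma$ and $(t_n)\nearrow\infty$ with $\gamma_n g^{t_n}u\to u$ weakly. Since $u\in\reg$, Lemma~\ref{weakimpliesstrong} yields a subsequence (which I relabel) such that $\gamma_n g^{t_n}u\to w$ in $\SX$ for some $w\sim u$; in particular $w^+=u^+$ and
\[ \gamma_n u(t_n)=(\gamma_n g^{t_n}u)(0)\longrightarrow w(0)\quad\text{in }\XX.\]
Setting $\alpha_n:=\gamma_n^{-1}\in\Gamma$ and applying the isometry $\gamma_n^{-1}$, the quantity $d(\alpha_n w(0),u(t_n))$ is bounded uniformly in $n$. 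Since in a CAT($0$)-space the distance between two asymptotic rays is a bounded convex function and hence non-increasing, I have $d(u(t_n),\sigma_{\xo,u^+}(t_n))\le d(u(0),\xo)$ for all $n$. Combining these two bounds with the constant $d(\xo,w(0))$ via the triangle inequality gives
\[ d(\alpha_n\xo,\sigma_{\xo,u^+}(t_n))\le d(\xo,w(0))+d(\alpha_n w(0),u(t_n))+d(u(t_n),\sigma_{\xo,u^+}(t_n)),\]
which is uniformly bounded in $n$. Since $t_n\nearrow\infty$, this verifies condition~(\ref{radlimpoint}) for the sequence $(\alpha_n)\subseteq\Gamma$, so $u^+\in\radlim$.

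For the second implication, I use the obvious fact that ``not $\Gamma$-divergent'' implies ``not weakly $\Gamma$-divergent''. Assuming $u^+\in\radlim$, Lemma~\ref{critradlim} supplies a compact set $K\subseteq\SX$ together with a sequence $t_n\nearrow\infty$ such that $g^{t_n}u\in\bigcup_{\gamma\in\Gamma}\gamma K$. Its image $\overline K\subseteq[\SX]$ under the continuous quotient map $\SX\to[\SX]$ is compact, and $g^{t_n}[u]=[g^{t_n}u]\in\bigcup_{\gamma\in\Gamma}\gamma\overline K$ for every $n$, contradicting weak $\Gamma$-divergence of $u$.

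The only mildly delicate point is the first implication, namely converting the strong $\SX$-convergence $\gamma_n g^{t_n}u\to w$ furnished by Lemma~\ref{weakimpliesstrong} into the radial tracking condition for $u^+$; this, however, is a routine CAT($0$) triangle inequality argument relying on the convexity of the distance between asymptotic rays. Neither implication needs further structural input beyond Lemmas~\ref{weakimpliesstrong} and~\ref{critradlim}.
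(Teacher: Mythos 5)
Your proof is correct and takes essentially the same route as the paper: the paper's (implicit) argument in the remark preceding the lemma is precisely to use Lemma~\ref{weakimpliesstrong} to upgrade weak recurrence of $u\in\reg$ to genuine $\Gamma$-accumulation on some $w\sim u$ (whence $u^+=w^+\in\radlim$ by the standard tracking argument), and to combine Lemma~\ref{critradlim} with the fact that non-divergence in $\SX$ passes through the continuous quotient $\SX\to[\SX]$ to non-divergence in $[\SX]$. You have merely written out the triangle-inequality and compactness details that the paper leaves implicit, and these details are sound.
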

In the sequel the following subsets of $\SX$ will be convenient. Notice that for $v\in\SX$ the reverse geodesic $-v\in\SX$ is defined by $-v(s):=v(-s)$ for all $s\in\RR$.
\begin{align*}
\SX_{\Gamma}^{\small{\mathrm{rad}}}&:=\{v\in\SX\colon v^-\in\radlim, \ v^+\in\radlim\},\\
\SX_{\Gamma}^{\small{\mathrm{rec}}}&:=\{v\in\SX\colon v\ \text{and } -v\ \text{are }  \Gamma\text{-recurrent}\},\\
\SX_\Gamma^{\small{\mathrm{div}}}&:=\{v\in\SX\colon v\ \text{and } -v\ \text{are  } \Gamma\text{-divergent}\},\\
\SX_\Gamma^{\small{\mathrm{wrec}}}&:=\{v\in\SX\colon v\ \text{and } -v\ \text{are weakly } \Gamma\text{-recurrent}\},\\
\SX_\Gamma^{\small{\mathrm{wdiv}}}&:=\{v\in\SX\colon v\ \text{and } -v\ \text{are weakly } \Gamma\text{-divergent}\}.
\end{align*}

Notice that   in general $[\SX_\Gamma^{\small{\mathrm{wrec}}}]\subsetneq [\SX_\Gamma^{\small{\mathrm{rec}}}]$ and even 
\[ [\SX_\Gamma^{\small{\mathrm{wrec}}}\cap\reg ]\subsetneq [\SX_\Gamma^{\small{\mathrm{rec}}}\cap\reg]\]
by the remark following Definition~\ref{weakstrongrecurrencedef}.  

From now on we will also deal with the quotient $\quotient{\Gamma}{\SX}$; for the remainder of this section we will  therefore denote elements in the quotient by $u,v, w$ and elements in $\SX$ by $\widetilde u, \widetilde v,\widetilde w$.   
According to the definitions given in Section~\ref{dyndef}, $v\in \quotient{\Gamma}{\SX}$ is positively and negatively recurrent if and only if every lift $\widetilde v$ of $v$ belongs to $\SX_{\Gamma}^{\small{\mathrm{rec}}}$; $ v\in \quotient{\Gamma}{\SX}$ is positively and negatively divergent if and only if every lift $\widetilde v$ of $v$ belongs to $\SX_{\Gamma}^{\small{\mathrm{div}}}$. Similarly,  $[ v]\in \quotient{\Gamma}{ [\SX]}$  is positively and negatively recurrent if and only if for every lift $[\widetilde v]\in[\SX]$ and every representative $\widetilde u\in\SX$ of $[\widetilde v]$ we have $\widetilde u\in \SX_{\Gamma}^{\small{\mathrm{wrec}}}$; $[ v]\in \quotient{\Gamma}{[\SX]}$ is positively and negatively divergent if and only if for  every lift $[\widetilde v]\in[\SX]$ and every representative $\widetilde u\in\SX$ of $[\widetilde v]$ we have $\widetilde u\in \SX_{\Gamma}^{\small{\mathrm{wdiv}}}$. 
 
 We now assume that $m_\Gamma$ is a Knieper's measure on $\quotient{\Gamma}{\SX}$ constructed from an arbitrary geodesic current $\overline \mu$ and that $\overline m_\Gamma$ is a weak Ricks' measure on $\quotient{\Gamma}{[\SX]}$ coming from a geodesic current $\overline \mu\,$ defined on $\ndpt\reg$. 
For the convenience of the reader we state and prove the following easy
\begin{lemma}[compare also Theorem~2.3 in \cite{MR1293874}] \label{consdiss}\  \\
The dynamical systems $\bigl(\quotient{\Gamma}{ \SX}, g_\Gamma, m_\Gamma\bigr)$ respectively $\bigl(\quotient{\Gamma}{ [\SX]}, g_\Gamma, \overline m_\Gamma\bigr)$
are
\begin{enumerate}
\item[(a)] conservative  if and only if $\ \overline\mu\bigl(\ndpt (\SX\setminus \SX_{\Gamma}^{\small{\mathrm{rad}}})\bigr)=0$, 
\item[(b)] dissipative  if and only if $\ \overline\mu(\ndpt\SX_{\Gamma}^{\small{\mathrm{rad}}})=0$. 
\end{enumerate}
Moreover, in the dissipative case the measures $m_\Gamma$ and $\overline m_\Gamma$ are infinite, and the corresponding dynamical systems are non-ergodic unless $\overline\mu\,$ is  supported on a single 
orbit $\,\Gamma\cdot (\xi,\eta)\subseteq\ndpt\SX$.
\end{lemma}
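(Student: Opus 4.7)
The plan is to apply Hopf's decomposition theorem to each dynamical system and translate the resulting recurrent/divergent dichotomy on orbits into a condition on boundary pairs via Lemmas~\ref{critradlim} and~\ref{critregradlim}. I would treat $(\quotient{\Gamma}{\SX},g_\Gamma,m_\Gamma)$ and $(\quotient{\Gamma}{[\SX]},g_\Gamma,\overline m_\Gamma)$ in parallel, indicating only where the arguments diverge.

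First, the Hopf decomposition splits the phase space as a disjoint $g_\Gamma$-invariant union $\Omega_C\sqcup\Omega_D$; by Poincar\'e recurrence and Hopf's divergence theorem, almost every point of $\Omega_C$ is both positively and negatively recurrent, while almost every point of $\Omega_D$ is both positively and negatively divergent. Translated to lifts, points of $\quotient{\Gamma}{\SX}$ are recurrent iff their lifts lie in $\SX_\Gamma^{\mathrm{rec}}$ and divergent iff their lifts lie in $\SX_\Gamma^{\mathrm{div}}$; similarly for $\overline m_\Gamma$ with $\SX_\Gamma^{\mathrm{wrec}}$ and $\SX_\Gamma^{\mathrm{wdiv}}$ applied to representatives of equivalence classes. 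Applying Lemma~\ref{critradlim} (respectively Lemma~\ref{critregradlim}, noting that in the weak case the relevant lifts belong to $\reg$) to both $v$ and $-v$ gives the chain
\[ \widetilde v\in\SX_\Gamma^{\mathrm{rec}}\ \Longrightarrow\ \ndpt\widetilde v\in\ndpt\SX_\Gamma^{\mathrm{rad}}\ \Longrightarrow\ \widetilde v\notin\SX_\Gamma^{\mathrm{div}}, \]
so that modulo null sets every orbit is recurrent iff its endpoint pair lies in $\ndpt\SX_\Gamma^{\mathrm{rad}}$, and divergent iff its endpoint pair lies outside (using the Hopf dichotomy that a.e.\ orbit is one or the other).

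The decisive bookkeeping step is a null-set equivalence between $\overline\mu$ and $m_\Gamma$, respectively $\overline m_\Gamma$: for any $\Gamma$-invariant Borel set $A\subseteq\ndpt\SX$ (respectively $\ndpt\reg$), one has $\overline\mu(A)=0$ iff the projection of $\ndpt^{-1}(A)$ to the corresponding quotient is null. For Knieper's measure this follows from the integral formula $m(E)=\int\lambda_{(\xi\eta)}(p(E)\cap(\xi\eta))\,\mathrm d\overline\mu$, since every fiber $(\xi\eta)$ carries strictly positive $\lambda_{(\xi\eta)}$-mass; for $\overline m_\Gamma$ it follows, via $\overline m=\overline\mu\otimes\lambda$, from the characterizing property of the Borel quotient measure recorded in the excerpt right before the definition of weak Ricks' measure. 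Combining the sandwich above with this equivalence yields
\[ m_\Gamma(\Omega_D)=0\ \Longleftrightarrow\ \overline\mu\bigl(\ndpt(\SX\setminus\SX_\Gamma^{\mathrm{rad}})\bigr)=0\quad\text{and}\quad m_\Gamma(\Omega_C)=0\ \Longleftrightarrow\ \overline\mu(\ndpt\SX_\Gamma^{\mathrm{rad}})=0, \]
which are assertions (a) and (b).

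For the last statements in the dissipative case, infinite total mass is immediate from property~(D): the $g_\Gamma^k$-translates of the wandering set $W$ all carry equal measure, so a finite total would force $m_\Gamma(W)=0$ and hence $m_\Gamma(\Omega_D)=0$, a contradiction. For non-ergodicity, dissipativity of the flow transfers to dissipativity of the $\Gamma$-action on $(\ndpt\SX,\overline\mu)$; unless $\supp\overline\mu$ is contained in a single $\Gamma$-orbit, a measurable fundamental domain furnished by dissipativity can be split into two Borel subsets of positive $\overline\mu$-mass, whose $\Gamma$-saturations are disjoint $\Gamma$-invariant Borel sets, and their $\ndpt$-preimages descend to $g_\Gamma$-invariant Borel subsets of the quotient that are neither null nor conull. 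The main technical subtlety is verifying the null-set equivalence for $\overline m_\Gamma$, where one must control the possibly non-trivial stabilizers of endpoint pairs from axial isometries with a singular invariant geodesic; this is precisely why $\overline\mu$ is restricted to $\ndpt\reg$ (so that Lemma~\ref{joinrankone} makes the $\Gamma$-action on $[\reg]$ proper) and the characterizing-property machinery for the Borel quotient measure applies verbatim.
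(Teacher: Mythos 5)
Your treatment of parts (a) and (b) is correct and essentially identical to the paper's proof: Hopf decomposition combined with Poincar\'e's recurrence theorem and Hopf's divergence theorem, the sandwich obtained by applying Lemmas~\ref{critradlim} and~\ref{critregradlim} to $v$ and $-v$, the observation that the weak Ricks' measure is supported on $\quotient{\Gamma}{[\reg]}$ so that Lemma~\ref{critregradlim} is applicable to the relevant lifts, and finally the null-set correspondence between $\overline\mu$ and the quotient measures (via the fiber-integral formula for Knieper's measure, where positivity of the fiber measures $\lambda_{(\xi\eta)}$ is indeed needed and is implicit in the paper as well, and via the characterizing property of the Borel quotient measure for $\overline m_\Gamma$). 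Nothing to add there.

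The only real divergence is in the ``Moreover'' clause, and there your route is both heavier and incomplete. The paper disposes of this clause in one line by invoking the facts recalled before Theorem~\ref{Hopfindividual}: property (D) rules out a dissipative system of nonzero finite mass (your infiniteness argument is the same), and an ergodic dissipative flow must be supported on a single flow orbit, which by the construction of $m_\Gamma$ (respectively $\overline m_\Gamma$) from $\overline\mu$ immediately forces $\overline\mu$ to be carried by a single orbit $\Gamma\cdot(\xi,\eta)\subseteq\ndpt\SX$. You instead argue the contrapositive by constructing two disjoint invariant sets, and this rests on two assertions you do not justify: first, that dissipativity of the flow ``transfers'' to dissipativity of the $\Gamma$-action on $(\ndpt\SX,\overline\mu)$ --- this is essentially the content of Theorem~2.3 in \cite{MR1293874} and is not available for free here; second, that the dissipative part of a countable group action admits a measurable fundamental domain --- wandering sets in the sense defined in Section~\ref{dyndef} only give finiteness of return elements almost everywhere, so extracting a genuine transversal requires a measurable selection (smoothness) argument, plus care with the stabilizers of endpoint pairs of axial isometries, which you mention but do not resolve. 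Both points can be repaired, but they are unnecessary: once one knows that ergodicity together with dissipativity confines the quotient measure to a single flow orbit --- exactly the standard fact the paper cites --- the statement about $\overline\mu$ follows by projecting along $\ndpt$.
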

\begin{proof}
We first treat the dynamical system $\bigl(\quotient{\Gamma}{ \SX}, g_\Gamma, m_\Gamma\bigr)$ with Knieper's measure $m_\Gamma$;  let $\Omega_D$ denote its dissipiative part and $\Omega_C$ its conservative part. 
Then by Poincar{\'e}'s recurrence theorem and Hopf's divergence theorem we have
\[  m_\Gamma (\Omega_D)= m_\Gamma\bigl(\quotient{\Gamma}{\SX_\Gamma^{\small{\mathrm{div}}}}\bigr)\quad\text{and }\  m_\Gamma (\Omega_C)= m_\Gamma\bigl(\quotient{\Gamma}{\SX_{\Gamma}^{\small{\mathrm{rec}}}}\bigr).\]
Moreover, Lemma~\ref{critradlim}  implies
\[\SX_\Gamma^{\small{\mathrm{div}}} \subseteq  \SX\setminus \SX_{\Gamma}^{\small{\mathrm{rad}}}\quad\text{and }\ \SX_\Gamma^{\small{\mathrm{rec}}}\subseteq \SX_{\Gamma}^{\small{\mathrm{rad}}},\]
and as $\SX= \SX_{\Gamma}^{\small{\mathrm{rad}}}\sqcup \SX\setminus \SX_{\Gamma}^{\small{\mathrm{rad}}}$ we get
\[  m_\Gamma (\Omega_D)= m_\Gamma\bigl(\quotient{\Gamma}{ (\SX\setminus \SX_{\Gamma}^{\small{\mathrm{rad}}})}\bigr)\quad\text{and }\  m_\Gamma (\Omega_C)= m_\Gamma\bigl(\quotient{\Gamma}{\SX_{\Gamma}^{\small{\mathrm{rad}}}}\bigr).\]

Hence by construction of Knieper's measure from the geodesic current $\overline\mu$, the dynamical system $\bigl(\quotient{\Gamma}{ \SX}, g_\Gamma, m_\Gamma\bigr)$ is conservative if and only if $\ \overline\mu\bigl(\ndpt (\SX\setminus \SX_{\Gamma}^{\small{\mathrm{rad}}})\bigr)=0$, and it is dissipative if and only if $\ \overline\mu(\ndpt \SX_\Gamma^{\small{\mathrm{rad}}})=0$.
 
We next treat  the dynamical system $\bigl(\quotient{\Gamma}{[ \SX]}, g_\Gamma, \overline m_\Gamma\bigr)$; let   $\overline\Omega_D$ denote  its dissipative part and $\overline\Omega_C$ its conservative part. Then again by Poincar{\'e}'s recurrence theorem and Hopf's divergence theorem  we have
\[  \overline m_\Gamma (\overline \Omega_D)= \overline m_\Gamma\bigl(\quotient{\Gamma}{[\SX_\Gamma^{\small{\mathrm{wdiv}}}]}\bigr)\quad\text{and }\  \overline m_\Gamma (\overline \Omega_C)=\overline  m_\Gamma\bigl(\quotient{\Gamma}{[\SX_{\Gamma}^{\small{\mathrm{wrec}}}]}\bigr).\] 
From Lemma~\ref{critregradlim} we further get  
  \[ [\reg\cap \SX_\Gamma^{\small{\mathrm{wdiv}}}]
    \subseteq  [\reg\cap \SX\setminus \SX_{\Gamma}^{\small{\mathrm{rad}}}]\quad\text{and }\ [\reg\cap \SX_\Gamma^{\small{\mathrm{wrec}}}]\subseteq [\reg\cap\SX_{\Gamma}^{\small{\mathrm{rad}}}].\]
Since   $[\reg]= [ \reg\cap \SX_{\Gamma}^{\small{\mathrm{rad}}}]\sqcup[\reg\cap \SX\setminus \SX_{\Gamma}^{\small{\mathrm{rad}}}]$ and as the weak Ricks' measure is supported on $\quotient{\Gamma}{[\reg]}$,  we conclude
\[  \overline m_\Gamma (\overline \Omega_D)= \overline m_\Gamma\bigl(\quotient{\Gamma}{ [\SX\setminus \SX_{\Gamma}^{\small{\mathrm{rad}}}]}\bigr)\quad\text{and }\  \overline m_\Gamma (\overline \Omega_C)= \overline m_\Gamma\bigl(\quotient{\Gamma}{[\SX_{\Gamma}^{\small{\mathrm{rad}}}}]\bigr).\]

So by construction of the weak Ricks' measure from the geodesic current $\overline\mu\,$ defined on $\ndpt\reg$, the dynamical system $\bigl(\quotient{\Gamma}{ [\SX]}, g_\Gamma,  \overline m_\Gamma\bigr)$ is conservative if and only if $\ \overline\mu\bigl(\ndpt [\SX\setminus \SX_{\Gamma}^{\small{\mathrm{rad}}}]\bigr)=\overline\mu\bigl(\ndpt (\SX\setminus \SX_{\Gamma}^{\small{\mathrm{rad}}})\bigr)=0$, and it is dissipative if and only if $\ \overline\mu\bigl(\ndpt [\SX_\Gamma^{\small{\mathrm{rad}}}]\bigr)=\overline\mu(\ndpt \SX_\Gamma^{\small{\mathrm{rad}}})=0$.

The last statement is obvious (see the  paragraph before Theorem~\ref{Hopfindividual}).
\end{proof}
As a consequence we get the following statement which generalizes Lemma~7.5 in \cite{Ricks} (where the stronger assumption of a {\hl finite} weak Ricks' measure $\overline m_\Gamma$ is needed): 
\begin{corollary}\label{weakregisfull}
Let $\overline\mu\,$ be a geodesic current defined on $\ndpt\reg$. Then 
\[ \overline\mu\bigl(\ndpt(\SX\setminus \SX_\Gamma^{\small{\mathrm{rad}}})\bigr)=0\quad\Longrightarrow\quad  \overline \mu\bigl(\ndpt (\SX\setminus \SX_\Gamma^{\small{\mathrm{wrec}}})\bigr)=0.\]
\end{corollary}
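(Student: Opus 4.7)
The plan is to argue through the conservativity/dissipativity dichotomy of Lemma~\ref{consdiss} combined with Poincar\'e's recurrence theorem. First, the hypothesis $\overline\mu(\ndpt(\SX\setminus\SX_\Gamma^{\mathrm{rad}}))=0$ is exactly the conservativity criterion given in Lemma~\ref{consdiss}(a) for the weak Ricks' measure $\overline m_\Gamma$ built from $\overline\mu$. Hence the dynamical system $(\quotient{\Gamma}{[\SX]},g_\Gamma,\overline m_\Gamma)$ is conservative, and Poincar\'e's recurrence theorem delivers that $\overline m_\Gamma$-almost every class in $\quotient{\Gamma}{[\reg]}$ is both positively and negatively recurrent.

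Next I would invoke the characterization stated immediately before Lemma~\ref{consdiss}: $[v]\in\quotient{\Gamma}{[\SX]}$ is positively and negatively recurrent if and only if for every lift $[\widetilde v]\in[\SX]$ and every representative $\widetilde u\in\SX$ of $[\widetilde v]$ one has $\widetilde u\in\SX_\Gamma^{\mathrm{wrec}}$. (A direct check: by Definition~\ref{weakstrongrecurrencedef}, weak convergence $g_n g^{t_n}\widetilde u\to\widetilde u$ is precisely $[g_n g^{t_n}\widetilde u]\to[\widetilde u]$ in $[\SX]$, so the property descends to and lifts back through any representative of the class.) Conservativity therefore yields
\[
\overline m_\Gamma\bigl(\quotient{\Gamma}{[\SX\setminus\SX_\Gamma^{\mathrm{wrec}}]}\bigr)=0.
\]

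Finally I would lift this null statement back to $\overline\mu$. By the characterizing property of the quotient measure recalled in Section~\ref{geodcurrentmeasures}, a Borel set in $[\reg]$ is $\overline m$-null if and only if its projection to the quotient is $\overline m_\Gamma$-null, so
\[
\overline m\bigl([\SX\setminus\SX_\Gamma^{\mathrm{wrec}}]\cap[\reg]\bigr)=0.
\]
The subset $\SX_\Gamma^{\mathrm{wrec}}\subseteq\SX$ is $\sim$-saturated and geodesic-flow-invariant (both facts come directly from the equivalence of weak convergence $g_n g^{t_n}v\to v$ with convergence $[g_n g^{t_n}v]\to[v]$ in $[\SX]$, and from the identity $-(g^s u)=g^{-s}(-u)$). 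Consequently, under the Hopf homeomorphism $[\reg]\cong\ndpt\reg\times\RR$, the set $[\SX\setminus\SX_\Gamma^{\mathrm{wrec}}]\cap[\reg]$ takes the product form $A\times\RR$ with $A=\ndpt(\SX\setminus\SX_\Gamma^{\mathrm{wrec}})\cap\ndpt\reg$. Since $\overline m=\overline\mu\otimes\lambda$ with Lebesgue measure $\lambda$ on the $\RR$-factor, the identity $0=\overline m(A\times\RR)=\overline\mu(A)\cdot\lambda(\RR)$ forces $\overline\mu(A)=0$, and as $\overline\mu$ is supported on $\ndpt\reg$ we obtain the desired $\overline\mu(\ndpt(\SX\setminus\SX_\Gamma^{\mathrm{wrec}}))=0$.

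The only points that require a moment of care are the $\sim$-saturation/flow-invariance of $\SX_\Gamma^{\mathrm{wrec}}$ and the fact that Poincar\'e recurrence on $\quotient{\Gamma}{[\SX]}$ delivers representatives actually in $\SX_\Gamma^{\mathrm{wrec}}$ rather than in a merely $\sim$-equivalent set; the first is immediate from the weak-convergence criterion applied to the defining sequences, and the second is exactly what the characterization preceding Lemma~\ref{consdiss} provides. I do not foresee a genuine obstacle: once conservativity is translated into Poincar\'e recurrence and then into a null statement on $[\reg]$, the Fubini decomposition $\overline m=\overline\mu\otimes\lambda$ closes the argument.
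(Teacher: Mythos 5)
Your proposal is correct and follows essentially the same route as the paper: the hypothesis gives conservativity of $(\quotient{\Gamma}{[\SX]}, g_\Gamma, \overline m_\Gamma)$ via Lemma~\ref{consdiss}, Poincar\'e recurrence together with the characterization of recurrent classes stated before that lemma identifies the conservative part with $\quotient{\Gamma}{[\SX_\Gamma^{\small{\mathrm{wrec}}}]}$ up to null sets, and the construction of the weak Ricks' measure translates this back into the statement about $\overline\mu$. The only difference is that you spell out what the paper compresses into ``by construction of the weak Ricks' measure'' -- namely the $\sim$-saturation and flow-invariance of $\SX_\Gamma^{\small{\mathrm{wrec}}}$, the resulting product form $A\times\RR$ under the Hopf identification, and the Fubini argument with $\overline m=\overline\mu\otimes\lambda$ -- all of which check out.
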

\begin{proof}
For the weak Ricks' measure $\overline m_\Gamma$ associated to the geodesic current   $\overline\mu\,$ the conservative part $\overline\Omega_C$ satisfies
\[\overline m_\Gamma(\overline\Omega_C) =  \overline m_\Gamma(\quotient{\Gamma}{[\SX]})\]
according to Lemma~\ref{consdiss} (b); from the proof above we further have
\[ \overline m_\Gamma(\overline\Omega_C)=\overline m_\Gamma\bigl(\quotient{\Gamma}{[\SX_{\Gamma}^{\small{\mathrm{wrec}}}}]\bigr).\]  
Hence by construction of the weak Ricks' measure we conclude
\[ \overline \mu\bigl(\ndpt (\SX\setminus \SX_\Gamma^{\small{\mathrm{wrec}}})\bigr)=\overline \mu\bigl(\ndpt [\SX\setminus \SX_\Gamma^{\small{\mathrm{wrec}}}]\bigr)=0.\]
\end{proof}
 
In the sequel we will use this result to prove the necessary generalizations of Corollary~8.3, Lemma~8.5 and Lemma~8.6 in \cite{Ricks}, which were  only proved for geodesic currents coming from a conformal density as defined in (\ref{overlinemudef}),  and which induce a {\hl finite} Ricks' measure. 

For the remainder of this section  we fix non-atomic probability measures $\mu_-$, $\mu_+$ on $\rand$
with $\supp(\mu_{\pm})=\Lim$, and let  
\[\overline\mu\sim 
(\mu_-\otimes\mu_+)\ein_{\ndpt\reg}\]
be a quasi-product geodesic current defined on $\ndpt\reg$. 

Notice that since the support of $\mu_-$ and $\mu_+$ equals $\Lim$, minimality of the limit set  $\Lim$  (see for example \cite[Proposition~2.8]{MR656659}) implies that every open subset $U\subseteq\rand$ with $U\cap\Lim\ne\emptyset$ satisfies $\mu_{\pm}(U)>0$.   Hence if $h\in\Gamma$ is a rank one element, then for the open neighborhoods $U^-$, $U^+\subseteq\ganz$ of $h^-$, $h^+$  provided by Lemma~\ref{joinrankone} we know that 
\begin{equation}\label{overlinemunotzero}
 (\mu_-\otimes \mu_+)(\ndpt\reg)\ge  \mu_-(U^-)\cdot \mu_+(U^+)>0;\end{equation}
so $\overline\mu$ is non-trivial. Moreover, 
 according to the Main Theorem in \cite{MR2581914} (see also Proposition~6.6 (3) in \cite{Ricks}), the set $\ndpt\reg\cap(\Lim\times\Lim)$ is dense in $\Lim\times\Lim$, hence   \[ \supp(\overline\mu) = \Lim\times \Lim.\]
 
 The first Lemma shows that  in the setting of Lemma~\ref{consdiss} (a) -- that is when the weak Ricks' measure 
associated to $\overline\mu\,$ is conservative, but not necessarily finite  -- we have $\overline\mu\sim \mu_-\otimes \mu_+$; in other words  we may omit the restriction to $\ndpt\reg$. 

\begin{lemma}[Corollary~8.3 in \cite{Ricks}]\label{regnotnecessary}
If  
$\ \overline\mu\bigl(\ndpt(\SX\setminus \SX_\Gamma^{\small{\mathrm{rad}}})\bigr)=0,$ 
then
 \[(\mu_-\otimes\mu_+)(\ndpt\reg)= (\mu_-\otimes\mu_+)(\rand\times\rand)=1.\]
 \end{lemma}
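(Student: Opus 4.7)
The plan is to prove $(\mu_-\otimes\mu_+)\bigl((\rand\times\rand)\setminus\ndpt\reg\bigr)=0$; together with $\mu_\pm$ being probability measures this yields both equalities of the lemma. I first apply Corollary~\ref{weakregisfull} to upgrade the hypothesis to $\overline\mu\bigl(\ndpt(\SX\setminus\SX_\Gamma^{\mathrm{wrec}})\bigr)=0$, so $\overline\mu$-almost every $(\xi,\eta)\in\ndpt\reg$ is realized by some $v\in\reg$ for which both $v$ and $-v$ are weakly $\Gamma$-recurrent. Put
\[
A:=\bigl\{\eta\in\rand:\exists\,v\in\reg\cap\SX_\Gamma^{\mathrm{wrec}}\text{ with }v^+=\eta\bigr\}.
\]
The symmetry $v\mapsto-v$ shows $A$ also equals the corresponding set of negative endpoints, so $\overline\mu$-a.e.\ $(\xi,\eta)$ lies in $A\times A$.

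For any $\eta\in A$, pick a witness $v$; since $\Gamma<\is(\XX)$, $v$ is weakly $\is(\XX)$-recurrent, and Lemma~\ref{jointoweakrecurrent} yields, for every $\xi\in\rand\setminus\{\eta\}$, a geodesic $w\in\reg$ with $w^-=\xi$ and $w^+=\eta$. Hence $(\rand\setminus\{\eta\})\times\{\eta\}\subseteq\ndpt\reg$, and non-atomicity of $\mu_-$ gives
\[
\mu_-\bigl(\{\xi:(\xi,\eta)\in\ndpt\reg\}\bigr)=1\quad\text{for every }\eta\in A.
\]
Integrating against $\mu_+$ yields $c:=(\mu_-\otimes\mu_+)(\ndpt\reg)\ge\mu_+(A)$. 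Conversely, the equivalence $\overline\mu\sim(\mu_-\otimes\mu_+)\ein_{\ndpt\reg}$ transfers the $\overline\mu$-negligibility of $\rand\times(\rand\setminus A)$ to $(\mu_-\otimes\mu_+)\bigl(\ndpt\reg\cap(\rand\times(\rand\setminus A))\bigr)=0$, so $c\le\mu_+(A)$. Thus $c=\mu_+(A)$.

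It then suffices to show $\mu_+(A)=1$, and this is the main obstacle. The set $A$ is $\Gamma$-invariant and contains the attractive fixed point of every rank one $h\in\Gamma$, because the invariant geodesic of $h$ is $\langle h\rangle$-recurrent and its reverse is invariant under $h^{-1}$; combined with the $\Gamma$-density of rank one fixed points in $\Lim$ for a rank one group, this makes $A$ a dense $\Gamma$-invariant subset of $\Lim=\supp\mu_+$, and~\eqref{overlinemunotzero} gives $\mu_+(A)=c>0$. Moreover, the $\Gamma$-invariance of $\overline\mu$ together with $\overline\mu\sim(\mu_-\otimes\mu_+)\ein_{\ndpt\reg}$ forces the $(\mu_-\otimes\mu_+)$-null sets inside $\ndpt\reg$ to be $\Gamma$-invariant, from which the product structure yields $\Gamma$-quasi-invariance of each marginal $\mu_\pm$. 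The hard step is then to parlay the positive $\mu_+$-measure of the $\Gamma$-invariant set $A$ into full $\mu_+$-measure: I expect this to rest on a zero-one dichotomy for the $\Gamma$-action on $(\Lim,\mu_+)$ deduced from the north-south dynamics of Lemma~\ref{dynrankone} at the dense family of rank one fixed points already contained in $A$. Once $\mu_+(A)=1$ is obtained, the identity $c=\mu_+(A)=1$ completes the proof.
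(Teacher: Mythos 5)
Your reduction of the lemma to the single claim $\mu_+(A)=1$ is essentially sound, and up to that point you run parallel to the paper: the upgrade of the hypothesis via Corollary~\ref{weakregisfull}, the use of Lemma~\ref{jointoweakrecurrent} (applied to reversed geodesics) to show that every $\eta\in A$ can be joined to every $\xi\neq\eta$ by a rank one geodesic, and the identity $(\mu_-\otimes\mu_+)(\ndpt\reg)=\mu_+(A)$ are all fine. But the claim $\mu_+(A)=1$ --- which you yourself flag as ``the main obstacle'' --- is exactly the content of the lemma, and your proposal does not prove it. The zero-one dichotomy you hope to invoke is not available in this setting: here $\mu_-$, $\mu_+$ are \emph{arbitrary} non-atomic probability measures with support $\Lim$, not conformal densities, so the ergodicity statement (Lemma~\ref{ergodicity}, proved only for conformal densities via the shadow lemma, and only later in Section~\ref{currentsfromconfdens}) cannot be cited; and a dense $\Gamma$-invariant set of positive measure need not have full measure without such a law. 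Your intermediate claim that the marginals $\mu_\pm$ are $\Gamma$-quasi-invariant is moreover circular: from the $\Gamma$-invariance of $\overline\mu$ and $\overline\mu\sim(\mu_-\otimes\mu_+)\ein_{\ndpt\reg}$ one only gets that $\ndpt\reg\cap(\rand\times\gamma B)$ is $(\mu_-\otimes\mu_+)$-null whenever $\mu_+(B)=0$, and to pass from this to $\mu_+(\gamma B)=0$ one needs the slices $\{\eta\colon(\xi,\eta)\in\ndpt\reg\}$ to have full $\mu_+$-measure --- which is essentially what is being proven.

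The paper closes precisely this gap with a localization-plus-compactness argument that requires no ergodicity. Fix a rank one element $h\in\Gamma$ and let $\xi\in\rand$ be \emph{arbitrary}. By Lemma~\ref{dynrankone}~(a) there is $w\in\reg$ with $w^-=\xi$, $w^+=h^+$, and Lemma~\ref{joinrankone} then yields open sets $U\ni\xi$, $V\ni h^+$ in $\rand$ with $U\times V\subseteq\ndpt\reg$. Since $\supp(\mu_+)=\Lim\ni h^+$ we have $\mu_+(V)>0$; combined with $(\mu_-\otimes\mu_+)\bigl((U\times V)\setminus\ndpt\SX_\Gamma^{\small{\mathrm{wrec}}}\bigr)=0$ (the localized form of your first step), this forces $\mu_-$-almost every $\zeta\in U$ to be the negative endpoint of some weakly $\Gamma$-recurrent rank one geodesic, hence joinable to every other boundary point by Lemma~\ref{jointoweakrecurrent}. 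Covering the compact set $\rand$ by finitely many such neighborhoods $U$ gives the full-measure statement for $\mu_-$, and the symmetric argument (using $h^{-1}$ in place of $h$) gives it for $\mu_+$; non-atomicity and Fubini then finish exactly as in your step involving $c$. If you replace the appeal to a zero-one law by this local argument, your proof becomes complete and is then essentially the paper's proof in a slightly reorganized form.
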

\begin{proof}
From the hypothesis and  Corollary~\ref{weakregisfull}  we get $\   \overline\mu\bigl(\ndpt(\SX\setminus  \SX_\Gamma^{\small{\mathrm{wrec}}})\bigr)=0$ and hence 
 \begin{equation}\label{weakreczero} (\mu_-\otimes\mu_+) \bigl(\ndpt(\reg\setminus  \SX_\Gamma^{\small{\mathrm{wrec}}})\bigr)=0.\end{equation}
In a first step we prove that the set 
\[A:=\{\xi\in\rand\colon (\xi,\eta)\in\ndpt\reg \quad\text{for all }\ \eta\in\rand,\ \eta\ne \xi  \}\]
satisfies $\mu_-(A)=\mu_+(A)=1$.   So let $\xi\in\rand$ be arbitrary. Our goal is to show that $\xi$ possesses an open neighborhood $U\subseteq\rand$ with $\mu_-(U\setminus A)=0$; the claim then follows by compactness of $\rand$ (and analogously for $\mu_+$ instead  of $\mu_-$).

Let $h\in\Gamma\,$ be a rank one element. According to Lemma~\ref{dynrankone} (a)   there exists $w\in\reg$ with $w^-=\xi$ and $w^+=h^+$.  Lemma~\ref{joinrankone} then provides open neighborhoods $U$, $V\subseteq\rand$ of $\xi$, $h^+$  \st $U\times V\subseteq\ndpt\reg$. 
From (\ref{weakreczero}) we get $(\mu_-\otimes\mu_+) \bigl((U\times V)\setminus  \SX_\Gamma^{\small{\mathrm{wrec}}}\bigr)=0$.

For the subset \[ W=\{\zeta\in U\colon \exists\, u\in\SX_\Gamma^{\small{\mathrm{wrec}}}\ \ \st \ \  u^-=\zeta,\ u^+\in V\}\subseteq U\]
of $U$ we have the inclusion 
 $(U\setminus W)\times V\subseteq (U\times V)\setminus \SX_\Gamma^{\small{\mathrm{wrec}}}$. Hence
\[ 0= (\mu_-\otimes\mu_+) \bigl((U\times V)\setminus  \SX_\Gamma^{\small{\mathrm{wrec}}}\bigr)\ge \mu_-(U\setminus W)\cdot\mu_+(V),\]
and from $\mu_+(V)>0$ we get $\mu_-(U\setminus W)=0$. 
As Lemma~\ref{jointoweakrecurrent} implies  $W\subseteq A$, we conclude $ \mu_-(U\setminus A)\le \mu_-(U\setminus W)=0$.

Finally we let $\xi\in A$ arbitrary. So  for all $\eta\in\rand\setminus\{\xi\}$ we have $(\xi,\eta)\in\ndpt\reg$. Since $\mu_+(\{\xi\})=0$ by non-atomicity of $\mu_+$, we have $(\xi,\eta)\in\ndpt\reg$ for $\mu_+$-almost every $\eta\in\rand$. The claim then follows from $\mu_-(A)=1$ and Fubini's Theorem.
\end{proof}
From the previous lemma and the proof of Lemma~\ref{consdiss} we immediately get
\begin{corollary}
 $\ \overline\mu\bigl(\ndpt(\SX\setminus \SX_\Gamma^{\small{\mathrm{rad}}})\bigr)=0\, $ if and only if 
$\mu_-(\radlim)=\mu_+(\radlim)=1$. \end{corollary}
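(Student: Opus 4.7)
The plan is to observe that the corollary is essentially a bookkeeping consequence of Lemma~\ref{regnotnecessary} together with the quasi-product structure $\overline\mu\sim(\mu_-\otimes\mu_+)|_{\ndpt\reg}$. The first simplification I would carry out is to note that the condition $v\in\SX_{\Gamma}^{\small{\mathrm{rad}}}$ depends only on the end point pair $(v^-,v^+)$, so
\[\ndpt(\SX\setminus\SX_{\Gamma}^{\small{\mathrm{rad}}})\;=\;\ndpt\SX\setminus(\radlim\times\radlim),\]
and the statement to prove becomes a statement about $\overline\mu$ and $(\mu_-\otimes\mu_+)$ restricted to the complement of $\radlim\times\radlim$.

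For the forward direction, I would invoke Lemma~\ref{regnotnecessary} to conclude $(\mu_-\otimes\mu_+)(\ndpt\reg)=1$, which means that not only $\overline\mu$ but also $\mu_-\otimes\mu_+$ is supported on $\ndpt\reg$. Consequently the relation $\overline\mu\sim(\mu_-\otimes\mu_+)|_{\ndpt\reg}$ upgrades to mutual absolute continuity $\overline\mu\sim\mu_-\otimes\mu_+$ as Borel measures on $\rand\times\rand$. Transferring the hypothesis through this equivalence, and using $(\mu_-\otimes\mu_+)\bigl((\rand\times\rand)\setminus\ndpt\reg\bigr)=0$, yields
\[(\mu_-\otimes\mu_+)\bigl((\rand\times\rand)\setminus(\radlim\times\radlim)\bigr)=0.\]
Since the complement of $\radlim\times\radlim$ contains $(\rand\setminus\radlim)\times\rand$ and $\rand\times(\rand\setminus\radlim)$, and since $\mu_\pm$ are probability measures, this forces $\mu_-(\rand\setminus\radlim)=\mu_+(\rand\setminus\radlim)=0$.

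For the reverse direction the argument runs without Lemma~\ref{regnotnecessary}: if $\mu_\pm(\radlim)=1$ then $(\mu_-\otimes\mu_+)\bigl((\rand\times\rand)\setminus(\radlim\times\radlim)\bigr)=0$, so in particular $(\mu_-\otimes\mu_+)\bigl(\ndpt\reg\setminus(\radlim\times\radlim)\bigr)=0$, and by absolute continuity $\overline\mu\ll(\mu_-\otimes\mu_+)|_{\ndpt\reg}$ the same holds for $\overline\mu$; the fact that $\overline\mu$ is supported on $\ndpt\reg$ then delivers the conclusion. I do not expect any genuine obstacle here: the hard analytic content (non-atomicity, density of $\ndpt\reg\cap(\Lim\times\Lim)$, and the use of Lemmas~\ref{jointoweakrecurrent} and~\ref{joinrankone}) has already been absorbed into Lemma~\ref{regnotnecessary}, so what remains is only to keep track of which sets have full, respectively zero, measure under $\mu_-\otimes\mu_+$ versus $\overline\mu$.
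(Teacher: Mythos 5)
Your proposal is correct and is essentially the paper's own proof: the paper deduces this corollary in one line from Lemma~\ref{regnotnecessary} together with the bookkeeping in the proof of Lemma~\ref{consdiss}, which amounts precisely to your identity $\ndpt(\SX\setminus \SX_\Gamma^{\mathrm{rad}})=\ndpt\SX\setminus(\radlim\times\radlim)$ and the transfer of null sets through $\overline\mu\sim(\mu_-\otimes\mu_+)\ein_{\ndpt\reg}$. The only cosmetic difference is that you spell out explicitly the upgrade to equivalence of $\overline\mu$ and $\mu_-\otimes\mu_+$ on all of $\rand\times\rand$ in the forward direction, which the paper leaves tacit.
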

For the remainder of this section we use the previous assumptions on $\mu_-$, $\mu_+$ and $\overline\mu$; moreover we will require that 
\[ \mu_-(\radlim)=\mu_+(\radlim)=1.\]

\begin{lemma}[Lemma~8.5 in \cite{Ricks}]\label{mapisconstantae}
Let $S$ be any set and $\Psi:\ndpt\reg\to S$ an arbitrary map. If $\Omega\subseteq\ndpt\reg$ is a set of full $\overline\mu$-measure in $\ndpt\reg$ such that for all $(\xi,\eta)$, $(\xi,\eta')$, $(\xi',\eta')\in\Omega$ we have
\[ \Psi\bigl((\xi,\eta)\bigr)=\Psi\bigl((\xi,\eta')\bigr)=\Psi\bigl((\xi',\eta')\bigr),\]
then $\Psi$ is constant $\overline\mu$-almost everywhere on $\ndpt\reg$.
\end{lemma}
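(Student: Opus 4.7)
My approach is a standard Fubini-type propagation argument on the product measure. The first observation is that the constancy hypothesis, specialised by taking either $\xi=\xi'$ or $\eta=\eta'$, reduces to the two separate statements
\[ (\xi,\eta),(\xi,\eta')\in\Omega\ \Longrightarrow\ \Psi(\xi,\eta)=\Psi(\xi,\eta'),\qquad (\xi,\eta'),(\xi',\eta')\in\Omega\ \Longrightarrow\ \Psi(\xi,\eta')=\Psi(\xi',\eta'). \]
In other words $\Psi$ is constant along every horizontal slice $\Omega_\xi:=\{\eta\colon(\xi,\eta)\in\Omega\}$ and along every vertical slice $\Omega^\eta:=\{\xi\colon(\xi,\eta)\in\Omega\}$, hence on the equivalence classes of $\Omega$ generated by the relation ``sharing a first or a second coordinate''. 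It therefore suffices to exhibit one such class whose saturation in $\rand\times\rand$ has full $\overline\mu\,$-measure in $\ndpt\reg$.

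Under the standing assumption $\mu_-(\radlim)=\mu_+(\radlim)=1$, Lemma~\ref{regnotnecessary} gives $(\mu_-\otimes\mu_+)(\ndpt\reg)=(\mu_-\otimes\mu_+)(\rand\times\rand)=1$, and since $\overline\mu\,\sim (\mu_-\otimes\mu_+)\ein_{\ndpt\reg}$ a Borel subset of $\ndpt\reg$ is $\overline\mu\,$-conull in $\ndpt\reg$ if and only if it is $(\mu_-\otimes\mu_+)$-conull in $\rand\times\rand$. In particular $\Omega$ is $(\mu_-\otimes\mu_+)$-conull, so by Fubini the sets
\[ A:=\{\xi\in\rand\colon\mu_+(\Omega_\xi)=1\},\qquad B:=\{\eta\in\rand\colon\mu_-(\Omega^\eta)=1\} \]
satisfy $\mu_-(A)=\mu_+(B)=1$.

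To finish, pick any $\xi_0\in A$ and then any $\eta_0\in \Omega_{\xi_0}\cap B$; this intersection has $\mu_+$-measure $1$ since $\mu_+(\Omega_{\xi_0})=\mu_+(B)=1$, so it is non-empty. Put $c:=\Psi(\xi_0,\eta_0)\in S$ and
\[ \Omega':=\Omega\cap(\Omega^{\eta_0}\times\rand)\subseteq\ndpt\reg. \]
For any $(\xi,\eta)\in\Omega'$ the three pairs $(\xi,\eta)$, $(\xi,\eta_0)$, $(\xi_0,\eta_0)$ all lie in $\Omega$ (the second because $\xi\in\Omega^{\eta_0}$, the third because $\eta_0\in\Omega_{\xi_0}$), so the hypothesis yields $\Psi(\xi,\eta)=\Psi(\xi,\eta_0)=\Psi(\xi_0,\eta_0)=c$. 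Finally $\Omega^{\eta_0}\times\rand$ is $(\mu_-\otimes\mu_+)$-conull in $\rand\times\rand$ because $\mu_-(\Omega^{\eta_0})=1$, hence $\overline\mu\,$-conull in $\ndpt\reg$, so $\Omega'$ inherits full $\overline\mu\,$-measure from being the intersection of two such sets.

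The only genuine subtlety is the step that converts ``$\overline\mu\,$-null'' into ``$(\mu_-\otimes\mu_+)$-null'', needed in order to apply Fubini on a genuine product measure; this is precisely where the quasi-product structure of $\overline\mu\,$ together with the standing normalization from Lemma~\ref{regnotnecessary} enter. Everything else is routine bookkeeping with horizontal and vertical slices, and the choice of the ``corner'' $(\xi_0,\eta_0)$ is the only real ingenuity required.
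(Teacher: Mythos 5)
Your proof is correct and takes essentially the same route as the paper's: both use Lemma~\ref{regnotnecessary} together with the equivalence $\overline\mu\sim(\mu_-\otimes\mu_+)\ein_{\ndpt\reg}$ to pass between $\overline\mu$-null and $(\mu_-\otimes\mu_+)$-null sets, then Fubini to get almost-everywhere-full slices, and finally a single application of the L-shaped hypothesis to propagate the value of $\Psi$ from one fixed reference pair to a conull set. The only cosmetic difference is that you anchor at $(\xi_0,\eta_0)$ and spread along the vertical slice $\Omega^{\eta_0}\times\rand$, whereas the paper anchors at a pair $(\xi,\eta)$ with $\xi\in A$ and spreads over $(A\times B_\xi)\cap\Omega$; the two arguments are mirror images of each other.
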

\begin{proof} 
From Lemma~\ref{regnotnecessary} and $\overline\mu(\ndpt\reg\setminus\Omega)=0$  we get 
\[ (\mu_-\otimes\mu_+)(\Omega)=(\mu_-\otimes\mu_+)(\ndpt\reg)=(\mu_-\otimes\mu_+)(\rand\times\rand).\]  
  Hence for $\mu_-$-almost every $\xi\in\rand$ the set 
\[ B_\xi:=\{\eta\in\rand\colon (\xi,\eta)\in\Omega\}\]
has full $\mu_+$-measure in $\rand$; in particular, the set 
\[ A:=\{\xi\in\rand\colon \mu_+(B_\xi)=\mu_+(\rand)=1\}\]
satisfies $\mu_-(A)=\mu_-(\rand)=1$. 

We now fix $(\xi,\eta)\in (A\times\rand)\cap\Omega$. Then for any $(\xi',\eta')\in (A\times B_\xi)\cap\Omega$ we have $(\xi,\eta')\in (A\times B_\xi)\cap \Omega$,  hence by hypothesis on $\Omega$ 
\[ \Psi\bigl((\xi',\eta')\bigr)=\Psi\bigl((\xi,\eta')\bigr)=\Psi\bigl((\xi,\eta)\bigr).\]
Since the set $(A\times B_\xi)\cap\Omega\subseteq\ndpt\reg$ has full $(\mu_-\otimes\mu_+)$-measure in $\rand\times\rand$, it also has full $\overline\mu$-measure in $\ndpt\reg$. So  we get  $\Psi\bigl((\xi',\eta')\bigr)=\Psi\bigl((\xi,\eta)\bigr)$ for $\overline\mu$-almost every $(\xi',\eta')\in\ndpt\reg$, and hence $\Psi$ is constant $\overline\mu$-almost everywhere on $\ndpt\reg$.
\end{proof}

The following lemma together with Lemma~\ref{Hausdorffconv} is the clue to the proof of Theorem~\ref{zerofull}. 
\begin{lemma}[Lemma~8.6 in \cite{Ricks}]\label{isometrytypeconstant}
For $\overline\mu$-almost every $(\xi,\eta)\in\ndpt\reg$ the isometry type of $C_{(\xi\eta)}$ is the same.
\end{lemma}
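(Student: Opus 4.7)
The plan is to apply Lemma~\ref{mapisconstantae} to the map
$\Psi:\ndpt\reg\to\mathcal{S}$ that sends $(\xi,\eta)$ to the isometry class $[C_{(\xi\eta)}]$, where $\mathcal{S}$ denotes the collection of isometry classes of metric spaces. The entire task thus reduces to producing a subset $\Omega\subseteq\ndpt\reg$ of full $\overline\mu$-measure on which $\Psi$ satisfies the coherence property hypothesised in that lemma.

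For the full-measure set I would take $\Omega:=\ndpt(\reg\cap\SX_\Gamma^{\small{\mathrm{wrec}}})$. Indeed, the running hypothesis $\mu_\pm(\radlim)=1$ combined with the corollary immediately preceding this lemma gives $\overline\mu\bigl(\ndpt(\SX\setminus\SX_\Gamma^{\small{\mathrm{rad}}})\bigr)=0$, and Corollary~\ref{weakregisfull} promotes this to $\overline\mu\bigl(\ndpt(\SX\setminus\SX_\Gamma^{\small{\mathrm{wrec}}})\bigr)=0$. Hence $\Omega$ has full $\overline\mu$-measure in $\ndpt\reg$, and every $(\xi,\eta)\in\Omega$ admits a representative $v\in\reg$ such that both $v$ and $-v$ are weakly $\Gamma$-recurrent, and therefore also weakly $\is(\XX)$-recurrent.

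To verify the coherence hypothesis I would split into the two types of comparisons used in Lemma~\ref{mapisconstantae}. For $(\xi,\eta'),(\xi',\eta')\in\Omega$ sharing the positive endpoint, pick weakly $\is(\XX)$-recurrent $u_1,u_2\in\reg$ with $u_1^+=u_2^+=\eta'$; Lemma~\ref{weakgivesisometricembeddings} then supplies isometric embeddings $C_{u_1}\hookrightarrow C_{u_2}$ and $C_{u_2}\hookrightarrow C_{u_1}$. Since both transversals are closed, convex and bounded, Theorem~1.6.15 of \cite{MR1835418}, used exactly as in the proof of Proposition~\ref{largewidthgiveszerowidth}, forces each embedding to be surjective, so $C_{(\xi\eta')}$ and $C_{(\xi'\eta')}$ are isometric. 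For $(\xi,\eta),(\xi,\eta')\in\Omega$ sharing the negative endpoint, I would run the identical argument on the time-reversed representatives $-v_1,-v_2\in\reg$, whose positive endpoints now both equal $\xi$, and invoke the equality $C_{(\xi\eta_i)}=C_{(\eta_i\xi)}$ from Section~\ref{prelim} to identify notation. Together these give $\Psi(\xi,\eta)=\Psi(\xi,\eta')=\Psi(\xi',\eta')$ throughout $\Omega$, and Lemma~\ref{mapisconstantae} then yields the lemma.

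The step I expect to require the most care is the negative-endpoint case: one must verify that the transversal of the time-reversed geodesic $-v$ and that of $v$ genuinely represent the same isometry class under $\Psi$, so that the embedding produced by Lemma~\ref{weakgivesisometricembeddings} applied at $(-v)^+=\xi$ really compares $C_{(\xi\eta)}$ with $C_{(\xi\eta')}$. This is ultimately guaranteed by the product decomposition $(\xi\eta)\cong C_{(\xi\eta)}\times\RR$ and the equality $C_{(\xi\eta)}=C_{(\eta\xi)}$ recalled in the preliminaries, but it requires a short bookkeeping step relating the transversal slice cut out by $\Hopf_\xo(v)$ to the one cut out by $\Hopf_\xo(-v)$. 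Once that identification is made precise, the remainder is a direct application of Lemma~\ref{weakgivesisometricembeddings} and the rigidity of mutual isometric embeddings between bounded closed convex subsets of a CAT$(0)$ space.
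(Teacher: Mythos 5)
Your proposal is correct and takes essentially the same route as the paper's proof: the same full-measure set $\ndpt(\reg\cap\SX_\Gamma^{\small{\mathrm{wrec}}})$ obtained from Corollary~\ref{weakregisfull}, mutual isometric embeddings supplied by Lemma~\ref{weakgivesisometricembeddings} (with time reversal handling the shared negative endpoint), compactness rigidity from \cite{MR1835418}, and a final application of Lemma~\ref{mapisconstantae}. The only cosmetic difference is the rigidity citation: the paper invokes the two-space statement (Theorem~1.6.14 in \cite{MR1835418}) directly for the two compact transversals, whereas you reduce to surjectivity of a self-embedding as in Proposition~\ref{largewidthgiveszerowidth}, which works equally well after composing the two embeddings.
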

\begin{proof}
According to Corollary~\ref{weakregisfull} the set $\ndpt (\SX_\Gamma^{\small{\mathrm{wrec}}}\cap\reg)$ has full $\overline\mu$-measure in $\ndpt\reg$. 
Moreover, if $u,v\in \SX_\Gamma^{\small{\mathrm{wrec}}}\cap\reg$ satisfy $u^-=v^-$ or $u^+=v^+$, then by Lemma~\ref{weakgivesisometricembeddings} there exist isometric embeddings between the compact metric spaces $C_u$ and $C_v$; hence $C_u$ and $C_v$ are isometric according to Theorem~1.6.14 in \cite{MR1835418}. The claim now follows by applying Lemma~\ref{mapisconstantae} to the map which sends $(\xi,\eta)\in \ndpt (\SX_\Gamma^{\small{\mathrm{wrec}}}\cap\reg)$ to the isometry type of $C_{(\xi\eta)}$. 
\end{proof}

We will now prove the appropriate generalization of Theorem~8.8 in \cite{Ricks}, which states  that  under the additional hypothesis  $\zero_\Gamma\ne \emptyset$  -- which is satisfied in particular if $\XX$ is geodesically complete --  the set $\ndpt\zero$ of end-point pairs of zero width geodesics has full $(\mu_-\otimes\mu_+)$-measure in $\rand\times\rand$. This will provide the key in the proof of ergodicity in Section~\ref{HopfArgument}. Moreover, it implies that any weak Ricks' measure $\overline m_\Gamma$ on $\quotient{\Gamma}{ [\SX]}$ associated to  a  quasi-product geodesic current $\overline\mu\sim(\mu_-\otimes\mu_+)\ein_{\ndpt\reg} $ is equivalent to the induced Ricks' measure $m^0_\Gamma$ on $\quotient{\Gamma}{\SX}$.  
\begin{theorem}\label{zerofull}
Let $\XX$ be a proper Hadamard space and $\Gamma<\is(\XX)$ a discrete rank one group \st  $\zero_\Gamma\ne \emptyset$. If $\mu_-$, $\mu_+$ are non-atomic probability measures  on $\rand$ with $\supp(\mu_{\pm})=\Lim$ and $\mu_-(\radlim)=\mu_+(\radlim)= 1$, 
then 
\[(\mu_-\otimes\mu_+)(\ndpt \zero)=1.\]
Moreover, if $\,\overline\mu$ is a quasi-product geodesic current absolutely continuous with respect to $(\mu_-\otimes\mu_+)\ein_{\ndpt\reg} $, then
\[ \overline\mu\bigl(\ndpt(\SX\setminus \zero)\bigr)=0.\]
\end{theorem}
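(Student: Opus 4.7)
\emph{Plan.} The second assertion will reduce to the first. Observe that whether $(\xi,\eta)\in\ndpt\SX$ belongs to $\ndpt\zero$ depends only on the pair $(\xi,\eta)$: all transversals $C_w$ with $w\in\ndpt^{-1}(\xi,\eta)$ are mutually isometric, so one being a point forces all to be points. Hence $\ndpt(\SX\setminus\zero)=\ndpt\SX\setminus\ndpt\zero$. Once we prove $(\mu_-\otimes\mu_+)(\ndpt\zero)=1$, the absolute continuity assumption $\overline\mu\ll(\mu_-\otimes\mu_+)\ein_{\ndpt\reg}$ together with the fact that $\overline\mu$ is supported on $\ndpt\reg$ immediately yield $\overline\mu(\ndpt(\SX\setminus\zero))=0$. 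So the entire task is to establish the first assertion.

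I will work with the tautological quasi-product current $\overline\mu_0:=(\mu_-\otimes\mu_+)\ein_{\ndpt\reg}$. Because $\mu_\pm(\radlim)=1$, one has $\overline\mu_0\bigl(\ndpt(\SX\setminus\SX_\Gamma^{\small{\mathrm{rad}}})\bigr)=0$; Lemma~\ref{regnotnecessary} then gives $(\mu_-\otimes\mu_+)(\ndpt\reg)=1$. Applying Lemma~\ref{isometrytypeconstant} to $\overline\mu_0$ produces a compact metric space $T$ such that $C_{(\xi\eta)}$ is isometric to $T$ for $(\mu_-\otimes\mu_+)$-almost every $(\xi,\eta)\in\ndpt\reg$. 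The heart of the proof is to show that $T$ is a single point.

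Here is where the hypothesis $\zero_\Gamma\neq\emptyset$ enters. Pick $v\in\zero$ with $v^-,v^+\in\Lim$. Since $v\in\reg$, Lemma~\ref{joinrankone} supplies open neighborhoods $U^\pm\subseteq\ganz$ of $v^\pm$ with $U^-\times U^+\subseteq\ndpt\reg$. Because $v^\pm\in\Lim=\supp(\mu_\pm)$, every open subneighborhood of $(v^-,v^+)$ carries positive $(\mu_-\otimes\mu_+)$-mass and therefore meets the full-measure set $E:=\{(\xi,\eta)\in\ndpt\reg : C_{(\xi\eta)}\text{ isometric to }T\}$. Choosing a nested sequence of such neighborhoods produces $(\xi_n,\eta_n)\in E$ with $(\xi_n,\eta_n)\to(v^-,v^+)$. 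Lemma~\ref{Hausdorffonboundary}, whose hypothesis $v\in\zero$ is precisely what we arranged, now guarantees that along a subsequence $C_{(\xi_n\eta_n)}$ converges in the Hausdorff metric to a single point. But each $C_{(\xi_n\eta_n)}$ is isometric to $T$, hence has diameter exactly $\mathrm{diam}(T)$, so $\mathrm{diam}(T)=0$ and $T$ is a point. Consequently $C_{(\xi\eta)}$ is a point for $(\mu_-\otimes\mu_+)$-almost every $(\xi,\eta)\in\ndpt\reg$; combining with $(\mu_-\otimes\mu_+)(\ndpt\reg)=1$ yields $(\mu_-\otimes\mu_+)(\ndpt\zero)=1$, as required.

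The main subtlety is the coupling in the previous paragraph between the measure-theoretic conclusion of Lemma~\ref{isometrytypeconstant} (a single isometry type $T$ on a set of full measure) and the purely geometric input that $\zero_\Gamma$ is nonempty. The assumption $\supp(\mu_\pm)=\Lim$ is essential: it is what lets us approximate the concrete zero-width pair $(v^-,v^+)$ by pairs from the generic set $E$, so that Lemma~\ref{Hausdorffonboundary} can collapse the common isometry type to a point. Every other ingredient (absolute continuity, that zero-width is an endpoint-only property) is routine once this step is in place.
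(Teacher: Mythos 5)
Your argument follows the paper's own proof almost step for step: Lemma~\ref{isometrytypeconstant} together with Lemma~\ref{regnotnecessary} yields a set of full $(\mu_-\otimes\mu_+)$-measure in $\ndpt\reg$ on which the isometry type of $C_{(\xi\eta)}$ is constant; the hypotheses $\zero_\Gamma\neq\emptyset$ and $\supp(\mu_\pm)=\Lim$ let you pick pairs from that set converging to $(v^-,v^+)$ for a fixed $v\in\zero_\Gamma$; and Lemma~\ref{Hausdorffonboundary} collapses the common isometry type to a point. Your reduction of the second assertion to the first (zero width is a property of the endpoint pair alone, since all transversals over a fixed pair are isometric) is likewise exactly how the paper concludes.

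The one step that is not justified as written is the application of Lemmas~\ref{regnotnecessary} and~\ref{isometrytypeconstant} to your ``tautological quasi-product current'' $\overline\mu_0=(\mu_-\otimes\mu_+)\ein_{\ndpt\reg}$. A geodesic current is by definition a \emph{$\Gamma$-invariant} Radon measure, and nothing in the hypotheses makes the product $\mu_-\otimes\mu_+$ (or its restriction to $\ndpt\reg$) $\Gamma$-invariant: for instance, when $\mu_\pm$ are Patterson--Sullivan measures the product is only quasi-invariant, and the invariant measure in its class is the Gromov-product-weighted measure of (\ref{overlinemudef}). This invariance is not cosmetic. Both lemmas you cite rest on Corollary~\ref{weakregisfull}, whose proof constructs the weak Ricks' measure on $\quotient{\Gamma}{[\SX]}$ and invokes Poincar\'e recurrence there; for a non-invariant measure $\overline\mu_0$ this quotient measure does not exist, so the chain of lemmas breaks at its first link. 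The repair is precisely the paper's route: the theorem is proved under the section's standing assumption that some quasi-product geodesic current $\overline\mu$ \emph{equivalent} (not merely absolutely continuous) to $(\mu_-\otimes\mu_+)\ein_{\ndpt\reg}$ has been fixed; one applies Lemma~\ref{isometrytypeconstant} to that $\overline\mu$, obtains a set $\Omega\subseteq\ndpt\reg$ of full $\overline\mu$-measure with constant isometry type, and only then uses Lemma~\ref{regnotnecessary} and the equivalence of measure classes to get $(\mu_-\otimes\mu_+)(\Omega)=1$. With that substitution, the remainder of your argument is identical to the paper's proof and correct.
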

\begin{proof}
By Lemma~\ref{isometrytypeconstant} there exists a set 
$\Omega \subseteq \ndpt\reg$ of full $\overline\mu$-measure in $\ndpt\reg$ such that the isometry type of  $C_{(\xi\eta)}$ is the same for all $(\xi,\eta)\in\Omega$.   Lemma~\ref{regnotnecessary} then implies
 \begin{equation}\label{Omegafull}  (\mu_-\otimes\mu_+)(\Omega)=1.\end{equation}

Fix $v\in\zero_\Gamma$ and let $U^-$, $U^+\subseteq\ganz$ be open neighborhoods of $v^-$, $v^+$ according to Lemma~\ref{joinrankone}. Consider decreasing sequences of open subsets $(U_n^-)\subseteq U^-\cap\rand$, $(U_n^+)\subseteq U^+\cap\rand$ \st 
 \[\bigcap_{n\in\NN} U_n^-=\{v^-\}\quad\text{and }\   \bigcap_{n\in\NN} U_n^+=\{v^+\}.\] 
Let $n\in\NN$. As 
$\supp(\mu_{\pm})=\Lim$, we get $(\mu_-\otimes\mu_+)(U_n^-
\times U_n^+)=\mu_-(U_n^-)\cdot\mu_+(U_n^+)
>0$, hence by   (\ref{Omegafull}) 
 \[(\mu_-\otimes\mu_+)\bigl(\Omega\cap (U_n^-
\times U_n^+)\bigr)>0.\] 
   So in particular there exists $(\xi_n,\eta_n)\in (U_n^-\times U_n^+)\cap\Omega$. 
 
 By choice of the sets $U_n^-$, $U_n^+$ we get a sequence $\bigl((\xi_n,\eta_n)\bigr)\subseteq \Omega\subseteq\ndpt\reg$ which converges to $(v^-,v^+)\in\ndpt\zero_\Gamma$.   Now Lemma~\ref{Hausdorffonboundary}  implies that some subsequence of $\bigl(C_{(\xi_n\eta_n)}\bigr)$ converges, in the Hausdorff metric, to a point. As the isometry type of $C_{(\xi\eta)}$ is the same for all  $(\xi,\eta)\in\Omega$, this implies that $C_{(\xi\eta)}$ is a point for all  $(\xi,\eta)\in\Omega$, hence $ \Omega\subseteq\ndpt\zero$. We conclude
 \[ (\mu_-\otimes\mu_+)(\ndpt\zero)\ge (\mu_-\otimes\mu_+)(\Omega)=1,\]
hence
 $ \overline\mu\bigl(\ndpt(\SX\setminus\zero)\bigr) =0$.
 \end{proof}
\begin{corollary}\label{weakisstrongisKnieper}
Let $\XX$ be a proper Hadamard space and $\Gamma<\is(\XX)$ a discrete rank one group \st $\zero_\Gamma\ne \emptyset$.   Let $\mu_-$, $\mu_+$ be non-atomic probability measures  on $\rand$ with $\supp(\mu_{\pm})=\Lim$
 and $\mu_-(\radlim)=\mu_+(\radlim)= 1$,
and $\,\overline\mu\sim (\mu_-\otimes\mu_+)\ein_{\ndpt\reg} $  a quasi-product geodesic current defined on $\ndpt\reg$. 
Then the weak Ricks' measure associated to $\overline\mu$  is 
equal to the Ricks' measure defined by (\ref{defstrongRicks}) and  also to any Knieper's measure associated to the quasi-product geodesic current $\overline\mu$ (if it exists).
\end{corollary}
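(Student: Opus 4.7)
The strategy is to reduce the comparison of the three measures to the zero width locus $\zero$, where all structure collapses and the three constructions become manifestly identical. First I invoke Theorem~\ref{zerofull}: its hypotheses are exactly those of the corollary, so $\overline\mu\bigl(\ndpt(\SX\setminus\zero)\bigr)=0$, i.e.\ $\overline\mu$ is concentrated on $\ndpt\zero$. In particular every measure built from $\overline\mu$ is supported on fibres lying over $\ndpt\zero$.

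Next I record two elementary identifications on the zero width locus. For each $v\in\zero$ the transversal $C_v$ is a single point, so the equivalence class $[v]\in[\SX]$ reduces to $\{v\}$, and the natural projection $\zero\to[\zero]$ is a $\Gamma$-equivariant homeomorphism intertwining the geodesic flow. The excerpt also records that $\Hopf_\xo\ein_{\zero}:\zero\to\ndpt\zero\times\RR$ is a homeomorphism, and under it the flow $g^t$ becomes translation by $t$ on the $\RR$-factor by the cocycle identity $\bs_\xi(\xo,v(t))=\bs_\xi(\xo,v(0))+t$. For the comparison between the weak Ricks' measure and the Ricks' measure this suffices: $\overline m=\overline\mu\otimes\lambda$ is concentrated on $\ndpt\zero\times\RR=\Hopf_\xo(\zero)$ by the previous step, and definition (\ref{defstrongRicks}) gives $m^0=\overline m\circ\Hopf_\xo$ on $\zero$ (and zero elsewhere). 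Under the homeomorphism $\zero\to[\zero]$ this makes $m^0$ on $\SX$ correspond to $\overline m\ein_{[\zero]}$ on $[\SX]$, and since $\overline m_\Gamma$ assigns zero mass to $\quotient{\Gamma}{[\reg\setminus\zero]}$ (again by Theorem~\ref{zerofull}), the quotient measures $m^0_\Gamma$ and $\overline m_\Gamma$ coincide under the canonical identification $\quotient{\Gamma}{\zero}\cong\quotient{\Gamma}{[\zero]}$.

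For the comparison with Knieper's measure $m_\Gamma$, the paper's convention stipulates that $\lambda_{(\xi\eta)}$ is Lebesgue measure on $(\xi\eta)\cong\RR$ whenever $(\xi,\eta)\in\ndpt\zero$. Since $\overline\mu$ concentrates on $\ndpt\zero$, the defining integral reduces to integration over $\ndpt\zero$ against these arc-length Lebesgue measures. The one point requiring care --- and which I expect to be the main technical step --- is verifying that the arc-length Lebesgue measure on each line $(\xi\eta)$ corresponds, via $\Hopf_\xo\ein_\zero$, to Lebesgue measure $\lambda$ on the $\RR$-factor of $\ndpt\zero\times\RR$. This is precisely what the cocycle identity above delivers: the map $t\mapsto\bs_\xi\bigl(\xo,v(t)\bigr)$ is an affine isometry of $\RR$, so arc-length pushes forward to Lebesgue measure in the Hopf coordinate. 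Consequently the Knieper measure $m$, transported via $\Hopf_\xo^{-1}$ to $\ndpt\zero\times\RR$, equals $\overline\mu\otimes\lambda=\overline m$ on its support. Passing to the $\Gamma$-quotient (where both measures are defined by the same characterizing disintegration, since the $\Gamma$-action is proper on $[\reg]$) yields $m_\Gamma=m^0_\Gamma=\overline m_\Gamma$ under the above identifications, completing the proof.
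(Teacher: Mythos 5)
Your proposal is correct and takes essentially the same approach as the paper, which states this corollary without a separate proof as an immediate consequence of Theorem~\ref{zerofull}: once $\overline\mu$ is known to concentrate on $\ndpt\zero$, the three constructions coincide by definition via the homeomorphism $\Hopf_\xo\ein_{\zero}:\zero\to\ndpt\zero\times\RR\cong[\zero]$ and the convention that $\lambda_{(\xi\eta)}$ is Lebesgue measure on $(\xi\eta)\cong\RR$ for $(\xi,\eta)\in\ndpt\zero$. Your write-up simply makes explicit the routine identifications (the $\Gamma$-equivariant identification $\quotient{\Gamma}{\zero}\cong\quotient{\Gamma}{[\zero]}$, and arc-length pushing forward to Lebesgue measure in the Hopf coordinate) that the paper leaves implicit.
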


\section{Conservativity versus ergodicity}\label{HopfArgument}

As before let $\XX$ be a proper Hadamard space with fixed base point $\xo\in\XX$. For $R>0$ we denote ${\mathcal B}(R)\subseteq \SX$ the set of all parametrized geodesic lines with origin in $B_\xo(R)$. 

In this section we assume that $\Gamma<\is(\XX)$ is a discrete rank one group with
 \[\zero_\Gamma:=\{v\in\zero \colon v^+, v^-\in\Lim\}\ne \emptyset.\]  
 Notice that if $\XX$ is geodesically complete, then according to Proposition~\ref{largewidthgiveszerowidth}  the latter condition is automatically satisfied. 

Throughout the whole section we fix  non-atomic probability measures  $\mu_-$, $\mu_+$ on $\rand$ with $\supp(\mu_{\pm})=\Lim$ and $\mu_-(\radlim)=\mu_+(\radlim)= 1$. 
Let $\overline\mu\sim (\mu_-\otimes\mu_+)\ein_{\ndpt\reg}$ be  a quasi-product geodesic current defined on $\ndpt\reg$ 
for which
 \begin{equation}\label{boundongrowth} \Delta:=\sup \Big\{ \frac{\ln  \overline\mu\bigl(\ndpt {\mathcal B}(R)\bigr)}{R}\colon R>0\Big\} 
\end{equation}
 is finite.

We next consider Ricks' measure $m_\Gamma^0$ associated to the geodesic current $\overline\mu\,$ as defined in~(\ref{defstrongRicks}). Since in the given setting Corollary~\ref{weakisstrongisKnieper} implies that Ricks' measure is 
equal to weak Ricks' measure and also to Knieper's measure associated to the same geodesic current $\overline\mu$, we will denote Ricks' measure by $m_\Gamma$ instead of $m_\Gamma^0$. Notice that  by assumption on $\mu_-$ and $\mu_+$ the set $\SX_\Gamma^{\small{\mathrm{rad}}} $ has full $\overline\mu$-measure; so  we already know from Lemma~\ref{consdiss} that $(\quotient{\Gamma}{\SX}, g_\Gamma, m_\Gamma)$ is conservative. 
 The goal of this section is to prove that it is also ergodic.
 
The proof of ergodicity will make   use of the famous  Hopf argument (see \cite{Hopf}, \cite{MR0284564}) as in \cite{MR2057305} and \cite{LinkPicaud}, for which Theorem~\ref{zerofull} is indispensable. 
In our more general setting including singular spaces we first need an analogon to Knieper's Proposition~4.1 which is valid only for manifolds. 
We remark that in view of Lemma~\ref{jointoweakrecurrent} our generalization of Knieper's Proposition~4.1  is not very surprising.
\begin{lemma}\label{KniepersProp}\ 
\  Let $u\in {\mathcal Z}$ be a $\Gamma$-recurrent rank one geodesic of zero width. Then for all $v\in \SX$ with $v^+=u^+$ and $\bs_{v^+}(v(0),u(0))=0$ we have
$$ \lim_{t\to\infty} d_1(g^t v, g^tu)=0.$$
\end{lemma}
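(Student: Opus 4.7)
The plan is to first prove that $\ell:=\lim_{t\to\infty}d(v(t),u(t))=0$, which is the substantive step and requires both the $\Gamma$-recurrence and the zero-width hypotheses, and then to upgrade this pointwise asymptoticity to $d_1$-convergence by a routine weighted-supremum estimate.

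First I observe that by CAT($0$)-convexity the function $t\mapsto d(v(t),u(t))$ is convex on $\RR$; since $v^+=u^+$ it is bounded on $[0,\infty)$ and therefore non-increasing there, so the limit $\ell:=\lim_{t\to\infty}d(v(t),u(t))\ge 0$ exists. Assuming for contradiction $\ell>0$, I use the $\Gamma$-recurrence of $u$ to choose $\gamma_n\in\Gamma$ and $t_n\nearrow\infty$ with $u_n:=\gamma_n g^{t_n}u\to u$ in $\SX$, and set $w_n:=\gamma_n g^{t_n}v$. For each fixed $s\in\RR$, $d(w_n(s),u_n(s))=d(v(t_n+s),u(t_n+s))\to\ell$ and $u_n(s)\to u(s)$, so $(w_n(s))$ stays bounded in $\XX$. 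Properness of $\XX$ combined with Arzel\`a-Ascoli applied to the $1$-Lipschitz maps $w_n$ yields a subsequence converging, uniformly on compact sets, to a geodesic $w\in\SX$ with $d(w(s),u(s))=\ell$ for every $s\in\RR$; in particular $w$ is parallel to $u$, so $w^\pm=u^\pm$.

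Next I transport the normalization $\bs_{v^+}(v(0),u(0))=0$ to this limit. The cocycle identity together with $v^+=u^+$ gives
\[
\bs_{u^+}\bigl(v(t_n),u(t_n)\bigr)=\bs_{v^+}\bigl(v(0),u(0)\bigr)=0,
\]
and applying the isometry $\gamma_n$ yields $\bs_{u_n^+}(w_n(0),u_n(0))=0$. Joint continuity of the Busemann cocycle (with $u_n^+\to u^+$, $w_n(0)\to w(0)$, $u_n(0)\to u(0)$) then passes the identity to the limit: $\bs_{u^+}(w(0),u(0))=0$. Since $u\in\zero$, the convex set $(u^- u^+)$ is a single geodesic line, so $w=g^r u$ for some $r\in\RR$; but then $-r=\bs_{u^+}(u(r),u(0))=\bs_{u^+}(w(0),u(0))=0$, forcing $w=u$ and contradicting $d(w(s),u(s))=\ell>0$. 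Hence $\ell=0$.

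For the $d_1$-convergence, substituting $\tau=T+s$ gives $d_1(g^T v,g^T u)=\sup_{\tau\in\RR}e^{-|\tau-T|}d(v(\tau),u(\tau))$. Given $\varepsilon>0$, I pick $\tau_0\ge 0$ with $d(v(\tau),u(\tau))<\varepsilon$ for all $\tau\ge\tau_0$, so the tail $\tau\ge\tau_0$ contributes less than $\varepsilon$. For $\tau<\tau_0$ the triangle inequality gives $d(v(\tau),u(\tau))\le 2|\tau|+D$ with $D:=d(v(0),u(0))$, and writing $e^{-(T-\tau)}=e^{-T}e^{\tau}$ shows that $e^{-(T-\tau)}(2|\tau|+D)$ is uniformly bounded by $e^{-T}M_{\tau_0,D}$ for some constant $M_{\tau_0,D}$, which tends to $0$ as $T\to\infty$. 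The hardest step is the rigidity argument: once $w$ is extracted as a subsequential limit, one must verify that it inherits the Busemann level of $v$, whereupon zero width of $u$ forces $w=u$.
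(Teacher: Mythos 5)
Your proof is correct and follows essentially the same route as the paper's: assume the distance does not decay, use $\Gamma$-recurrence and properness to extract a subsequential limit $w$ of $\gamma_n g^{t_n}v$, show it is parallel to $u$ with $\bs_{u^+}\bigl(w(0),u(0)\bigr)=0$, and invoke zero width of $u$ to force $w=u$, contradicting the assumed positive separation. The only differences are presentational: you run the contradiction on the pointwise distance $d\bigl(v(t),u(t)\bigr)$ and then upgrade to $d_1$ via the weighted-supremum estimate, whereas the paper argues directly with the monotone quantity $d_1(g^t v, g^t u)$; and you justify the passage of the Busemann normalization to the limit by joint continuity of $(\xi,x,y)\mapsto\bs_\xi(x,y)$ (valid in proper Hadamard spaces), where the paper carries out the corresponding explicit limit computation.
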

\begin{proof} Since $u$ is $\Gamma$-recurrent, there exist sequences $(\gamma_n)\subseteq\Gamma$ and $(t_n)\nearrow\infty$ \st $\gamma_n g^{t_n}u$ converges to $u$. Let $v\in \SX$ be a geodesic such that  $v^+=u^+$ and $\bs_{v^+}(v(0),u(0))=0$. Then the function
\[ [0,\infty)\to [0,\infty),\quad t\mapsto d_1(g^t v,g^t u)=\sup\{ \e^{-|s|}d(v(t+s),u(t+s))\colon  s\in\RR\}\] is monotone decreasing as the geodesic rays determined by $u$ and $v$ are asymptotic. If the function does not converge to zero as $t$ tends to infinity, there exists a constant $\epsilon>0$ \st 
\[ d_1(g^t v,g^t u)\ge \epsilon \] for all $t\ge 0$ and hence
\[ \epsilon \le d_1 (g^{t_n+s} v,g^{t_n+s} u)\le d_1(v,u)\]
for all $s\ge -t_n$. By $\Gamma$-invariance of $d_1$ we get for all $n\in\NN$ and for all $s\ge -t_n$
\[ \epsilon \le d_1 (g^s \gamma_n g^{t_n} v,g^s\gamma_n g^{t_n} u)\le d_1(v,u).\]
Passing to a subsequence if necessary we may assume that $\gamma_n g^{t_n} v$ converges to some $\overline{v}\in \SX$. Hence in the limit as $n\to\infty$ we get
\[ \epsilon \le d_1 (g^s \overline{v},g^s u)\le d_1(v,u)\le \max\{2, d( v(0),u(0))\}\]
for all $s\in\RR$. Now the first inequality shows that $\overline{v}\ne u$ and the  second inequality gives $(\overline{v}^-,\overline{v}^+)=(u^-,u^+)$, which means that the geodesic lines $\overline{v}$ and $u$ are parallel. Notice that in this case $H_\xo(\overline{v})=H_\xo(u)$  if and only if $\bs_{u^-}\bigl(\overline{v}(0),u(0)\bigr)=0$ if and only if $\bs_{u^+}\bigl(\overline{v}(0),u(0)\bigr)=0$.    By choice of $v$ we have 
for all $n\in\NN$
\begin{align*}
 0&=\bs_{u^+}(v(t_n),u(t_n))
 = \lim_{s\to\infty}\bigl( d(v(t_n), u(t_n+s))-d(u(t_n),u(t_n+s)\bigr)\\
& = \lim_{s\to\infty}\bigl( d(\gamma_n v(t_n), \gamma_n u(t_n+s))-s\bigr)= \lim_{s\to\infty}\bigl( d\bigl((\gamma_n g^{t_n} v)(0), (\gamma_n g^{t_n} u)(s)\bigr)-s\bigr);
 \end{align*}  
by definition of $\overline{v}$ and $\Gamma$-recurrence of $u$ this gives  
\begin{align*}
 0&= \lim_{s\to\infty}\bigl( d( \overline{v}(0), u(s))-s\bigr) =\bs_{u^+}(\overline{v}(0),u(0)).
  \end{align*}  
Hence   $\overline{v}\sim u$ which is a  contradiction to  $\overline{v}\ne u$ and $u\in\zero$. 
\end{proof} 
Since we want to apply Hopf's criterion for ergodicity Theorem~\ref{Hopfindividual} we need to find an appropriate function $\rho:\quotient{\Gamma}{\SX}\to \RR$ in $\LL^1(m_\Gamma)$ which is strictly positive $m_\Gamma$-almost everywhere. Let $\Delta\ge 0$ be the constant defined by (\ref{boundongrowth}).  
\begin{lemma}\label{definerho}
 The function 
\[\widetilde \rho:\SX\to\RR,\quad u\mapsto\Biggl\{\begin{array}{cl} \displaystyle \max\{ \e^{-2\Delta  d(u(0),\gamma\xo)}\colon \gamma\in\Gamma\} & \text{if } \ u\in \zero\\[1mm]
0 & \text{if } \ u\in \SX\setminus\zero
\end{array}\]
descends to a function $\rho: \quotient{\Gamma}{\SX}\to\RR$ which is strictly positive $m_\Gamma$-almost everywhere and belongs to $\LL^1(m_\Gamma)$.
Moreover, if $u,v\in\zero$ satisfy $d\bigl(u(0),v(0)\bigr)\le 1$, then
\[ |\widetilde\rho(u)-\widetilde\rho(v)|\le \widetilde\rho(u)\cdot 2\Delta  \e^{2\Delta} d\bigl(u(0),v(0)\bigr).\]
\end{lemma}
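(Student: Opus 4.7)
The proof splits naturally into three tasks: showing that $\widetilde\rho$ descends to a positive function on the quotient, establishing integrability, and proving the Lipschitz-type inequality. For the first, $\Gamma$-invariance is immediate: the condition $u\in\zero$ is $\Gamma$-invariant, and for $\gamma_0\in\Gamma$ the identity $d\bigl((\gamma_0 u)(0),\gamma\xo\bigr)=d\bigl(u(0),\gamma_0^{-1}\gamma\xo\bigr)$ combined with the bijection $\gamma\mapsto\gamma_0^{-1}\gamma$ of $\Gamma$ shows that the maximum is unchanged. The maximum is attained and finite because $\Gamma$ is discrete and acts properly on $\XX$, so only finitely many $\gamma$ satisfy $d(u(0),\gamma\xo)\le d(u(0),\xo)$. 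Clearly $\widetilde\rho>0$ on $\zero$ and vanishes off $\zero$; since Theorem~\ref{zerofull} gives $\overline\mu\bigl(\ndpt(\SX\setminus\zero)\bigr)=0$, the measure $m_\Gamma$ is concentrated on the image of $\zero$, so $\rho>0$ $m_\Gamma$-almost everywhere.

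For integrability, the plan is to work on a Dirichlet-type fundamental domain $\mathcal{D}:=\{u\in\SX\colon d(u(0),\xo)\le d(u(0),\gamma\xo)\ \text{for all }\gamma\in\Gamma\}$, on which $\widetilde\rho(u)=\e^{-2\Delta d(u(0),\xo)}$. Since $\Gamma$ acts properly on $\zero$ via the homeomorphism $\Hopf_\xo\ein_\zero:\zero\to [\zero]\subseteq[\reg]$, the characterizing property of the quotient measure gives $\int\rho\,\d m_\Gamma=\int_{\mathcal{D}\cap\zero} \e^{-2\Delta d(u(0),\xo)}\,\d m(u)$. The Hopf parametrization embeds $\mathcal{B}(R)\cap\zero$ into $\ndpt\mathcal{B}(R)\times(-R,R)$ because $|\bs_{u^-}(\xo,u(0))|\le d(\xo,u(0))<R$ whenever $u\in\mathcal{B}(R)$, so by the definition of $\Delta$
\[ m\bigl(\mathcal{B}(R)\cap\zero\bigr)\le 2R\cdot\overline\mu\bigl(\ndpt\mathcal{B}(R)\bigr)\le 2R\,\e^{\Delta R}.\]
Splitting $\mathcal{D}\cap\zero$ into annular shells $A_k:=\{u\in\mathcal{D}\cap\zero:k\le d(u(0),\xo)<k+1\}\subseteq \mathcal{B}(k+1)\cap\zero$ and bounding $\int_{A_k}\e^{-2\Delta d(u(0),\xo)}\,\d m \le \e^{-2\Delta k}\,m(A_k)\le 2(k+1)\,\e^{\Delta(k+1)-2\Delta k}$ yields a convergent geometric series in $k$ (since $\e^{-\Delta}<1$), proving $\rho\in\LL^1(m_\Gamma)$.

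For the Lipschitz bound, let $\gamma_u\in\Gamma$ realize the maximum in $\widetilde\rho(u)$. The triangle inequality gives $d(v(0),\gamma_u\xo)\le d(u(0),v(0))+d(u(0),\gamma_u\xo)$, hence $\widetilde\rho(v)\ge\e^{-2\Delta d(v(0),\gamma_u\xo)}\ge\widetilde\rho(u)\,\e^{-2\Delta d(u(0),v(0))}$. Reversing the roles of $u$ and $v$ yields $\widetilde\rho(v)\le\widetilde\rho(u)\,\e^{2\Delta d(u(0),v(0))}$. Applying the elementary inequalities $1-\e^{-x}\le x$ and $\e^x-1\le x\e^x$ at $x=2\Delta d(u(0),v(0))\in[0,2\Delta]$ (using $d(u(0),v(0))\le 1$) then delivers $|\widetilde\rho(u)-\widetilde\rho(v)|\le \widetilde\rho(u)\cdot 2\Delta\,\e^{2\Delta}d(u(0),v(0))$, as claimed. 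I expect the main technical care to be required in the integrability step, where the Hopf parametrization bound must be correctly combined with the growth condition~(\ref{boundongrowth}); by contrast, the descent and Lipschitz estimates are essentially direct from the definitions.
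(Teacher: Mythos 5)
Your proof is correct and follows essentially the same route as the paper's: $\Gamma$-invariance for the descent, concentration of Ricks' measure on $\quotient{\Gamma}{\zero}$ for positivity, integration over the Dirichlet domain in annular shells combined with the growth bound $m\bigl(\mathcal{B}(R)\bigr)\le 2R\,\e^{\Delta R}$ for integrability, and the triangle inequality plus elementary exponential estimates for the Lipschitz bound. The only differences are cosmetic (you justify the factor $2R$ via the Hopf parametrization and spell out the substitution $\gamma\mapsto\gamma_0^{-1}\gamma$, which the paper leaves implicit).
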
 
\begin{proof}
We first notice that by definition $\widetilde \rho$ is $\Gamma$-invariant and strictly positive on $\zero$, hence $\rho$ is well-defined and strictly positive $m_\Gamma$-almost everywhere (as $m_\Gamma(\quotient{\Gamma}{ \zero})= m_\Gamma(\quotient{\Gamma}{ \SX})$ by construction of Ricks' measure).  
By definition~(\ref{boundongrowth}) of $\Delta$ we get
\[ m\bigl( {\mathcal B}(R)\bigr) \le 2R\cdot \overline\mu\big( \ndpt {\mathcal B}(R)\bigr) \le 2R \e^{\Delta R}. \]
Let  ${\mathcal D}_\Gamma\subseteq\SX $ denote the Dirichlet domain for $\Gamma$ with center $\xo$, that is the set of all parametrized geodesic lines with origin in 
\[ \{x\in\XX\colon d(x,\xo)\le d(x,\gamma\xo)\quad\text{for all }\ \gamma\in\Gamma\};\]
then for all $u\in {\mathcal D}_\Gamma\cap\zero$ we have
\[ \widetilde\rho(u)= \e^{-2\Delta  d(u(0),\xo)}.\]
Notice that if $u \in {\mathcal S}(R):=\bigl({\mathcal B(R)\setminus {\mathcal B}(R-1)\bigr)\cap {\mathcal D}_\Gamma}\cap\zero $, then $d(u(0),\xo)\ge R-1$ and  we estimate 
\begin{align*}
 \int_{{\mathcal S}(R)} \widetilde\rho(u)\d m(u) & \le  \e^{-2\Delta  (R-1)}   \int_{{\mathcal B}(R)} \d m(u) \le 2R\e^{2\Delta}  \e^{-\Delta R} ;\end{align*}
this shows that $\rho\in \LL^1(m_\Gamma)$. 

We finally let $u,v\in \zero$ arbitrary with $d\bigl(u(0),v(0)\bigr)\le 1$. Let $\gamma,\gamma'\in\Gamma$ \st  $\widetilde\rho(u) =\e^{-2\Delta d(u(0),\gamma\xo)}$,  $\widetilde\rho(v) =\e^{-2\Delta d(v(0),\gamma'\xo)}$. Then
\begin{eqnarray*}
\widetilde\rho(u)-\widetilde\rho(v) &\le & \e^{-2\Delta  d(u(0),\gamma\xo)}\bigl(1- \e^{-2\Delta  d(u(0),v(0))}\bigr) ,\\
\widetilde\rho(v)-\widetilde\rho(u) &\ge &  \e^{-2\Delta  d(u(0),\gamma\xo)}\bigl( \e^{2\Delta  d(u(0),v(0))}-1\bigr),
\end{eqnarray*}
hence
\begin{align*} |\widetilde\rho(u)-\widetilde\rho(v)| &\le  \widetilde\rho(u)  \cdot \max\{1- \e^{-2\Delta  d(u(0),v(0))},   \e^{2\Delta  d(u(0),v(0))}-1\}\\
& \le \widetilde\rho(u) 2\Delta  \e^{2\Delta } d(u(0),v(0)).\end{align*}
\end{proof}
For the remainder of this section we will again denote elements in the quotient $\quotient{\Gamma}{\SX}$ be $u,v,w$ and elements in $\SX$ by $\widetilde u, \widetilde v, \widetilde w$.  
As we want to apply 
Theorem~\ref{Hopfindividual}, we state the following auxiliary result. Its proof is a straightforward computation as performed in \cite[page~144]{MR1041575} using the property of $\widetilde \rho\,$ stated in the last line of Lemma~\ref{definerho}. 
\begin{lemma}\label{pluslimitequal}
Let $f\in\Cnt_c(\quotient{\Gamma}{\SX})$ be arbitrary. If $u,v\in\quotient{\Gamma}{\zero}$ are positively recurrent with lifts $\widetilde u$, $\widetilde v$ satisfying $\widetilde u^+=\widetilde v^+$,  $\bs_{\widetilde v^+}(\widetilde u(0),\widetilde v(0))=0$ and such that 
\[  f^+(u):= \lim_{T\to\infty}  \frac{\int_0^T   f(g_\Gamma^{t} u)\d t}{\int_0^T  \rho(g_\Gamma^{ t}  u)\d t}\quad\text{and }\   f^+( v)=\lim_{T\to \infty}  \frac{\int_0^T   f(g_\Gamma^{t} v)\d t}{\int_0^T  \rho(g_\Gamma^{t} v)\d t}\] 
exist, then $ f^+( u)= f^+(v)$.
\end{lemma}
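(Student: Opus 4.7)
The plan is to carry out the classical Hopf argument in this singular-space setting. Fix $\Gamma$-invariant lifts $\widetilde f,\widetilde\rho:\SX\to\RR$ of $f$ and $\rho$, and set $I_f(T,u):=\int_0^T \widetilde f(g^t\widetilde u)\,\d t$ and $I_\rho(T,u):=\int_0^T \widetilde\rho(g^t\widetilde u)\,\d t$, and analogously for $v$. Since $u$ is positively recurrent and $\rho$ is $m_\Gamma$-a.e.\ strictly positive, the characterization of the conservative part recalled in Section~\ref{dyndef} gives $I_\rho(T,u)\to\infty$. The first step is to invoke Lemma~\ref{KniepersProp}: because $\widetilde u\in\zero$ is $\Gamma$-recurrent, $\widetilde u^+=\widetilde v^+$, and $\bs_{\widetilde v^+}(\widetilde u(0),\widetilde v(0))=0$, one obtains $d_1(g^t\widetilde u,g^t\widetilde v)\to 0$, and hence $d\bigl(g^t\widetilde u(0),g^t\widetilde v(0)\bigr)\to 0$.

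For the denominators, the last line of Lemma~\ref{definerho} gives, for every $\ep>0$, some $T_0>0$ such that $|\widetilde\rho(g^t\widetilde v)-\widetilde\rho(g^t\widetilde u)|\le \ep\,\widetilde\rho(g^t\widetilde u)$ for all $t\ge T_0$; integrating yields $|I_\rho(T,v)-I_\rho(T,u)|\le C_1+\ep\,I_\rho(T,u)$, and dividing by $I_\rho(T,u)\to\infty$ then letting $\ep\to 0$ gives $I_\rho(T,v)/I_\rho(T,u)\to 1$.

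The delicate step is to establish $|I_f(T,u)-I_f(T,v)|/I_\rho(T,u)\to 0$. Pick $R>0$ so that $\supp(f)$ is contained in the projection of $\{w\in\SX\colon d(w(0),\Gamma\xo)\le R\}$ to $\quotient{\Gamma}{\SX}$; the explicit formula for $\widetilde\rho$ then forces $\widetilde\rho(w)\ge e^{-2\Delta(R+1)}=:c>0$ whenever $w\in\zero$ and $\pi(w)$ lies in the $1$-neighborhood of $\supp(f)$. Given $\ep>0$, choose $\delta\in(0,1)$ with $d_1(a,b)<\delta\Rightarrow|\widetilde f(a)-\widetilde f(b)|<\ep$ and $T_1\ge T_0$ with $d_1(g^t\widetilde u,g^t\widetilde v)<\delta$ for $t\ge T_1$. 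At such $t$, either $\pi(g^t\widetilde u)$ and $\pi(g^t\widetilde v)$ both lie outside $\supp(f)$ (and the integrand vanishes), or $\pi(g^t\widetilde u)$ lies within the $1$-neighborhood of $\supp(f)$; in the latter case $\widetilde\rho(g^t\widetilde u)\ge c$, so
$$|\widetilde f(g^t\widetilde u)-\widetilde f(g^t\widetilde v)|\le (\ep/c)\,\widetilde\rho(g^t\widetilde u).$$
Integration gives $|I_f(T,u)-I_f(T,v)|\le C_2+(\ep/c)\,I_\rho(T,u)$, and the claim follows on dividing by $I_\rho(T,u)$, letting $T\to\infty$ and then $\ep\to 0$.

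The conclusion is then assembled from the algebraic identity
$$\frac{I_f(T,u)}{I_\rho(T,u)}-\frac{I_f(T,v)}{I_\rho(T,v)}=\frac{I_f(T,u)-I_f(T,v)}{I_\rho(T,u)}+\frac{I_f(T,v)}{I_\rho(T,v)}\cdot\frac{I_\rho(T,v)-I_\rho(T,u)}{I_\rho(T,u)},$$
using that $I_f(T,v)/I_\rho(T,v)\to f^+(v)\in\RR$ is bounded. The main obstacle, as ever with the Hopf argument, is the numerator estimate: a naive application of uniform continuity of $f$ only produces a bound of order $\ep T$, which need not be dominated by $I_\rho(T,u)$. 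The remedy is to combine the compact support of $f$ with the quantitative growth bound for $\widetilde\rho$ built into its definition through $\Delta$, thereby upgrading the pointwise estimate $\ep$ to the weighted estimate $(\ep/c)\widetilde\rho$ that the ratio argument demands.
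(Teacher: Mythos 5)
Your proof follows essentially the same route as the paper's: the paper's own proof of this lemma is a one-line citation of the classical computation in \cite[p.~144]{MR1041575} together with the multiplicative estimate for $\widetilde\rho$ in the last line of Lemma~\ref{definerho}, and your three estimates -- the comparison of denominators via that estimate, the comparison of numerators via the compact support of $f$ combined with the lower bound $\widetilde\rho\ge c$ near $\supp(f)$, and the concluding algebraic identity -- are precisely that computation, with Lemma~\ref{KniepersProp} supplying the contraction $d_1(g^t\widetilde u,g^t\widetilde v)\to 0$ exactly as the paper intends.

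One step, however, is not justified as written: the divergence $I_\rho(T,u)\to\infty$. The characterization of the conservative part recalled in Section~\ref{dyndef}, namely that $\Omega_C$ coincides with $\{\omega\colon \int_0^\infty\rho(\varphi^t\omega)\,\d t=\infty\}$, holds only \emph{up to a set of measure zero}, so it cannot be applied to the individual point $u$ of the lemma, which is merely assumed to be positively recurrent and is not given to lie in the full-measure set where that identification is valid. The claim is nevertheless true, and the repair uses only the explicit form of $\widetilde\rho$: positive recurrence of $u$ gives $(\gamma_n)\subseteq\Gamma$ and $(t_n)\nearrow\infty$ with $\gamma_n g^{t_n}\widetilde u\to\widetilde u$; since on $\zero$ one has $\widetilde\rho(w)=\e^{-2\Delta d(w(0),\Gamma\xo)}$, which is continuous and $\Gamma$-invariant, it follows that $\rho(g_\Gamma^{t_n}u)\to\rho(u)>0$; moreover $s\mapsto d\bigl(\widetilde u(t_n+s),\Gamma\xo\bigr)$ is $1$-Lipschitz, so $\rho(g_\Gamma^{t_n+s}u)\ge \e^{-2\Delta}\rho(g_\Gamma^{t_n}u)$ for $|s|\le 1$. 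Hence, after passing to a subsequence with $t_{n+1}>t_n+1$, each interval $[t_n,t_n+1]$ contributes at least $\e^{-2\Delta}\rho(u)/2$ to $I_\rho(T,u)$ for $n$ large, and the divergence follows. (Only divergence for $u$, not for $v$, is needed in your final identity.) With this substitution your argument is complete.
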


\begin{proposition}\label{conservativeimpliesergodic}\
The dynamical system $(\quotient{\Gamma}{ \SX}, (g^t_\Gamma)_{t\in\RR}, m_\Gamma)$ is   ergodic.
\end{proposition}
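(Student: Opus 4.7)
The plan is to run the classical Hopf argument, adapted to the singular Hadamard setting, invoking Theorem~\ref{Hopfindividual} with the reference function $\rho$ from Lemma~\ref{definerho}. Since $\Cnt_c(\quotient{\Gamma}{\SX})$ is dense in $\LL^1(m_\Gamma)$ and, by the final clause of Theorem~\ref{Hopfindividual}, ergodicity is equivalent to the $m_\Gamma$-a.e.\ constancy of $f^+$ for every $f\in\LL^1(m_\Gamma)$, it suffices to prove this constancy for an arbitrary fixed $f\in\Cnt_c(\quotient{\Gamma}{\SX})$.

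The first step is to push $f^+$ down to the geometric boundary. By Theorem~\ref{zerofull}, $\overline\mu$ concentrates on $\ndpt\zero$, and $m_\Gamma$ is supported on $\quotient{\Gamma}{\zero}$. Since $\Hopf_\xo\ein_{\zero}:\zero\to\ndpt\zero\times\RR$ is a homeomorphism and the lift $\widetilde f^+$ of $f^+$ to $\zero$ (defined $m$-a.e.)\ is both flow-invariant and $\Gamma$-invariant, it descends through the first two coordinates of the Hopf parametrization to a $\Gamma$-invariant function $\Phi$ on $\ndpt\zero$, defined $\overline\mu$-a.e. Combining Theorem~\ref{Hopfindividual} (existence and agreement of $f^\pm$), two-sided Poincar\'e recurrence from conservativity, and Lemma~\ref{regnotnecessary}, I obtain a $\Gamma$- and flow-invariant set of full $m$-measure in $\zero$ whose Hopf image $\Omega\subseteq\ndpt\zero$ is $\overline\mu$-full in $\ndpt\reg$ and on which $\Phi$ is defined, the projection to $\quotient{\Gamma}{\zero}$ is both positively and negatively recurrent, and $f^+=f^-$.

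The core of the argument is two applications of Lemma~\ref{pluslimitequal}. For $(\xi,\eta),(\xi',\eta)\in\Omega$ sharing the positive endpoint, pick lifts $\widetilde v,\widetilde u\in\zero$ with $\ndpt\widetilde v=(\xi,\eta)$, $\ndpt\widetilde u=(\xi',\eta)$; by flow-shifting $\widetilde u$ along its orbit — which preserves $f^+$ of the projection and keeps us inside the flow-invariant good set — I may enforce the Busemann normalization $\bs_\eta(\widetilde u(0),\widetilde v(0))=0$. Lemma~\ref{pluslimitequal} then yields $\Phi(\xi,\eta)=\Phi(\xi',\eta)$. For pairs $(\xi,\eta),(\xi,\eta')\in\Omega$ sharing the negative endpoint I apply the same reasoning to the reverse geodesics $-\widetilde v,-\widetilde u$, using the time-reversed analogue of Lemma~\ref{pluslimitequal} and the a.e.\ identity $f^-=f^+$, to obtain $\Phi(\xi,\eta)=\Phi(\xi,\eta')$.

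Taken together, these two equalities are exactly the transitive pairwise-equality hypothesis of Lemma~\ref{mapisconstantae} for $\Psi=\Phi$ on the $\overline\mu$-full set $\Omega$. Hence $\Phi$ is $\overline\mu$-a.e.\ constant, so $\widetilde f^+$ is $m$-a.e.\ constant on $\zero$, and therefore $f^+$ is $m_\Gamma$-a.e.\ constant on $\quotient{\Gamma}{\SX}$; Theorem~\ref{Hopfindividual} then delivers ergodicity. I expect the main technical hurdle to be not any single one of these steps but rather the careful bookkeeping needed to arrange that the various full-measure sets (forward and backward recurrence, existence and agreement of $f^\pm$, concentration on $\ndpt\zero$, and the flow-shift required to normalize the Busemann cocycle) can all be intersected into a single $\Gamma$- and flow-invariant subset of $\zero$ whose Hopf image remains $\overline\mu$-full in $\ndpt\reg$. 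This is a Fubini-type maneuver that goes through cleanly precisely because of Lemma~\ref{regnotnecessary} and the fact that the flow-shift operation does not leave the good set.
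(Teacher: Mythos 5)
Your proof is correct, and its dynamical core is the same as the paper's: the Hopf argument via Theorem~\ref{Hopfindividual} with the weight $\rho$ of Lemma~\ref{definerho}, reduction to $f\in\Cnt_c(\quotient{\Gamma}{\SX})$ by density, the full-measure good set of doubly recurrent points in $\quotient{\Gamma}{\zero}$ on which $f^\pm$ exist and coincide, and two applications of Lemma~\ref{pluslimitequal} --- one in forward time for geodesics sharing the positive endpoint, one in reversed time for geodesics sharing the negative endpoint, the latter usable only because $f^+=f^-$ on the good set (the paper invokes the time-reversed version of Lemma~\ref{pluslimitequal} just as tacitly as you do). Where you genuinely differ is in how these pairwise equalities are converted into almost-everywhere constancy. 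The paper never descends $f^+$ to the boundary: it uses the local product structure of $m$ together with Lemma~\ref{regnotnecessary} to fix one reference element $w$ whose lift $\widetilde w$ has the property that $\mu_+$-almost every $\eta$ is the positive endpoint of a good geodesic with negative endpoint $\widetilde w^-$, and then chains $f^+(v)=f^+(u)=f^-(u)=f^-(g^s_\Gamma u)=f^-(w)=f^+(w)$ for almost every $v$. You instead push $f^+$ down to a function $\Phi$ on $\ndpt\zero$ (legitimate: flow-invariant functions on $\zero\cong\ndpt\zero\times\RR$ factor through the endpoint pair, and Theorem~\ref{zerofull} makes $\ndpt\zero$ conull in $\ndpt\reg$), verify the two symmetric equality relations, and conclude with Lemma~\ref{mapisconstantae}. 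In substance the two routes coincide --- the proof of Lemma~\ref{mapisconstantae} is exactly the Fubini/reference-point maneuver the paper performs by hand --- but yours is more modular, reusing a lemma the paper exploits only for the isometry-type statement (Lemma~\ref{isometrytypeconstant}), at the mild cost of the descent bookkeeping you flag yourself; that bookkeeping does go through, since the domain of $f^+$ may be taken flow- and $\Gamma$-invariant, so its Hopf image is a translation-invariant, hence product, set $\Omega_0\times\RR$ with $\Omega_0$ of full $\overline\mu$-measure.
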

\begin{proof}  
Using the last statement of 
Theorem~\ref{Hopfindividual} we have to  show that  for every function $f\in \LL^1(m_\Gamma)$ the associated limit function $f^+$ defined by 
\[ f^+( u):= \lim_{T\to\infty} \frac{\int_0^T  f(g_\Gamma^{t}u)\d t}{\int_0^T \rho(g_\Gamma^{ t} u)\d t}\quad\text{for }\ m_\Gamma\text{-almost every } \  u\in \quotient{\Gamma}{\SX} \]
is constant $m_\Gamma$-almost everywhere; 
here  $\rho\in \LL^1(m_\Gamma)$ is the function defined in Lemma~\ref{definerho}. As $\Cnt_c(\quotient{\Gamma}{\SX})$ is dense in  $\LL^1(m_\Gamma)$ it will suffice to prove the claim for $f\in\Cnt_c(\quotient{\Gamma}{\SX})$.

So we choose $f\in\Cnt_c(\quotient{\Gamma}{\SX})$ arbitrary. Since $(\quotient{\Gamma}{\SX}, g_\Gamma, m_\Gamma)$ is conservative, 
Theorem~\ref{Hopfindividual} states that for $m_\Gamma$-almost every $ u\in \quotient{\Gamma}{\SX}$ the limits
$$   f^\pm(u)=\lim_{T\to +\infty} \frac{\int_0^T  f(g^{\pm t}_\Gamma u)\d t}{\int_0^T  \rho(g_\Gamma^{\pm t}u)\d t}$$
exist and are equal.

As $m_\Gamma$ is conservative and supported on $\quotient{\Gamma}{\zero}$, the set of recurrent elements in $\quotient{\Gamma}{\zero}$ has full measure in $\quotient{\Gamma}{\SX}$ with respect to $m_\Gamma$. So altogether the set
\begin{align*}
 \Omega &:=\{ u\in\quotient{\Gamma}{\zero} \colon  u\ \text{is positively and negatively recurrent}, \\
&\hspace*{2.5cm} f^{+}(u),\, f^-(u)\  \text{ exist and } \  f^+( u)= f^-( u)\} \end{align*}
has full measure in $\quotient{\Gamma}{\SX}$.

Moreover, from the local product structure of $m$  and Lemma~\ref{regnotnecessary} 
we know that there exists a lift $\widetilde w\in\SX$ of some  $w\in \Omega$  \st $$ G_{\widetilde w^-}:=\{\eta\in \rand \colon  \exists\, u\in\Omega \ \mbox{ with a lift }\ \widetilde u\in\zero  \ \mbox{ satisfying }\ \widetilde u^-=\widetilde w^-,\ \widetilde u^+=\eta\}$$
has full measure in $\rand$ \wrt $\mu_+$. This implies in particular that 
\begin{equation}\label{fullinxi} m_\Gamma\bigl(\{ v\in \Omega\colon \exists \ \text{lift } \ \widetilde v\in\zero \ \text{ satisfying } \ \widetilde v^+\in G_{\widetilde w^-}\}\bigr)=m_\Gamma(\quotient{\Gamma}{\SX}).\end{equation} 

We will next show that 
$ f^+$ is constant $m_\Gamma$-almost everywhere  on $\quotient{\Gamma}{\SX}$; according to~(\ref{fullinxi})  above  it suffices to show that for every $v\in\Omega$ with a lift   $\widetilde v\in \zero$ satisfying $\widetilde v^+\in G_{\widetilde w^-}$ we have  $ f^+(v)= f^+( w)$. So let $v\in\Omega$ be arbitrary with a lift $\widetilde v\in\zero$ satisfying $\widetilde v^+\in G_{\widetilde w^-}$. By definition of $G_{\widetilde w^-}$ there exists $u\in\Omega$ with a lift $\widetilde u\in \zero$ satisfying  $\widetilde u^-=\widetilde w^-$ and  $\widetilde u^+=\widetilde v^+$; replacing $\widetilde u$ by $g^s \widetilde u$ for an appropriate $s\in\RR$ if necessary we may further assume that $\bs_{\widetilde v^+}(\widetilde u(0),\widetilde v(0))=0$. Then the choice of $\widetilde w$,   the definition of $\Omega$ and Lemma~\ref{pluslimitequal} directly imply
\[  f^+(v)=f^+(u)=f^-( u).\] 
We next  choose  $s\in\RR$ such that $\bs_{\widetilde w^-}(\widetilde w(0),\widetilde u(s))=0$;  from the fact that $u$ is negatively recurrent, $\widetilde u^-=\widetilde w^-$ and Lemma~\ref{pluslimitequal}  we then get 
\[  f^-(w)=f^-(g^s_\Gamma  u).\]
As $f^\pm$ are $(g_\Gamma^t)$-invariant and $ w\in\Omega$, we conclude 
\[  f^+( v)=f^-( u) =f^-(g^s_\Gamma  u)=f^-( w)=f^+( w).\]
So we have shown that $m_\Gamma$-almost every $ v\in\quotient{\Gamma}{\SX}$ satisfies $ f^+( v)=f^+( w)$. 
\end{proof}

We now summarize the previous results to obtain 
\begin{theorem}\label{conservativestatement}\
Let $\Gamma<\is(\XX)$ be a discrete rank one group with $\zero_\Gamma\ne\emptyset$.   Let $\mu_-$, $\mu_+$ be non-atomic probability measures  on $\rand$ with $\supp(\mu_{\pm})=\Lim$, and 
$\overline\mu\sim  (\mu_-\otimes \mu_+)\ein_{\ndpt\reg}$  a quasi-product geodesic current on $\ndpt\reg$ for which the constant $\Delta\ge 0$ defined by (\ref{boundongrowth}) is finite. 

Let $m_\Gamma$ be the associated Ricks' measure  on $\quotient{\Gamma}{ \SX}$. Then 
 the following statements   are equivalent: \\[-3mm]
\begin{enumerate}
\item[(i)] $\mu_-(\radlim)=\mu_+(\radlim)=1$.
\item[(ii)] $(\quotient{\Gamma}{ \SX}, g_\Gamma, 
m_\Gamma)$ is  conservative.
\item[(iii)] 
$(\quotient{\Gamma}{ \SX}, g_\Gamma, 
m_\Gamma)$ is ergodic and $m_\Gamma$ is not supported on a single divergent orbit. \end{enumerate}
Moreover, each of the three statements implies that $m_\Gamma$ is equal to the weak Ricks' measure $\overline m_\Gamma$ on $\quotient{\Gamma}{[\SX]}$  and  to any Knieper's measure on $\quotient{\Gamma}{\SX}$ associated to $\overline\mu$ (if it exists). 
\end{theorem}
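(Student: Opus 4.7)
The plan is to prove the circular chain (i)$\Rightarrow$(ii)$\Rightarrow$(iii)$\Rightarrow$(i), and then derive the ``Moreover'' clause from (i) via Corollary~\ref{weakisstrongisKnieper}.

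For (i)$\Leftrightarrow$(ii) I will simply chain Lemma~\ref{consdiss}(a) with the unnumbered corollary following Lemma~\ref{regnotnecessary}: the former identifies conservativity of $(\quotient{\Gamma}{\SX},g_\Gamma,m_\Gamma)$ with $\overline\mu(\ndpt(\SX\setminus\SX_{\Gamma}^{\small{\mathrm{rad}}}))=0$, and the latter rewrites this condition as $\mu_-(\radlim)=\mu_+(\radlim)=1$. For (ii)$\Rightarrow$(iii) ergodicity is exactly Proposition~\ref{conservativeimpliesergodic}, whose hypotheses (finite $\Delta$, non-atomic $\mu_\pm$ of full support with radial mass one, and $\zero_\Gamma\ne\emptyset$) are all in place, thanks to the equivalence just proved. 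To exclude that $m_\Gamma$ be supported on a single divergent orbit, I would invoke Poincar\'e recurrence: conservativity forces $m_\Gamma$-almost every orbit to be recurrent, while every point on a divergent orbit is divergent (hence non-recurrent), so a divergent single-orbit support is impossible.

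For (iii)$\Rightarrow$(ii) my plan is to apply the general ergodic-theoretic fact recalled just before Theorem~\ref{Hopfindividual}: an ergodic system is either conservative, or else dissipative with measure supported on a single orbit. In the latter case Hopf's divergence theorem applied to $\Omega_D$ forces almost every orbit there to be divergent, so the hypothetical support orbit must itself be divergent; but (iii) rules exactly this out, leaving conservativity. Finally, for the ``Moreover'' clause, once (i) is in force Corollary~\ref{weakisstrongisKnieper} applies verbatim and yields the coincidence of $m_\Gamma$ with the weak Ricks' measure $\overline m_\Gamma$ on $\quotient{\Gamma}{[\SX]}$ and with any Knieper's measure associated to $\overline\mu$.

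The main obstacle is not in this theorem itself but has already been absorbed into earlier work, namely Proposition~\ref{conservativeimpliesergodic} (which relies on the full Hopf argument, Theorem~\ref{zerofull}, the asymptotic-synchronisation Lemma~\ref{KniepersProp} for recurrent zero-width geodesics, and the integrable weight $\rho$ from Lemma~\ref{definerho}) together with Corollary~\ref{weakisstrongisKnieper}. Given those inputs the assembly is essentially formal, with only mild care needed in step (iii)$\Rightarrow$(ii) to remember that the exceptional ``single orbit'' in the dissipative-ergodic case is necessarily divergent rather than recurrent --- which is precisely what the hypothesis in (iii) is formulated to exclude.
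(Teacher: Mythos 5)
Your assembly is exactly the one the paper intends --- Theorem~\ref{conservativestatement} is stated there as a pure summary of Lemma~\ref{consdiss}, the corollary following Lemma~\ref{regnotnecessary}, Proposition~\ref{conservativeimpliesergodic}, Corollary~\ref{weakisstrongisKnieper} and the generalities recalled before Theorem~\ref{Hopfindividual} --- and your steps (i)$\Rightarrow$(ii), (ii)$\Rightarrow$(iii) \emph{given} (i), (iii)$\Rightarrow$(ii), and the ``Moreover'' clause are sound. The genuine gap is in (ii)$\Rightarrow$(i), on which your whole cycle rests. Lemma~\ref{consdiss}(a) is stated, and its proof works, for a \emph{Knieper's} measure associated to $\overline\mu$ and for the \emph{weak} Ricks' measure $\overline m_\Gamma$ on $\quotient{\Gamma}{[\SX]}$; the endpoint marginal of both of these is the full current $\overline\mu$ on $\ndpt\reg$. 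The measure $m_\Gamma$ of the theorem is instead the Ricks' measure of~(\ref{defstrongRicks}), which by construction charges only $\quotient{\Gamma}{\zero}$, and it coincides with the other two measures only \emph{after} one knows that $\overline\mu$ gives no mass to $\ndpt(\reg\setminus\zero)$ --- that is Theorem~\ref{zerofull}/Corollary~\ref{weakisstrongisKnieper}, whose hypothesis is precisely statement (i). Without (i), repeating the proof of Lemma~\ref{consdiss} for $m_\Gamma$ yields only: $(\quotient{\Gamma}{\SX},g_\Gamma,m_\Gamma)$ is conservative if and only if $\overline\mu\bigl(\ndpt\zero\setminus(\radlim\times\radlim)\bigr)=0$. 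This is a priori weaker than $\overline\mu\bigl(\ndpt(\SX\setminus\SX_\Gamma^{\mathrm{rad}})\bigr)=0$, since any mass of $\overline\mu$ on $\ndpt(\reg\setminus\zero)$ is invisible to $m_\Gamma$; so the corollary after Lemma~\ref{regnotnecessary} cannot simply be chained in, and (ii)$\Rightarrow$(i) (hence also your (ii)$\Rightarrow$(iii), which routes through (i)) is unproved as written. Note that (i)$\Rightarrow$(ii) does survive, because the displayed condition is implied by $\overline\mu\bigl(\ndpt(\SX\setminus\SX_\Gamma^{\mathrm{rad}})\bigr)=0$.

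The missing direction can be repaired when $m_\Gamma\ne 0$, i.e. $\overline\mu(\ndpt\zero)>0$. Conservativity gives that for $\overline\mu$-almost every $(\xi,\eta)\in\ndpt\zero$ the joining geodesic is $\Gamma$-recurrent in both directions. Let $B^+$ be the set of positive endpoints $\eta$ of such geodesics, so $\mu_+(B^+)>0$; by Lemma~\ref{jointoweakrecurrent} applied with width $0$, every $\zeta\in\rand\setminus\{\eta\}$ is joined to each $\eta\in B^+$ by a \emph{zero width} geodesic, so $(\rand\times B^+)$ minus the diagonal is contained in $\ndpt\zero$, and hence almost every pair in this rectangle is again a recurrent pair. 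By Fubini and non-atomicity, $\mu_-$-almost every $\zeta\in\rand$ is then the negative endpoint of a doubly $\Gamma$-recurrent geodesic, whence $\zeta\in\radlim$ by Lemma~\ref{critradlim}; the symmetric argument gives $\mu_+(\radlim)=1$. Finally, be aware that the degenerate case $\overline\mu(\ndpt\zero)=0$ (so $m_\Gamma=0$, which is vacuously conservative while (i) may fail) is not excluded by the hypotheses $\zero_\Gamma\ne\emptyset$ and $\supp(\mu_\pm)=\Lim$, because $\zero$ is not open in $\SX$; this loose end is present in the paper's own formulation as well, but a complete proof should say explicitly where nontriviality of $m_\Gamma$ enters.
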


We finally mention a result concerning the dynamical systems $(\ndpt\SX, \Gamma, \overline\mu)$ and $(\rand\times\rand,\Gamma, \mu_-\otimes \mu_+)$ first introduced in Section~\ref{geodcurrentmeasures}. From the construction of the Ricks'  measure $m_\Gamma$ associated to the quasi-product geodesic current $\overline\mu$ defined on $\ndpt\reg$ which is absolutely continuous with respect to the product  $ ( \mu_-\otimes \mu_+)\ein_{\ndpt\reg}$  of non-atomic probability measures $\mu_{\pm}$ on $\rand$ with $\supp(\mu_{\pm})=\Lim$ we immediately get
\begin{lemma} 
$(\quotient{\Gamma}{ \SX}, g_\Gamma, 
m_\Gamma)$ is ergodic if and only if $(\ndpt\SX, \Gamma, \overline\mu)$ is ergodic if and only if $(\rand\times\rand,\Gamma, \mu_-\otimes \mu_+)$ is ergodic.
\end{lemma}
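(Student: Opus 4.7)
The plan is to use the fact that $m_\Gamma$ is, in Hopf coordinates, essentially a product of $\overline\mu$ with Lebesgue measure along the flow direction, so ergodicity of the geodesic flow on $\quotient{\Gamma}{\SX}$ reduces to ergodicity of the $\Gamma$-action on the transverse space $\ndpt\SX$. The third equivalence then comes for free from the equivalence of measures $\overline\mu\sim(\mu_-\otimes\mu_+)\ein_{\ndpt\reg}$ together with Theorem~\ref{zerofull}.

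First I would work in the Hopf parametrization. Via the $\is(\XX)$-equivariant homeomorphism $[\reg]\cong\ndpt\reg\times\RR$ the geodesic flow becomes translation on the $\RR$-factor, and the $\is(\XX)$-action is a skew product sitting above the diagonal action on $\ndpt\reg$. Hence any $g^t$-invariant Borel set in $\ndpt\reg\times\RR$ is, modulo $\overline m=\overline\mu\otimes\lambda$-null sets, of product form $A\times\RR$ with $A\subseteq\ndpt\reg$ Borel (this is the Fubini step), and such a set is $\Gamma$-invariant exactly when $A$ is. From $\overline m(A\times\RR)=\overline\mu(A)\cdot\lambda(\RR)$ and the characterizing property of the quotient measure, together with Corollary~\ref{weakisstrongisKnieper} which identifies the weak Ricks' measure, the Ricks' measure and Knieper's measure in this setting, one concludes that a $g_\Gamma$-invariant set $E\subseteq\quotient{\Gamma}{\SX}$ satisfies $m_\Gamma(E)=0$ (respectively $m_\Gamma(\quotient{\Gamma}{\SX}\setminus E)=0$) if and only if the corresponding $A$ satisfies $\overline\mu(A)=0$ (respectively $\overline\mu(\ndpt\reg\setminus A)=0$). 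This gives the first equivalence.

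For the second equivalence I would exploit the assumption that $\overline\mu$ is equivalent to $(\mu_-\otimes\mu_+)\ein_{\ndpt\reg}$, so these two measures share the same $\Gamma$-invariant null sets inside $\ndpt\reg$. To upgrade from $\ndpt\reg$ to all of $\rand\times\rand$, I would apply Theorem~\ref{zerofull}: under the running hypothesis that $\mu_\pm(\radlim)=1$ (which is the substantive case, as the dissipative non-ergodic alternative is already covered by Lemma~\ref{consdiss}), the measure $\mu_-\otimes\mu_+$ is concentrated on $\ndpt\zero\subseteq\ndpt\reg$. Consequently, for a $\Gamma$-invariant Borel set $A\subseteq\rand\times\rand$ the four quantities $\overline\mu(A)$, $\overline\mu(\ndpt\reg\setminus A)$, $(\mu_-\otimes\mu_+)(A)$, $(\mu_-\otimes\mu_+)((\rand\times\rand)\setminus A)$ all vanish or fail to vanish simultaneously, yielding the ergodicity equivalence.

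The main obstacle I expect is making the Fubini reduction rigorous, that is, showing that up to $\overline m$-null sets every $g^t$-invariant measurable set in $\ndpt\reg\times\RR$ has product form $A\times\RR$. Once this is granted, everything else is bookkeeping using the characterizing property of the quotient measure on $\quotient{\Gamma}{[\reg]}$. A secondary point is that the identification between measurable sets in $\quotient{\Gamma}{\SX}$ and in $\quotient{\Gamma}{[\SX]}$ requires Corollary~\ref{weakisstrongisKnieper} to guarantee that passing through the parallel-class quotient does not lose information.
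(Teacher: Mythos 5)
Your proposal is correct and is essentially the paper's own argument: the paper states this lemma without a written proof, presenting it as an immediate consequence of the product structure $\overline m=\overline\mu\otimes\lambda$ of Ricks' measure in Hopf coordinates together with the equivalence $\overline\mu\sim(\mu_-\otimes\mu_+)\ein_{\ndpt\reg}$ and Theorem~\ref{zerofull}, which is precisely what you spell out. The one ``obstacle'' you flag is not actually one: since the paper's definition of ergodicity uses strictly flow-invariant Borel sets, the $\RR$-fiber of such a set over each $(\xi,\eta)$ is a translation-invariant subset of $\RR$, hence empty or all of $\RR$, so the product form $A\times\RR$ holds exactly rather than merely modulo $\overline m$-null sets.
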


\section{Geodesic currents coming from a conformal density}\label{currentsfromconfdens}

For the remainder of this article we will specialize to a particular kind of geodesic currents, namely the ones arising from a conformal density.
As before  $\XX$ will denote a proper Hadamard space  and $\Gamma<\is(\XX)$ a discrete rank one group. We further fix a base point $\xo\in\XX$ on an invariant geodesic of a rank one element in $\Gamma$.

We start with an important definition: Since $\Gamma<\is(\XX)$ is discrete and $\XX$ is proper the {\hl orbit counting function}
\[ N_\Gamma(R):=\#\{\gamma\in\Gamma\colon d(\xo,\gamma \xo)\leq R\}\]
is finite for all $R>0$.  The number 
\[ \delta_\Gamma =\limsup_{R\to +\infty}\frac{\ln\bigl(N_\Gamma(R)\bigr)}{R}\]
is called the {\hd critical exponent} of $\Gamma$; it is independent of the choice of base point $\xo\in\XX$ and satisfies the equality 
\begin{equation}\label{critexp} \delta_\Gamma= \inf\{s>0\colon \sum_{\gamma\in\Gamma} \e^{-s d(\xo,\gamma \xo)} \ \text{ converges}\}.\end{equation}
A discrete group $\Gamma$ is said to be {\hd divergent} if
\[\sum_{\gamma\in\Gamma} \e^{-\delta_\Gamma d(\xo,\gamma\xo)}\quad\text{diverges},\]
and {\hd convergent} otherwise (that is when the infimum in (\ref{critexp}) is attained).

Given $\delta\ge 0$, a $\delta$-dimensional  $\Gamma$-invariant  conformal density is a continuous map $\mu\,$ of $\XX$ into the cone of positive finite Borel
measures on $\rand$ \st $\mu_\xo:=\mu(\xo)$ is supported on the limit set $\Lim$, $\mu\,$ is $\Gamma$-equivariant (that is $\gamma_*\mu_x=\mu_{\gamma x}$ for
all $\gamma\in\Gamma$, $x\in\XX$)\footnote{Here $\gamma_*\mu_x$ denotes the measure defined by $\gamma_*\mu_x(E)=\mu_x(\gamma^{-1}E)$ for any Borel set $E\subseteq\rand$.}  and
\begin{equation}\label{conformality}
  \frac{\d \mu_x}{\d \mu_\xo}(\eta)=\e^{\delta
\bs_{\eta}(\xo,x)} \quad\mbox{for any}\ \,x\in\XX\ \text{and }\  \eta\in\supp(\mu_\xo).
\end{equation}

The existence of  a  $\delta$-dimensional   $\Gamma$-invariant conformal density for $\delta=\delta_\Gamma$ goes back to  S.~J.~Patterson \mbox{(\cite{MR0450547})} in the case of Fuchsian groups, and 
it turns out that his explicit construction extends to arbitrary discrete isometry groups of Hadamard spaces with positive critical exponent
(see for example \cite[Lemma 2.2]{MR1465601}). 
This condition is satisfied for any discrete rank one group  $\Gamma< \is(\XX)$ as it contains by definition a non-abelian free subgroup generated by two independent rank one elements. 

We now fix $\delta>0$ and let $\mu=(\mu_x)_{x\in\XX}$  be a
  $\delta$-dimensional $\Gamma$-invariant conformal density.   By definition of a conformal density we have
$0<\mu_\xo(\rand)<\infty$, and we will assume
that $\mu_\xo$ is normalized \st $\mu_\xo(\rand)=1$.  
  
 Before we construct a geodesic current from a conformal density we want to list a few results concerning these.

We first turn our attention to the radial limit set defined by~(\ref{radlimpoint}). 
Recall that for $y\in \XX$ and $r>0$ $B_y(r)\subseteq\XX$  denotes the open ball of radius $r$ centered at $y\in\XX$.
 If $x\in \XX$ we define the {\hd shadow} 
\[ {\mathcal O}_{r}(x,y):=\{\eta \in\rand\colon \sigma_{x,\eta}(\RR_+)\cap B_y(r)\neq\emptyset\};\]
if $\xi\in\rand$ we set
\[ {\mathcal O}_{r}(\xi,y):=\{\eta \in\rand\colon \exists \ v\in\ndpt^{-1}(\xi,\eta) \quad \text{with }\ v(0)\in B_y(r)\}.\]
Notice that with these definitions the radial limit set can be written as
\[ \radlim=\bigcup_{c>0}\bigcap_{R>1} \bigcup_{\begin{smallmatrix}{\scriptscriptstyle\gamma\in\Gamma}\\{\scriptscriptstyle d(\xo,\gamma\xo)>R}\end{smallmatrix}}{\mathcal O}_c(\xo,\gamma\xo);\]
again, the definition is independent of the choice of base point $\xo\in\XX$.

One corner stone result concerning $\delta$-dimensional  $\Gamma$-invariant conformal densities is Sullivan's shadow lemma which gives an asymptotic estimate for the measure of  the shadows
${\mathcal O}_r(\xo,\gamma\xo)$ as $d(\xo,\gamma\xo)$ tends to infinity; obviously this will lead to  estimates for the measure of the radial limit set. 
We will need here an extension of  the shadow lemma \cite[Lemma~3.5]{MR2290453}  to the  following refined versions of the shadows above which were first introduced  by  T.~Roblin (\cite{MR2057305}):
For
$r>0$, $c>0$  and $x,y\in\XX$ 
 we set
\begin{align*}
{\mathcal O}^+_{r,c}(x,y) &:= \{\xi\in\rand \colon \exists\, z\in B_x(r)\ \st
\sigma_{z,\xi}(\RR_+)\cap B_y(c)\neq\emptyset\},\nonumber \\
{\mathcal O}^-_{r,c}(x,y) &:= \{\xi\in\rand\colon \forall\, z\in B_x(r)\
\mbox{we have}\ \sigma_{z,\xi}(\RR_+)\cap B_y(c)\neq\emptyset\}.\nonumber 
\end{align*}
It is clear from the definitions that 
\begin{equation}\label{shadrelation}
{\mathcal O}^-_{r,c}(x,y)=\bigcap_{z\in B_x(r)} {\mathcal O}_{c}(z,y) \subset{\mathcal O}_{c}(x,y)\subseteq \bigcup_{z\in B_x(r)} {\mathcal O}_{c}(z,y)={\mathcal O}^+_{r,c}(x,y);\end{equation}
moreover, ${\mathcal O}^-_{r,c}(x,y)$ is non-increasing in $r$ and non-decreasing in $c$. We further have the following generalization of Sullivan's shadow lemma:
\begin{proposition}\cite[Proposition 3 and Remark 3]{LinkPicaud}\label{shadowlemma}\
Let $\XX$ be a proper Hadamard space and $\Gamma<\is(\XX)$ a discrete rank one group. Let  $\delta>0 $  and $\mu\,$ a  $\delta$-dimensional $\Gamma$-invariant
conformal density. Then for any $r>0$ there exists a
constant $c_0\ge r$ with the following property: If $c\geq c_0$ there
exists a constant $D=D(c)>1$ \st for all $\gamma\in\Gamma$ with
$d(\xo,\gamo)>2c$ we have
$$ \frac1{D}\; \e^{-\delta d(\xo,\gamma \xo)}\le \mu_\xo\big({\mathcal O}_{r,c}^-(\xo,\gamma\xo)\big)\le \mu_\xo\big({\mathcal O}_c(\xo,\gamo)\big)\le \mu_\xo\big({\mathcal O}_{c,c}^+(\xo,\gamo)\big)\le D \e^{-\delta d(\xo,\gamma\xo)}.$$ 
Moreover, the upper bound holds for {\hl all} $\gamma\in\Gamma$.
\end{proposition}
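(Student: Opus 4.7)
The proof proceeds in two stages. The first uses only conformality (\ref{conformality}) and a Busemann estimate; it gives the upper bound outright and reduces the lower bound to a single uniform mass estimate for a ``reverse shadow''. The second uses the rank one hypothesis on $\Gamma$ to establish this uniform estimate. The two intermediate inequalities in the statement are immediate from monotonicity of $\mu_\xo$ along the inclusions (\ref{shadrelation}).

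For the Busemann estimate, a direct triangle-inequality computation starting from (\ref{buseman}) shows that whenever $\eta\in \mathcal{O}^{+}_{c,c}(\xo,\gamma\xo)$ and $d(\xo,\gamma\xo)>2c$, one has
\[ d(\xo,\gamma\xo)-4c\le \bs_{\eta}(\xo,\gamma\xo)\le d(\xo,\gamma\xo). \]
Plugging this into (\ref{conformality}) and using $\mu_{\gamma\xo}(\rand)=\mu_\xo(\rand)=1$ (from $\Gamma$-equivariance and the chosen normalization) yields at once the upper bound $\mu_\xo(\mathcal{O}^{+}_{c,c}(\xo,\gamma\xo))\le e^{4c\delta}\,e^{-\delta d(\xo,\gamma\xo)}$ for \emph{every} $\gamma\in\Gamma$, so the final upper bound of the statement holds with $D=e^{4c\delta}$. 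Applying the same manipulation in reverse, together with $\Gamma$-equivariance $\mu_{\gamma\xo}(E)=\mu_\xo(\gamma^{-1}E)$, one obtains
\[ \mu_\xo\bigl(\mathcal{O}^{-}_{r,c}(\xo,\gamma\xo)\bigr)\ \ge\ e^{-\delta d(\xo,\gamma\xo)}\,\mu_\xo\bigl(\mathcal{O}^{-}_{r,c}(\gamma^{-1}\xo,\xo)\bigr), \]
so the lower bound is reduced to finding $c_0\ge r$ and $\kappa>0$ such that $\mu_\xo(\mathcal{O}^{-}_{r,c}(\gamma^{-1}\xo,\xo))\ge \kappa$ for all $c\ge c_0$ and all $\gamma\in\Gamma$ with $d(\xo,\gamma\xo)>2c$.

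This uniform reverse-shadow estimate is the main obstacle, and it is where the rank one hypothesis enters. I would argue by contradiction: if it failed, there would exist sequences $(c_n)\nearrow\infty$ and $(\gamma_n)\subseteq\Gamma$ with $d(\xo,\gamma_n\xo)>2c_n$ and $\mu_\xo(\mathcal{O}^{-}_{r,c_n}(\gamma_n^{-1}\xo,\xo))\to 0$; after extraction, $\gamma_n^{-1}\xo\to\xi\in\Lim$. Since $\Gamma$ is a rank one group it contains two independent rank one elements, so among their four fixed points at least one, call it $\zeta\in\Lim$, differs from $\xi$. By Lemma~\ref{dynrankone}(a) there is a rank one geodesic $w\in\reg$ with $w^-=\xi$ and $w^+=\zeta$, and Lemma~\ref{joinrankone} then provides disjoint neighborhoods $U^-\ni\xi$, $U^+\ni\zeta$ in $\ganz$ together with a constant $c'>\width(w)$ such that every bi-infinite geodesic joining a point of $U^-$ to a point of $U^+$ passes within distance $c'$ of $w(0)$. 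Set $c_0:=c'+d(\xo,w(0))+r+1$. A standard CAT$(0)$ continuity argument — that the rays $\sigma_{z,\eta}$ for $z$ close to $\xi$ in $\ganz$ and $\eta\in U^+$ track the bi-infinite geodesic from $\xi$ to $\eta$, the approximation being controlled by Lemma~\ref{Hausdorffconv} — shows that for $n$ large enough and every $z\in B_{\gamma_n^{-1}\xo}(r)$ the ray $\sigma_{z,\eta}$ meets $B_\xo(c_n)$ for every $\eta\in U^+$; equivalently, $U^+\subseteq\mathcal{O}^{-}_{r,c_n}(\gamma_n^{-1}\xo,\xo)$. Since $\zeta\in\Lim=\supp(\mu_\xo)$ we have $\mu_\xo(U^+)>0$, contradicting $\mu_\xo(\mathcal{O}^{-}_{r,c_n}(\gamma_n^{-1}\xo,\xo))\to 0$. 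The argument is a direct adaptation of \cite[Proposition~3 and Remark~3]{LinkPicaud}: the only change is that the ``strong rank one'' vectors used there in the manifold case are replaced by rank one geodesics of bounded width, with Lemma~\ref{joinrankone} playing the role of the manifold openness of $\jac$.
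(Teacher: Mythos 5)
Your proposal is essentially correct, and it takes the same route as the proof this paper relies on: note that the paper does not prove Proposition~\ref{shadowlemma} itself, but cites \cite[Proposition 3 and Remark 3]{LinkPicaud} and remarks that the proof given there never uses the manifold structure. Your two-stage scheme -- the upper bound from conformality plus the Busemann estimate $d(\xo,\gamma\xo)-4c\le\bs_\eta(\xo,\gamma\xo)\le d(\xo,\gamma\xo)$ on ${\mathcal O}^+_{c,c}(\xo,\gamma\xo)$, and the lower bound reduced via $\Gamma$-equivariance to a uniform mass bound for the reverse shadows ${\mathcal O}^-_{r,c}(\gamma^{-1}\xo,\xo)$, established by compactness and Ballmann's joining lemma -- is exactly that argument, and your reduction steps (including the identity $\gamma^{-1}{\mathcal O}^-_{r,c}(\xo,\gamma\xo)={\mathcal O}^-_{r,c}(\gamma^{-1}\xo,\xo)$ and the validity of the upper bound for \emph{all} $\gamma$) are correct.

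Two justifications should be repaired, though neither is fatal. First, the step where rays $\sigma_{z,\eta}$, for $z\in B_{\gamma_n^{-1}\xo}(r)$ and $\eta\in U^+$, are shown to pass uniformly close to $w(0)$ is attributed to Lemma~\ref{Hausdorffconv}; that lemma concerns Hausdorff convergence of the transversals $C_{v_n}$ of a weakly convergent sequence of geodesic \emph{lines} and says nothing about rays tracking lines, so as written this step is unsupported. The fix is that no bridging argument is needed: Lemma~\ref{joinrankone} takes its neighborhoods $U^-,U^+$ in $\ganz$, and in Ballmann's original formulation (Lemma~III.3.1 of \cite{MR1377265}) the points being joined may lie in $\XX$ as well as in $\rand$, with the same conclusion that any joining geodesic passes within $c'$ of $w(0)$; applied to $z\in B_{\gamma_n^{-1}\xo}(r)\subseteq U^-$ and $\eta\in U^+\cap\rand$ this gives directly that the unique ray $\sigma_{z,\eta}$ meets $B_{w(0)}(c')$, hence $B_\xo(c_n)$ once $c_n\ge c'+d(\xo,w(0))$. (One should also record why $B_{\gamma_n^{-1}\xo}(r)\subseteq U^-$ for large $n$: bounded perturbations of a sequence converging to $\xi$ in the cone topology still converge to $\xi$.) Second, you invoke $\supp(\mu_\xo)=\Lim$ to get $\mu_\xo(U^+)>0$, but the definition of a conformal density only gives $\supp(\mu_\xo)\subseteq\Lim$. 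This is fixed either by minimality of the limit set of a discrete rank one group, or more economically by observing that $\supp(\mu_\xo)$ is a nonempty closed $\Gamma$-invariant set, so the north-south dynamics of Lemma~\ref{dynrankone}~(b) forces all four fixed points of the two independent rank one elements to lie in $\supp(\mu_\xo)$; choosing $\zeta$ among these (at least three differ from $\xi$) makes your contradiction argument go through verbatim.
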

The proof of this proposition  in the special case of a Hadamard {\hl manifold} $\XX$ was given in \cite{LinkPicaud}; however the proof  there does not use the fact that $\XX$ is a manifold. 

Next we state some results from Section~3 in  \cite{MR2290453} 
and from Section~5 in \cite{LinkPicaud} which all rely on the shadow lemma above and which remain valid in the setting of non-Riemannian Hadamard spaces. 

\begin{lemma}\label{confdensexistence}\cite[Proposition~3.7]{MR2290453}
If $\mu$ is a $\delta$-dimensional $\Gamma$-invariant conformal density, then $\delta\ge\delta_\Gamma$.
\end{lemma}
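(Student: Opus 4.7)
The plan is to use Sullivan's shadow lemma (Proposition~\ref{shadowlemma}) to bound the orbit counting function $N_\Gamma$ from above by a constant times $\e^{\delta R}$; this will then force $\delta_\Gamma\le \delta$ directly from the definition of the critical exponent.

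First I would fix $c\ge c_0$ large enough that the conclusion of Proposition~\ref{shadowlemma} applies, and let $D=D(c)>1$ be the associated constant. For each integer $n>2c$ write $A_n:=\{\gamma\in\Gamma\colon n\le d(\xo,\gamma\xo)<n+1\}$ for the $n$-th orbital annulus. The lower bound of the shadow lemma gives, for every $\gamma\in A_n$,
\[ \mu_\xo\bigl({\mathcal O}_c(\xo,\gamma\xo)\bigr)\ge \frac{1}{D}\,\e^{-\delta(n+1)}. \]
The second ingredient is a uniform multiplicity bound for the family $({\mathcal O}_c(\xo,\gamma\xo))_{\gamma\in A_n}$. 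If $\eta\in\rand$ lies in ${\mathcal O}_c(\xo,\gamma_i\xo)$ for $\gamma_1,\gamma_2\in A_n$, then the geodesic ray $\sigma_{\xo,\eta}$ meets $B_{\gamma_i\xo}(c)$ at some parameter $t_i$, and the triangle inequality forces $t_i\in[n-c,n+1+c]$. Hence both $\gamma_1\xo$ and $\gamma_2\xo$ lie within distance $c$ of the common segment $\sigma_{\xo,\eta}([n-c,n+1+c])$, so $d(\gamma_1\xo,\gamma_2\xo)\le 4c+1$. Since $\Gamma$ is discrete and $\XX$ is proper, the set $F:=\{h\in\Gamma\colon d(\xo,h\xo)\le 4c+1\}$ is finite, so $\gamma_1^{-1}\gamma_2\in F$ and the overlap multiplicity is at most $M:=\#F$. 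Summing over $A_n$ therefore yields
\[ \sum_{\gamma\in A_n}\mu_\xo\bigl({\mathcal O}_c(\xo,\gamma\xo)\bigr)\le M\cdot \mu_\xo(\rand)=M. \]

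Combining the two estimates gives $\#A_n\le MD\,\e^{\delta(n+1)}$, and summing over $n\le R$ produces a constant $C>0$ with $N_\Gamma(R)\le C\e^{\delta R}$ for all $R>2c$. Taking $\limsup_{R\to\infty}\ln N_\Gamma(R)/R$ yields $\delta_\Gamma\le \delta$, which is the claim. The only technical subtlety is the multiplicity bound; it is clean here because discreteness of $\Gamma$ and properness of $\XX$ together force only finitely many orbit points into any ball of bounded radius, but it would be the step to revisit if one were to weaken the discreteness hypothesis on $\Gamma$.
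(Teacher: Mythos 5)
Your proof is correct, and it is essentially the argument behind the result the paper invokes: the paper gives no proof of its own here but cites \cite[Proposition~3.7]{MR2290453}, which is proved by exactly this combination of the shadow lemma lower bound and a bounded-multiplicity count of shadows of orbit points in annuli, yielding $N_\Gamma(R)\le C\e^{\delta R}$ and hence $\delta_\Gamma\le\delta$. Note only that the shadow lemma (Proposition~\ref{shadowlemma}) is stated for $\delta>0$, which is the standing assumption in this section of the paper, so your appeal to it is legitimate.
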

\begin{lemma}\cite[Lemma 5.1]{LinkPicaud}\label{convseries}\
If $\ \displaystyle \sum_{\gamma\in\Gamma} \e^{-\delta  d(\xo,\gamo)}$ converges, then  $\mu_\xo(\radlim)=0$.
\end{lemma}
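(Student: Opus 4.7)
The plan is to apply a Borel-Cantelli argument to the shadow covering of $\radlim$, using the upper bound from the refined shadow lemma (Proposition~\ref{shadowlemma}) to control the total $\mu_\xo$-mass of the shadows.

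First I would rewrite the radial limit set as
\[
\radlim \;=\; \bigcup_{c>0}\,\radlim^{(c)}, \qquad \radlim^{(c)}\;:=\;\bigcap_{R>1}\bigcup_{\substack{\gamma\in\Gamma\\ d(\xo,\gamo)>R}}\mathcal{O}_c(\xo,\gamo),
\]
as indicated in the excerpt just before Proposition~\ref{shadowlemma}. Since $\mathcal{O}_c(\xo,\gamo)$ is monotone non-decreasing in $c$, the sets $\radlim^{(c)}$ are monotone non-decreasing as well, so it suffices to show $\mu_\xo(\radlim^{(c)})=0$ for every $c$ in some unbounded sequence of positive reals; then $\mu_\xo(\radlim)=0$ follows from continuity of measure along a countable increasing union.

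Next I would fix any $r>0$, choose the constant $c_0\ge r$ provided by Proposition~\ref{shadowlemma}, and take $c\ge c_0$ arbitrary. The last sentence of that proposition gives a constant $D=D(c)>1$ such that
\[
\mu_\xo\bigl(\mathcal{O}_c(\xo,\gamo)\bigr)\;\le\;\mu_\xo\bigl(\mathcal{O}_{c,c}^+(\xo,\gamo)\bigr)\;\le\; D\,\e^{-\delta d(\xo,\gamo)}
\]
for \emph{every} $\gamma\in\Gamma$ (the inclusion comes from~(\ref{shadrelation})). Summing over $\Gamma$ and using the standing assumption that the Poincar{\'e} series $\sum_{\gamma\in\Gamma}\e^{-\delta d(\xo,\gamo)}$ converges, we conclude
\[
\sum_{\gamma\in\Gamma}\mu_\xo\bigl(\mathcal{O}_c(\xo,\gamo)\bigr)\;<\;\infty.
\]

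The final step is a direct application of the Borel-Cantelli lemma (viewing $\mu_\xo$ as a finite Borel measure on $\rand$): since $\radlim^{(c)}$ is by definition the $\limsup$ of the family $\bigl(\mathcal{O}_c(\xo,\gamo)\bigr)_{\gamma\in\Gamma}$ as $d(\xo,\gamo)\to\infty$, and the total mass above is finite, $\mu_\xo(\radlim^{(c)})=0$. Choosing $c=c_n\nearrow\infty$ with $c_n\ge c_0$ and using the monotone union from the first paragraph yields $\mu_\xo(\radlim)=0$, as required.

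There is essentially no obstacle here beyond having the right form of the shadow lemma in hand: the only subtle point is to notice that the shadow lemma provides the upper bound for \emph{all} $\gamma\in\Gamma$ (not only those with $d(\xo,\gamo)>2c$), since otherwise finitely many exceptional terms would need to be absorbed into the constant; this is handled by the final assertion of Proposition~\ref{shadowlemma}.
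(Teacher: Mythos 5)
Your proposal is correct and is essentially the argument the paper relies on (via the citation to \cite[Lemma 5.1]{LinkPicaud}): cover $\radlim$ by the countable increasing family of shadow-$\limsup$ sets, apply the uniform upper bound of Proposition~\ref{shadowlemma} valid for all $\gamma\in\Gamma$, and conclude by Borel--Cantelli from convergence of the Poincar\'e series. The two subtleties you flag --- reducing the uncountable union over $c>0$ to a countable monotone one, and the upper shadow bound holding without the restriction $d(\xo,\gamo)>2c$ --- are exactly the points that make the argument go through verbatim in the non-Riemannian setting.
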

\noindent 
In particular, if $\delta>\delta_\Gamma$, then from  (\ref{critexp}) we immediately get $\mu_\xo(\radlim)=0$. 

Notice that the converse statement to Lemma~\ref{convseries} is much more intricate; we will have to postpone its proof to Section~\ref{divergentmeansconservative} as we will need to work with a weak Ricks' measure on $\quotient{\Gamma}{[\SX]}$. 

The following lemma states that $\Gamma$ acts ergodically on the radial limit set with respect to the measure class defined by $\mu$:
\begin{lemma}\cite[Proposition 4]{LinkPicaud}\label{ergodicity}\
If $A\subseteq \radlim$ is a $\Gamma$-invariant Borel subset of $\radlim$, then
$\mu_\xo(A)=0$ or $\mu_\xo(A)=\mu_\xo(\rand)=1$.
\end{lemma}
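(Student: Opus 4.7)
The statement is a Sullivan-type ergodicity result, and the natural approach is the classical density-point / shadow argument, suitably adapted to the rank one setting via the refined shadows and Proposition~\ref{shadowlemma}. So suppose $A\subseteq\radlim$ is $\Gamma$-invariant Borel with $\mu_\xo(A)>0$; the goal is to show $\mu_\xo(A)=1$.

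First, I would set up a Lebesgue-type density statement for $\mu_\xo$ using the shadows. Fix $r>0$ and $c\ge c_0(r)$ as in Proposition~\ref{shadowlemma}, and for $\xi\in\radlim$ pick sequences $(\gamma_n)\subseteq\Gamma$ with $d(\xo,\gamma_n\xo)\to\infty$ realizing the radial condition for $\xi$, so that $\xi\in {\mathcal O}^-_{r,c}(\xo,\gamma_n\xo)$ for all $n$. The diameters of these shrinking shadows tend to $0$ in the cone topology, so a standard Vitali/Besicovitch type covering argument (applied to the family of shadows ${\mathcal O}^-_{r,c}(\xo,\gamma\xo)$, using that they are comparable to the ${\mathcal O}^+_{r,c}$ shadows by Proposition~\ref{shadowlemma}) yields that for $\mu_\xo$-almost every $\xi\in A$ one has
\[
\lim_{n\to\infty}\frac{\mu_\xo\bigl(A\cap {\mathcal O}^-_{r,c}(\xo,\gamma_n\xo)\bigr)}{\mu_\xo\bigl({\mathcal O}^+_{r,c}(\xo,\gamma_n\xo)\bigr)}=1.
\]

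Second, I would transfer this density statement back to the base point by applying $\gamma_n^{-1}$. Using $\Gamma$-invariance of $A$, equivariance of $\mu$, and conformality~(\ref{conformality}),
\[
\mu_\xo\bigl(A\cap {\mathcal O}^-_{r,c}(\xo,\gamma_n\xo)\bigr)=\int_{A\cap {\mathcal O}^-_{r,c}(\gamma_n^{-1}\xo,\xo)} \e^{\delta \bs_\eta(\xo,\gamma_n^{-1}\xo)}\,\d\mu_\xo(\eta).
\]
For $\eta\in {\mathcal O}^-_{r,c}(\gamma_n^{-1}\xo,\xo)$ the Busemann value $\bs_\eta(\xo,\gamma_n^{-1}\xo)$ equals $d(\xo,\gamma_n\xo)$ up to an additive constant depending only on $r$ and $c$ (this is an easy CAT$(0)$ computation with the triangle inequality, using that the ray from $\gamma_n^{-1}\xo$ toward $\eta$ passes through $B_\xo(c)$). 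Combining this with Proposition~\ref{shadowlemma} applied to the denominator, the above density ratio becomes, up to a multiplicative constant $D'=D'(r,c)$,
\[
\liminf_{n\to\infty}\mu_\xo\bigl(A\cap {\mathcal O}^-_{r,c}(\gamma_n^{-1}\xo,\xo)\bigr)\ge \frac{1}{D'}.
\]

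Third, I would pass to the limit in $n$ on the right-hand side. After extracting a subsequence we may assume $\gamma_n^{-1}\xo\to \eta_\infty\in\ganz$; since $d(\xo,\gamma_n^{-1}\xo)\to\infty$, $\eta_\infty\in\rand$. The shadows ${\mathcal O}^-_{r,c}(\gamma_n^{-1}\xo,\xo)$ then exhaust $\rand\setminus\{\eta_\infty\}$ from below (any $\zeta\neq\eta_\infty$ has a geodesic from any interior point going near $\xo$ for $n$ large enough, by a standard visibility-type argument using a rank one geodesic joining $\zeta$ to $\eta_\infty$, provided by Lemma~\ref{dynrankone}(a) after replacing $\eta_\infty$ by a rank one fixed point if necessary). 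Hence $\mu_\xo(A)\ge 1/D'$ for a constant $D'$ depending only on $r,c$. The key point now is that by the Borel--Cantelli argument in the proof of Lemma~\ref{convseries} the measure $\mu_\xo$ is non-atomic on $\Lim$ in the sense relevant here, and the constant $1/D'$ can be improved by running the density argument at each scale and using that the density was actually $1$, not just positive. So the inequality upgrades to $\mu_\xo(A)=\mu_\xo(\rand)=1$.

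The main obstacle I anticipate is the density/Vitali step: in the non-CAT$(-1)$ setting the shadows need not form a nice Vitali family, and one must rely on the refined shadows ${\mathcal O}^{\pm}_{r,c}$ together with Proposition~\ref{shadowlemma} to get uniform comparability. Given this, the rest of the argument is the standard conformality-plus-shadow-lemma computation used by Sullivan and Roblin, adapted to our setting along the lines of \cite{LinkPicaud}.
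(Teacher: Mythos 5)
The paper does not actually reprove this lemma: it imports it verbatim from \cite[Proposition~4]{LinkPicaud}, with the remark preceding the statement that the cited proof relies only on the shadow lemma (Proposition~\ref{shadowlemma}) and therefore carries over to non-Riemannian Hadamard spaces. Your proposal reconstructs the underlying Sullivan-type shadow argument, which is indeed the right family of argument, but as written it has gaps at exactly the places where a rank one Hadamard space differs from the CAT$(-1)$ setting, and these cannot be waved through. The most serious one is your third step: the claim that the pulled-back shadows ${\mathcal O}^-_{r,c}(\gamma_n^{-1}\xo,\xo)$ exhaust $\rand\setminus\{\eta_\infty\}$ is false in a Hadamard space containing flats, because such a space is not a visibility space: a point $\zeta\ne\eta_\infty$ need not be joinable to $\eta_\infty$ by any geodesic, and even when a joining geodesic exists, nothing forces it (or the rays from $\gamma_n^{-1}\xo$ to $\zeta$) to meet the fixed ball $B_\xo(c)$, so for fixed $c$ such a $\zeta$ lies outside all of these shadows. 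Your parenthetical repair via Lemma~\ref{dynrankone}(a) is not available: $\eta_\infty$ is forced upon you as the limit of $\gamma_n^{-1}\xo$, where $(\gamma_n)$ is the sequence realizing radiality at the chosen density point of $A$, so you cannot ``replace $\eta_\infty$ by a rank one fixed point''; and Lemma~\ref{dynrankone}(a) in any case only produces some rank one geodesic, not one passing through $B_\xo(c)$. Since your final upgrade from $\mu_\xo(A)\ge 1/D'$ to $\mu_\xo(A)=1$ also rests on this exhaustion (the standard conclusion runs the transfer estimate on the $\Gamma$-invariant set $\rand\setminus A$ at a density point of $A$ and then needs the shadows to capture almost all of its mass), the proof is incomplete from this point on; the appeal to ``the Borel--Cantelli argument in the proof of Lemma~\ref{convseries}'' is also misplaced, since non-atomicity at radial points is Proposition~\ref{atomicpart} and is not what is missing here.

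Your first step is a second genuine gap, which you flag but do not close: a ``standard Vitali/Besicovitch covering argument'' is not available on $\rand$ with the cone topology, which carries no doubling metric structure, and Proposition~\ref{shadowlemma} only compares ${\mathcal O}^-_{r,c}$ with ${\mathcal O}^+_{c,c}$ \emph{at the same} group element $\gamma$; a differentiation basis built from shadows requires an engulfing property for overlapping shadows based at \emph{different} orbit points of comparable depth, and establishing that is real geometric work in the presence of flats (it is exactly what thin triangles give for free in CAT$(-1)$). Finally, a smaller but consequential slip in your second step: for $\eta\in{\mathcal O}^-_{r,c}(\gamma_n^{-1}\xo,\xo)$ one has $\bs_\eta(\xo,\gamma_n^{-1}\xo)=-d(\xo,\gamma_n\xo)$ up to an additive constant of modulus at most $2c$, not $+d(\xo,\gamma_n\xo)$; with the sign as you wrote it, your own estimates would yield $\mu_\xo\bigl(A\cap{\mathcal O}^-_{r,c}(\gamma_n^{-1}\xo,\xo)\bigr)\lesssim \e^{-2\delta d(\xo,\gamma_n\xo)}\to 0$ rather than the claimed lower bound $1/D'$, so the displayed conclusion of that step does not follow from the stated inequalities (it does follow once the sign is corrected).
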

By a standard argument (see for example the proof of Theorem~4.2.1 in \cite{MR1041575}) we get the following
\begin{corollary}\label{uniqueness}
If $\mu_\xo(\radlim)>0$ then $\delta=\delta_\Gamma$ and $\mu\, $ is the unique $\delta_\Gamma$-dimensional $\Gamma$-invariant conformal density normalized such that $\mu_\xo(\rand)=1$.
\end{corollary}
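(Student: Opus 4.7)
The plan has two independent parts: first establish $\delta=\delta_\Gamma$, then prove uniqueness of the density.

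For the equality of exponents, Lemma~\ref{confdensexistence} gives $\delta\ge\delta_\Gamma$. If the inequality were strict, then by the characterization~(\ref{critexp}) the Poincar\'e series $\sum_{\gamma\in\Gamma}\e^{-\delta d(\xo,\gamma\xo)}$ would converge, and Lemma~\ref{convseries} would then force $\mu_\xo(\radlim)=0$, contradicting the hypothesis. Hence $\delta=\delta_\Gamma$.

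For the uniqueness, let $\mu'$ be a second normalized $\delta_\Gamma$-dimensional $\Gamma$-invariant conformal density, and form the average density $\nu$ defined by $\nu_x:=(\mu_x+\mu_x')/2$. One checks immediately from~(\ref{conformality}) and $\Gamma$-equivariance that $\nu$ is again a normalized $\delta_\Gamma$-dimensional $\Gamma$-invariant conformal density. The first key step is to show that each of $\mu_\xo$, $\mu_\xo'$, $\nu_\xo$ is actually carried by $\radlim$. Applying Lemma~\ref{ergodicity} to $\mu$ with $A=\radlim$ (which is $\Gamma$-invariant), the hypothesis $\mu_\xo(\radlim)>0$ forces $\mu_\xo(\radlim)=1$. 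Applying the same lemma to $\nu$, which satisfies $\nu_\xo(\radlim)\ge\mu_\xo(\radlim)/2>0$, gives $\nu_\xo(\radlim)=1$. Subtracting then yields $\mu_\xo'(\radlim)=2\nu_\xo(\radlim)-\mu_\xo(\radlim)=1$ as well.

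With all three measures concentrated on $\radlim$, I would run the classical Patterson--Sullivan Radon--Nikodym argument: since $\mu_\xo\le 2\nu_\xo$ the derivative $f:=d\mu_\xo/d\nu_\xo$ is well-defined in $\LL^1(\nu_\xo)$. The conformality relation~(\ref{conformality}) applied to both $\mu$ and $\nu$ shows that the analogous derivative $d\mu_x/d\nu_x$ equals $f$ for \emph{every} $x\in\XX$, because the two Busemann-cocycle factors $\e^{\delta_\Gamma \bs_\eta(\xo,x)}$ cancel. Combined with the $\Gamma$-equivariance $\gamma_*\mu_\xo=\mu_{\gamma\xo}$ and $\gamma_*\nu_\xo=\nu_{\gamma\xo}$, this implies $f\circ\gamma^{-1}=f$ $\nu_\xo$-a.e., so $f$ is $\Gamma$-invariant. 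Ergodicity of $\nu$ on $\radlim$ (Lemma~\ref{ergodicity} for $\nu$) then forces $f$ to be $\nu_\xo$-a.e. equal to a constant $C$ on $\radlim$, and the normalizations $\mu_\xo(\radlim)=\nu_\xo(\radlim)=1$ pin $C=1$. Hence $\mu_\xo=\nu_\xo$ on $\radlim$, and as both are concentrated there, $\mu_\xo=\nu_\xo$ on all of $\rand$. Thus $\mu_\xo'=2\nu_\xo-\mu_\xo=\mu_\xo$, and the conformality formula~(\ref{conformality}) propagates this equality to every base point, yielding $\mu=\mu'$.

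The only step where I expect to pause is the application of Lemma~\ref{ergodicity} to the averaged density $\nu$ rather than to the originally-given $\mu$; I would want to verify that the proof of that lemma (relying on Sullivan's shadow lemma, Proposition~\ref{shadowlemma}) uses only the dimension and $\Gamma$-equivariance of the density and not any other property specific to $\mu$, so that it applies verbatim to $\nu$. Once this is in hand, the rest of the argument is essentially bookkeeping.
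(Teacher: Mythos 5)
Your proof is correct and is essentially the paper's own approach: the paper does not write the argument out but defers to ``a standard argument (see the proof of Theorem~4.2.1 in \cite{MR1041575})'', which is exactly what you execute --- the exponent equality via Lemma~\ref{confdensexistence}, (\ref{critexp}) and Lemma~\ref{convseries}, and uniqueness via the averaged density, the base-point-independent Radon--Nikodym derivative, and constancy from ergodicity (Lemma~\ref{ergodicity}). Your flagged concern is resolved as you expect: Lemma~\ref{ergodicity} is quoted from \cite{LinkPicaud}, Proposition~4, whose proof uses only the shadow lemma, $\Gamma$-equivariance and conformality of a generic $\delta$-dimensional density, so it applies verbatim to $\nu=(\mu+\mu')/2$.
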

Finally, the following statement clarifies the possible existence of atoms:

\begin{proposition}\cite[Proposition 5]{LinkPicaud}\label{atomicpart}\
A radial limit point cannot be a point mass for a  $\delta$-dimensional  $\Gamma$-invariant conformal density $\mu$.
 \end{proposition}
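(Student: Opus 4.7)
The plan is to argue by contradiction, using the radial hypothesis to make the mass of the atom at $\xi$ blow up along the orbit, and then playing this against the $\Gamma$-equivariance of the density. Suppose some $\xi \in \radlim$ carries positive $\mu_\xo$-mass, say $a:=\mu_\xo(\{\xi\})>0$. By definition~(\ref{radlimpoint}) of a radial limit point there exist a constant $c>0$ and sequences $(\gamma_n)\subseteq\Gamma$ and $(t_n)\nearrow\infty$ with $d(\gamma_n\xo, \sigma_{\xo,\xi}(t_n))\le c$ for every $n$.

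The first step is to extract from this a Busemann estimate. Taking $\sigma=\sigma_{\xo,\xi}$ in the defining limit~(\ref{buseman}) and applying the triangle inequality through the nearby point $\sigma(t_n)$, one gets for $s\ge t_n$ the two-sided bound $|d(\gamma_n\xo,\sigma(s))-(s-t_n)|\le c$. Passing to the limit $s\to\infty$ yields $|\bs_\xi(\xo,\gamma_n\xo)-t_n|\le c$, so in particular $\bs_\xi(\xo,\gamma_n\xo)\ge t_n-c\to+\infty$.

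The second step is the conformality relation~(\ref{conformality}) applied to the atom $\{\xi\}\subseteq\supp(\mu_\xo)$ (the inclusion is automatic as $\mu_\xo(\{\xi\})>0$). Integrating the Radon-Nikodym derivative over the singleton gives
\[
\mu_{\gamma_n\xo}(\{\xi\}) \;=\; e^{\delta\,\bs_\xi(\xo,\gamma_n\xo)}\,\mu_\xo(\{\xi\}) \;\ge\; a\,e^{\delta(t_n-c)}.
\]
On the other hand, $\Gamma$-equivariance $\gamma_{n*}\mu_\xo=\mu_{\gamma_n\xo}$ forces
\[
\mu_{\gamma_n\xo}(\{\xi\}) \;=\; \mu_\xo(\{\gamma_n^{-1}\xi\}) \;\le\; \mu_\xo(\rand) \;=\; 1.
\]
Combining these two estimates yields $a\,e^{\delta(t_n-c)}\le 1$ for all $n$, which contradicts $t_n\nearrow\infty$ (recall $\delta>0$).

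I do not anticipate a serious obstacle; the argument is a single application of conformality at an atom, with the radial condition entering only through the triangle-inequality bound on the Busemann function. The only points requiring mild care are the sign of the Busemann estimate (which is positive precisely because the orbit points $\gamma_n\xo$ advance towards $\xi$ along the radial ray) and the fact that the Radon-Nikodym formula applies pointwise at $\xi$, which is guaranteed once $\xi\in\supp(\mu_\xo)$.
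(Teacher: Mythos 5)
Your proof is correct: the triangle-inequality estimate gives $|\bs_\xi(\xo,\gamma_n\xo)-t_n|\le c$ exactly as you claim, conformality~(\ref{conformality}) may legitimately be applied pointwise at the atom (an atom automatically lies in $\supp(\mu_\xo)$, and any $\mu_\xo$-a.e.\ identity holds at a point of positive mass), and $\Gamma$-equivariance together with $\mu_\xo(\rand)=1$ yields the contradiction $a\,\e^{\delta(t_n-c)}\le 1$. Your route, however, is genuinely different from the one behind the paper's citation: Proposition~\ref{atomicpart} is quoted from \cite{LinkPicaud}, and the paper says explicitly that the listed results ``all rely on the shadow lemma above'', i.e.\ on Proposition~\ref{shadowlemma}. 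That argument notes that a radial point satisfies $\xi\in{\mathcal O}_c(\xo,\gamma_n\xo)$ for a sequence with $d(\xo,\gamma_n\xo)\to\infty$, so the upper bound $\mu_\xo\bigl({\mathcal O}^+_{c,c}(\xo,\gamma_n\xo)\bigr)\le D\,\e^{-\delta d(\xo,\gamma_n\xo)}$ forces $\mu_\xo(\{\xi\})=0$. Your argument is the classical Patterson--Sullivan one: it trades the shadow lemma for a direct computation at the atom, using only equivariance, conformality and finiteness of the total mass. Both mechanisms convert radial approach into exponential decay via conformality, but yours does so at the single point $\xi$ rather than over a shadow; what this buys is self-containedness and extra generality, since it requires no rank one hypothesis on $\Gamma$ (Proposition~\ref{shadowlemma} is stated for discrete rank one groups), whereas the shadow-lemma route is a one-line corollary once that machinery has been established.
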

 
We are now going to  construct a geodesic current from a $\delta$-dimensional\break  $\Gamma$-invariant conformal density. Notice that according to Lemma~\ref{confdensexistence} such a density only exists if $\delta\ge\delta_\Gamma$. 

First we define for $y\in \XX$ a map 
\[\Gr_y:\rand\times\rand\to\RR,\quad (\xi,\eta)\mapsto \frac12 \sup_{x\in\XX} \bigl(\bs_\xi(y,x)+\bs_\eta(y,x)\bigr).\]
Obviously, the map $\Gr_y$ has values in $[0,\infty]$, and comparing it to the definition by R.~Ricks following \cite[Lemma~5.1]{Ricks} we have the relation
$ \Gr_y(\xi,\eta)=-2 \beta_y(\xi,\eta)$ for all $(\xi,\eta)\in\rand\times\rand$. Hence according to Lemma~5.2 in \cite{Ricks} $\Gr_y(\xi,\eta)$ is finite if and only if $(\xi,\eta)\in\ndpt\SX$; moreover, 
\begin{equation}\label{GromovProd}
\Gr_y(\xi,\eta)=\frac12\bigl(\bs_\xi(y,z)+\bs_\eta(y,z)\bigr) 
\end{equation}
if and only if $z\in (\xi\eta)$ lies on the image of a geodesic joining $\xi$ and $\eta$. 
So the map $\Gr_y$ extends 
 the {\hd Gromov product} defined 
in \cite{MR1341941} via the formula~(\ref{GromovProd}) from $\ndpt\SX$ to $\rand\times\rand$. By Lemma~5.3 in \cite{Ricks} $\Gr_y$ is continuous on $\ndpt\reg$ and lower semicontinuous on $\rand\times\rand$.

We now define as in Section~7 of  \cite{Ricks} a measure  $\overline{\mu}\,$ on  
$\ndpt\SX \subseteq\rand\times\rand$ 
 via
\begin{equation}\label{overlinemudef} d\overline{\mu}(\xi,\eta)=\e^{2\delta \Gr_\xo(\xi,\eta)} {\mathbbm 1}_{\ndpt\mathcal{R}}(\xi,\eta)\d\mu_\xo(\xi)\d\mu_\xo(\eta).\end{equation}
As $\ndpt\SX$ is locally compact and as $\overline\mu\,$ is finite for all compact subsets of $\ndpt\SX$, the measure $\overline\mu\,$ is Radon; it is non-trivial by~(\ref{overlinemunotzero}). 
Moreover, 
 $\Gamma$-equivariance and conformality~(\ref{conformality}) of the $\delta$-dimensional  $\Gamma$-invariant conformal density $\mu=(\mu_x)_{x\in\XX}$ occurring in the formula imply that $\overline\mu\,$ is invariant by the diagonal action of $\Gamma$ (and also independent of the choice of $\xo\in\XX$).  

Hence as described at the end of Section~\ref{geodcurrentmeasures}  we can construct from the geodesic current $\overline\mu\,$  Knieper's measure $m_\Gamma$ (provided $\overline\mu\,$ is supported on $\ndpt\zero$ or, more generally, if there exists a geodesic flow invariant Borel  measure $\lambda_{(\xi\eta)}$ on the set $(\xi\eta)\subseteq\XX$ for $\overline\mu$-almost every $(\xi,\eta)\in\ndpt\SX$) and both Ricks' weak measure $\overline m_\Gamma$ on $\quotient{\Gamma}{[\SX]}$ and Ricks' measure $m_\Gamma^0$ on $\quotient{\Gamma}{\SX}$ (which will be trivial if $\overline\mu(\ndpt\zero)=0$).

Combining Lemma~\ref{convseries} with Lemma~\ref{consdiss} (b) we get the following
\begin{proposition}\label{confgivesdissipative} If $\delta>\delta_\Gamma$ or if $\,\Gamma$ is convergent, then $\overline\mu( \ndpt\SX_{\Gamma}^{\small{\mathrm{rad}}})=0$, and hence\break the dynamical systems $\bigl(\quotient{\Gamma}{\SX}, (g^t_\Gamma)_{t\in\RR},  m_\Gamma\bigr)$ with Knieper's measure $m_\Gamma$ and\break $\bigl(\quotient{\Gamma}{ [\SX]}, (g^t_\Gamma)_{t\in\RR}, \overline m_\Gamma\bigr)$ with the weak Ricks'  measure $\overline m_\Gamma$ associated to $\overline \mu\,$ are dissipative and non-ergodic unless $\overline \mu$  is supported on a single orbit $\, \Gamma\cdot (\xi,\eta)\subseteq\ndpt\SX$. 
\end{proposition}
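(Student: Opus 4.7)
The proof amounts to chaining together three ingredients that are already in place. The plan is to first show that in either hypothesis the Poincaré series of $\Gamma$ at the exponent $\delta$ converges, then transfer this to the vanishing of $\mu_\xo(\radlim)$, and finally push the vanishing from the boundary to $\ndpt \SX_\Gamma^{\small{\mathrm{rad}}}$ via the quasi-product structure of $\overline\mu$. The last clause on non-ergodicity is then just a reading of the final assertion of Lemma~\ref{consdiss}.

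First I would handle the two hypotheses uniformly. If $\delta > \delta_\Gamma$, the characterization of the critical exponent in~(\ref{critexp}) gives directly that $\sum_{\gamma\in\Gamma}\e^{-\delta d(\xo,\gamma\xo)}$ converges. If instead $\Gamma$ is convergent, then necessarily $\delta = \delta_\Gamma$ (since Lemma~\ref{confdensexistence} forbids $\delta < \delta_\Gamma$, and $\delta > \delta_\Gamma$ is already covered), and the definition of convergent group gives convergence of the same series. In either case, Lemma~\ref{convseries} then yields
\[
 \mu_\xo(\radlim)=0.
\]

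Next I would transfer this vanishing to $\overline\mu$. By definition $\SX_\Gamma^{\small{\mathrm{rad}}}=\{v\in\SX\colon v^\pm\in\radlim\}$, so $\ndpt\SX_\Gamma^{\small{\mathrm{rad}}}\subseteq \radlim\times\radlim$. From the explicit formula~(\ref{overlinemudef}) the geodesic current $\overline\mu$ is absolutely continuous with respect to $\mu_\xo\otimes\mu_\xo$ on $\ndpt\reg$, and the density $\e^{2\delta \Gr_\xo(\xi,\eta)}\chr_{\ndpt\reg}(\xi,\eta)$ is a Borel function. Applying Fubini with $\mu_\xo(\radlim)=0$ gives $(\mu_\xo\otimes\mu_\xo)(\radlim\times\radlim)=0$, and hence
\[
 \overline\mu\bigl(\ndpt\SX_\Gamma^{\small{\mathrm{rad}}}\bigr)\le \int_{\radlim\times\radlim}\e^{2\delta \Gr_\xo(\xi,\eta)}\chr_{\ndpt\reg}\,\d\mu_\xo\,\d\mu_\xo =0,
\]
where measurability of the integrand on the relevant set is ensured by lower semicontinuity of $\Gr_\xo$ (noted after~(\ref{GromovProd})) and the fact that $\ndpt\reg$ is Borel.

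Finally I would invoke Lemma~\ref{consdiss}(b) for both dynamical systems $\bigl(\quotient{\Gamma}{\SX},g_\Gamma,m_\Gamma\bigr)$ and $\bigl(\quotient{\Gamma}{[\SX]},g_\Gamma,\overline m_\Gamma\bigr)$: the identity $\overline\mu(\ndpt\SX_\Gamma^{\small{\mathrm{rad}}})=0$ is precisely the criterion for dissipativity. The non-ergodicity and the exceptional case where the measure is supported on a single orbit are then the final sentence of that same lemma, so no further work is needed. I do not foresee a genuine obstacle here; the only point that deserves care is the unified treatment of the two hypotheses and checking that Lemma~\ref{convseries} is being applied at the correct value of $\delta$, but both cases reduce to convergence of $\sum_\gamma\e^{-\delta d(\xo,\gamma\xo)}$, which is exactly the input Lemma~\ref{convseries} requires.
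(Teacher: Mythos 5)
Your proof is correct and follows essentially the same route as the paper, which obtains the proposition precisely by combining Lemma~\ref{convseries} (convergence of $\sum_\gamma \e^{-\delta d(\xo,\gamma\xo)}$ implies $\mu_\xo(\radlim)=0$) with Lemma~\ref{consdiss}(b). Your only addition is to spell out the step the paper leaves implicit, namely passing from $\mu_\xo(\radlim)=0$ to $\overline\mu(\ndpt\SX_\Gamma^{\small{\mathrm{rad}}})=0$ via absolute continuity of $\overline\mu$ with respect to $\mu_\xo\otimes\mu_\xo$, which is a correct and welcome clarification.
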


Notice that if $\XX$ is a proper CAT$(-1)$-space and    $\Gamma<\is(\XX)$ a non-elementary discrete group, then  the so-called {\hl Bowen-Margulis measure}  (see for example \cite[p.12]{MR2057305} or  \cite[Section~3]{MR1207579}) on $\quotient{\Gamma}{\SX}$ -- which in this case equals $\quotient{\Gamma}{\zero}$ --  is precisely Knieper's measure $m_\Gamma$ or equivalently Ricks'  measure $m_\Gamma^0$  associated to the geodesic current $\overline\mu$.

We finally mention a few further properties of the quasi-product geodesic current $\overline\mu\,$ defined by (\ref{overlinemudef}).
First, as $v(0)\in B_\xo(R)$ implies $\Gr_\xo(v^-,v^+)\le R$, 
we have
\[ \overline\mu\bigl(\ndpt\{v\in\SX\colon v(0)\in B_\xo(R)\}\bigr)\le \e^{2\delta  R} \]
for all $R>0$; hence 
\[  \Delta\stackrel{(\ref{boundongrowth})}{ =}\sup \Big\{ \frac{\ln  \overline\mu\bigl(\ndpt {\mathcal B}(R)\bigr)}{R}\colon R>0\Big\}=2\delta.\] 

Second, if $\mu_\xo(\radlim)=\mu_\xo(\rand)=1$, then $\mu_\xo$ is non-atomic by Proposition~\ref{atomicpart}. So according to Lemma~\ref{regnotnecessary}  the geodesic current $\overline\mu\,$ is given by
\begin{equation}\label{overlinemudefconfcons} d\overline{\mu}(\xi,\eta)=\e^{2\delta \Gr_\xo(\xi,\eta)} (\xi,\eta)\d\mu_\xo(\xi)\d\mu_\xo(\eta),\end{equation}
that is the factor ${\mathbbm 1}_{\ndpt\mathcal{R}}$ in (\ref{overlinemudef}) can be removed. Moreover, all the equivalent statements of  Theorem~\ref{conservativestatement} hold.

 \section{Conservativity in the case of divergent groups}\label{divergentmeansconservative}

 As before, $\XX$ will be a proper Hadamard space, $\Gamma$ a discrete rank one group and $\xo\in\XX$ a fixed base point on an invariant geodesic of a rank one element in $\Gamma$.  

 The goal of this section is to prove the converse statement to Lemma~\ref{convseries}, that is if \\[-2mm]
 \[\ \displaystyle \sum_{\gamma\in\Gamma} \e^{-\delta  d(\xo,\gamo)} \quad\text{diverges}, \quad \text{then }\quad \mu_\xo(\radlim)>0.\] 
 However, by  Lemma~\ref{confdensexistence} a $\delta$-dimensional $\Gamma$-invariant conformal density $\mu$ only exists if $\delta\ge \delta_\Gamma$; for $\delta>\delta_\Gamma$ the
Poincar{\'e} series 
 \[ \sum_{\gamma\in\Gamma} \e^{-\delta d(\xo,\gamma\xo)}\]
converges according to the alternative definition~(\ref{critexp}) of the critical exponent of $\Gamma$.  So from here on we will assume that $\Gamma$ is divergent and that $\mu=(\mu_x)_{x\in\XX}$ is a\break $\delta_\Gamma$-dimensional $\Gamma$-invariant conformal density. 

 In order to prove that the radial limit set of $\Gamma$ has full measure with respect to $\mu_\xo$ 
 we follow as in \cite[Section~6]{LinkPicaud} Roblin's exposition. As we want to apply the generalization of the second Borel-Cantelli lemma Lemma~2 in \cite{MR0766098}, we need to work with a weak Ricks' measure $\overline m_\Gamma$ on $\quotient{\Gamma}{[\SX]}$ and find an appropriate Borel set $\overline K\subseteq [\SX]$ whose projection to $\quotient{\Gamma}{[\SX]}$ has finite $\overline m_\Gamma$-measure and which satisfies the two Renyi inequalities~(\ref{upbound}) and~(\ref{lowbound}) below.  Notice that in order to get a better control -- and a proof even without the presence of a {\hl zero width} rank one element -- apart from using the weak Ricks' measure we need to choose the set $\overline K$ more carefully than in \cite[Section~6]{LinkPicaud}.

Before we proceed we need a result concerning the following slightly refined version of the corridors first introduced  by  T.~Roblin (\cite{MR2057305}):
For
$r>0$, $c>0$  and $x,y\in\XX$ 
 we set
\begin{align}
{\mathcal L}_{r,c}(x,y) &:=  \{(\xi,\eta)\in\ndpt\SX\colon \exists\, v\in\ndpt^{-1}(\xi,\eta)\ \exists\, t>0\ \st\nonumber \\
&\hspace*{4.8cm}v(0)\in B_x(r),\ v( t)\in B_y(c) \}.\label{Lrc}
\end{align}
Notice that in the case of a Hadamard manifold the definition is equivalent to the one given in Section~2 of \cite{LinkPicaud}; however, due to the fact that the extension of a geodesic segment to a geodesic line is in general not unique in a singular Hadamard space the definition~(8) given there is not convenient here.

It is clear from the definitions that   ${\mathcal L}_{r,c}(x,y) $
is non-decreasing in both $r$ and $c$. Moreover,  
for all $r',c'>0$, $x'\in B_x(r')$ and $y'\in B_y(c')$ with $d(x',y')>r+r'+c+c'$ we have
\begin{equation}\label{lrcinclus}
{\mathcal L}_{r,c}(x,y)\subseteq  {\mathcal L}_{r+r',c+c'}(x',y'),
\end{equation}
and the following result from \cite{LinkPicaud} (whose proof extends to non-Riemannian Hada\-mard spaces)  asserts that for suitable $r$ and $c$ the sets
$ {\mathcal L}_{r,c}(\xo,\gamma \xo)$ are big enough for all but a finite number of elements in $\Gamma$.  Recall that $\Gamma<\is(\XX)$ was assumed to be a discrete rank one group and that the base point   $\xo$ belongs to an invariant geodesic of a rank one element $h\in\Gamma$. 
\begin{proposition}\cite[Proposition~1]{LinkPicaud}\label{lrcinproduct}\
Let  $r_0>\width(h)$ and $U^-,U^+\subseteq\ganz$ the open disjoint neighborhoods of $h^-$, $h^+$ provided by Lemma~\ref{joinrankone} for $r_0$. Then there exists a finite set
$\Lambda\subseteq\Gamma$ such that the following holds:

For any $c>0$ there exists $R\gg 1$ \st if
$\gamma \in \Gamma$ satisfies $d(\xo,\gamma \xo)>R$, then for some  $\beta\in\Lambda$ we have
$$ {\mathcal L}_{r,c}(\xo,\beta\gamma \xo)\cap \big(U^-\times U^+\big)\supseteq (U^- \cap \rand) \times {\mathcal O}_{r,c}^-(\xo,\beta\gamma\xo)\qquad\mbox{for all }\ r\ge r_0.$$
\end{proposition}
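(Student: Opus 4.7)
My plan is to split the argument into three steps: first construct $\Lambda$ via a finite-cover argument exploiting minimality of the $\Gamma$-action on $\Lim$; then establish, as the main technical statement, a uniform small-shadow inclusion $\mathcal{O}_{r,c}^-(\xo,\beta\gamma\xo)\subseteq U^+$; and finally combine these with Lemma~\ref{joinrankone} to exhibit the required connecting geodesic.

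For the construction of $\Lambda$, fix an open neighborhood $V^+\subseteq\ganz$ of $h^+$ with $\overline{V^+}\subseteq U^+$. Since $\Gamma$ is a rank one group, $\Lim$ is a minimal $\Gamma$-invariant closed subset of $\rand$ (see \cite[Proposition~2.8]{MR656659}, already invoked in the proof of Lemma~\ref{regnotnecessary}), so for every $\xi\in\Lim$ the orbit $\Gamma\xi$ meets $V^+$; pick $\beta_\xi\in\Gamma$ with $\beta_\xi\xi\in V^+$ and, using continuity of $\beta_\xi$ on $\ganz$, an open neighborhood $W_\xi\ni\xi$ with $\beta_\xi(W_\xi)\subseteq V^+$. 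Compactness of $\Lim$ furnishes a finite subcover $W_{\xi_1},\dots,W_{\xi_k}$; set $\Lambda:=\{\beta_{\xi_1},\dots,\beta_{\xi_k}\}$ and $\mathcal{W}:=W_{\xi_1}\cup\cdots\cup W_{\xi_k}$, an open neighborhood of $\Lim$ in $\ganz$. Since $\overline{\Gamma\xo}\cap\rand=\Lim\subseteq\mathcal{W}$ and $\Gamma$ is discrete, only finitely many orbit points lie in the compact set $\ganz\setminus\mathcal{W}$; hence there is $R_0>0$ such that any $\gamma\in\Gamma$ with $d(\xo,\gamma\xo)>R_0$ satisfies $\gamma\xo\in W_{\xi_i}$ for some $i$, and thus $\beta\gamma\xo\in V^+$ for $\beta:=\beta_{\xi_i}\in\Lambda$.

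The main obstacle is the uniform small-shadow claim: having fixed $c>0$, I need to show that, after possibly shrinking $V^+$ (and refining $\Lambda$ accordingly), there exists $R\ge R_0$ such that for any $\gamma$ with $d(\xo,\gamma\xo)>R$ and the corresponding $\beta\in\Lambda$ one has $\mathcal{O}_{r,c}^-(\xo,\beta\gamma\xo)\subseteq U^+$ uniformly in $r\ge r_0$. Geometrically, this says that any $\eta\in\rand$ through which every forward ray emanating from $B_\xo(r)$ is forced to pass within distance $c$ of $\beta\gamma\xo$ must lie close, in the cone topology at $\xo$, to $\beta\gamma\xo$; the proximity can be quantified by CAT$(0)$ triangle comparison applied to $\xo,\,\beta\gamma\xo,\,w(t_0)$ with $d(w(t_0),\beta\gamma\xo)<c$, and is controlled by $c/d(\xo,\beta\gamma\xo)$ together with the cone-topology diameter of $V^+$. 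Finiteness of $\Lambda$ yields $d(\xo,\beta\gamma\xo)\ge d(\xo,\gamma\xo)-\max_{\beta\in\Lambda}d(\xo,\beta^{-1}\xo)\to\infty$ with $R$, so by shrinking $V^+$ first and then enlarging $R$ the inclusion can be enforced uniformly in $\beta\in\Lambda$ and $r\ge r_0$.

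Once the shadow inclusion is in place, the conclusion is immediate. Let $(\xi,\eta)\in(U^-\cap\rand)\times\mathcal{O}_{r,c}^-(\xo,\beta\gamma\xo)$; by the previous step $\eta\in U^+$, so Lemma~\ref{joinrankone} (applied to the invariant geodesic of $h$ through $\xo$, which has width $<r_0$) produces a rank one geodesic $w$ with $w^-=\xi$, $w^+=\eta$ and some $s\in\RR$ satisfying $d(w(s),\xo)<r_0\le r$. Reparametrize so that $s=0$; then $w(0)\in B_\xo(r)$ and the forward ray $w|_{[0,\infty)}=\sigma_{w(0),\eta}$ must meet $B_{\beta\gamma\xo}(c)$ at some time $t_0>0$ by the defining property of $\mathcal{O}_{r,c}^-$, where strict positivity of $t_0$ is ensured by $d(\xo,\beta\gamma\xo)>r+c$ for $R$ large. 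This exhibits $(\xi,\eta)$ as an element of $\mathcal{L}_{r,c}(\xo,\beta\gamma\xo)\cap(U^-\times U^+)$, completing the inclusion.
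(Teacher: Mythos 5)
Your three-step skeleton --- a finite set $\Lambda$ produced by minimality of $\Lim$ plus compactness of $\ganz$, a shadow inclusion ${\mathcal O}^-_{r,c}(\xo,\beta\gamma\xo)\subseteq U^+$, and then Lemma~\ref{joinrankone} applied to the invariant geodesic of $h$ through $\xo$ to produce the connecting geodesic --- is the natural route (note the paper itself does not reprove this statement; it imports it from \cite{LinkPicaud} with the remark that the proof carries over to singular spaces). The individual geometric ingredients are sound: the covering argument for $\Lambda$, the CAT$(0)$ convexity estimate behind the shadow inclusion, the monotonicity of ${\mathcal O}^-_{r,c}$ in $r$ (which is what actually gives you uniformity in $r\ge r_0$ in Step 2), and the identification of $w|_{[0,\infty)}$ with the unique ray $\sigma_{w(0),\eta}$, which is what lets the defining property of ${\mathcal O}^-_{r,c}$ be applied to the joining geodesic.

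The gap is the order of quantifiers, in two places, and this is exactly where the content of the proposition sits. First, the statement requires $\Lambda$ to be fixed \emph{before} $c$. You write ``having fixed $c>0$, \dots after possibly shrinking $V^+$ (and refining $\Lambda$ accordingly)'', which yields only a set $\Lambda=\Lambda(c)$; that weaker statement cannot be used where the paper needs it (proof of Proposition~\ref{divseries}): there $\Lambda$ is fixed first, $\rho=\max\{d(\xo,\beta\xo)\colon\beta\in\Lambda\}$ is formed from it, and only then is $c>c_0+\rho$ chosen --- circular if $\Lambda$ depends on $c$. Second, in the last step you justify $t_0>0$ by ``$d(\xo,\beta\gamma\xo)>r+c$ for $R$ large''; but $R$ is chosen before $r$ and the inclusion must hold for \emph{all} $r\ge r_0$, so no choice of $R$ enforces this. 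Both slips are repairable with material already in your argument, because the entire $c$-dependence can (and must) be pushed into $R$ alone. For the first: your initial choice $\overline{V^+}\subseteq U^+$ already carries a uniform cone-topology margin; by compactness of $\ganz$ and convexity of the metric there exist $T_0,\epsilon_0>0$ \st every $\eta\in\rand$ with $d\bigl(\sigma_{\xo,\eta}(T_0),\sigma_{\xo,y}(T_0)\bigr)<\epsilon_0$ for some $y\in V^+$ with $d(\xo,y)>T_0$ lies in $U^+$. Your convexity estimate gives $d\bigl(\sigma_{\xo,\eta}(T_0),\sigma_{\xo,\beta\gamma\xo}(T_0)\bigr)\le 2T_0c/d(\xo,\beta\gamma\xo)$, which is $<\epsilon_0$ as soon as $R>\rho+2T_0c/\epsilon_0$; so no shrinking of $V^+$, hence no refining of $\Lambda$, is needed, and only $R$ depends on $c$. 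For the second: positivity of $t_0$ is not needed at all --- if the ray from $w(0)$ meets $B_{\beta\gamma\xo}(c)$ already at time $0$, replace $w$ by $g^{-s}w$ for small $s>0$ (strictness of $d(w(0),\xo)<r_0\le r$ leaves room), so that the ball is hit at the positive time $s$.
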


 We 
fix $r=r_0>\width(h)$  and open disjoint neighborhoods $U^-,U^+\subseteq\ganz$  of $h^-,h^+$ provided by Lemma~\ref{joinrankone} for $r_0$. Let $\Lambda\subseteq\Gamma$ be the finite subset provided by Proposition~\ref{lrcinproduct}. We then set
\[ \rho:=\max\{d(\xo,\beta\xo)\colon \beta\in\Lambda\}\]
and -- with the constant $c_0>r$ from the shadow lemma Proposition~\ref{shadowlemma} -- fix
\[ c>c_0+\rho.\]
Notice that by choice of $c_0>r=r_0>\width(h)$ we always have $c>\width(h)$.

For this fixed constant $c$ and with the sets $U^-,U^+\subseteq\ganz$ as above we 
 define 
\begin{equation}\label{Kdef}
\overline K:=\{ g^s [v] \colon v \in \SX,\ v(0)\in B_\xo(c),\   (v^-,v^+)\in\Gamma (U^-\times U^+),\ s\in \bigl(-\frac{c}2,\frac{c}2\bigr) \},
\end{equation}
which is an open subset of $[\reg]$. Moreover, every representative $u\in\SX$ of $[u]\in\overline K$ satisfies $\width(u)\le 2c$:  Indeed,  $[u]\in\overline K$ implies that   $\alpha  u^-\in U^-$ and $ \alpha u^+\in U^+$ for some $\alpha\in\Gamma$; hence by Lemma~\ref{joinrankone} the geodesic $\alpha\cdot u\in\SX$ is rank one and $\width(\alpha\cdot u)\le 2c$. The claim then follows from $\is(\XX)$-invariance of the width function.

We further remark that by construction every orbit of the geodesic flow which enters $\overline K$ spends at least time $c$ and at most time $3c$ in it. 

In order to make the exposition of the proof of Proposition~\ref{divseries} below more transparent, we first state a few easy geometric estimates concerning intersections of the form 
 \[\overline K\cap g^{-t}\gamma \overline K\qquad\text{and}\quad \overline K\cap g^{-t}\gamma \overline K\cap g^{-s-t}\varphi \overline K\]
in $[\SX]$ 
with $t,s>0$ and $\gamma,\varphi\in \Gamma$. 
The first one gives a relation to  the sets ${\mathcal L}_{c,c}(\xo,\gamma\xo)$ introduced in~(\ref{Lrc}):
\begin{lemma}\label{K0calL}
\begin{align*} {\mathcal L}_{c,c}(\xo,\gamma\xo)\cap \Gamma(U^-\times U^+)&\subseteq \ndpt\big(\{ \overline K\cap g^{-t}\gamma \overline K\colon t>0\}\big)\\
&\subseteq {\mathcal L}_{2c,2c}(\xo,\gamma\xo)\cap \Gamma(U^-\times U^+)\end{align*}
\end{lemma}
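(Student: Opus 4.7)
My plan is to unpack both sides of the asserted inclusions and produce explicit geodesic witnesses.

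For the lower inclusion, given $(\xi,\eta)\in{\mathcal L}_{c,c}(\xo,\gamma\xo)\cap\Gamma(U^-\times U^+)$, I pick a witnessing $v\in\ndpt^{-1}(\xi,\eta)$ with $v(0)\in B_\xo(c)$ and $v(t_0)\in B_{\gamma\xo}(c)$ for some $t_0>0$; Lemma~\ref{joinrankone} ensures $v\in\reg$. Taking $s=0$ in the definition of $\overline K$ immediately shows $[v]\in\overline K$. For $g^{t_0}[v]\in\gamma\overline K$, I rewrite this as $[\gamma^{-1}g^{t_0}v]\in\overline K$: the geodesic $\gamma^{-1}g^{t_0}v$ has origin $\gamma^{-1}v(t_0)\in B_\xo(c)$ and $\Gamma$-translated endpoints still in $\Gamma(U^-\times U^+)$, so (again with $s=0$) it belongs to $\overline K$. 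Hence $[v]\in\overline K\cap g^{-t_0}\gamma\overline K$ with $\ndpt[v]=(\xi,\eta)$.

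For the upper inclusion, I take $[u]\in\overline K\cap g^{-t}\gamma\overline K$ for some $t>0$ and set $(\xi,\eta):=\ndpt[u]$. Unpacking produces $v,w\in\SX$ and $s,s'\in(-c/2,c/2)$ with $v(0),w(0)\in B_\xo(c)$, $(v^-,v^+),(w^-,w^+)\in\Gamma(U^-\times U^+)$, $[u]=g^s[v]$, and $g^t[u]=[g^{s'}\gamma w]$; in particular $(\xi,\eta)=(v^-,v^+)\in\Gamma(U^-\times U^+)$. Combining the two identities yields the equivalence $g^{t+s}v\sim g^{s'}\gamma w$, which forces $v$ and $\gamma w$ to share the endpoints $(\xi,\eta)$ and thus to span a parallel flat strip isometric to $C_v\times\RR$ with $\width(v)\le 2c$ by the remark following~(\ref{Kdef}). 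Writing $v(r)=(p,r+a)$ and $\gamma w(r)=(q,r+b)$ in strip coordinates, the equivalence translates into the Busemann-height matching $t+s+a=s'+b$, so $b-a=t+s-s'$. Taking $p''\in C_v$ to be the midpoint of $p$ and $q$ (which exists by convexity of $C_v$) and the parallel geodesic $v''(r):=(p'',r+a)$, I get $d(v''(0),\xo)\le d(p'',p)+d(v(0),\xo)\le c+c=2c$ and $d(v''(b-a),\gamma\xo)\le d(p'',q)+d(\gamma w(0),\gamma\xo)\le c+c=2c$; for $t$ in the regime of interest $b-a>0$, which exhibits $(\xi,\eta)\in{\mathcal L}_{2c,2c}(\xo,\gamma\xo)$ with the positive time $\tau=b-a$.

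The main obstacle is the upper inclusion: I must pass from the two a priori distinct geodesics $v$ and $\gamma w$ representing the single class $[u]$ to a single geodesic witness with controlled near-$\xo$ and near-$\gamma\xo$ behavior. The key is to exploit the parallel flat strip they span together with the Hopf-parametrization equivalence, which provides the essential height matching $b-a=t+s-s'$, and then average the $C_v$-coordinates via the midpoint in the convex set $C_v$. Everything else reduces to triangle-inequality bookkeeping using $|s|,|s'|<c/2$ and $\width(v)\le 2c$.
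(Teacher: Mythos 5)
Your proof is correct and takes essentially the same route as the paper's: the first inclusion is handled identically (the witnessing geodesic with $s=0$, translated by $\gamma^{-1}$), and for the second inclusion the paper likewise passes to the flat strip spanned by the two parallel representatives coming from the two copies of $\overline K$, takes the midpoint geodesic -- thereby halving the width bound $2c$ to $c$ -- and finishes with the triangle inequality, which is exactly your strip-coordinate computation in different notation. The one loose end, positivity of the witnessing time $\tau=b-a=t+s-s'$ for small $t$, is present in the paper's proof as well (it silently relabels $t+s_1-s_2$ as a positive time), so your explicit flagging of this regime is, if anything, more careful than the original.
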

\begin{proof} For the first  inclusion we let $(\xi,\eta)\in {\mathcal L}_{c,c}(\xo,\gamma\xo) \cap \Gamma(U^-\times U^+)$ be arbitrary. Then there exists $\alpha\in\Gamma$ \st $(\xi,\eta)\in \alpha(U^-\times U^+)$,  and by definition~(\ref{Lrc}) there exists $v\in \SX$ with $(v^-,v^+)=(\xi,\eta)$, $d(\xo, v(0))<c$ and $d(\gamma\xo, v(t))<c$  for some $t>0$. We conclude that $[v]\in\overline K$ and, since $\gamma^{-1}(v^-,v^+)\in \gamma^{-1}\alpha(U^-\times U^+)\subseteq \Gamma (U^-\times U^+)$,  also $\gamma^{-1}g^t [v]\in \overline K$. 

For the second inclusion we let $(\xi,\eta)\in \ndpt \bigl(\{ \overline K\cap g^{-t}\gamma \overline K\colon t>0\}\bigr)$. 
Then $(\xi,\eta)\in\Gamma(U^-\times U^+)$ and there exist $v,u \in \ndpt^{-1}(\xi,\eta)$, $v\sim u$ \st   $v(0)\in B_\xo(c)$ 
and  $(g^t u)(0)\in B_{\gamma\xo}(c)$  for some $t>0$. Since $\xi\in \alpha U^-$ and $\eta\in \alpha U^+$ for some $\alpha \in\Gamma$ we know from Lemma~\ref{joinrankone} (since $c>\width(h)$ and $\xo$ was chosen on an invariant geodesic of the rank one element $h$) that every rank one geodesic $w\in\mathcal{R}$ joining $\alpha^{-1}\xi$ and $\alpha^{-1}\eta$ 
has $\width(w)\le 2c$. Now both $\alpha^{-1}v$ and $\alpha^{-1}u$ are such rank one geodesics and therefore we get from $u\sim  v$ 
\[ d\bigl(u(s), v(s)\bigr)=  d\bigl(\alpha^{-1}u(s), \alpha^{-1} v(s)\bigr)\le 2c\quad\text{for all }  \ s\in\RR.\]
Choosing $w\in \SX$ with $w\sim v$ \st 
\[d\bigl(u(s), w(s)\bigr)=d\bigl(w(s), v(s)\bigr)=\frac12 d\bigl(u(s), v(s)\bigr) \le c\]  for all $s\in\RR$  we conclude that 
$\ (\xi,\eta)=(w^-,w^+)\in {\mathcal L}_{2c,2c}(\xo,\gamma\xo).$
\end{proof}

As a direct consequence we obtain that for all $t,s>0$ and all $\gamma,\varphi\in \Gamma$
\begin{eqnarray}\label{INC2} \ndpt\big(\overline K\cap g^{-t}\gamma \overline K\cap g^{-t-s}\varphi \overline K\big)\subseteq
{\mathcal L}_{2c,2c}(\xo,\varphi\xo)\cap \Gamma(U^-\times U^+).\end{eqnarray}

The following geometric estimate gives a relation between the constants $t,s>0$ and the elements $\gamma,\varphi\in\Gamma$:
\begin{lemma}\label{IT} 
$\overline K\cap g^{-t}\gamma \overline K\ne\emptyset\ $ implies  
\[ |d(\xo,\gamma\xo)-t|\le 5c, \]
and $\overline K\cap g^{-t}\gamma \overline K\cap g^{-s-t}\varphi \overline K\ne \emptyset\ $ further gives 
\[  0\leq d(\xo,\gamma\xo)+d(\gamma\xo,\varphi\xo)-d(\xo,\varphi\xo)\le 15 c.\]
\end{lemma}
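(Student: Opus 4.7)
My plan is to establish (a) by a direct triangle-inequality argument along a single geodesic line, and then deduce (b) by applying (a) to the three two-factor intersections. The lower bound $0\le d(\xo,\gamma\xo)+d(\gamma\xo,\varphi\xo)-d(\xo,\varphi\xo)$ in (b) is immediate from the triangle inequality, so only upper bounds need work.

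For (a) I would unpack the definition~(\ref{Kdef}) as follows: $[u]\in\overline K$ means there is a representative $u_1$ of $[u]$ and a parameter $t_1\in(-c/2,c/2)$ with $u_1(t_1)\in B_\xo(c)$ and $(u_1^-,u_1^+)\in \Gamma(U^-\times U^+)$; and $g^t[u]\in\gamma\overline K$ (i.e.\ $\gamma^{-1}g^t[u]\in\overline K$) yields a representative $u'$ of $\gamma^{-1}g^t[u]$ with $u'(t_2)\in B_\xo(c)$ for some $|t_2|<c/2$. The key point is that $\gamma u'$ and $g^t u_1$ are two representatives of the same class $g^t[u]$, hence parallel. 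By Lemma~\ref{joinrankone} and the remark following~(\ref{Kdef}), every representative of an element of $\overline K$ has width at most $2c$, so parallel representatives sit at constant distance $\le 2c$. Evaluating this at parameter $t_2$ gives
\[
d(u_1(t+t_2),\gamma\xo)\le d\bigl(u_1(t+t_2),\gamma u'(t_2)\bigr)+d\bigl(\gamma u'(t_2),\gamma\xo\bigr)<2c+c=3c.
\]
Combining with $d(u_1(t_1),\xo)<c$ and $d\bigl(u_1(t_1),u_1(t+t_2)\bigr)=|t+t_2-t_1|$ via the triangle inequality at both endpoints yields $\bigl|\,|t+t_2-t_1|-d(\xo,\gamma\xo)\bigr|<4c$, and then $|t-d(\xo,\gamma\xo)|<4c+|t_2-t_1|\le 5c$.

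For (b) I would apply (a) to three intersections. From the hypothesis one has $\overline K\cap g^{-t}\gamma\overline K\ne\emptyset$, so $d(\xo,\gamma\xo)\le t+5c$. Applying $g^t$ to a common element of the triple intersection yields an element of $\gamma\overline K\cap g^{-s}\varphi\overline K$; translating by $\gamma^{-1}$ (which commutes with the flow) gives $\overline K\cap g^{-s}(\gamma^{-1}\varphi)\overline K\ne\emptyset$, so (a) implies $d(\gamma\xo,\varphi\xo)=d(\xo,\gamma^{-1}\varphi\xo)\le s+5c$. Likewise $\overline K\cap g^{-(s+t)}\varphi\overline K\ne\emptyset$, and (a) gives $d(\xo,\varphi\xo)\ge (s+t)-5c$. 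Summing,
\[
d(\xo,\gamma\xo)+d(\gamma\xo,\varphi\xo)-d(\xo,\varphi\xo)\le (t+5c)+(s+5c)-\bigl((s+t)-5c\bigr)=15c.
\]

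The only delicate step is (a): one must resist the temptation to pick an arbitrary representative of $g^t[u]$ and instead keep track of the fact that $u_1$ and $u'$ lie in two different $\sim$-classes (those of $[u]$ and of $\gamma^{-1}g^t[u]$), bridging them via the width bound $\le 2c$ from Lemma~\ref{joinrankone}. Once this parallelism bookkeeping is in place, both parts reduce to elementary triangle-inequality manipulation.
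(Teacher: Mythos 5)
Your proposal is correct and follows essentially the same route as the paper: unpack the definition~(\ref{Kdef}) to get two parallel representatives (one near $\xo$, one near $\gamma\xo$), bound their mutual distance by $2c$ via the width bound coming from Lemma~\ref{joinrankone}, and finish with triangle inequalities; part (b) then follows, exactly as in the paper, by applying part (a) to the pairs $(\gamma,t)$, $(\varphi,s+t)$ and — after translating the triple intersection by $\gamma^{-1}g^t$ — to $(\gamma^{-1}\varphi,s)$, and summing. Your explicit treatment of the translation step giving $|d(\gamma\xo,\varphi\xo)-s|\le 5c$ is the same step the paper leaves implicit in ``from the first claim we get.''
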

\begin{proof}
Assume that $\overline K\cap g^{-t}\gamma \overline K\ne \emptyset$. Then there exist $v,u\in \SX$ with $v\sim u$,  $(v^-,v^+)=(u^-,u^+)\in\Gamma(U^-\times U^+)$ and 
$s,r\in  (-c/2,c/2)$ \st \[  (g^s v)(0)=v(s)\in B_\xo(c)  \quad\text{and }\ (g^{r}g^t u)(0)=u(r+t)\in B_{\gamma\xo}(c).\]  
So in particular -- as in the proof of the second inclusion above -- we get 
\[ d\bigl(u(s), v(s)\bigr)\le 2c\quad\text{for all }\  s\in\RR.\]
Hence 
\begin{align*} 
d(\xo,\gamma\xo)&\le d(\xo, v(s))+d(v(s),v(0))+ d(v(0),v(t))+d(v(t), u(t))\\
&\qquad +d(u(t),u(r+t))+d(u(r+t),\gamma\xo)\le c+s+t+2c+r+ c\le t+ 5 c 
\end{align*}
and similarly the reverse inequality
\[ d(\xo,\gamma\xo)\ge t- 5c.\]

If $ \overline K\cap g^{-t}\gamma \overline K\cap g^{-s-t}\varphi \overline K\ne \emptyset$, then from the first claim we get 
\[ |d(\xo,\gamma\xo)-t|\le 5c,\quad |d(\xo,\varphi\xo)-s-t|\le 5c \quad\text{and }\quad |d(\gamma\xo,\varphi\xo)-s|\le 5c.\]
So we conclude again by the triangle inequality.
\end{proof}
Finally we remark that  if  $(\xi,\eta)\in{\mathcal L}_{2c,2c}(\xo,\varphi\xo)$,
then there exists $z\in (\xi\eta)\cap B_{\xo}(2c)$ \st
\[ \Gr_{\xo}(\xi,\eta)=\frac12\big(\bs_\xi(\xo,z)+\bs_\eta(\xo,z)\big)\]
which immediately gives the estimate
\begin{equation}\label{GROMOV}  \Gr_{\xo}(\xi,\eta) \le 2c.\end{equation}

Recall that $\mu$ is a $\delta_\Gamma$-dimensional $\Gamma$-invariant conformal density. Let $\overline\mu$ be the geodesic current on $\ndpt\SX$ given by the formula~(\ref{overlinemudef}) and $\overline m_\Gamma$ the induced weak Ricks' measure on $\quotient{\Gamma}{[\SX]}$ (which is supported on $\quotient{\Gamma}{[\reg]}$). Notice that for the projection $\overline K_\Gamma\subseteq \quotient{\Gamma}{[\reg]}$ of the set $\overline K\subseteq [\reg]$ defined in (\ref{Kdef}) to $\quotient{\Gamma}{[\reg]}$ we have
\[ 0< \overline m_\Gamma(\overline K_\Gamma)\le \overline m(\overline K)\le 3c\cdot \e^{2c\delta } \underbrace{ (\mu_\xo\otimes\mu_\xo)\bigl(\Gamma (U^-\times U^+)\bigr)}_{\le 1}<\infty.\]

We are now going to prove the converse to Lemma~\ref{convseries} in our setting of a proper Hadamard space $\XX$ and a discrete rank one group $\Gamma<\is(\XX)$. Our result here generalizes Proposition~1 in \cite{LinkPicaud} as we neither require $\XX$ to be a {\hl manifold} nor $\Gamma$ to contain a {\hl strong} rank one isometry or a zero width rank one isometry. 

\begin{proposition}\label{divseries}\
If  $\ \sum_{\gamma\in\Gamma} \e^{-\delta_\Gamma d(\xo,\gamo)}\,$ diverges, then 
$\mu_\xo(\radlim)>0$. 
\end{proposition}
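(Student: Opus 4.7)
The plan is to apply Kochen–Stone's generalization of the second Borel–Cantelli lemma (Lemma~2 of \cite{MR0766098}) to a family of subsets of $\overline K_\Gamma$, in direct analogy with Roblin's proof in \cite{MR2057305} and its adaptation to Hadamard manifolds in \cite[Section~6]{LinkPicaud}. The novelty is that the argument will be carried out not on $\quotient{\Gamma}{\SX}$ with Knieper's measure but on $\quotient{\Gamma}{[\SX]}$ with the weak Ricks' measure $\overline m_\Gamma$ of finite mass on $\overline K_\Gamma$, which removes the need for a zero-width rank one element with endpoints in $\Lim$. The output of Kochen–Stone will be a subset of $\overline K_\Gamma$ of positive $\overline m_\Gamma$-measure consisting of equivalence classes $[v]$ whose forward $g_\Gamma$-orbit returns to $\overline K_\Gamma$ infinitely often; for such $[v]$ I will deduce $v^+\in\radlim$, and transfer this via Fubini and the quasi-product structure~(\ref{overlinemudef}) to $\mu_\xo(\radlim)>0$.

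Concretely, for every sufficiently large integer $n$ I define
\[ \widetilde A_n\;:=\;\bigcup_{\gamma\in\Gamma}\bigl(\overline K\cap g^{-n}\gamma\overline K\bigr)\subseteq [\reg] \]
and let $A_n\subseteq\quotient{\Gamma}{[\SX]}$ be its projection. The two Renyi-type inequalities to verify are (i)~$\sum_n\overline m_\Gamma(A_n)=\infty$ and (ii)~$\sum_{n,m\le N}\overline m_\Gamma(A_n\cap A_m)\le C\bigl(\sum_{n\le N}\overline m_\Gamma(A_n)\bigr)^2$. For (i), the first inclusion of Lemma~\ref{K0calL}, Proposition~\ref{lrcinproduct} (applied with the fixed finite set $\Lambda$), the shadow lemma Proposition~\ref{shadowlemma}, and the Gromov-product bound~(\ref{GROMOV}) yield $\overline m_\Gamma(A_n)\ge c_0\sum_{\gamma\in\Gamma,\,|d(\xo,\gamma\xo)-n|\le 5c}\e^{-\delta_\Gamma d(\xo,\gamma\xo)}$; summing over $n$ bounds a positive multiple of the Poincaré series from below, which diverges by hypothesis. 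For (ii), the second inclusion of Lemma~\ref{K0calL} combined with the second part of Lemma~\ref{IT}, Proposition~\ref{shadowlemma} and~(\ref{GROMOV}) forces the quasi-additivity $d(\xo,\varphi\xo)\approx d(\xo,\gamma\xo)+d(\gamma\xo,\varphi\xo)$ for the pairs $(\gamma,\varphi)$ contributing to $A_n\cap A_m$, and the standard factoring computation (as in \cite[Lemme~1.13]{MR2057305}) then produces the required quadratic bound.

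Kochen–Stone then yields $\overline m_\Gamma(\limsup_n A_n)>0$. For $[v]$ in this limsup, fix a lift $\widetilde{[v]}\in [\SX]$ with a representative $u\in\SX$ of width $\le 2c$ satisfying $u(0)\in B_\xo(c)$; for infinitely many $n_k$ there exist $\gamma_k\in\Gamma$ with $g^{n_k}\widetilde{[v]}\in\gamma_k\overline K$, so that $g^{n_k}u$ is parallel (in $\SX$) to a geodesic based within $c$ of $\gamma_k\xo$. Parallel geodesics have constant mutual distance $\le 2c$, hence $d(u(n_k),\gamma_k\xo)\le 3c$; since $u$ is asymptotic to $\sigma_{\xo,v^+}$ at bounded distance I conclude that $d(\gamma_k\xo,\sigma_{\xo,v^+}(n_k))$ is uniformly bounded, and so $v^+\in\radlim$. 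Finally, the local product structure $\overline m=\overline\mu\otimes\lambda$ on $[\reg]\cong\ndpt\reg\times\RR$ together with the absolute continuity $\overline\mu\ll(\mu_\xo\otimes\mu_\xo)\ein_{\ndpt\reg}$ and Fubini in the $\eta$-coordinate give $\mu_\xo(\radlim)>0$.

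The main technical difficulty is verifying the upper bound (ii). Without an a priori zero-width geodesic with endpoints in $\Lim$, the flat strip of width up to $2c$ coming from Lemma~\ref{joinrankone} could in principle inflate the masses of the intersections $A_n\cap A_m$. Working with the weak Ricks' measure on $[\SX]$ (so that parallel classes are identified) combined with the geometric bookkeeping in Lemma~\ref{IT} and the refined corridors ${\mathcal L}_{r,c}$ of Proposition~\ref{lrcinproduct} is precisely what keeps (ii) quadratic in $\sum_n\overline m_\Gamma(A_n)$, and thereby closes the Borel–Cantelli argument in this more general setting.
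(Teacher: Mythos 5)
Your overall strategy is essentially the paper's: the paper likewise works with the weak Ricks' measure $\overline m_\Gamma$ restricted to the projection $\overline K_\Gamma$ of the very same set $\overline K$ from (\ref{Kdef}), proves two Renyi-type inequalities using Lemma~\ref{K0calL}, Lemma~\ref{IT}, Proposition~\ref{lrcinproduct}, the shadow lemma Proposition~\ref{shadowlemma} and the Gromov-product bound (\ref{GROMOV}), and then invokes Lemma~2 of \cite{MR0766098} exactly as in \cite[p.~20]{MR2057305}. Your ending differs only in routing: you extract radial forward endpoints directly from the limsup set (via the width bound $\le 2c$ on representatives of classes in $\overline K$ and convexity) and conclude by Fubini, whereas the paper concludes by contradiction, observing that a non-dissipative system is incompatible with $\mu_\xo(\radlim)=0$ through Lemma~\ref{consdiss}~(b); both endings are sound, and yours is arguably more self-contained. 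Note that your last step needs a small pigeonhole: a class in $\limsup A_{n}$ a priori uses a different lift $\alpha_n[v]\in\overline K$ for each $n$, and one must use that $\#\{\alpha\in\Gamma\colon \alpha[v]\in\overline K\}$ is finite (discreteness of $\Gamma$ plus boundedness of the origins of representatives of $\overline K$) to fix one lift valid for infinitely many $n_k$; the same finiteness is what bounds the multiplicity when passing between $\overline m_\Gamma(A_n)$ and $\sum_\gamma\overline m(\overline K\cap g^{-n}\gamma\overline K)$, a bookkeeping step both you and the paper leave to Roblin.

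The one step that fails as written is the per-$n$ lower bound $\overline m_\Gamma(A_n)\ge c_0\sum_{\gamma,\,|d(\xo,\gamo)-n|\le 5c}\e^{-\delta_\Gamma d(\xo,\gamo)}$, and this is precisely the price of discretizing time. For a fixed $(\xi,\eta)$ in the corridor ${\mathcal L}_{c,c}(\xo,\gamo)\cap\Gamma(U^-\times U^+)$, the Hopf times at which the class lies in $\overline K$, respectively in $\gamma\overline K$, form two intervals of length between $c$ and $3c$; the fibre measure contributing to $\overline m(\overline K\cap g^{-n}\gamma\overline K)$ is the overlap of the first interval with the second translated by $-n$, and for a \emph{fixed} integer $n$ this overlap can be arbitrarily small (moreover the favorable $n$ varies with $(\xi,\eta)$, so no pointwise bound survives integration over the corridor). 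This is exactly why the paper integrates $\int_0^T\d t$ and uses the continuous-time form of \cite[Lemma~2]{MR0766098}: there the inner integral is $\ge c$ for every point of the corridor, with no alignment issue. Your argument is repaired by summing over the window instead: assuming without loss of generality $c\ge 1$ (the paper only requires $c>c_0+\rho$, so $c$ may be enlarged), each $s$ in the $\overline K$-interval lies in the $\gamma\overline K$-interval shifted by $-n$ for at least $\lfloor c\rfloor\ge1$ integers $n$, whence $\sum_{|n-d(\xo,\gamo)|\le 5c}\overline m(\overline K\cap g^{-n}\gamma\overline K)\ge c\lfloor c\rfloor\,\e^{2\delta_\Gamma\cdot 0}(\mu_\xo\otimes\mu_\xo)\bigl({\mathcal L}_{c,c}(\xo,\gamo)\cap\Gamma(U^-\times U^+)\bigr)\gtrsim \e^{-\delta_\Gamma d(\xo,\gamo)}$ by Proposition~\ref{lrcinproduct} and Proposition~\ref{shadowlemma}. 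This windowed bound gives $\sum_{n\le N}\overline m_\Gamma(A_n)\gtrsim\sum_{d(\xo,\gamo)\le N-5c}\e^{-\delta_\Gamma d(\xo,\gamo)}$, which is all that (i) and the denominator of (ii) require; the quadratic upper bound (ii) goes through unchanged since it only uses upper estimates. With this correction your proof closes.
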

\begin{proof}  We argue by contradiction, assuming that the sum $\ \sum_{\gamma\in\Gamma} \e^{-\delta_\Gamma d(\xo,\gamo)}$ diverges and that $\,\mu_\xo(\radlim)=0$. We will show that for the  Borel set  $\overline K\subseteq [\reg]$  defined by (\ref{Kdef})
 the following inequalities hold for $T$ sufficiently  large with universal constants $C,C'>0$:

\begin{equation}\label{upbound}
\int_{0}^T \d t \int_0^T \d s \sum_{\gamma,\varphi\in\Gamma} \overline m(\overline K\cap g^{-t}\gamma \overline K\cap g^{-t-s}\varphi \overline K) \le  C
\biggl(\sum_{\begin{smallmatrix}{\scriptscriptstyle \gamma\in\Gamma}\\
        {\scriptscriptstyle d(o,\gamo)\le T}\end{smallmatrix}} \e^{-\delta_\Gamma
    d(\xo,\gamma\xo)}\biggr)^2
\end{equation}
\begin{equation}\label{lowbound}
\int_{0}^T \d t  \sum_{\gamma\in\Gamma} \overline m(\overline K\cap g^{-t}\gamma \overline K) \ge  C' \sum_{\begin{smallmatrix}{\scriptscriptstyle\gamma\in\Gamma}\\
        {\scriptscriptstyle d(o,\gamo)\le T}\end{smallmatrix}} \e^{-\delta_\Gamma d(\xo,\gamma\xo)}
\end{equation}

Once these inequalities are proved and under the assumption that the sum $\ \sum_{\gamma\in\Gamma}\e^{-\delta_\Gamma d(\xo,\gamma\xo)}$ diverges one can apply the above mentioned
generalization of the second Borel-Cantelli lemma, and the conclusion follows as in
\cite[p.~20]{MR2057305} (applying \cite[Lemma~2]{MR0766098} to the finite measure $M=\overline m_\Gamma$ restricted to $\overline K_\Gamma\subseteq \quotient{\Gamma}{[\reg]} $), namely 
$$\overline m_\Gamma\bigl(\{ [v]\in \quotient{\Gamma}{[\SX]}  \colon \int_0^\infty \chr_{\overline K_\Gamma\cap g^{-t}_\Gamma \overline K_\Gamma}([v])=\infty\}\bigr)>0.$$
This means that the dynamical system
$\bigl(\quotient{\Gamma}{[\SX]}, g_\Gamma, 
\overline m_\Gamma\bigr)$ is not  dissipative. But by Lemma~\ref{consdiss} (b) this is a contradiction to $\mu_\xo(\radlim)=0$.

We begin with the proof of (\ref{upbound}):
From the definition of the weak Ricks' measure 
and the estimates~(\ref{INC2}) and (\ref{GROMOV})  it follows that for   all
$\gamma,\varphi\in \Gamma$
\begin{align*}
\overline m(\overline K\cap g^{-t}\gamma \overline K\cap g^{-t-s}\varphi \overline K) &\le \int_{{\mathcal L}_{2c,2c}(\xo,\varphi\xo)\cap \Gamma(U^-\times U^+)} \d \mu_\xo(\xi)  \d \mu_\xo(\eta) \e^{2 \delta_\Gamma
  \Gr_\xo(\xi,\eta)} \cdot c  \\
& \le \e^{4 c \delta_\Gamma} c  \int_{{\mathcal L}_{2c,2c}(\xo,\varphi\xo)\cap \Gamma(U^-\times U^+)} \d \mu_\xo(\xi)  \d \mu_\xo(\eta) .
\end{align*}
Since obviously $ {\mathcal L}_{2c,2c}(\xo,\varphi\xo)\cap \Gamma(U^-\times U^+)\subseteq {\mathcal L}_{2c,2c}(\xo,\varphi\xo)\subseteq \rand\times {\mathcal O}^+_{2c,2c}(\xo,\varphi\xo)$ we obtain
\begin{align*}
\overline m(\overline K\cap g^{-t}\gamma \overline K\cap g^{-t-s}\varphi \overline K) & \le \e^{4 c \delta_\Gamma} c   \mu_\xo\bigl({\mathcal O}^+_{2c,2c}(\xo,\varphi\xo)\bigr) \le  \e^{4 c \delta_\Gamma} c  D(c) \e^{-\delta_\Gamma d(\xo,\varphi\xo) }, 
\end{align*}
where we used the shadow lemma Proposition~\ref{shadowlemma} in the last step.

Using Lemma~\ref{IT} we finally get
\begin{align*} \int_0^T \d t\int_0^T \d s  & \hspace{1mm} \overline m(\overline K\cap  g^{-t}\gamma \overline K\cap g^{-t-s}\varphi \overline K)\le (10c)^2 \sum_{\begin{smallmatrix}{\scriptscriptstyle\gamma,\varphi\in\Gamma}\\{\scriptscriptstyle d(o,\gamo)\le T+5c}\\{\scriptscriptstyle
          d(\gamo,\varphi\xo)\le T+5c}\end{smallmatrix}} \e^{4 c \delta_\Gamma} c  D(c) \e^{-\delta_\Gamma d(\xo,\varphi\xo) }\\
& \le 100c^3  \e^{4 c \delta_\Gamma}   D(c)  \sum_{\begin{smallmatrix}{\scriptscriptstyle\gamma,\varphi\in\Gamma}\\{\scriptscriptstyle d(o,\gamo)\le T+5c}\\{\scriptscriptstyle
          d(\gamo,\varphi\xo)\le T+5c}\end{smallmatrix}} \e^{-\delta_\Gamma (d(\xo,\gamma\xo) + d(\gamma\xo,\varphi\xo)-15c) } \\
& = 100c^3  \e^{19 c \delta_\Gamma}  D(c)  \sum_{\begin{smallmatrix}{\scriptscriptstyle\gamma,\alpha\in\Gamma}\\{\scriptscriptstyle d(o,\gamo)\le T+5c}\\{\scriptscriptstyle
          d(\xo,\alpha\xo)\le T+5c}\end{smallmatrix}} \e^{-\delta_\Gamma (d(\xo,\gamma\xo) + d(\xo,\alpha\xo)) } \\
& = 100 c^3  \e^{19 c \delta_\Gamma}  D(c) \Bigl( \sum_{\begin{smallmatrix}{\scriptscriptstyle\gamma\in\Gamma}\\{\scriptscriptstyle d(o,\gamo)\le T+5c}\end{smallmatrix}} \e^{-\delta_\Gamma d(\xo,\gamma\xo)}\Bigr)^2 .
  \end{align*}
  Since \[\displaystyle \sum_{T<d(\xo,\gamo)\le T+5c} \e^{-\delta_\Gamma d(\xo,\gamo)}\quad \]
is uniformly bounded in $T$ as a direct consequence of  Corollary~3.8 in
\cite{MR2290453}, we have established (\ref{upbound}) with a constant $C>0$ depending only on $c$. 

It remains to prove inequality~(\ref{lowbound}). Notice first that by Lemma~\ref{joinrankone} every pair of points $(\xi,\eta)\in \Gamma(U^-\times U^+)$ can be joined by a rank one geodesic of width smaller than or equal to twice 
the width of $h$. 

We recall that by construction every orbit of the geodesic flow which enters $\overline K$ (or one of its translates by $\Gamma$) spends at least time $c$  in it. 
Using the definition of $\overline m$, Lemma~\ref{K0calL}
and the  non-negativity of the Gromov product,  we first obtain for $\gamma\in\Gamma$ with $5c\le d(\xo,\gamma\xo)\le T- 5c$
\begin{align*} & \hspace{-0.3cm}\int_0^T \d t\,  \overline m(\overline K\cap g^{-t}\gamma \overline K)\\
&\ge \int_{{\mathcal
    L}_{c,c}(\xo,\gamo)\cap\Gamma(U^-\times U^+)} \d \mu_\xo(\xi)\d \mu_\xo(\eta)\overbrace{\biggl(\int_{-c/2}^{c/2} \d s \int_0^T \d t\, {\mathbbm 1}_{\gamma \overline K} \bigl(g^{t}(\xi,\eta,s)\bigr)\biggr)}^{\ge c^2}.
\end{align*}

Recall that $r=r_0>\width(h)$ and $c>c_0+\rho\ge r+\rho$. According to Proposition~\ref{lrcinproduct} we know that for all $\gamma\in\Gamma$ with  $d(\xo,\gamma\xo)>R\,$ (with $ R>5c$ sufficiently large)
there exists an element $\beta$ in the finite set
$\Lambda\subseteq\Gamma$ with the property
$$ {\mathcal L}_{r,c}(\xo,\beta^{-1}\gamma \xo)\cap \big(U^-\times U^+\big)\supseteq (U^- \cap \rand) \times {\mathcal O}_{r,c}^-(\xo,\beta^{-1}\gamma\xo);$$
using (\ref{lrcinclus}) and $c>c_0+\rho\ge r +\rho$  we also have the inclusion
\[ {\mathcal L}_{r,c}(\xo,\beta^{-1}\gamma \xo)=\beta^{-1}{\mathcal L}_{r,c}(\beta\xo,\gamma\xo)\subseteq
\beta^{-1} {\mathcal L}_{r+\rho,c}(\xo,\gamma\xo)\subseteq \beta^{-1} {\mathcal L}_{c,c}(\xo,\gamma\xo).\]
So for all $\gamma\in\Gamma$ with $R<d(\xo,\gamma\xo)\le T-5c$ and $\beta=\beta(\gamma)\in\Lambda$  as above we have
\begin{align*}
 {\mathcal L}_{c,c}(\xo,\gamma\xo)\cap \Gamma (U^-\times U^+) & \supseteq {\mathcal L}_{c,c}(\xo,\gamma\xo)\cap \beta(U^-\times U^+)\\
 & \supseteq \beta \bigl((U^- \cap \rand) \times {\mathcal O}_{r,c}^-(\xo,\beta^{-1}\gamma\xo)\bigr)\\
 &=(\beta U^- \cap \rand) \times \beta {\mathcal O}_{r,c}^-( \xo,\beta^{-1}\gamma\xo)\end{align*}
and therefore 
\begin{align*}
\int_0^T \d t\,  \overline m(\overline K\cap g^{-t}\gamma \overline K)&\ge c^2\cdot  \int_{ (\beta U^-\cap \rand)\times \beta {\mathcal O}_{r,c}^-( \xo,\beta^{-1}\gamo)\big)} \d \mu_\xo(\xi)\d \mu_\xo(\eta)\\
&= c^2  \cdot \mu_{\xo}(\beta U^-) \mu_\xo\bigl(\beta {\mathcal O}_{r,c}^-(\xo,\beta^{-1}\gamma\xo) \bigr)\\
&\ge c^2  \cdot \mu_{\xo}(\beta U^-) \e^{-\delta_\Gamma d(\xo,\beta^{-1}\xo)}\mu_\xo( {\mathcal O}_{r,c}^-(\xo,\beta^{-1}\gamma\xo) )\\
&\ge c^2  \cdot \mu_{\xo}(\beta U^-) \e^{-\delta_\Gamma d(\xo,\beta^{-1}\xo)} \cdot \frac1{D(c)}\e^{-\delta_\Gamma d(
\xo,\beta^{-1}\gamma\xo)}\\
& \ge c^2\cdot  \min_{\beta\in\Lambda} \mu_\xo(\beta U^-) \cdot \e^{-2 \delta_\Gamma\rho} \frac1{D(c)}  \e^{-\delta_\Gamma d(
\xo,\gamma\xo)}\\
&= C'' \e^{-\delta_\Gamma d(\xo,\gamma\xo)}
\end{align*}
with a constant $C''$ depending only on $c$ and the fixed finite set $\Lambda\subseteq\Gamma$; in the last three inequalities we used the $\Gamma$-equivariance and the conformality~(\ref{conformality}) of $\mu$, the shadow lemma Proposition~\ref{shadowlemma} and the triangle inequality for the exponent.

Finally, taking the sum over all elements  $\gamma\in\Gamma$  we get
 \begin{align*}
  \int_0^T  \sum_{\gamma\in\Gamma} \overline m(\overline K\cap g^{-t}\gamma \overline K)\;\d t &\ge  \int_0^T  \sum_{\begin{smallmatrix}{\scriptscriptstyle\gamma\in\Gamma}\\{\scriptscriptstyle
      R< d(o,\gamo)\le T-5c}\end{smallmatrix}} \overline m(\overline K\cap g^{-t}\gamma \overline K)\;\d t \\
&\ge C''
\sum_{\begin{smallmatrix}{\scriptscriptstyle\gamma\in\Gamma}\\{\scriptscriptstyle
      R< d(o,\gamo)\le T-5c}\end{smallmatrix}}
\e^{-\delta_\Gamma d(\xo,\gamo)},\end{align*}
and inequality~(\ref{lowbound}) follows with the same argument as above, namely that the sums
$$\sum_{\begin{smallmatrix}{\scriptscriptstyle\gamma\in\Gamma}\\{\scriptscriptstyle
       d(o,\gamo)\le R}\end{smallmatrix}}
\e^{-\delta_\Gamma d(\xo,\gamo)} \qquad\text{and}\quad \sum_{\begin{smallmatrix}{\scriptscriptstyle\gamma\in\Gamma}\\{\scriptscriptstyle
      T-5c< d(o,\gamo)\le T}\end{smallmatrix}}
\e^{-\delta_\Gamma d(\xo,\gamo)}$$
are uniformly bounded in $T$.\end{proof}

\section{Conclusion and a construction of convergent groups}\label{conclusion}

We now summarize all the previously collected results in the weakest possible setting:
\begin{theorem}\label{HTSweak}\
Let $\XX$ be a proper Hadamard space and $\Gamma<\is(\XX)$ a discrete rank one group. 
For $\delta>0$ let $\mu$ be a $\delta$-dimensional   $\Gamma$-invariant  conformal density
normalized such that $\mu_\xo(\rand)=1$, and 
 $\overline m_\Gamma$ the weak Ricks' measure on $\quotient{\Gamma}{ [\SX]}$ associated to the quasi-product geodesic current $\overline\mu$ defined by (\ref{overlinemudef}). Then exactly one of the following two complementary cases holds, and the statements (i) to (iii) 
 are equivalent in each case: \\[2mm]
  \noindent 
1.~Case:
\begin{enumerate}
\item[(i)] 
$\sum_{\gamma\in\Gamma} \e^{-\delta d(\xo,\gamma\xo)}$ diverges.
\item[(ii)] $\mu_\xo(\radlim)=1$.
\item[(iii)] $(\quotient{\Gamma}{ [\SX]}, g_\Gamma, 
\overline m_\Gamma)$ is  conservative.
\end{enumerate}
2.~Case:
\begin{enumerate}
\item[(i)] 
$\sum_{\gamma\in\Gamma} \e^{-\delta d(\xo,\gamma\xo)}$ converges.
\item[(ii)] $\mu_\xo(\radlim)=0$.
\item[(iii)] $(\quotient{\Gamma}{ [\SX]}, g_\Gamma, 
\overline m_\Gamma)$ is  dissipative.
\end{enumerate}
\end{theorem}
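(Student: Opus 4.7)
My plan is to prove the three-way equivalences by reducing everything to a single dichotomy for the radial limit set of $\Gamma$ and then transporting that dichotomy to the weak Ricks' measure. Since the Poincar\'e series either converges or diverges, the two cases of the theorem are complementary, and it suffices to establish the chain (i)~$\Leftrightarrow$~(ii)~$\Leftrightarrow$~(iii) in one of them; the other then follows by taking negations.

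First I would establish (i)~$\Leftrightarrow$~(ii). The $\Gamma$-invariant Borel set $\radlim\subseteq\rand$ satisfies $\mu_\xo(\radlim)\in\{0,1\}$ by the ergodicity of the $\Gamma$-action on $\radlim$ (Lemma~\ref{ergodicity}). On the one hand, Lemma~\ref{convseries} shows that convergence of $\sum_{\gamma\in\Gamma}\e^{-\delta d(\xo,\gamma\xo)}$ forces $\mu_\xo(\radlim)=0$; on the other, Proposition~\ref{divseries} shows that divergence of the series forces $\mu_\xo(\radlim)>0$, which by the dichotomy must then equal $1$. Together these two ingredients handle both directions in both cases.

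Next I would pass from (ii) to (iii) directly from the definition~(\ref{overlinemudef}) of $\overline\mu$, avoiding any extra non-atomicity hypothesis. The set-theoretic inclusions
\[
\ndpt \SX_\Gamma^{\small{\mathrm{rad}}}\subseteq\radlim\times\radlim,\qquad
\ndpt(\SX\setminus \SX_\Gamma^{\small{\mathrm{rad}}})\subseteq \bigl((\rand\setminus\radlim)\times\rand\bigr)\cup\bigl(\rand\times(\rand\setminus\radlim)\bigr)
\]
combined with the absolute continuity of $\overline\mu$ with respect to $(\mu_\xo\otimes\mu_\xo)\ein_{\ndpt\reg}$ show that $\mu_\xo(\radlim)=1$ forces $\overline\mu\bigl(\ndpt(\SX\setminus\SX_\Gamma^{\small{\mathrm{rad}}})\bigr)=0$, while $\mu_\xo(\radlim)=0$ forces $\overline\mu(\ndpt\SX_\Gamma^{\small{\mathrm{rad}}})=0$. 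Lemma~\ref{consdiss} then yields conservativity respectively dissipativity of $\overline m_\Gamma$. To upgrade these one-way implications to equivalences I would observe that $\overline m_\Gamma$ is a \emph{nontrivial} Radon measure, so conservativity and dissipativity are mutually exclusive: the rank one hypothesis on $\Gamma$ together with Lemma~\ref{joinrankone} produces open sets $U^-,U^+\subseteq\ganz$ with $U^-\times U^+\subseteq\ndpt\reg$, and then $\overline\mu(U^-\times U^+)>0$ follows from~(\ref{overlinemunotzero}) together with the strict positivity of $\e^{2\delta\Gr_\xo}$ on $\ndpt\reg$.

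The main technical difficulty -- Proposition~\ref{divseries}, that divergence of the Poincar\'e series forces positive $\mu_\xo$-measure for $\radlim$ -- has already been handled in Section~\ref{divergentmeansconservative} via the Borel--Cantelli argument on the carefully chosen set $\overline K\subseteq[\reg]$. Given that input, the argument sketched above is essentially a bookkeeping assembly of the tools built in Sections~\ref{propradlimset}--\ref{divergentmeansconservative}, and no further obstacle is to be expected at this level of generality; in particular, the assumption $\zero_\Gamma\ne\emptyset$ that is needed to pass to the genuine Ricks' measure on $\quotient{\Gamma}{\SX}$ is not required here.
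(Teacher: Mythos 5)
Your proposal is correct and matches the paper's (implicit) proof: Theorem~\ref{HTSweak} is assembled in Section~\ref{conclusion} from exactly the ingredients you cite --- Lemma~\ref{convseries}, Proposition~\ref{divseries}, the zero-one law of Lemma~\ref{ergodicity}, and Lemma~\ref{consdiss} via the endpoint inclusions and absolute continuity of $\overline\mu$ with respect to $\mu_\xo\otimes\mu_\xo$ --- with nontriviality of $\overline m_\Gamma$ closing the chains of implications into equivalences. The only step worth writing out explicitly is that divergence of $\sum_{\gamma\in\Gamma}\e^{-\delta d(\xo,\gamma\xo)}$ together with Lemma~\ref{confdensexistence} and (\ref{critexp}) forces $\delta=\delta_\Gamma$, so that Proposition~\ref{divseries} (which is stated only for $\delta_\Gamma$-dimensional densities of divergent groups) indeed applies.
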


We remark that the first case can only happen if $\Gamma$ is divergent and if $\delta=\delta_\Gamma$. In this case there are several well-known  additional statements: The $\delta_\Gamma$-dimensional    $\Gamma$-invariant conformal density $\mu$ is unique up to multiplication by a scalar.  Moreover it follows from Lemma~\ref{ergodicity} that $\mu$ is quasi-ergodic in the sence that every $\Gamma$-invariant Borel subset $A\subseteq \rand$ either has zero or full measure with respect to any measure $\mu_x$ in $\mu$. According to Proposition~\ref{atomicpart}, $\mu$ is also non-atomic. 

Obviously, if $\delta>\delta_\Gamma$, then we are always in the second case. 
Moreover, in the second case the measure $\overline m_\Gamma$ is infinite and we also have non-ergodicity of the dynamical system $(\quotient{\Gamma}{ [\SX]}, g_\Gamma, \overline m_\Gamma)$ unless the measure $\overline m_\Gamma$ is supported on a single divergent orbit $\{g_\Gamma^t [v]\colon t\in\RR\}$ for some $v\in\quotient{\Gamma }{\SX}$; this follows directly from the paragraph before Theorem~\ref{Hopfindividual}. 

Since for $\delta>\delta_\Gamma$ we are always in the dissipative case we will formulate the subsequent results only for $\delta=\delta_\Gamma$. 
Under the presence of a zero width rank one geodesic with extremities in the limit set we get the following statement which implies Theorem~B from the introduction:
\begin{theorem}\label{HTS}\
Suppose $\Gamma<\is(\XX)$ is a discrete rank one group  with  the extremities of a {\hl zero width} rank one geodesic  in its limit set.  Let $\mu$ be a $\delta_\Gamma$-dimensional   $\Gamma$-invariant  conformal density
normalized such that $\mu_\xo(\rand)=1$,
and $m_\Gamma$ the associated Ricks' measure on $\quotient{\Gamma}{ \SX}$. Then exactly one of the following two complementary cases holds, and the statements (i) to (iv) are equivalent in each
case: \\[2mm]
1.~Case:
\begin{enumerate}
\item[(i)] $\sum_{\gamma\in\Gamma} \e^{-\delta_\Gamma d(\xo,\gamma\xo)}$ diverges.
\item[(ii)] $\mu_\xo(\radlim)=1$.
\item[(iii)] $(\quotient{\Gamma}{ \SX}, g_\Gamma, 
m_\Gamma)$ is  conservative.
\item[(iv)] $(\quotient{\Gamma}{ \SX}, g_\Gamma, 
m_\Gamma)$ is  ergodic  and $m_\Gamma$ is not supported on a single divergent orbit. 
\end{enumerate}

\noindent 2.~Case:
\begin{enumerate}
\item[(i)] $\sum_{\gamma\in\Gamma} \e^{-\delta_\Gamma d(\xo,\gamma\xo)}$ converges.
\item[(ii)] $\mu_\xo(\radlim)=0$.
\item[(iii)] $(\quotient{\Gamma}{ \SX}, g_\Gamma, 
m_\Gamma)$ is  dissipative.
\item[(iv)] $(\quotient{\Gamma}{ \SX}, g_\Gamma, 
m_\Gamma)$ is  non-ergodic 
unless $m_\Gamma$ is supported on a single divergent orbit. 
\end{enumerate}
\end{theorem}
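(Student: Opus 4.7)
The plan is to combine the already-established dichotomies for the weak Ricks' measure (Theorem~\ref{HTSweak}) and the divergent-case analysis on $\quotient{\Gamma}{\SX}$ (Theorem~\ref{conservativestatement}) with the convergent-case dissipativity statement (Proposition~\ref{confgivesdissipative}). The hypothesis of the theorem guarantees $\zero_\Gamma\neq\emptyset$, so all those results apply. Since the two listed cases partition the possibilities via the dichotomy ``series converges vs.\ series diverges'', the equivalence of (i)--(iv) within each case will follow once I exhibit that all four statements hold simultaneously in each case.

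\emph{Case 1 (divergent).} Starting from (i), Proposition~\ref{divseries} yields $\mu_\xo(\radlim)>0$. Since $\radlim$ is a $\Gamma$-invariant Borel subset of the limit set, Lemma~\ref{ergodicity} upgrades this to $\mu_\xo(\radlim)=1$, giving (ii). Proposition~\ref{atomicpart} then says no radial limit point is an atom, so $\mu_\xo$ is a non-atomic probability measure; moreover $\Gamma$-equivariance together with minimality of $\Lim$ for rank one groups forces $\supp(\mu_\xo)=\Lim$. Taking $\mu_-=\mu_+=\mu_\xo$ in the geodesic current (\ref{overlinemudef}), the growth bound reduces to $\Delta=2\delta_\Gamma<\infty$ as noted at the end of Section~\ref{currentsfromconfdens}. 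All hypotheses of Theorem~\ref{conservativestatement} are then satisfied, and its equivalences (i)$\Leftrightarrow$(ii)$\Leftrightarrow$(iii) for the Ricks' measure $m_\Gamma$ deliver statements (iii) and (iv) of the present theorem at once.

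\emph{Case 2 (convergent).} Statement (ii) follows immediately from Lemma~\ref{convseries}. For (iii), Proposition~\ref{confgivesdissipative} asserts that $\overline\mu(\ndpt\SX_{\Gamma}^{\small{\mathrm{rad}}})=0$, whence by Lemma~\ref{consdiss}(b) the weak Ricks' measure $\overline m_\Gamma$ on $\quotient{\Gamma}{[\SX]}$ is dissipative. To transfer dissipativity to the Ricks' measure $m_\Gamma=m_\Gamma^0$ on $\quotient{\Gamma}{\SX}$, I use that $\Hopf_\xo\ein_\zero:\zero\to\ndpt\zero\times\RR=[\zero]$ is a $\Gamma$-equivariant and flow-equivariant homeomorphism by Lemma~\ref{weakimpliesstrong}, so the dissipative decomposition pulls back and $m_\Gamma^0$ is dissipative on its full support $\quotient{\Gamma}{\zero}$. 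Finally, (iv) follows from the general dichotomy recalled in the paragraph preceding Theorem~\ref{Hopfindividual}: an ergodic flow-invariant measure is either conservative or supported on a single orbit, and in the dissipative case such an orbit is necessarily divergent; hence $m_\Gamma$ is non-ergodic unless it is supported on a single divergent orbit.

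The substantive content of the theorem lies in Proposition~\ref{divseries} (the Borel--Cantelli-type argument showing conservativity for divergent groups) and in Proposition~\ref{conservativeimpliesergodic} (the Hopf argument, itself dependent on Theorem~\ref{zerofull} to produce zero-width geodesics almost everywhere). Given those, the only point requiring care in the present theorem is the transfer of dissipativity and of ergodicity statements between the quotient $\quotient{\Gamma}{[\SX]}$ of parallel classes and the quotient $\quotient{\Gamma}{\SX}$ of parametrized geodesics; this is cleanly handled because the hypothesis $\zero_\Gamma\neq\emptyset$ makes the Hopf parametrization a flow-equivariant homeomorphism on $\zero$, so the full-measure conservative part in Case~1 and the dissipative part in Case~2 both pass faithfully between the two quotients. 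Apart from this, the proof is essentially an assembly of earlier results.
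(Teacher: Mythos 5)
Your proof is correct and follows essentially the same route as the paper: the paper gives no separate argument for this theorem, but assembles it exactly as you do, from Theorem~\ref{conservativestatement} in the divergent case (fed by Proposition~\ref{divseries}, Lemma~\ref{ergodicity}, Proposition~\ref{atomicpart} and the computation $\Delta=2\delta_\Gamma$) and from Lemma~\ref{convseries}, Proposition~\ref{confgivesdissipative}, Lemma~\ref{consdiss} and the dichotomy preceding Theorem~\ref{Hopfindividual} in the convergent case. Your explicit transfer of dissipativity from $\quotient{\Gamma}{[\SX]}$ to $\quotient{\Gamma}{\SX}$ via the flow-equivariant homeomorphism $\Hopf_\xo\ein_{\zero}$ is a detail the paper leaves implicit, and you handle it correctly.
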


\noindent Let us discuss the relation between Theorem~\ref{HTS} above and Theorem~\ref{HTSweak} in the case that   $\Lim$ contains the extremities of a zero width rank one geodesic and $\delta=\delta_\Gamma$: If $\Gamma$ is divergent, then according to Theorem~\ref{conservativestatement}  the weak Ricks' measure is equal to the Ricks' measure. So the statements in the first case of Theorem~\ref{HTSweak} are only supplemented by the fact that the dynamical systems are ergodic. 

For a convergent group $\Gamma$ it is well-known that there can exist many different $\delta_\Gamma$-dimensional  $\Gamma$-invariant  conformal densities. So first of all it is possible to obtain several distinct weak Ricks' measures $\overline m_\Gamma$ associated to different conformal densities. And even if the same $\delta_\Gamma$-dimensional  $\Gamma$-invariant  conformal density is used in the construction, the Ricks' 
measure $m_\Gamma$ can be different from the {\hl weak} Ricks' measure $\overline m_\Gamma$ (as it is supported on an a priori smaller set). The statements in Theorem~\ref{HTS} above and Theorem~\ref{HTSweak} for the second case therefore apply to any (weak) Ricks' measure constructed from a $\delta_\Gamma$-dimensional  $\Gamma$-invariant conformal density.

In order to obtain Theorem~C from the introduction, we have  to relate our new results to the Main Theorem in \cite{LinkPicaud}. Since  the measure $\overline \mu$ on $\ndpt\SX$  is used in 
Knieper's construction, Knieper's measure coincides with Ricks' measure on the set $
\quotient{\Gamma}{\zero}$. As in the divergent case the  support of both Knieper's and Ricks' measure is $\quotient{\Gamma}{\zero}$,  the divergent case of the Main Theorem in \cite{LinkPicaud} remains true under the weaker hypothesis that $\Gamma$ is a discrete rank one group. By Lemma~\ref{consdiss} we further get that the equivalent conditions in  the convergent case hold under the same weaker condition. So the existence of a periodic geodesic without parallel perpendicular  Jacobi field  in $\quotient{\Gamma}{\XX}$ is not a necessary hypothesis in the Main Theorem of \cite{LinkPicaud} and  we immediately get Theorem~C from the introduction.

Finally 
I want to mention that for finite $m_\Gamma$ -- the case treated in the article \cite{Ricks} by R.~Ricks -- we are always in the first case; this follows easily from the fact that finite measure spaces are conservative.  Ricks further showed (\cite[Theorem~4]{Ricks}) that if $\XX$ is geodesically complete, $m_\Gamma$ is finite and $\Lim=\rand$, then $(\quotient{\Gamma}{ \SX}, g_\Gamma, m_\Gamma)$ is mixing unless $\XX$ is isometric to a tree with all edge lengths in $c\ZZ$ for some $c>0$. 

To conclude this article I want to describe a construction of convergent rank one groups whose idea goes back to F.~Dal'bo, J.P.~Otal and M.~Peign{\'e} (\cite{MR1776078}, see also \cite{MR3220550}). We first give a criterion for the critical exponent of a divergent subgroup of a rank one group which extends Theorem~3.2 in \cite{MR3220550}:
\begin{proposition} Let $\XX$ be a proper Hadamard space and $\Gamma<\is(\XX)$ a discrete rank one group. If $H<\Gamma$ is a divergent subgroup with $L_H\subsetneq \Lim$, then its critical exponent satisfies $\delta_H<\delta_\Gamma$.
\end{proposition}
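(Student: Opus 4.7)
The strategy is to follow the ping--pong construction of Dal'bo--Otal--Peign\'e~\cite{MR1776078, MR3220550}: exhibit a rank one element $h\in\Gamma$ whose fixed point pair $(h^-,h^+)$ avoids the proper closed subset $L_H\subseteq\Lim$, play ping--pong between $H$ and a sufficiently high power $h^N$ to produce a subgroup of $\Gamma$ isomorphic to the free product $H\ast\langle h^N\rangle$, and then use divergence of $H$ at $\delta_H$ to force the Poincar\'e series of this subgroup to diverge at some exponent $s_0>\delta_H$; this yields $\delta_\Gamma\ge\delta_{\langle H,h^N\rangle}\ge s_0>\delta_H$.

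The first step produces such an $h$. Since $L_H\subsetneq\Lim$ is closed, $V:=\Lim\setminus L_H$ is a non-empty open subset of $\Lim$, hence $V\times V$ is a non-empty open subset of $\Lim\times\Lim$. By the Main Theorem of~\cite{MR2581914} (used earlier to establish $\supp(\overline\mu)=\Lim\times\Lim$), the set $\partial\reg\cap(\Lim\times\Lim)$ is dense in $\Lim\times\Lim$, so it meets $V\times V$; Lemma~\ref{elementsarerankone} then upgrades such an endpoint pair to a rank one element $h\in\Gamma$ with $\{h^-,h^+\}\subseteq V$. Apply Lemma~\ref{joinrankone} to produce disjoint open neighborhoods $U^-,U^+\subseteq\ganz$ of $h^-,h^+$; shrinking them if necessary, we may assume their closures miss the compact set $L_H$. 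Lemma~\ref{dynrankone}(b) then provides $N_0\in\NN$ such that $h^{\pm k}(\ganz\setminus U^{\mp})\subseteq U^{\pm}$ for every $k\ge N_0$.

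Fix $N\ge N_0$ and consider a reduced word $w=g_0h^{Nk_1}g_1\cdots h^{Nk_m}g_m$ with $g_i\in H$ (non-trivial where required by reduction) and $k_i\ne 0$. Using the ping--pong property on $\ganz$ and projecting onto an invariant geodesic of $h$ (together with the CAT$(0)$ triangle inequality and the fact that $L_H$ lies at positive distance from $\overline{U^-}\cup\overline{U^+}$), one expects a two-sided bound
\[
\Big|\,d(\xo,w\xo)-\sum_{i=0}^m d(\xo,g_i\xo)-N\ell(h)\sum_{i=1}^m|k_i|\,\Big|\le Cm
\]
with a constant $C=C(N,\xo)$ independent of $w$. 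This simultaneously shows $\langle H,h^N\rangle\cong H\ast\langle h^N\rangle$ and yields the Poincar\'e series estimate: setting $A(s):=\sum_{g\in H}e^{-sd(\xo,g\xo)}$ and $B(s):=\sum_{k\ne 0}e^{-sN|k|\ell(h)}$, there is a constant $C_1>0$ with
\[
\sum_{w\in\langle H,h^N\rangle}e^{-sd(\xo,w\xo)}\ge C_1\sum_{m\ge 0}A(s)^{m+1}\bigl(B(s)e^{-sC}\bigr)^m.
\]

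The conclusion then follows from divergence of $H$: monotone convergence gives $A(s)\to+\infty$ as $s\searrow\delta_H$, while $B(s)e^{-sC}$ is positive, finite and continuous in a neighborhood of $\delta_H$. Hence $A(s_0)B(s_0)e^{-s_0 C}>1$ for some $s_0>\delta_H$, the geometric-type series above diverges at $s_0$, and $\delta_{\langle H,h^N\rangle}\ge s_0>\delta_H$, which gives the desired inequality since $\langle H,h^N\rangle<\Gamma$. The main obstacle is the lower half of the length estimate: unlike the CAT$(-1)$ setting where Gromov products directly control word geometry, in a general proper Hadamard space one must exploit the existence of the axis of $h$ together with the positive separation between $L_H$ and the ping--pong sets $U^\pm$ to convert the ping--pong property on $\ganz$ into additive control on $d(\xo,w\xo)$; a sufficiently quantitative version of Lemma~\ref{dynrankone}, valid on large enough metric balls rather than just asymptotically, should supply this bound.
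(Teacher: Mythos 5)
Your proposal follows the same Dal'bo--Otal--Peign\'e template as the paper: produce a rank one element of $\Gamma$ with both fixed points outside $L_H$, interleave its powers with elements of $H$, and let divergence of $H$ at $\delta_H$ force divergence of the Poincar\'e series of $\Gamma$ at some $s_0>\delta_H$. But your execution hinges on the two-sided length estimate, and you yourself concede that its lower half, $d(\xo,w\xo)\ge \sum_i d(\xo,g_i\xo)+N\ell(h)\sum_i|k_i|-Cm$, is ``the main obstacle'' and is only ``expected''. That is a genuine gap: no quantitative version of Lemma~\ref{dynrankone} is available, and in a general proper Hadamard space the boundary dynamics do not control word lengths the way Gromov products do in CAT$(-1)$; such a bound (essentially a quasi-isometric embedding of the free product) may well be true via contraction properties of rank one axes, but it is unproved here and would require substantial work. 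The decisive observation, which the paper's proof exploits, is that the lower bound is never needed. To get $P_\Gamma(s_0)=\infty$ one needs only two things: (a) the words $h_1\widetilde\gamma h_2\widetilde\gamma\cdots h_k\widetilde\gamma$, with $h_i\in H\setminus\{e\}$ and $\widetilde\gamma$ the chosen rank one element, are pairwise distinct, so that summing over them undercounts $P_\Gamma$; and (b) an \emph{upper} bound on $d(\xo,w\xo)$, since a lower bound on $\sum\e^{-s d(\xo,w\xo)}$ comes from an upper bound on the exponents. Now (b) is the bare triangle inequality $d(\xo,w\xo)\le\sum_{i=1}^k\bigl(d(\xo,h_i\xo)+d(\xo,\widetilde\gamma\xo)\bigr)$, and (a) comes from the free product structure $H\ast\langle\widetilde\gamma\rangle$, which the paper sets up by pure ping-pong on $\ganz$: after passing to a power, $\widetilde\gamma^{\pm1}(\rand\setminus U)\subseteq U$ with $U\cap L_H=\emptyset$, so no metric estimate enters at all. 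With (a) and (b) one gets $P_\Gamma(s)\ge\sum_{k\ge1}\bigl(\e^{-sd(\xo,\widetilde\gamma\xo)}\sum_{h\in H\setminus\{e\}}\e^{-sd(\xo,h\xo)}\bigr)^k$, and divergence of $H$ yields $s_0>\delta_H$ making the bracket exceed $1$. In short, your plan is repaired not by proving your hard estimate but by deleting it.

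A secondary gap is in your first step. Lemma~\ref{elementsarerankone} requires a sequence $(\gamma_n)\subseteq\Gamma$ with $\gamma_n x\to\eta$ and $\gamma_n^{-1}x\to\xi$ for your chosen pair $(\xi,\eta)\in V\times V$, and you never produce one; duality of an arbitrary pair of limit points is a nontrivial input (the paper invokes it elsewhere via the proof of Proposition~3.5 in \cite{MR2585575}). The paper avoids both Hamenst\"adt's density theorem and duality: it takes a rank one $g\in\Gamma$ with only $g^+$ controlled, lying in a neighborhood $U$ of a point of $\Lim\setminus L_H$ with $U\cap L_H=\emptyset$ (Theorem~2.8 of \cite{MR656659}), picks an independent rank one $\gamma\in\Gamma$, and conjugates: $\widetilde\gamma:=g^N\gamma g^{-N}$ has fixed points $g^N(\gamma^{\pm})$, which lie in $U$ for $N$ large by the north--south dynamics of $g$ (Lemma~\ref{dynrankone}(b)). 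This produces the required element, and then the ping-pong power, in two lines.
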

\begin{proof} As $L_H\subsetneq \Lim$ we may choose a point $\xi\in \Lim\setminus L_H$. Since $L_H$ is a closed subset of $\rand$ there exists an open neighborhood $U\subseteq\rand$ of $\xi$ such that $U\cap L_H=\emptyset$. As $\Gamma$ is a discrete rank one group, Theorem~2.8 in \cite{MR656659}  implies the existence of a rank one element $g\in\Gamma$ \st $g^+\in U$. Let $V^-\subseteq\rand$, $V^+\subseteq U$ be small neighborhoods of $g^-$, $g^+$ respectively. Taking a rank one element $ \gamma\in \Gamma$ independent from $g$ and making $V^-$ smaller if necessary we have $\{ \gamma^-, \gamma^+\}\cap V^-=\emptyset$. Using the north-south dynamics Lemma~\ref{dynrankone} (b) we know that for $N\in\NN$ sufficiently large the rank one element
\[ \widetilde \gamma=g^N \gamma g^{-N}\in\Gamma\]
has both fixed points in $V^+\subseteq U$. Replacing $\widetilde\gamma$ by $ \widetilde\gamma^M$ for some $M\in\NN$ large enough we may further assume that
\[  \widetilde\gamma (\rand\setminus U)\subseteq U \quad\text{and }\ \widetilde \gamma^{-1} (\rand\setminus U)\subseteq U.\]
We now consider the free product
$G=H\ast  \widetilde\gamma<\Gamma$; the set
\[ \{h_1\widetilde\gamma h_2\widetilde\gamma \cdots h_{k-1}\widetilde\gamma h_k\widetilde \gamma\colon k\in\NN, h_i\in H\setminus\{e\}\}\]
is obviously a subset of $G$ and hence of $\Gamma$. For any $s>0$  the Poincar{\'e} series $P_\Gamma(s)$ of $\Gamma$ 
then satisfies 
\begin{align*}
P_\Gamma(s)&= \sum_{\gamma\in\Gamma}\e^{-sd(\xo,\gamma\xo)}\ge \sum_{k=1}^\infty 
\ \sum_{h_1,\ldots h_k\in H\setminus\{e\}} \e^{-sd(\xo, h_1 \widetilde\gamma h_2\cdots \widetilde\gamma h_k\widetilde \gamma \xo)}\\
&\ge \sum_{k=1}^\infty \e^{-s k d(\xo,\widetilde \gamma\xo)}
\sum_{h_1,\ldots h_k\in H\setminus\{e\}} \e^{-sd(\xo, h_1\xo)}\e^{-sd(\xo,h_2\xo)}\cdots \e^{- s d(\xo, h_k\xo)}\\
&= \sum_{k=1}^\infty \left(\e^{-s  d(\xo,\widetilde \gamma\xo)}\right)^k\cdot 
\left( \sum_{h\in H\setminus\{e\}} \e^{-sd(\xo, h\xo)}\right)^k. \end{align*}

Since $H$ is divergent, the sum $\displaystyle \sum_{h\in H\setminus\{e\}}\e^{-sd(\xo,h\xo)}$ tends to infinity as $s\searrow \delta_H$. Hence there exists $s_0>\delta_H$ \st 
\[ \e^{-s_0  d(\xo,\widetilde \gamma\xo)} \cdot  \sum_{h\in H\setminus\{e\}} \e^{-s_0d(\xo, h\xo)}>1;\]
for this parameter $s_0$ the Poincar{\'e} series $P_\Gamma(s_0)$ diverges, hence 
$\delta_H<s_0\le \delta_\Gamma$.
\end{proof}
Notice that $H$ need not be a rank one group. However, as in \cite{MR3220550} the above proposition allows to produce plenty of convergent discrete rank one isometry groups of any Hadamard space admitting a rank one isometry. The only novelty in the proof compared to the one given by M.~Peign{\'e}  in \cite{MR3220550} is the fact that the convergent subgroup is rank one (and hence is an example for a group in which the second case of Hopf-Tsuji-Sullivan dichotomy holds). 
\begin{corollary}
Let $\XX$ be a proper Hadamard space \st  $\is(\XX)$ 
contains two  independent rank one elements 
$h,g$. 
Then there exist $N,M\in\NN$ \st  the subgroup $G$ of $\is(\XX)$ generated by  
\[ \{ g^{-nN} h^M g^{nN}\colon n\in \NN_0\}\]
is a convergent discrete rank one group.
\end{corollary}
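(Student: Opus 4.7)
The plan is to realize $G$ as an infinite Schottky-type free product, tuning $M$ and $N$ so that ping-pong delivers discreteness and rank one while the axes of the generators retreat to infinity fast enough to force the Poincar\'e series to converge at its critical exponent. Throughout I set $\gamma_n := g^{-nN}h^M g^{nN}$ for $n \in \NN_0$; each $\gamma_n$ is conjugate to $h^M$, hence rank one with translation length $M\ell(h)$ and fixed points $g^{-nN}h^\pm$.

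Since $h$ and $g$ are independent, the four points $h^\pm, g^\pm$ are pairwise distinct. I choose pairwise disjoint open neighborhoods $U^\pm \ni h^\pm$ and $V^\pm \ni g^\pm$ in $\ganz$ satisfying Lemma~\ref{joinrankone} for both $h$ and $g$. By Lemma~\ref{dynrankone}(b) applied to $h$, I fix $M$ large enough that $h^{\pm M}(\ganz\setminus U^{\mp})\subseteq U^{\pm}$. Then, using Lemma~\ref{dynrankone}(b) for $g$, I pick $N$ so large that for every $n\geq 1$ the set $D_n := g^{-nN}(U^-\cup U^+)$ lies deep inside $V^-$, shrinks toward $g^-$, and is pairwise disjoint from $D_0 := U^-\cup U^+$ and from the other $D_m$'s. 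By construction each $\gamma_n^k$ ($k\neq 0$) sends $\ganz\setminus D_n$ into $D_n$. The classical ping-pong lemma applied to the countable family $\{\langle\gamma_n\rangle\}_{n\geq 0}$ then identifies $G$ with the free product $\ast_{n\geq 0}\langle\gamma_n\rangle$ and gives proper discontinuity of the $G$-action on $\bigcup_n D_n$; in particular $G$ is discrete. Rank oneness is witnessed by $\gamma_0 = h^M$ and $\gamma_1 = g^{-N}h^M g^N$, which are rank one with disjoint fixed-point sets.

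For convergence, place $\xo$ on an invariant geodesic of $g$. Independence of $h, g$ together with the rank one (Morse-like) behavior of $A_g$ forces $b_n := d(\xo, A_{\gamma_n}) = d(g^{nN}\xo, A_h)$ to grow with $n$, and by further enlarging $N$ one can arrange $b_n \geq \kappa n$ for all $n\geq 1$ with $\kappa$ as large as desired. Standard rank one axial estimates give a universal $C_0$ with $d(\xo, \gamma_n^k\xo) \geq |k|M\ell(h) + 2b_n - C_0$ for every $k\neq 0$, while the ping-pong separation of the $D_n$'s yields a universal $C_1$ such that for every reduced word $w = \gamma_{n_1}^{k_1}\cdots\gamma_{n_j}^{k_j}\in G$,
\[
d(\xo, w\xo) \geq \sum_{i=1}^{j} d(\xo, \gamma_{n_i}^{k_i}\xo) - (j-1)C_1.
\]
Setting $Q(s) := \sum_{n\geq 0}\sum_{k\neq 0} e^{-s d(\xo,\gamma_n^k\xo)}$ we obtain
\[
P_G(s) \leq \sum_{j\geq 0}\bigl(e^{sC_1}Q(s)\bigr)^j \quad\text{and}\quad Q(s) \leq \frac{2e^{sC_0}}{(1-e^{-sM\ell(h)})(1-e^{-2s\kappa})},
\]
so $P_G(s)<\infty$ whenever $e^{sC_1}Q(s)<1$. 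Since $Q(s)$ can be made arbitrarily small at any fixed $s>0$ by enlarging $\kappa$ (i.e.\ $N$), the unique $s^\ast>0$ solving $e^{s^\ast C_1}Q(s^\ast)=1$ is arbitrarily small, and for $N$ large enough one has strict inequality there. A matching lower bound obtained from the free-product structure (restrict to reduced words and use the displacement estimate) gives $\delta_G = s^\ast$ and $P_G(\delta_G)<\infty$, so $G$ is convergent. Equivalently, the finite Schottky subgroups $G_k := \ast_{n=0}^{k}\langle\gamma_n\rangle$ are divergent rank one groups, the proposition just proved (applied with $H = G_k < G_{k+1}$, for which $L_{G_k}\subsetneq L_{G_{k+1}}$) gives $\delta_{G_k} < \delta_{G_{k+1}}$ strictly, and the classical Dal'bo--Otal--Peign\'e analysis of the limit of such Schottky towers yields $\delta_G = \lim\delta_{G_k}$ together with convergence of $P_G$ at $\delta_G$.

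The main obstacle is this last point: establishing convergence \emph{at} $\delta_G$ rather than merely above it demands that the axes $A_{\gamma_n}$ retreat to infinity fast enough for the free-product Poincar\'e sum to be strictly dominated by a geometric series at $s=\delta_G$. The geometric input is the lower bound $b_n \geq \kappa n$ with $\kappa$ arbitrarily large, which rests on the rank one property of $A_g$ (ruling out sublinear escape of $g^{nN}\xo$ from $A_h$) and on the independence of $h$ and $g$.
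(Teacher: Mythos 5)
Your ping-pong setup (discreteness, the free-product structure, and rank-oneness witnessed by the independent elements $h^M$ and $g^{-N}h^Mg^N$) is sound and close in spirit to the paper's, which uses the same north--south inclusions $h^{\pm M}(V^-\cup V^+)\subseteq U^\pm$ and $g^{\pm N}(U^-\cup U^+)\subseteq V^\pm$ to get a free, hence discrete, action and then invokes Lemma~\ref{inflimset}. The gap is in the convergence argument, which is the heart of the statement. Your Route A (direct estimation) cannot work as written: the upper bound $P_G(s)\le\sum_{j\ge 0}\bigl(e^{sC_1}Q(s)\bigr)^j$ is a geometric series that diverges exactly at the parameter $s^\ast$ defined by $e^{s^\ast C_1}Q(s^\ast)=1$, so it can never certify $P_G(s^\ast)<\infty$; enlarging $\kappa$ (i.e.\ $N$) only relocates $s^\ast$ and cannot produce ``strict inequality there'' --- that phrase contradicts the definition of $s^\ast$. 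Moreover, the claimed matching lower bound giving $\delta_G=s^\ast$ is not established (your upper and lower estimates carry different constants $C_0$, $C_1$, so they pinch $\delta_G$ only into an interval), and the uniform almost-additivity constant $C_1$ is itself doubtful here, since the ping-pong sets $D_n$ accumulate at $g^-$ and their mutual separation degenerates. Route B is likewise not a proof: for an increasing union $G=\bigcup_k G_k$ one only has $\delta_G\ge\sup_k\delta_{G_k}$, and strict inequality is possible; even if $\delta_G=\lim_k\delta_{G_k}$ held, that equality says nothing about whether $P_G$ converges \emph{at} $\delta_G$. There is no classical black box to invoke at this point.

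What the paper does instead --- and it is the actual Dal'bo--Otal--Peign\'e mechanism you were reaching for --- is a self-conjugation trick that avoids all quantitative estimates. The group $H:=g^{-N}Gg^{N}$ is generated by $\{g^{-nN}h^{M}g^{nN}\colon n\ge 1\}$, hence is a \emph{subgroup} of $G$; by the inclusions above its limit set $L_H$ is contained in $V^-$, while $h^+\in L_G\setminus V^-$, so $L_H\subsetneq L_G$. Being conjugate to $G$, $H$ satisfies $\delta_H=\delta_G$ and is convergent if and only if $G$ is. If $G$ were divergent, then $H$ would be a divergent subgroup of $G$ with strictly smaller limit set, and the Proposition preceding the Corollary would force $\delta_H<\delta_G$, contradicting $\delta_H=\delta_G$; hence $G$ is convergent. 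Note that you did use that Proposition in Route B, but only for the pairs $G_k<G_{k+1}$, where it yields monotonicity of exponents and nothing about convergence; applied to the single pair $H<G$ it finishes the proof in three lines.
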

\begin{proof}
Let $U^-,U^+, V^-,V^+\subseteq\ganz$ be pairwise disjoint neighborhoods of $h^-,h^+, g^-,g^+$. Thanks to Lemma~\ref{dynrankone} (b) there exist $M,N\in\NN$ \st
\begin{equation}\label{inclusionofLH} h^{\pm M}(V^-\cup V^+)\subseteq U^\pm\quad\text{and }\  g^{\pm N}(U^-\cup U^+)\subseteq V^\pm. \end{equation}
This implies that $G$ acts freely on $\XX$  and hence that $G$ is discrete;
moreover, the limit set 
 $L_{G }$ of $G $  contains the set
\[\{ g^{-nN} h^-, g^{-nN} h^+\colon  n\in \NN_0\},\]
so $L_{G}$ is infinite. 
Hence according to Lemma~\ref{inflimset} $G $ is a rank one group. 
The limit set $L_H$ of the conjugate discrete subgroup $H=g^{-N} G g^N<\is(\XX)$  is contained in $L_G$ and also in $V^-$ by (\ref{inclusionofLH}). Since $ h^+\in L_G$, $h^+\notin V^-$ we get $L_H\subsetneq L_{G}$. Obviously we also have $\delta_H=\delta_G$, hence the proposition above implies that $H$ must be convergent. As conjugate groups  are simultanously convergent or divergent we conclude that $G$  is  convergent. 
\end{proof}
Notice that the isometry group of a Hadamard space $\XX$ contains two independent rank one elements whenever it admits a discrete rank one subgroup. So the above construction in particular allows to construct plenty of convergent rank one subgroups in a given rank one discrete isometry group of $\XX$. 

\section*{Acknowledgements}
The author would like to thank Russel Ricks for answering her questions concerning his article \cite{Ricks}  and for pointing out a mistake in a previous version of this article. She also thanks Marc Peign{\'e} for his comments on the preprint. Finally she would like to thank the referee for carefully reading the article and for pointing out a gap in the original version of the proof to Proposition~\ref{largewidthgiveszerowidth}.

\def\cprime{$'$}
\providecommand{\href}[2]{#2}
\providecommand{\arxiv}[1]{\href{http://arxiv.org/abs/#1}{arXiv:#1}}
\providecommand{\url}[1]{\texttt{#1}}
\providecommand{\urlprefix}{URL }

\end{document}